\documentclass{amsart}
\usepackage[utf8]{inputenc}    
\usepackage[T1]{fontenc}
\usepackage[english]{babel}     
\usepackage[a4paper, left=1in, right=1in, top=1in, bottom=1in]{geometry}
\usepackage{comment}
\usepackage{indentfirst}
\usepackage{graphicx}           
\usepackage{amsmath,amssymb}    
\usepackage{amsthm}             
\usepackage{mathtools}          
\usepackage{enumitem}           
\usepackage{listings}           
\usepackage{todonotes}          
\usepackage{hyperref}           
\usepackage[cal=boondox]{mathalfa}
\usepackage{relsize}
\usepackage{stmaryrd}
\usepackage{setspace}
\usepackage{chngcntr}

\counterwithin{equation}{subsection}
\counterwithin{figure}{subsection}

\setenumerate[1]{label={(\arabic*)}} 
\setlist[enumerate]{itemsep=0pt}
\setstretch{1.1}

\setlist[itemize]{itemsep=0pt}

\newcommand\restr[2]{{
  \left.\kern-\nulldelimiterspace 
  #1 
  \right |_{#2} 
  }}


\newtheorem{theorem}{Theorem}[section]
\newtheorem{corollary}{Corollary}[theorem]
\newtheorem{lemma}[theorem]{Lemma}
\newtheorem{proposition}[theorem]{Proposition}

\newenvironment{customthm}[1]
  {\innercustomthm}
  {\endinnercustomthm}

\theoremstyle{definition}
\newtheorem{definition}[theorem]{Definition}
\newtheorem{defprop}[theorem]{Definition/Proposition}

\newtheorem*{remark}{Remark(s)}

\newcommand\Tau{\mathcal{T}}
\newcommand*\interior[1]{\mathring{#1}}

\DeclarePairedDelimiter\ceil{\lceil}{\rceil}
\DeclarePairedDelimiter\floor{\lfloor}{\rfloor}


\begin{document}

\title{On global rigidity of transversely holomorphic Anosov flows}
\author{Mounib Abouanass}
\date{\today}

\maketitle

\begin{abstract}
In this paper, we study transversely holomorphic flows, i.e. those whose holonomy pseudo-group is given by biholomorphic maps. We prove that for Anosov flows on smooth compact manifolds, the strong unstable (respectively, stable) distribution is integrable to complex manifolds, on which the flow acts holomorphically. Furthermore, assuming its complex dimension to be one, it is uniquely integrable to complex affine one-dimensional manifolds, each moreover affinely diffeomorphic to $\mathbb C$, on which the flow acts affinely. In this case, the weak stable (respectively, unstable) foliation is transversely holomorphic, and even transversely projective if the flow is assumed to be topologically transitive.
By combining these facts in low dimensions, our main result is as follows: if a transversely holomorphic Anosov flow on a smooth compact five-dimensional manifold is topologically transitive, then it is either $C^\infty$ orbit equivalent to the suspension of a hyperbolic automorphism of a complex torus, or, up to finite covers, $C^\infty$-orbit equivalent to the geodesic flow of a compact hyperbolic manifold.
\end{abstract}

\tableofcontents

\section{Introduction}
Smooth flows on smooth manifolds have long been of interest to both mathematicians and physicists. Their approach can be either dynamical (by considering the one-parameter subgroup of smooth diffeomorphisms $(\varphi^t)_{t \in \mathbb R}$, using various tools coming from dynamical systems and ergodic theory), or geometrical (by considering the the partition of the phase space into orbits, i.e. the orbit foliation).

A classical approach in continuous dynamics involves assuming transverse structures for the flow.
For example, Brunella and Ghys (see \cite{brunella_umbilical_1995}, \cite{brunella_transversely_1996}, \cite{ghys_transversely_1996}) have studied transversely holomorphic flows on smooth three-manifolds, that is smooth flows whose orbit foliation $\Phi$ can be given by a smooth foliated atlas whose transition maps are holomorphic.
They achieved a complete classification using advanced topological and analytical techniques, see also \cite{carriere_flots_1984}.
However, their techniques fail for higher dimensions due to several reasons: a priori non-conformity, non-equivalence of harmonic functions and holomorphic maps, and the lack classification of compact complex manifolds of complex dimension three, among others.

On the other hand, Anosov flows exhibit rich dynamical properties. 
The fundamental structural stability theorem (see \cite{katok_introduction_1995}) shows that a small $C^1$-perturbation of such a flow results in another Anosov flow $C^1$-orbit equivalent to the initial one, meaning there exists a diffeomorphism sending orbits to orbits without considering parametrization. However, from a dynamical perspective, they are not identical and many different examples of such flows can be find in the literature (see \cite{barbot_caracterisation_1995}). 

For instance, algebraic Anosov flows (that is finite covers of suspensions of Anosov diffeomorphisms or geodesic flows on hyperbolic manifolds) have been studied by Ghys. Their geometric properties are closely linked with the structure of their weak stable and unstable foliations: for example, Plante showed in \cite{plante_anosov_1981} that if the weak stable or weak unstable foliation is transversally affine, then the flow is $C^\infty$-orbit equivalent to the suspension of an Anosov diffeomorphism. 

Ghys proved classification results for smooth compact three-dimensional manifolds, initially by assuming regularity of the strong stable and unstable foliations (which depend on parametrization) in \cite{ghys_flots_1987}, and later by assuming regularity of the weak stable/unstable foliations, along with a volume-preserving hypothesis (which both does not depend on parametrization) in \cite{ghys_deformations_1992}. 
This latter assumption is strong and essential: he constructed examples of "exotic" (i.e. non algebraic) topologically transitive flows which are not volume preserving.

In \cite{ghys_rigidite_1993}, he showed that if the weak stable foliation is $C^{1,1}$, the strong unstable leaves can be uniquely equipped with complete real affine structures in such a way that the the weak stable foliation is transversally projective. By the classification of \cite{barbot_caracterisation_1995}, he concluded without the volume-preserving assumption. Ghys extended these ideas to the low-dimensional complex case (see \cite{ghys_holomorphic_1995}), for holomorphic Anosov diffeomorphisms and flows, achieving a classification without additional assumptions, by rigidity of holomorphicity.

Under certain geometric or regularity assumptions (see \cite{fang_rigidity_2007}), Fang managed to classify classes of uniformly quasi-conformal flows, and in particular transversally holomorphic transversally symplectic flows on smooth compact five-dimensional manifolds.

By combining the rigid properties of holomorphic maps with the geometric richness of Anosov flows, one can hope to achieve classification results for transversely holomorphic Anosov flows, that is Anosov flows whose orbit foliation is transversely holomorphic, in low-dimensional cases, such as five-dimensional manifolds, without use of strong hypothesizes such as the volume-preservation.
The main results of this paper are the following.

\begin{customthm}{A}
Let $(\varphi^t)_{t\in \mathbb R}$ a transversely holomorphic Anosov flow on a smooth compact manifold $M$. Suppose the strong unstable distribution $E^{u}$ is of complex dimension $1$.\\
Then there exists a unique family of complex affine structures on the strong unstable leaves $(\mathcal{F}^{u}_x)_{x\in M}$ such that: for every $x \in M$ and every $t \in \mathbb R$, the map 
    $\restr{\varphi^t}{\mathcal{F}^{u}_{x}}: \mathcal{F}^{u}_{x}\to \mathcal{F}^{u}_{\varphi^t(x)}$
    is affine.\\
Moreover, each of these complex affine structures is complete.
\end{customthm}
This result is obtained in subsection \ref{subsec:6.1} as Theorem \ref{thm:compaffstru}. 
By a complex affine structure for a complex manifold $L$, we mean a holomorphic atlas whose change of coordinates are complex affine diffeomorphisms. \\
Similarly, a transverse projective structure for a foliation $\mathcal{F}$ is given by a foliated atlas whose transition maps are complex projective automorphisms. See the beginning of section \ref{sec:6} for more details.\\
As a consequence, we are able to prove in subsection \ref{subsec:6.2} (see Theorem \ref{thm:FsTrProj}):

\begin{customthm}{B}
    Let $(\varphi^t)_{t\in \mathbb R}$ a transversely holomorphic Anosov flow on a smooth compact manifold $M$. Suppose the strong unstable distribution $E^{u}$ is of complex dimension $1$. Suppose moreover that $(\varphi^t)_{t\in \mathbb R}$ is transitive.\\
    Then the weak stable foliation $\mathcal{F}^s$ is transversely projective.
\end{customthm}
Using these results in dimension five allows us to complete the classification in section \ref{sec:7} (see Theorem \ref{thm:classif}):
\begin{customthm}{C}
    Let $(\varphi^t)_{t \in \mathbb R}$ a transversely holomorphic Anosov flow on a smooth compact manifold $M$ of dimension five. Suppose $(\varphi^t)_{t \in \mathbb R}$ topologically transitive.\\
    Then $(\varphi^t)$ is either $C^\infty$-orbit equivalent to the suspension of a hyperbolic automorphism of a complex torus, or, up to finite covers, $C^\infty$-orbit equivalent to the geodesic flow on the unit tangent bundle of a compact hyperbolic three-dimensional manifold.
\end{customthm}
These results are analogous to that of Ghys in the continuous real case (see \cite{ghys_deformations_1992}, \cite{ghys_rigidite_1993}) and the holomorphic case (\cite{ghys_holomorphic_1995}).

In section \ref{sec:5} we consider an arbitrary transversely holomorphic Anosov flow on a smooth compact manifold (of any dimension). We prove that the strong unstable foliation and the strong stable foliation are continuous foliations with holomorphic leaves on which the action of the flow is holomorphic.\\
Thanks to theses properties, we show that the weak stable foliation $\mathcal{F}^s$ is transversely holomorphic under the assumption that the strong stable distribution $E^{u}$ is of complex dimension one.\\
In section \ref{sec:6}, the fundamental result is that there exists, in that case, a unique family of complete complex affine structures on the strong unstable leaves, holomorphically compatible with the ones introduced in section \ref{sec:5}, on which the action of the flow is affine, and the weak stable foliation is transversely projective with respect to these affine structures if the flow is topologically transitive.\\
Eventually in section \ref{sec:7}, we prove our main result: a transversely holomorphic transitive Anosov flow on a smooth compact five-dimensional manifold is either $C^\infty$-orbit equivalent to the suspension of a hyperbolic automorphism of a complex torus, or, up to finite covers, $C^\infty$-orbit equivalent to the geodesic flow of a compact hyperbolic manifold.

\section{Reminders and definitions}

We only consider smooth ($C^\infty$) manifolds without boundary.
\subsection{Foliations}
We start by recalling some notions coming from the theory of foliations (see \cite{camacho_geometric_1985}, \cite{moerdijk_introduction_2003}, \cite{haefliger_groupoides_1984}, \cite{lee_manifolds_2009} and \cite{candel_foliations_2000}). 
\begin{defprop}
\label{defprop:2.1}
    Let $M$ a smooth manifold of dimension $n\in \mathbb N$. Let $r \geq 1$ (or $r=\infty$) and $k\leq n$.\\
    A \emph{$C^r$ foliation} $\mathcal{F}$ of dimension $k$ on $M$ (or codimension $n-k$) is defined by one of the following equivalent assertions:
    \begin{enumerate}
        \item a $C^r$-atlas $(U_i, \psi_i)_i$ on $M$ which is maximal with respect to the following properties:
            \begin{enumerate}
                \item For all $i$, $\psi_i(U_i)=U_i^1 \times U_i^2$, where $U_i^1$ and $U_i^2$ are connected open subsets of $\mathbb R^k$ and $\mathbb R^{n-k}$ respectively ;
                \item For all $i,j$, there exist $C^r$ maps $f_{ij}$ and $h_{ij}$ such that
                \[\forall(x,y) \in \psi_i(U_i \cap U_j) \subset \mathbb R^k \times \mathbb R^{n-k}, \; \psi_i \circ \psi_j^{-1}(x,y)=(f_{ij}(x,y), h_{ij}(y)).\]
            \end{enumerate}
        \item a maximal atlas $(U_i, s_i)_i$, where each $U_i$ is an open subset of $M$ and $s_i:U_i \to \mathbb R^{n-k}$ is a $C^r$ submersion, satisfying:
            \begin{enumerate}
                \item $\bigcup_i U_i = M$ ;
                \item For all $i,j$, there exists a $C^r$ diffeomorphism
                \[\gamma_{ij}: s_i(U_i\cap U_j) \to s_j(U_i\cap U_j) \text{  such that  } \restr{s_i}{U_i\cap U_j} = \restr{\gamma_{ij}\circ s_j}{U_i\cap U_j}. \]
            \end{enumerate}
        \item a partition of $M$ into a family of disjoint connected $C^r$ immersed submanifolds $(L_\alpha)_\alpha$ of dimension $k$  such that for every $x \in M$, there is a $C^r$ chart $(U, \psi)$ at $x$, of the form $\psi: U \to U^1 \times U^2$ where $U^1$ and $U^2$ are connected open subsets of $\mathbb R^k$ and $\mathbb R^{n-k}$ respectively, satisfying: for each $L_\alpha$, for each connected component $(U \cap L_\alpha)_\beta$ of $U\cap L_\alpha$, there exists $c_{\alpha,\beta} \in \mathbb R^{n-k}$ such that 
        \[ \psi ((U\cap L_\alpha)_\beta) = U^1 \times \{c_{\alpha, \beta}\}.\]
     \end{enumerate}
    The maps $h_{ij}$ and $\gamma_{ij}$ are called \textit{transition maps}.\\
    We will call (for each equivalent assertion $(1)$, $(2)$ and $(3)$):
    \begin{itemize}
        \item a \emph{plaque}:
            \begin{enumerate}
                \item a set of the form $\psi_i^{-1}(U_i \times \{c\})$, for $c \in U_i^2$ ;
                \item a connected component of a fiber of $s_i$ ;
                \item a connected component $(U\cap L_\alpha)_\beta$ of $U\cap L_\alpha$.
            \end{enumerate}
        \item a \emph{leaf} of the foliation an equivalence class for the following equivalence relation: two points $x$ and $y$ are equivalent if and only if there exist a sequence of foliation charts $U_1, \ldots, U_k$ and a sequence of points $x=p_0, p_1, \ldots, p_{k-1},p_k=y$ such that, for $i\in\llbracket 1,k\rrbracket$, $p_{i-1}$ and $p_{i}$ lie on the same plaque of $U_i$.
    \end{itemize}
    \end{defprop}
    We will represent abusively a foliation by the set of its leaves $\mathcal{F}$ and will note $\mathcal{F}_x$ the leaf of the foliation containing $x\in M$. 
    \begin{remark}
    \label{rem:2.1}
    \leavevmode
    \begin{itemize}
        \item A $C^r$ foliation $\mathcal{F}$ gives rise to a $C^{r-1}$ subbundle of $TM$, noted $T\mathcal{F}$, called the \textit{tangent bundle} to the foliation $\mathcal{F}$, whose fibers are the tangent spaces to the leaves. We also define the \textit{normal bundle} $\nu:= TM \diagup T\mathcal{F}$ of the foliation. It is a vector bundle whose transition functions are given by the differential of the transition maps $h_{ij}$.
        \item We can define in the same way a \emph{holomorphic} foliation on a complex manifold by replacing "$C^r$" in the above definition with "holomorphic" and $\mathbb R$ with $\mathbb C$.
    \end{itemize}
    \end{remark}

From the first and third formulations we can define a more general notion of foliation adapted to our purposes: 
\begin{definition}
    Let $M$ a smooth manifold of dimension $n\in \mathbb N$. Let $r \geq 1$ (or $r=\infty$) and $k\leq n$.\\
    A \emph{(topological) foliation} with $C^r$ leaves, $\mathcal{F}$, of dimension $k$ (or codimension $n-k$) on $M$ is defined by one of the following equivalent assertions:
    \begin{enumerate}
        \item a $C^0$-atlas $(U_i, \psi_i)_i$ on $M$ which is maximal with respect to the following properties:
            \begin{enumerate}
                \item For all $i$, $\psi_i(U_i)=U_i^1 \times U_i^2$, where $U_i^1$ and $U_i^2$ are connected open subsets of $\mathbb R^k$ and $\mathbb R^{n-k}$ respectively ;
                \item For all $i,j$, there exist $C^0$ maps $f_{ij}$ and $h_{ij}$ such that
                \[\forall(x,y) \in \psi_j(U_i \cap U_j) \subset \mathbb R^k \times \mathbb R^{n-k}, \; \psi_i \circ \psi_j^{-1}(x,y)=(f_{ij}(x,y), h_{ij}(y))\]
                and for every such $y$, the map $x\mapsto f_{ij}(x,y)$ is $C^r$.
            \end{enumerate}
        \item a partition of $M$ into a family of disjoint connected $C^r$ immersed submanifolds $(L_\alpha)_\alpha$ of dimension $k$  such that for every $x \in M$, there is a $C^0$ chart $(U, \psi)$ at $x$, of the form $\psi: U \to U^1 \times U^2$ where $U^1$ and $U^2$ are connected open subsets of $\mathbb R^k$ and $\mathbb R^{n-k}$ respectively, satisfying: for each $L_\alpha$, for each connected component $(U \cap L_\alpha)_\beta$ of $U\cap L_\alpha$, there exists $c_{\alpha,\beta} \in \mathbb R^{n-k}$ such that 
        \[ \psi ((U\cap L_\alpha)_\beta) = U^1 \times \{c_{\alpha, \beta}\}\]
        and the map $\operatorname{pr}_1\circ \psi|_{(U \cap L_\alpha)_\beta}:(U \cap L_\alpha)_\beta\to U^1$ is a $C^r$ chart for $L$.
     \end{enumerate}
     We call such an atlas a \emph{foliated atlas} adapted to $\mathcal{F}$. 
\end{definition}

\begin{remark}
    \leavevmode
        \begin{itemize}
        \item For every $y$, the map $x\mapsto f_{ij}(x,y)$ in the previous definition is automatically a $C^r$ diffeomorphism.  
        \item We can still define the tangent bundle $T\mathcal{F}$ to the foliation $\mathcal{F}$ if it is a foliation with $C^r$ leaves, $r\geq 1$.
        It is a vector bundle over $M$ whose transition maps are given by the differential along the first variable of the maps $f_{ij}$, ie by $d_1f_{ij}$.
        \item In case $T\mathcal{F}$ is a continuous subbundle of $TM$, the foliation $\mathcal{F}$ is said to be \emph{integral} (see \cite{wilkinson_dynamical_2008} and \cite{pugh_holder_1997} for a discussion about integrability of continuous subbundles of $TM$).
        In order to facilitate the statement of some results, we will call integral foliations \emph{$C^0$ foliations}.
        Note that every $C^r$ foliation, $r\geq 1$, is integral, and by definition an integral foliation has $C^1$ leaves.
        \item We can define in the same way (if $k=2k'$ is even) a \emph{foliation with holomorphic leaves} by replacing "$C^r$" in the above definition with "holomorphic".
        \end{itemize}
\end{remark}

A particular example of a $C^\infty$ foliation on a smooth compact manifold is that of a smooth flow $(\varphi^t)$ induced by a nowhere vanishing vector field $X$. 
In that case, the orbits of the flow define the leaves of a foliation which we usually denote by $\Phi$. Here are two fundamental examples:

\begin{itemize}
    \item \textit{Suspension of a diffeomorphism}: Let $f:F \to F$ a $C^r$-diffeomorphism on a smooth connected manifold $F$. \\
The group $\mathbb Z$ acts freely and properly discontinuously on $\mathbb R \times F$ by:
\[\text{For } k\in \mathbb Z \text{ and } (t,x)\in \mathbb R \times F, \quad k \cdot(t,x) = (t+k, f^{-k}(x)),\]
and maps leaves of the smooth foliation $\mathcal{F}=(\mathbb R \times \{x\})_{x \in F}$ to leaves.\\
The foliation $\mathcal{F}$ gives rise to a $C^r$-foliation on the \emph{suspension manifold of $f$} $F_f:= (\mathbb R \times F) \diagup \mathbb Z$, called the \emph{suspension of the diffeomorphism $f$}. In fact, this foliation is induced by the "vertical" flow, called the \emph{flow suspension of the diffeomorphism $f$}, $\varphi^t: \left \{\begin{array}{l}
        F_f\quad \to \quad F_f \\
        \overline{(s,x)} \; \mapsto \; \overline{(s+t,x)}
    \end{array} \right. . $
    \item \textit{Geodesic flow}: Let $(M,g)$ a Riemannian manifold. Let $\nabla$ the Levi-Civita connection of $(M,g)$. A \emph{geodesic} on $M$ is a curve $\gamma$ satisfying $\nabla_{\dot{\gamma}} \dot{\gamma} = 0$. It has necessarily constant speed.\\
By the fundamental theorem of ordinary differential equations, for every $v=(p, v_p)\in TM$, there exists a unique maximal geodesic $\gamma_v$ defined on a neighborhood of $0 \in \mathbb R$ such that $\gamma_v(0)=p$ and $\dot{\gamma_v}(0) = v_p$. \\
The Riemannian manifold $(M,g)$ is \emph{complete} if any maximal geodesic is defined on the whole $\mathbb R$. In that case, there exists a flow $(g^t)_{t \in \mathbb R}$ in the tangent bundle $TM$ of $M$ called the \emph{geodesic flow}, defined by:
$g^t: \left \{\begin{array}{ccl}
        TM\quad &\to& \quad TM \\
        v \; &\mapsto& \; (\gamma_v(t), \dot{\gamma_v}(t))
    \end{array} \right. . $
By the constant speed property of geodesics, we can still defined the geodesic flow on the unit sphere bundle $S^1M$ of $M$.
\end{itemize}

From now on, we will always consider $C^0$ foliations on a smooth manifold $M$.

\subsection{Holonomy}
    
We now define the fundamental concept of \textit{holonomy} for an integral foliation (see \cite{moerdijk_introduction_2003}, \cite{camacho_geometric_1985}).
In the following, a $C^0$ diffeomorphism means a homeomorphism. 

\begin{definition}
    A \emph{transversal} to the (integral) foliation $\mathcal{F}$ is a smooth submanifold $T$ of $M$ such that for every $x\in T$, $T_xT\oplus T_x\mathcal{F}=T_xM$.
\end{definition}
\begin{defprop}
\label{defprop:2.2}
    Let $ \mathcal{F}$ a $C^r$ foliation on a smooth manifold $M$ ($r \in \mathbb N \cup \{\infty\}$). Let $x,y$ belonging to the same leaf $L$ of $\mathcal{F}$ and $\alpha$ a path from $x$ to $y$ in $L$. Let also $T,S$ two small transversal to $\mathcal{F}$ at $x$ and $y$ respectively.
    \begin{itemize}
        \item If there exists a foliation chart $U$ containing $\alpha([0,1])$, we can define a germ of $C^r$ diffeomorphism $h(\alpha)^{S,T}$ from a small open subset $A$ of $T$ to an open subset of $S$ such that: \begin{enumerate}
            \item $h(\alpha)^{S,T}(x)=y$ ;
            \item For any $x'\in A$, $h(\alpha)^{S,T}(x')$ lies on the same plaque in $U$ as $x'$.
        \end{enumerate}
        Moreover, the germ $h(\alpha)^{S,T}$ does not depend on $U$ nor the path in $L\cap U$ connecting $x$ and $y$.
        \item In the general case, we can choose a sequence of foliation charts $U_1, \ldots, U_k$ such that for all $i$, $\alpha([\frac{i-1}{k}, \frac{i}{k}]) \subset U_i$, and a sequence $(T_i)_i$ of transversal sections of $\mathcal{F}$ at $\alpha(\frac{i}{k})$, with $T_0=T$ and $T_k=S$. Let $\alpha_i$ a path in $L\cap U_i$ from $\alpha(\frac{i-1}{k})$ to $\alpha(\frac{i}{k})$. \\
        We define 
        \[h(\alpha)^{S,T}:=h(\alpha_k)^{T_k, T_{k-1}} \circ \cdots \circ h(\alpha_1)^{T_1, T_0}.
        \]
        Moreover, $h(\alpha)^{S,T}$ depends only on $T, S$ and $\alpha$, and satisfies the same properties $(1), (2)$.
    \end{itemize}
    The germ of $C^r$ diffeomorphism $h(\alpha)^{S,T}$ is called the \emph{$\mathcal{F}$-holonomy} of the path $\alpha$ with respect to the transversals $T$ and $S$.
\end{defprop} 
    We list some basic properties of holonomy maps:
    \begin{enumerate}[label=(\roman*)]
        \item If $\alpha$ is a path in $L$ from $x$ to $y$ and $\beta$ a path in $L$ from $y$ to $z$, and if $T$, $S$ and $R$ are transversal sections of $\mathcal{F}$ at $x$, $y$ and $z$ respectively, then
        \[h(\beta \alpha)^{R,T}=h(\beta)^{R,S} \circ h(\alpha)^{S,T},\]
        where $\beta\alpha$ is the concatenation of the paths $\alpha$ and $\beta$ ;
        \item If $\alpha$ and $\beta$ are homotopic paths in $L$ relatively to endpoints from $x$ to $y$, and if $T$ and $S$ are transversals sections at $x$ and $y$ respectively, then $h(\beta)^{S,T}= h(\alpha)^{S,T}$.
        \item If $\alpha$ is a path in $L$ from $x$ to $y$, and if $T,T'$ and $S,S'$ are pairs of transverse sections at $x$ and $y$, respectively, then \[h(\alpha)^{S',T'}=h(\overline{y})^{S',S} \circ h(\alpha)^{S,T} \circ h(\overline{x})^{T,T'},\] where $\overline{x}$ is the constant path with image $x$.
    \end{enumerate}
    \begin{remark}
    \label{rem:2.2}
    \begin{itemize}
    \leavevmode
        \item The construction of the holonomy map between two points in the same plaque of the same foliation chart $(U, \psi)$ is as follows. Keep the notations of Definition/Proposition \ref{defprop:2.2}. Denote by $n$ the dimension of $M$ and $q$ the codimension of $\mathcal{F}$. 
        $\psi(T) , \psi(S)\subset \mathbb R^{n}$ are vertical graphs of $C^r$ maps from $\mathbb R^q$ to $\mathbb R^{n-q}$.
        By projecting these graphs on $\mathbb R^q$, we obtain two $C^r$ diffeomorphisms $\Psi: T \to \mathbb R^q$ and $\Psi': S \to \mathbb R^q$. Since the leaves of the foliation are the horizontal lines, it comes 
        \[h(\alpha)^{S,T}=\Psi'^{-1} \circ \Psi.\]
        This means that under these coordinates, the holonomy between two points in the same plaque of the same foliation chart is equal to the identity.
        \item The holonomy of a path in a leaf with respect to the natural transversals given by the foliation charts can be expressed in local coordinates as a composition of transition functions $h_{ij}$ (see Definition/Proposition \ref{defprop:2.1}). More precisely, let $\alpha$ a path in $L$ from $x$ to $y$ and a sequence of foliation charts $U_1, \ldots, U_k$ such that for all $i$, $\alpha([\frac{i-1}{k}, \frac{i}{k}]) \subset U_i$ and $U_1$ and $U_k$ are centered at $x$ and $y$ respectively.
        Let $T=\psi_1^{-1}(\{0\} \times U_1^2)$ and $S=\psi_k^{-1}(\{0\} \times U_k^2)$.
        Then there exists a small open subset $V$ of $T$ such that 
        \[h(\alpha)^{S,T}= \restr{\psi_k^{-1} \circ h_{k,k-1} \circ \cdots \circ h_{2,1} \circ \psi_1}{V}.
        \]
    \end{itemize}
    \end{remark}
    
\begin{definition}
    Given a $C^r$ foliation $\mathcal{F}$ on a manifold $M$, the set of all holonomy maps of any path with respect to any transversals to $\mathcal{F}$ defines a pseudo-group called the \emph{holonomy pseudo-group} of the foliation $\mathcal{F}$.\\
    For every leaf $L$ of $\mathcal{F}$, $x\in L$ and small transversal section $T$ at $x$, there is a group homomorphism 
    \[h^T:=h^{T,T}: \pi_1(L,x) \to \text{Diff}^r_x(T), \]
    where $\text{Diff}^r_x(T)$ is the group of $C^r$ diffeomorphisms of $T$ which fix $x$, 
    called the \emph{holonomy homomorphism} of $L$, determined up to conjugation.\\
    The set $h^T(\pi_1(L,x))$ is a group called the \textit{holonomy group} of the leaf $L$, determined also up to conjugation.\\
    We define also the space $\widetilde{L}\diagup \ker(h^T)$, where $\widetilde{L}$ is the universal covering space of $L$. It does not depend on $x \in L$ nor $T$ and is called the \emph{holonomy covering} of $L$.
    \end{definition}

    Even though the holonomy of a $C^0$ foliation $\mathcal{F}$ is only continuous by definition, it can have stronger regularity, such as being $C^r$ or even holomorphic. In the latter case, $\mathcal{F}$ is said to be \emph{transversely holomorphic}.
\subsection{Almost complex structure}
See \cite{kobayashi_foundations_1996} for more details.
\begin{definition}
    Let $M$ a smooth manifold of even dimension $2n\geq2$.\\
    An \emph{almost complex structure} on $M$ is a smooth vector bundle endomorphism $J:TM \to TM$ such that $J^2=-\operatorname{Id}$.
\end{definition}
Every complex manifold, of complex dimension $n$, carries a \emph{canonical} almost complex structure given in local coordinates $(z_1,\cdots, z_n)$, where $z_k=x_k+iy_k$ for $k\in \llbracket 1,n \rrbracket$, by
$$J\left ( \frac{\partial }{\partial x_k}\right) = \frac{\partial }{\partial y_k} \quad\text{ ; } \quad J\left ( \frac{\partial }{\partial y_k}\right) =  -\frac{\partial }{\partial x_k}.$$

\begin{definition}
    An almost complex structure $J$ on a smooth compact manifold $M$ is \emph{integrable} if there exists a complex manifold structure on $M$ compatible with its smooth structure whose canonical induced almost complex structure is $J$.
\end{definition}
Not every almost complex structure is integrable. This can be seen using the following fundamental theorem due to Newlander-Nirenberg:

\begin{theorem}
    An almost complex structure $J$ on a smooth manifold $M$ is integrable if and only if its \emph{Nijenhuis tensor} 
    $$N_J: (X,Y)\mapsto  [JX, JY] - J[X, JY] - J[J X,Y] -[X, Y]$$ 
    vanishes identically.
\end{theorem}

\subsection{Transversely holomorphic foliations}
We all that in mind, we are able to define (and characterize) \emph{transversely holomorphic foliations} (see \cite{gomez-mont_transversal_1980}, \cite{haefliger_deformations_1983}, \cite{duchamp_deformation_1979}, \cite{brunella_transversely_1996}):
\begin{definition}
\label{def:2.4}
    A $C^r$ foliation $\mathcal{F}$ ($r \in \mathbb N \cup \{\infty\}$) on a smooth manifold is \emph{transversely holomorphic} if there exists a $C^r$ atlas of foliated charts compatible with the one defining $\mathcal{F}$ whose transition maps $h_{ij}$ are holomorphic.\\
    If $r\geq 1$, such a structure can be given by a maximal $C^r$ atlas $(U_i,s_i)_i$ of submersions compatible with the one defining $\mathcal{F}$ and whose transition maps $\gamma_{ij}$ are holomorphic.
\end{definition}
\begin{remark}
If $\mathcal{F}$ is a transversely holomorphic $C^r$ foliation, then its normal bundle $\nu=TM/T\mathcal{F}$ is a $C^r$ bundle.
        Also, its restriction to a transversal $T$ to the foliation $\mathcal{F}$ is $C^\infty$ (see Remark \ref{rem:2.1} and the proof of the next result).
\end{remark}
The following is a characterization of transversely holomorphic foliations:
\begin{proposition}
\label{prop:caractrholo}
    Let $M$ a smooth manifold of dimension $k+2q$ $(k \in \mathbb N, q \in \mathbb N^*)$ and $\mathcal{F}$ a $C^r$ foliation ($r \in \mathbb N \cup \{\infty\}$) of dimension $k$ on $M$.\\
    The following assertions are equivalent:
    \begin{enumerate}[label=(\roman*)]
        \item $\mathcal{F}$ is transversely holomorphic;
        \item For any points $x, y$ on the same leaf of $\mathcal{F}$, there exist (hence for all) small transversals to the foliation $T$ and $T'$ at $x$ and $y$ respectively, there exist $C^r$ diffeomorphisms $\Psi: T\to V$ and $\Psi':T'\to V'$, where $V, V'$ are open sets of $\mathbb C^q$, such that, for any $\mathcal{F}$-path $\alpha$ joining $x$ and $y$, 
        \[\Psi' \circ h(\alpha)^{T',T} \circ \Psi^{-1}\] is holomorphic.
    \end{enumerate}
    For $r\geq1$, these assertions are also equivalent to:
    \begin{enumerate}[label=(\roman*)]
        \setcounter{enumi}{2}
        \item There exists an almost complex structure on the normal bundle $ \nu=TM \diagup T\mathcal{F}$ of the foliation (that is a $C^r$ vector bundle endomorphism
        $I$ of $\nu$ satisfying $ I^2=-\operatorname{Id}$) such that
        \begin{enumerate}
            \item For any transversal to the foliation $S$, the almost complex structure $I_S$ induced by $I$ on the tangent bundle $TS \cong \restr{\nu}{S}$ of $S$ is integrable;
            \item $I$ is invariant along $\mathcal{F}$-holonomy maps, that is:\\
            For every $x \in M$, $y \in \mathcal{F}_x$, for every path $\alpha$ in $\mathcal{F}_x$ connecting $x$ to $y$, for every small transversals $T$ and $S$ to $\mathcal{F}$ at $x$ and $y$ respectively,
            $$(h_\alpha^{S,T})^*I_S:= (Dh_\alpha^{S,T})^{-1}\circ I_S\circ Dh_\alpha^{S,T}= I_T .$$
        \end{enumerate}
    \end{enumerate}
\end{proposition}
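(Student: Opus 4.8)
The plan is to prove the cycle $(i) \Rightarrow (ii) \Rightarrow (i)$, valid for every $r$, and then, in the regular case $r \geq 1$, to splice the third assertion into the loop by establishing $(i) \Rightarrow (iii) \Rightarrow (i)$. The engine common to all these implications is the local description of holonomy from Remark \ref{rem:2.2}: with respect to the natural transversals furnished by a foliated atlas, $h(\alpha)^{S,T}$ is, read in coordinates, exactly a composition of transition maps $h_{k,k-1}\circ \cdots \circ h_{2,1}$. Thus "holomorphic transition maps" and "holomorphic holonomy" are two presentations of the same datum, and the proposition reduces to moving carefully between an atlas-level statement $(i)$, a pseudo-group-level statement $(ii)$, and an infinitesimal statement $(iii)$ on the normal bundle.

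For $(i) \Rightarrow (ii)$ I would fix a foliated atlas whose $h_{ij}$ are holomorphic and take $\Psi$ (resp. $\Psi'$) to be the restriction to $T$ (resp. $T'$) of the transverse projection $\operatorname{pr}_2\circ\psi_i$ of a chart centered near $x$ (resp. $y$); under $(i)$ this projection already takes values in an open subset of $\mathbb{C}^q$. By Remark \ref{rem:2.2} the conjugate $\Psi'\circ h(\alpha)^{T',T}\circ \Psi^{-1}$ is a composition of the $h_{ij}$'s, hence holomorphic. The parenthetical "hence for all" I would settle using the change-of-transversal formula (the third property of holonomy maps listed after Definition/Proposition \ref{defprop:2.2}): replacing $T,T'$ by other transversals conjugates the holonomy by two intra-chart holonomies $h(\overline{x})$ and $h(\overline{y})$, which are again compositions of transition maps (the identity in adapted coordinates), so holomorphy persists once the matching complex coordinates are used.

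The substantive direction is $(ii) \Rightarrow (i)$, where I must manufacture a holomorphic foliated atlas from the pseudo-group data. Starting from any $C^r$ foliated atlas $(U_i,\psi_i)$ with central transversals $T_i$, I would use $(ii)$ to endow each $T_i$ with a complex chart $\Psi_i\colon T_i\to \mathbb{C}^q$ and replace the transverse coordinate of $U_i$ by $\tilde s_i:=\Psi_i\circ p_i$, where $p_i\colon U_i\to T_i$ is the plaque-projection. On an overlap $U_i\cap U_j$ the new transition map equals $\Psi_i\circ h(\alpha)^{T_i,T_j}\circ\Psi_j^{-1}$ for a short path $\alpha$ inside the overlap, hence is holomorphic by $(ii)$, so $(U_i,\tilde s_i)$ is a holomorphic foliated atlas. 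The delicate point is the \emph{consistency} of the chosen coordinates from chart to chart: I would first verify, applying $(ii)$ to a constant path, that any two admissible coordinates on one transversal differ by a biholomorphism, so the complex structure on each transversal is well defined and the $\Psi_i$ assemble into a single transverse holomorphic structure independent of the auxiliary paths.

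For $r \geq 1$ I would obtain $(i)\Rightarrow(iii)$ by noting that holomorphic $h_{ij}$ have $\mathbb{C}$-linear differentials, so multiplication by $i$ descends to a well-defined $C^r$ endomorphism $I$ of $\nu=TM/T\mathcal{F}$; invariance $(b)$ is immediate from Remark \ref{rem:2.2}, and $(a)$ holds since on a transversal $I_S$ is the canonical structure read off a holomorphic chart. Conversely, for $(iii)\Rightarrow(i)$ I would feed each integrable $I_S$ into the Newlander–Nirenberg theorem to turn every transversal into a complex $q$-manifold, invoke $(b)$ to see that all holonomy maps are biholomorphisms, and then pick foliation charts whose transverse coordinate is a holomorphic chart of the transversal, making the transition maps equal to holonomy maps and hence holomorphic. \textbf{The main obstacle} I anticipate is concentrated in these reverse implications and is twofold: securing the global coherence of the per-transversal complex structures (the consistency step above, which must not depend on the chosen $\mathcal{F}$-paths, and which ultimately rests on the holonomy-invariance of $I$), and applying Newlander–Nirenberg legitimately in finite regularity, where one has to ensure that $I_S$ is regular enough on the smooth transversal for the integrability theorem to be available.
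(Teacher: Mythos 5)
Your proposal is correct and follows essentially the same route as the paper: $(i)\Leftrightarrow(ii)$ via the expression of holonomy as a composition of transition maps (Remark \ref{rem:2.2}) together with the change-of-transversal formula, and $(i)\Leftrightarrow(iii)$ by descending multiplication by $i$ to $\nu$ and, conversely, reading the holonomy in the holomorphic coordinates furnished by integrability of each $I_S$ (the paper routes $(iii)$ back through $(ii)$ rather than directly to $(i)$, but this is the same argument). Your worry about Newlander--Nirenberg in finite regularity is moot for the converse direction, since $(iii)(a)$ \emph{assumes} integrability in the sense of the existence of a compatible complex structure; the theorem is only needed, as in the paper, to verify $(iii)(a)$ from $(i)$ via the Nijenhuis tensor.
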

\begin{proof}
    Suppose $\mathcal{F}$ admits a transversely holomorphic structure. 
    With the notations of the second point of Remark \ref{rem:2.2}, since each $h_{ij}$ is holomorphic, the result is immediate.\\
    Now suppose for any $x,y$ on the same leaf of $\mathcal{F}$, there exists $T,T'$ transversals to the foliation at $x$ and $y$ and $C^r$ diffeomorphisms $\Psi: T\to V$ and $\Psi': T' \to V'$, where $V, V'$ are open sets of $\mathbb C^q$, such that for any $\mathcal{F}$-path $\alpha$ joining $x$ and $y$, $\Psi' \circ h(\alpha)^{T',T} \circ \Psi^{-1}$ is holomorphic.
    Let $S,S'$ any small transversals to the foliation at $x$ and $y$ respectively and denote by 
    \[\Phi = \Psi \circ h(\overline{x})^{T,S}, \quad \Phi' = \Psi' \circ h(\overline{y})^{T',S'}. \]
    It comes that $\Phi' \circ h(\alpha)^{S',S} \circ \Phi^{-1}=\Psi' \circ h(\alpha)^{T',T} \circ \Psi^{-1}$ is holomorphic.\\
    Now suppose the result true for all points and all transversals to the foliation at these points. There exists a covering family $(T_\beta)_\beta$ of small transversals to the foliation (that is, any leaf of the foliation meets at least one transversal of this family) and $C^r$ diffeomorphisms $\Psi_\beta: T_\beta \to V_\beta \subset \mathbb C^q$ such that for any $\mathcal{F}$-path $\alpha$ joining $x \in T_\beta
    $ and $y \in T_{\beta'}$, \[\Psi_{\beta'} \circ h(\alpha)^{T_{\beta'},T_\beta} \circ \Psi_\beta^{-1}\] is holomorphic.
    Let $(U, \psi)$ a $C^r$ foliated chart centered at some $x\in M$, $S$ an element of the covering family which meets $\mathcal{F}_x$ at a point $y$, and $\Psi: S\to V \subset \mathbb C^q$ such a $C^r$ diffeomorphism. Let also $\alpha$ a $\mathcal{F}$-path joining $x$ and $y$. Write $\psi(U)=U^1\times U^2$ and call $T=\psi^{-1}(\{0\} \times U^2)$.
    By restricting $U$ if necessary, the family $(U, \psi^*)$, where
    \[\psi^*:=(\operatorname{Id} \otimes (\Psi \circ h(\alpha)^{S,T} 
    \circ \restr{\psi^{-1}}{U^2})) \circ \psi,\]
    defines a $C^r$ atlas for $\mathcal{F}$ whose transition maps are holomorphic.\\ 
    Suppose $\mathcal{F}$ is a transversely holomorphic foliation and denote by $(U_i, \psi_i)_i$ a foliated atlas whose transition maps $h_{ij}$ are holomorphic. 
    Note 
    \[\psi_i = (t_i^1, \cdots , t_i^k, (x_i^{1}, x_i^{'1}), \cdots , (x_i^{q}, x_i^{'q})).\]
    Recall that the normal bundle is trivialized in $U_i$ by: 
    \[\eta_i:\left \{\begin{array}{l}
        (\pi^\nu)^{-1}(U_i) \quad \to \quad U_i \times \mathbb R^{2q} \\
        v= \sum_{j=1}^q w_j \left[\frac{\partial}{\partial x_i^j} \right] + w'_j \left [\frac{\partial}{\partial x_i^{'j}} \right] \; \mapsto \; (\pi^\nu(v),  \begin{pmatrix}
            w\\
            w'
        \end{pmatrix} )
    \end{array} \right .  .\]
    Let $J:= \begin{pmatrix}
        0 & -I_q \\
        I_q & 0
    \end{pmatrix}$ and define a $C^r$ endomorphism of $\nu|_{U_i}$ by
    \[J_i\left(\eta_i^{-1}(x,  \begin{pmatrix} w\\w'\end{pmatrix}) \right)=
        \eta_i^{-1}(x,  J \begin{pmatrix} w\\w'\end{pmatrix} )
     \]
     for $x\in U_i$ and $\begin{pmatrix} w\\w'\end{pmatrix} \in \mathbb R^{2q}$.
    For $x \in U_i \cap U_j$ and $v \in \mathbb R^{2q}$, $\eta_i\circ \eta_j^{-1}(x,v)=(x,d_{\psi_j(x)}h_{ij}(v))$ (see Remark \ref{rem:2.1}). 
    Since the transition maps $h_{ij}$ are holomorphic, their differential are in $\mathrm{Gl_q}(\mathbb C)$ which is exactly the set of invertible matrices of $\mathrm{M_{2q}}(\mathbb R)$ that commute with $J$. 
    Therefore, for every $v \in  (\pi^\nu)^{-1}(U_i\cap U_j)$, $J_i(v)=J_j(v)$ and we define in this way a $C^r$ endomorphism $I$ of $\nu\to M$. It is immediate that $I^2=-\operatorname{Id}$.\\
    For every transversal $S$ to the foliation, the natural isomorphism $TS \cong \restr{\nu}{S}$ gives rise to an almost complex structure $I_S$ on $S$. 
    Let $x \in M$, $y \in \mathcal{F}_x$, a path $\alpha$ in $\mathcal{F}_x$ connecting $x$ to $y$, $T$ and $S$ small transversals to $\mathcal{F}$ at $x$ and $y$ respectively.
    We want to prove that $(h_\alpha^{S,T})^*I_S =I_T$.
    By definition of the holonomy, we can assume that there exist foliation charts $(U_i, \psi_i)$ and $(U_j, \psi_j)$ centered at $x$ and $y$ respectively such that $\psi_i^{-1}(U_i^1\times \{0\}) \cap\psi_j^{-1}(U_j^1\times \{0\}) \neq \emptyset$.
    Also, by property $(iii)$ of holonomy maps and the first point of Remark \ref{rem:2.2}, we can assume that $T=\psi_i^{-1}(\{0\} \times U_i^2)$ and $S=\psi_j^{-1}(\{0\} \times U_j^2)$. 
    By the second point of Remark \ref{rem:2.2} and the very definition of $I$, we get the result that $I$ is invariant under holonomy maps.\\
    Recall that an almost complex structure $J$ is integrable if and only if its Nijenhuis tensor $N_J(X,Y):= [JX, JY] - J[X, JY] - J[J X,Y] -[X, Y]$ vanishes, and that if $h$ is a diffeomorphism, then $N_{h^*J}(X,Y)=h^*(N_J(h_*(X),h_*(Y))).$  With that in mind, fix a transversal $S$ to the foliation. In order to prove that $I_S$ is integrable, we can assume $S$ as small as we want (since the vanishing of $N_J$ is purely local) and even $S=\psi_i^{-1}(\{0 \} \times U_i^2)$ for some $i$ (since for any diffeomorphism $h$, $(N_{h^*J}=0 \iff N_{J}=0)$ and because if $T$ denotes another small transversal to the foliation and $h$ the holonomy of a path with respect to $S$ and $T$ respectively then $I_S=h^*I_T$ by the previous paragraph). But $I_S$ is integrable by the very definition of $I$.\\
    Conversely, assume there exists an almost complex structure on the normal bundle which is invariant under holonomy and such that the almost complex structure induced on any transversal is integrable. Let $x, y$ two points on the same leaf of $\mathcal{F}$ and fix two transversals $T, S$ of the foliation at $x$ and $y$ respectively. Since $I_T$ and $I_S$ are integrable, we can restrict $T$ and $S$ to smaller sets such that there exist diffeomorphisms $\Psi: T \to V$ and $\Psi' \to V'$, where $V,V'$ are open sets of $\mathbb C$, such that the push-forward of $I_T$ and $I_S$ by $\Psi$ and $\Psi'$ respectively is exactly the multiplication by $i \in \mathbb C$. Now since $I$ is invariant under holonomy, for any $\mathcal{F}$-path $\alpha$ joining $x$ and $y$, $(h(\alpha)^{S,T})^*I_S=I_T$. Therefore, the pull-back of $i \in \text{End}(\mathbb C)$ by $\Psi' \circ h(\alpha)^{S,T} \circ \Psi^{-1}$ is $i$, which exactly means that $\Psi' \circ h(\alpha)^{S,T} \circ \Psi^{-1}$ is holomorphic.
\end{proof}

\subsection{Hyperbolic set, Anosov flows}
We recall some definitions of hyperbolic dynamical systems (see \cite{katok_introduction_1995}, \cite{plante_anosov_1972}, \cite{hirsch_invariant_1977}).
\begin{definition}
\label{def:2.6}
    Let $(\varphi_t)_{t \in \mathbb R}$ a smooth flow on a smooth manifold $M$ generated by a smooth nowhere vanishing vector field $X$.\\
   A $(\varphi_t)$-invariant subset $\Lambda$ of $M$ is called \emph{hyperbolic} if there exist $(d\varphi_t)$-invariant subbundles of $TM$ - $E^{s}$ and $E^{u}$ - a Riemannian metric $\|.\|$ on $M$, real numbers $\lambda < 1 < \mu$ and $C>0$ such that:
    \begin{enumerate}
        \item $TM = \mathbb R X \oplus E^{s} \oplus E^{u}$ ;
        \item 
        \label{eq:ano}
            \begin{align*}
            \forall v_s \in E^{s}, t \geq 0, \quad \|d\varphi_t(v_s)\| &\leq C\lambda^t\|v_s\|,\\
            \forall v_u \in E^{u}, t \geq 0, \quad \|d\varphi_{-t}(v_u)\| &\leq C\mu^{-t}\|v_u\|.            \end{align*}
    \end{enumerate}
    The vector bundles $E^{s}$ and $E^{u}$ are called respectively the \emph{(strong) stable \emph{and} (strong) unstable distributions} of $(\varphi_t)$.\\
    The vector bundles $\mathbb R X \oplus E^{s}$ and $\mathbb R X \oplus E^{u}$ are called respectively the \emph{weak stable \emph{and} weak unstable distributions} of $(\varphi_t)$.\\
    If $M$ is a hyperbolic set for $(\varphi_t)$, then the flow $(\varphi_t)$ is said to be \emph{Anosov}.
\end{definition}
If a compact set is hyperbolic for a given Riemannian metric, then it is hyperbolic for every Riemannian metric. Moreover, by \cite{hirsch_stable_1969}, we can choose a Riemannian metric so that $C=1$, which is what we do from now on. We call such metric an \textit{adapted} metric.\\

As was discussed in \cite{wilkinson_dynamical_2008}, one can define different notions of integrability of a continuous subbundle of $TM$.
We recall some of them.
Let $E$ a continuous subbundle of dimension $k$ of the tangent bundle of a smooth manifold $M$.\\
$E$ is said to be \textit{integrable} if there exists a foliation $\mathcal{F}$ of $M$ by $C^1$ leaves of dimension $k$ which are everywhere tangent to $E$.
$E$ is said to be \textit{uniquely integrable} if it is integrable to a foliation $\mathcal{F}$ and every $C^1$ path of $M$ everywhere tangent to $E$ lies in a single leaf of $\mathcal{F}$.
The notion of unique integrability is different from the fact that there exists a unique foliation tangent to $E$.
If $E$ is uniquely integrable, then there exists a unique foliation whose leaves are everywhere tangent to $E$.\\

Let $(\varphi_t)_{t \in \mathbb R}$ an Anosov flow on a smooth manifold $M$.
Then $E^{s}$ and $E^{u}$ are unique and are characterized by the inequalities \ref{eq:ano}. Also, the unstable and stable distributions are integrable to flow-invariant $C^0$ (even Hölder-continuous) foliations $\mathcal{F}^u$, $\mathcal{F}^s$ respectively (i.e. for $*\in \{s,u\}$ and $t \in \mathbb R$, $\varphi^t(\mathcal{F}^*(x))=\mathcal{F}^*(\varphi^t(x))$), with $C^\infty$ leaves.
The stable and unstable leaves are simply connected.\\
Denote by $d$ the distance function on $M$ induced by our Riemannian metric. For $\delta>0$  and $S$ an immersed submanifold of $M$ containing a point $x\in M$, let $S_{x,\delta}$ the open ball of $S$ (for the metric induced on $S$) centered at $x$ of radius $\delta$. Denote by an index the induced metric on an immersed submanifold.\\
Then $x$ and $y$ belong to the same stable manifold if and only if $d(\varphi^t(x), \varphi^t(y))\xrightarrow[{t\to +\infty}]{} 0$.
Also, if $x$ and $y$ belong to the same stable manifold, then $d_{s}(\varphi^t(x), \varphi^t(y)) \leq \lambda^t d_{s}(x,y)$ for every $t\geq 0$.\\
The same is true for the unstable foliation with negative instead of positive time.\\

We give two fundamental examples of Anosov flows on smooth compact manifolds:
\begin{itemize}
\label{ex:2.7.1}
    \item \textit{Suspension of an Anosov diffeomorphism}: Let $f$ an Anosov diffeomorphism on a smooth compact manifold $F$ (the only real difference with the continuous time case is the absence of the flow direction). Let $E^{u}$ and $E^{s}$ the unstable and stable distributions of $f$ and $\lambda<1<\mu$ the rates of contraction and expansion of $f$ respectively. \\
    The tangent bundle $T(F_f)$ of the suspension manifold $F_f$ is isomorphic to the vector bundle $\mathbb R \frac{\partial}{\partial t} \oplus TF$ over $F_f$, which decomposes therefore as $\mathbb R \frac{\partial}{\partial t} \oplus E^{u} \oplus E^{s}$.\\
    Let $\|.\|$ a Riemannian metric on $F$ and extend it to a Riemannian metric on $F_f$ such that $\mathbb R \frac{\partial}{\partial t} $ and $TF$ are orthogonal with respect to this metric. It comes by calling $(\varphi^t)$ the suspension flow of $f$:
    \begin{align*}
            \forall v_s \in E^{s}, t \geq 0, \quad &\|d\varphi_t(v_s)\| = \|df^{\floor{t}}(v_s)\|\leq C\lambda^{\floor{t}}\|v_s\| \leq \frac{C}{\lambda}\lambda^t\|v_s\|, \\
            \forall v_u \in E^{u}, t \geq 0, \quad &\|d\varphi_{-t}(v_u)\| = \|df^{-\floor{t}}(v_u)\|\leq C\mu^{-\floor{t}}\|v_u\| \leq C\mu\mu^{-t}\|v_u\|.
    \end{align*}
            So the suspension flow of an Anosov diffeomorphism $f$ is an Anosov flow on the (compact) suspension manifold $F_f$, where the constant $C$ of Definition \ref{def:2.6} is now $\max(\frac{C}{\lambda}, C\mu)$ and the rates of contraction and expansion are the same than those of $f$. Remark also that the suspension flow of a diffeomorphism $f$ is Anosov if and only if $f$ is Anosov.
    \item \textit{Geodesic flow on a compact Riemannian manifold of negative curvature}: (See \cite{paternain_geodesic_1999}, \cite{katok_introduction_1995}, \cite{barreira_nonuniform_2007}, \cite{lee_riemannian_1997}, \cite{camacho_geometric_1985}) Let $(M,g)$ a complete Riemannian manifold of constant negative curvature.\\
    For any $v=(p,v_p) \in TM$, there exists an isomorphism
    \[ \psi_v: \left \{\begin{array}{l}
        T_v(TM) \quad \to \quad T_pM \times T_pM \\
        \left[(\gamma,V)\right] \; \mapsto \; (\dot{\gamma}(0), \nabla_{\dot{\gamma}(0)}V)
    \end{array} \right . \]
    and a unique Riemannian metric on $TTM$, called the Sasaki metric of $TM$, such that under these identifications:
    \[\forall(u,v) \in TM \times TM, \quad \|(u,v)\|^2=\|u\|^2+\|v\|^2.\]
    Moreover, each isomorphism $\psi_v$, for $v=(p,v_p)\in S^1M$, restricts to an isomorphism between the tangent bundle at $v$ of the unit sphere bundle, $T_v(S^1M)$, and $T_pM \times \{v_p\}^{\perp}$.\\
    Let $v=(p,v_p) \in TM$ and $\xi \in T_v(TM)$. Note $\psi_v(\xi)=(v_1^{\xi},v_2^{\xi}) \in T_pM \times T_pM$ and denote by $J^{\xi}$ the unique Jacobi field along the geodesic $\gamma_v$ such that $J^{\xi}(0)=v_1^{\xi}$ and $\nabla_{\dot{\gamma}(0)}J^{\xi}=v_2^{\xi}$.\\
    Then, the action of the geodesic flow $(g^t)$ on $T(TM)$ satisfies:
    \[\forall t\in \mathbb R, \quad \psi_{g^t(v)}(dg^t(\xi))=(J^{\xi}(t), \nabla_{\dot{\gamma}(t)}J^{\xi}).\]
    Suppose now $M$ is a compact Riemannian manifold (so it is complete) and consider the action of the geodesic flow on the unit sphere bundle $S^1M$, which is also compact. We can show that for any $v=(p,v_p) \in S^1M$, $T_v(S^1M) \cong T_pM \times \{v_p\}^{\perp}$ decomposes as the direct sum of the three following subspaces of $T_v(S^1M)$:
    \begin{itemize}
        \item $\mathbb R v_p \times \{0\}$, which corresponds to the geodesic flow direction at $v$;
        \item $\left \{(w_p, w_p) \in T_pM \times \{v_p\}^{\perp}\right \}$, which corresponds to the   unstable distribution of $(g^t)$ at $v$ ;
        \item $\left \{(w_p, -w_p) \in T_pM \times \{v_p\}^{\perp}\right \}$, which corresponds to the   stable distribution of $(g^t)$ at $v$,
    \end{itemize}
    so that the geodesic flow on $S^1 M$ is an Anosov flow (see Definition \ref{def:2.6}) with $C=1$, $\lambda=e^{-1}$ and $\mu=e$ when choosing the Sasaki metric on $S^1M$. 
\end{itemize}

\subsection{Quasiconformality}
We recall some notions of quasiconformality (see \cite{ahlfors_quasiconformal_1953} or \cite{vaisala_lectures_1971}). 
Let $k\geq 2$ and $\|\cdot\|$ be the euclidean norm on $\mathbb R^k$.\\
A homeomorphism $h: U \to V$ between two open subsets $U, V$ of $\mathbb{R}^k$ is \emph{conformal} if it is $C^1$ and its differential at every point $x\in U$ is angle-preserving and orientation-preserving.
If $k=2$, then this definition coincides with that of a holomorphic function whose holomorphic derivative is nowhere vanishing.

For such maps $h:U\to V$ which are not $C^1$ we can define a more general notion of \emph{quasiconformality}.\\
We define the \emph{linear dilatation} of $h$ at $x \in U$ to be  
\[
L_h(x) = \limsup_{r \to 0} \frac{\displaystyle\max_{\|y-x\| = r} \|h(y) - h(x)\|}{\displaystyle\min_{\|y-x\| = r} \|h(y) - h(x)\|}.
\]  
If $L_h(x) \leq K$ for every $x \in U$ (where $K\geq1$), then $h$ is said to be \emph{$K$-quasiconformal}.
A homeomorphism $h:U \to V$ is \emph{quasiconformal} if there exists $K\geq1$ such that $h$ is $K$-quasiconformal.\\
We list some properties of quasiconformal maps:
\begin{itemize}
    \item If $h:U\to V$ is a diffeomorphism, then it is $K$-quasiconformal if and only if for every $x \in U$, 
$$\frac{\displaystyle\sup_{\|v\|=1} \|d_xh(v)\|}{\displaystyle\inf_{\|v\|=1} \|d_xh(v)\|}\leq K;$$
    \item If $h:U\to V$ is $K$ quasiconformal and $g:V \to W$ is $K'$ quasiconformal, then $g\circ h: U \to W$ is $KK'$ quasiconformal;
    \item A homeomorphism $h:U\to V$ is $1$-quasiconformal if and only if it is conformal;
    \item If a sequence $(h_n)$ of $K$-quasiconformal maps converges uniformly to a homeomorphism $h$, then $h$ is $K$-quasiconformal.
\end{itemize}

\section{Fundamental examples of transversely holomorphic flows on smooth five-dimensional manifolds}
\label{sec:3}
\subsection{Suspension of a holomorphic diffeomorphism of a complex surface}
Let $f: F\to F$ a holomorphic diffeomorphism of a complex surface $F$. One way to see the suspension flow is transversely holomorphic is by computing the holonomy of any path between two points in the same orbit of the suspension flow $\overline{(s,x)}$ and $\overline{(s+t_0,x)}$ with respect to the transversals $F_s=\pi(\{s\} \times F)$ and $F_{s+t_0}=\pi(\{s+t_0\} \times F)$: in holomorphic local coordinates, it is a power of $f$ ($f^{\floor{t}}$ if $t\geq 0$ and $f^{\ceil{t}}$ if $t<0$).\\
This construction can be generalized to any complex dimension for $F$.
\subsection{Geodesic flow on a hyperbolic three-dimensional manifold}
(See \cite{marden_outer_2007}). Consider the upper half space model $\mathbb H^3:= \{(z,t)\in \mathbb R^3, \; z\in \mathbb C, \, t>0\}$ with the metric 
    $ds = \frac{|dz|^2 + (dt)^2}{t}.$
    We will always say $\mathbb C$ instead of $\mathbb C \times \{0\}$ from now on. 
    Oriented non-parametrized complete geodesics of $(\mathbb H^3, ds)$ (whose set is denoted $\overrightarrow{\mathbb G}$) are exactly the perpendicular lines to $\mathbb C$ and the semi circles perpendicular to $\mathbb C$.
    Therefore, $(\mathbb H^3, ds)$ is \textit{the} complete simply-connected Riemannian three-manifold of constant sectional curvature $-1$.
    Denote by $d$ the distance induced by this metric on $\mathbb H^3$ and fix some $x_0 \in \mathbb H^3$: if an element $x=(z,t) \in \mathbb H^3$ satisfies $x \rightarrow \infty$ or $t \rightarrow 0$, then $d(x,x_0) \rightarrow \infty$, and vice versa. Therefore, we call the \emph{boundary at infinity} of $\mathbb H^3$ the set $\partial \mathbb H^3:= \mathbb C \cup \{\infty\}$ which is non other than the complex projective line, or the Riemann sphere, $\mathbf P^1(\mathbb C)$.
    Remark that $\overrightarrow{\mathbb G}$ has a natural and useful identification by a complex manifold: an element of $\overrightarrow{\mathbb G}$ is naturally identified by its (ordered) endpoints, each of which lies in $\partial \mathbb H^3 = \mathbf P^1(\mathbb C)$ and is different from the other. Therefore, $\overrightarrow{\mathbb G}$ is non other than $\mathbf P^1(\mathbb C) \times \mathbf P^1(\mathbb C) \setminus \Delta$, where $\Delta=\{(x,x), \; x \in \mathbf P^1(\mathbb C)\}$ is the diagonal of $\mathbf P^1(\mathbb C) \times \mathbf P^1(\mathbb C)$. The group of automorphisms of $P^1(\mathbb C)$  is exactly the set of \emph{Möbius transformations} 
    $\left \{ z \to \frac{az+b}{cz+d}, \; a,b,c,d \in \mathbb C, \; ad-bc=1 \right \}$
    which is $\mathrm{PSL}(2,\mathbb C)=\mathrm{SL}(2,\mathbb C) \diagup \{I_2, -I_2\}$.
    In fact, any orientation-preserving isometry of $(\mathbb H^3, ds)$ is the extension of a unique Möbius transformation, that is 
    $\text{Isom}_+(\mathbb H^3,ds) \cong \mathrm{PSL}(2,\mathbb C).$\\
We can define a global submersion $D: S^1\mathbb H^3 \to (\mathbf P^1(\mathbb C) \times \mathbf P^1(\mathbb C)) \setminus \Delta $ which associates to every unit tangent vector $v$ to $\mathbb H^3$ the oriented non-parametrized complete geodesic (seen as an element of $(\mathbf P^1(\mathbb C) \times \mathbf P^1(\mathbb C)) \setminus \Delta $) it belongs to. The orbits of the geodesic flow on $S^1\mathbb H^3$ are precisely the fibers of $D$, so $D: S^1\mathbb H^3 \to (\mathbf P^1(\mathbb C) \times \mathbf P^1(\mathbb C)) \setminus \Delta $ is a trivial fiber bundle with fiber $\mathbb R$.\\
As for the geodesic flow on the unit tangent bundle of a compact hyperbolic $3$-manifold, we begin with some results concerning subgroups $G$ of $\mathrm{PSL}(2,\mathbb C)$ which we will need for our study (see \cite{song_hyperbolic_nodate} for example).\\
$G$ acts properly discontinuously on $\mathbb H^3$ if and only if it is discrete.
If $G$ is discrete, then it acts freely on $\mathbb H^3$ if and only if it is torsion-free, and it is cocompact if and only if it is of virtual cohomological dimension $3$.\\
Therefore, the complete (respectively compact) hyperbolic three-dimensional manifolds are exactly the quotient of $\mathbb H^3$ by a discrete torsion-free (respectively discrete torsion-free and cocompact) subgroup of $\mathrm{PSL}(2, \mathbb C)$. As a result, if $V=\mathbb H^3 \diagup G$ is a complete hyperbolic three-dimensional manifold, where $G$ is a discrete torsion-free subgroup of $\mathrm{PSL}(2, \mathbb C)$, the universal Riemannian covering manifold $\widetilde{V}$ of $V$ is isometric to $\mathbb H^3$. Denote by $\mathbb H^3 \xrightarrow{p} V$ this covering map, which is an isometry. It induces a covering map $S^1\mathbb H^3 \xrightarrow{Tp} S^1V$, where $Tp$ is the tangent map of $p$, which is also an isometry. Therefore, the geodesic flow on $S^1H^3$ is the pull back by $Tp$ of the geodesic flow on $S^1V$ so that the transition functions of the foliated charts for each of these two geodesic flows are the same, which implies that the geodesic flow on $S^1V$ is also transversely holomorphic. 

\section{Sketch of proof of the results}
\label{sec:4}
We will always consider a smooth transversely holomorphic Anosov flow $(\varphi^t)_{t \in \mathbb R}$ on a smooth compact manifold $M$.\\
In section \ref{sec:5}, we first prove that each strong stable and strong unstable leaf is a complex manifold on which the flow acts holomorphically. 
We begin in Proposition \ref{prop:holoinvfol} with an elementary but fundamental result on the holonomy of $\Phi$ with respect to transversals containing small open neighborhoods of a $\varphi^t$-invariant foliation whose tangent distribution does not contain the flow direction.\\
We apply this result to the strong stable and strong unstable foliations (Corollary \ref{cor:holofloFsu}) and use the characterization of $C^\infty$ transversely holomorphic foliations given by Proposition \ref{prop:caractrholo} as well as the characterization of the strong stable and unstable distributions to prove the existence of an integrable almost complex structure on them, which is invariant under the differential of the flow. This is Proposition \ref{prop:complexFsuFss}.
An important corollary which will be fundamental is that the strong stable and unstable foliations are continuous foliations with complex leaves.\\
In Theorem \ref{thm:Fstrholo}, we prove that the weak stable foliation is transversely holomorphic if the strong unstable distribution is of complex dimension one.
The idea is that we can take two points belonging to the same strong stable leaf in order to compute the weak stable holonomy, since we have shown previously that this holonomy is just the action of the flow on strong unstable leaves, which is holomorphic, if the two points belong to the same orbit.
We then follow the idea in \cite{ghys_holomorphic_1995} to prove that it is conformal, thus holomorphic in that case.\\
In section \ref{sec:6}, we prove that the strong unstable leaves are complex affine diffeomorphic to $\mathbb C$ and the flow is affine with respect to these structures (Theorem \ref{thm:compaffstru}), following again the idea in \cite{ghys_holomorphic_1995}.
We use the strong unstable foliation's continuous atlas of complex leaves and the uniform expansion of the flow on the strong unstable distribution in order to apply the contracting mapping principle on a space of continuous section of a jet bundle.
The link between complex affine structures on a holomorphic curve $L$ and jets of holomorphic diffeomorphisms on $L$ is given in Lemma \ref{lem:affstruL}.\\
This allows us to compute the Schwarzian derivative of the weak stable holonomy between two points belonging to the same strong stable leaf, since this holonomy is affine thus projective if the points belong to the same orbit by Theorem \ref{thm:compaffstru}.
We need for that the notion of Markov partition for a flow, as defined in \cite{ratner_markov_1973} (see Theorem \ref{thm:rat} and the discussion before and after it).
By following the idea of \cite{ghys_holomorphic_1995}, we prove that the Schwarzian derivative is equal to zero which implies Theorem \ref{thm:FsTrProj}.\\
In the last section, in case $M$ is of dimension five, we use the preceding result to define the developing maps as well as the holonomy representations (see \cite{thurston_geometry_2022} and section \ref{sec:6} for reminders) associated to the transverse projective structures of the weak stable and weak stable foliations which gives us a developing map $D$ and a holonomy representation $H$ for the transverse $(\mathrm{PSL}(2, \mathbb C) \times \mathrm{PSL}(2, \mathbb C), \mathbf{P}^1(\mathbb C) \times \mathbf{P}^1(\mathbb C))$-structure of the flow.
We prove that $D: \widetilde{M} \to D(\widetilde{M})$ is a smooth fiber bundle with fiber $\mathbb R$ and that the image of $D$ is of two types, which correspond to the suspension and the geodesic flow cases.
\section{Complex structures of strong leaves and transverse structures of weak foliations}
\label{sec:5}
\subsection{The strong leaves are complex}

We state some general results concerning foliations transverse to flows, which are essential for the rest of the article and are proved in \cite{}.

\begin{lemma}
    Let $(\varphi^t)_{t\in \mathbb R}$ a complete smooth flow on a smooth manifold M, induced by a nowhere vanishing vector field $X$.
    Let $\mathcal{F}$ a continuous foliation of $M$ with $C^r$ leaves $(r \geq 1)$, which is invariant under the flow $\varphi^t$ (i.e. for every $t \in \mathbb R$ and $x \in M$, $\varphi^t(\mathcal{F}_x)=\mathcal{F}_{\varphi^t(x)}$), such that $X$ is nowhere tangent to $\mathcal{F}$. \\
    Then there exists a continuous foliation $\mathcal{F}^w$ with $C^r$ leaves tangent to $T\mathcal{F} \oplus \mathbb R X$ whose leaf at $x \in M$ is $\mathcal{F}^w(x)=\bigcup_{t \in \mathbb R}\varphi^t(\mathcal{F}_x)$.
\end{lemma}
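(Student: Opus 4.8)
The plan is to realize $\mathcal{F}^w$ as the saturation of $\mathcal{F}$ by the flow, and to build foliated charts by combining a smooth flow box with the foliated charts of $\mathcal{F}$.

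First I would set $\mathcal{F}^w(x) := \bigcup_{t\in\mathbb R}\varphi^t(\mathcal{F}_x)$ and check that these sets partition $M$. The flow-invariance hypothesis, rewritten as the identity $\mathcal{F}_{\varphi^t(x)} = \varphi^t(\mathcal{F}_x)$, is exactly what makes the relation ``$y\in\mathcal{F}^w(x)$'' symmetric and transitive: if $y = \varphi^{t_0}(z)$ with $z\in\mathcal{F}_x$, then $\mathcal{F}_y = \varphi^{t_0}(\mathcal{F}_z) = \varphi^{t_0}(\mathcal{F}_x)$, so $x\in\varphi^{-t_0}(\mathcal{F}_y)\subset\mathcal{F}^w(y)$, and the same works along chains. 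By construction $\varphi^t(\mathcal{F}^w(x)) = \mathcal{F}^w(\varphi^t(x))$, so $\mathcal{F}^w$ is flow-invariant. For the smooth structure of a single class I would flow a plaque: fixing a plaque $P$ of $\mathcal{F}$ through $x$, the map $(t,y)\mapsto\varphi^t(y)$ on $(-\epsilon,\epsilon)\times P$ has injective differential everywhere, its image being $d\varphi^t(T_yP)\oplus\mathbb R X(\varphi^t(y))$, where $d\varphi^t(T_yP)\subset T\mathcal{F}$ by flow-invariance while $X\notin T\mathcal{F}$. Hence for small $\epsilon$ it is a $C^r$ embedding, exhibiting $\mathcal{F}^w(x)$ as a $(k+1)$-dimensional $C^r$ immersed submanifold tangent to $T\mathcal{F}\oplus\mathbb R X$.

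The core of the argument is the construction of foliated charts. Around a point $x_0$ I would fix a smooth flow box $\Xi\colon(-\delta,\delta)\times S\to W$, $\Xi(t,s) = \varphi^t(s)$, where $S$ is a smooth local transversal to the flow, so that in these coordinates $X = \partial/\partial t$. The decisive observation is that the global flow-invariance of $\mathcal{F}$ becomes, in the box, invariance of $\mathcal{F}|_W$ under the coordinate translations $(t,s)\mapsto(t+t_0,s)$, since $\varphi^{t_0}\circ\Xi(t,s) = \Xi(t+t_0,s)$ carries $\mathcal{F}$-leaves to $\mathcal{F}$-leaves. As $X$ is nowhere tangent to $\mathcal{F}$, each plaque of $\mathcal{F}|_W$ is transverse to $\partial/\partial t$, hence (after shrinking) a $C^r$ graph $t = \tau(s)$ over a $k$-dimensional piece of $S$; translation-invariance then forces the leaves over such a piece to be exactly the vertical translates of one graph. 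I would use this to descend $\mathcal{F}|_W$ to a foliation $\mathcal{F}_S$ of $S$, of dimension $k$ and with $C^r$ leaves, whose plaques are the projections along flow lines of the plaques of $\mathcal{F}$, and conclude that, through $\Xi$, $\mathcal{F}^w|_W$ is the product foliation $(-\delta,\delta)\times\mathcal{F}_S$. A foliated chart for $\mathcal{F}_S$ crossed with $(-\delta,\delta)$ is then a foliated chart for $\mathcal{F}^w$; its transition maps inherit the triangular form required by the definition of a topological foliation, with $C^r$ leafwise dependence and merely continuous transverse dependence, and the leaves are tangent to $T\mathcal{F}\oplus\mathbb R X$ by the second step. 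Patching these charts over $M$ produces the asserted $C^0$ foliation with $C^r$ leaves.

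The step I expect to be the main obstacle is the descent of $\mathcal{F}|_W$ to a genuine foliation $\mathcal{F}_S$ on the transversal $S$. Because $\mathcal{F}$ is only a topological foliation with $C^r$ leaves, its foliated charts are continuous in the transverse directions, so I cannot invoke smooth transversality or the implicit function theorem to see that the projected plaques sweep out $S$ in a product fashion. Instead I would argue that the flow-line projection of a foliated chart of $\mathcal{F}$ is a continuous injection of an open subset of $\mathbb R^{k}\times\mathbb R^{\,n-k-1}$ into $S$ (the dimensions matching since $\dim S = n-1 = k+(n-k-1)$), and invoke invariance of domain to upgrade it to a homeomorphism onto an open set, thereby certifying that the projected plaques really form a foliated atlas of $S$. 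The cocycle condition and the triangular form of the transition maps then follow from those of $\mathcal{F}$ together with the smoothness of $\Xi$, and the remaining verifications — that the tangent distribution of $\mathcal{F}^w$ is exactly $T\mathcal{F}\oplus\mathbb R X$ and that the global leaf through $x$ equals $\bigcup_t\varphi^t(\mathcal{F}_x)$ — are routine consequences of the local product description.
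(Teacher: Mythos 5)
The paper does not actually prove this lemma --- it is stated with a pointer to an external (and blank) reference --- so your argument has to stand on its own. Its architecture is the natural one: saturate $\mathcal{F}$ by the flow, pass to a flow box $\Xi\colon(-\delta,\delta)\times S\to W$ in which flow-invariance of $\mathcal{F}$ becomes invariance of $\mathcal{F}|_W$ under $t$-translations, and descend the plaques to a foliation $\mathcal{F}_S$ of the transversal $S$. The first two steps (the classes $\bigcup_t\varphi^t(\mathcal{F}_x)$ partition $M$ and are $(k+1)$-dimensional $C^r$ immersed submanifolds tangent to $T\mathcal{F}\oplus\mathbb R X$) are correct.

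The gap is at the descent, the very step you flag as the main obstacle, and invariance of domain does not close it. The map you propose to use, ``the flow-line projection of a foliated chart of $\mathcal{F}$'', is $q\circ\psi^{-1}$, where $q\colon W\to S$ is the projection along flow lines and $\psi\colon U\to U^1\times U^2\subset\mathbb R^k\times\mathbb R^{n-k}$ is a foliated chart; its domain is open in $\mathbb R^k\times\mathbb R^{n-k}$, not in $\mathbb R^k\times\mathbb R^{n-k-1}$, and on that domain it is never injective, since it collapses every flow segment. Restricting it to a slice $U^1\times T$ with $T\subset U^2$ an $(n-k-1)$-dimensional set on which it becomes injective with image a neighborhood in $S$ amounts exactly to producing a local cross-section of the local flow $h_t$ that $\varphi^t$ induces on the local leaf space $U^2$ (write $\psi\circ\varphi^t\circ\psi^{-1}(a,b)=(f_t(a,b),h_t(b))$). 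Because the transverse regularity of $\mathcal{F}$ is only $C^0$, $h_t$ is merely a continuous local flow, so no implicit-function-theorem argument is available, and the injection whose existence you assert is essentially the chart for $\mathcal{F}_S$ that you are trying to construct --- asserting it is circular. The step can be repaired: verify that $h_t$ is a jointly continuous local flow with no fixed points for small $t\neq 0$ (a plaque, being a graph over $S$, meets each flow line at most once), and then invoke the local cross-section/parallelizability theorem for fixed-point-free continuous flows (Whitney--Bebutov) to obtain a homeomorphism of a neighborhood in $U^2$ with $(-\epsilon,\epsilon)\times V^2$, $V^2\subset\mathbb R^{n-k-1}$ open, straightening the orbits of $h_t$; taking $T=\{0\}\times V^2$ in these coordinates yields your injection, after which invariance of domain and the rest of your outline (triangular transition maps, $C^r$ leafwise regularity) go through.
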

This last results allows us to define the \emph{weak stable} foliation $\mathcal{F}^{ws}$ and \emph{weak unstable} foliation $\mathcal{F}^{wu}$. They are $C^0$ foliations with $C^\infty$ leaves.
Since the weak stable foliation and the unstable foliation are transverse, we have the following result (see also
\cite{pugh_o-stability_1970}):
    \begin{proposition}
    \label{prop:locprodstru}
        If $M$ is compact, there exists $\delta_0 >0$ such that for every $\delta\leq \delta_0$, for every $x_0 \in M$,
     \[\left \{\begin{array}{ccc}
        \mathcal{F}^{ws}_{\delta}(x_0) \times \mathcal{F}^{u}_{\delta}(x_0) & \to & M \\
        (y,z) & \mapsto & \mathcal{F}^{u}_{2\delta}(y) \cap \mathcal{F}^{ws}_{2\delta}(z)
    \end{array} \right. \]
    is a well-defined map which is in addition a homeomorphism onto its image. \\
    The same is true by exchanging the roles of the stable and unstable letters.
    \end{proposition}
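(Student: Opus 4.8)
The plan is to reduce the statement to the existence of \emph{bi-foliated} product charts adapted simultaneously to $\mathcal{F}^{ws}$ and $\mathcal{F}^u$, and then to extract a uniform size by compactness. The two foliations are transverse and complementary: at every $x$ one has $T_x\mathcal{F}^{ws}=E^s_x\oplus\mathbb R X_x$ and $T_x\mathcal{F}^u=E^u_x$, so that $T_x\mathcal{F}^{ws}\oplus T_x\mathcal{F}^u=T_xM$ by Definition \ref{def:2.6}, with $\dim\mathcal{F}^{ws}=n-p$ and $\dim\mathcal{F}^u=p$ summing to $n=\dim M$. Because $E^s\oplus\mathbb R X$ and $E^u$ are continuous subbundles and $M$ is compact, there is a uniform lower bound on the angle between these two distributions; this uniform transversality is what will let a single $\delta_0$ work for every base point $x_0$.

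First I would fix $x_0$ and work in a foliated chart $\psi:U\to U^1\times U^2$ for the strong unstable foliation, with $U^1\subset\mathbb R^p$ and $U^2\subset\mathbb R^{n-p}$, in which the unstable plaques are the horizontal slices $U^1\times\{c\}$ (such charts exist since $\mathcal F^u$ is a continuous foliation with $C^\infty$ leaves). A weak stable leaf is a $C^\infty$ submanifold everywhere transverse to $E^u$; shrinking $U$ so that the unstable plaques remain inside a narrow cone around the $U^1$-directions, each weak stable plaque meeting $U$ appears as a graph $\{(\phi(c),c):c\in U^2\}$ of a continuous map $\phi:U^2\to U^1$. Consequently the unstable plaque through $y$, namely $U^1\times\{c_y\}$ with $c_y=\mathrm{pr}_2\,\psi(y)$, meets the weak stable plaque $\{(\phi_z(c),c)\}$ through $z$ in the single point $\psi^{-1}(\phi_z(c_y),c_y)$. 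This is the well-definedness (existence and uniqueness of the intersection) inside one chart; the passage from plaques to the disks $\mathcal F^u_{2\delta}(y)$ and $\mathcal F^{ws}_{2\delta}(z)$ is handled by taking $\delta_0$ small enough that, for $\delta\le\delta_0$, these disks stay in $U$ and reduce to single plaques, the factor $2$ being the margin needed so that the intersection point, which lies at leafwise distance $O(\delta)$ from $y$ and from $z$, is actually captured by disks of radius $2\delta$; the relevant constants are uniform by the angle bound above.

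For the homeomorphism property, I would note that in the chart the map reads $(y,z)\mapsto\psi^{-1}(\phi_z(c_y),c_y)$, which is continuous because $c_y$ depends continuously on $y$ and the graph $\phi_z$ depends continuously on $z$ --- this is precisely the continuous dependence of the leaves of $\mathcal F^{ws}$ on their base point. The inverse is given by the two leafwise projections $w\mapsto y=\mathcal F^u(w)\cap\mathcal F^{ws}_\delta(x_0)$ and $w\mapsto z=\mathcal F^{ws}(w)\cap\mathcal F^u_\delta(x_0)$, which are continuous for the same reason; hence the map is a continuous bijection onto its image with continuous inverse. To make $\delta_0$ uniform, I would cover the compact manifold $M$ by finitely many such charts and take $\delta_0$ to be a Lebesgue-type constant, small enough that every $\delta_0$-ball (in an adapted metric, cf. Definition \ref{def:2.6}) around any point lies well inside one of them with room for the factor $2$.

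The main obstacle is regularity: $\mathcal F^{ws}$ and $\mathcal F^u$ are only $C^0$ foliations (with smooth leaves), so I cannot invoke the smooth Frobenius theorem or the implicit function theorem to produce the product chart or the single transverse intersection. The crux is therefore the purely topological claim that two transverse continuous foliations with complementary $C^1$ leaves admit local product charts of uniform size. I would establish this from two ingredients already available: the continuity of the foliations $\mathcal F^{ws}$ and $\mathcal F^u$ (their plaques vary continuously with the base point), which gives both the graph description and the continuity of $\phi_z$; and the uniform transversality coming from compactness, which keeps the weak stable plaques inside the transverse cone and guarantees a single intersection with each unstable plaque. Controlling the leafwise distances to upgrade the $\delta$ in the domain to the $2\delta$ in the codomain, uniformly in $x_0$, is the last quantitative point and again rests on the compactness-derived angle bound.
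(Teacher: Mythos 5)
The paper does not actually prove this proposition: it is stated as the classical local product structure (canonical coordinates) for Anosov flows and is attributed to the literature (\cite{pugh_o-stability_1970}), so any self-contained argument you give is necessarily a different route from the paper's. Your overall strategy --- uniform transversality of $T\mathcal{F}^{ws}=E^s\oplus\mathbb R X$ and $T\mathcal{F}^u=E^u$, a graph description of one family of plaques over the other, and a Lebesgue-number argument for uniformity in $x_0$ --- is the right shape of proof.

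There is, however, a concrete gap in the way you set it up. You run the cone and graph argument inside a foliated chart $\psi:U\to U^1\times U^2$ for $\mathcal F^u$, but since $\mathcal F^u$ is only a $C^0$ foliation, $\psi$ is merely a homeomorphism in the transverse direction (it is $C^\infty$ only along the plaques). Consequently the image $\psi(P)$ of a weak stable plaque $P$ is just a topological submanifold of $U^1\times U^2$: the uniform angle bound between $E^u$ and $E^s\oplus\mathbb R X$, which lives in $TM$, does not transport through $\psi$ into any cone condition in the chart, so you cannot conclude from it that $\psi(P)$ is a graph over $U^2$, nor that it meets each horizontal $U^1\times\{c\}$ at most once. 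Two standard repairs: (a) replace $\psi$ by the smooth chart $\exp_{x_0}$ and use that, by continuity of the distributions and smoothness of the individual leaves, the local unstable and local weak stable leaves of all points near $x_0$ are uniformly Lipschitz graphs over $E^u_{x_0}$ and $E^s_{x_0}\oplus\mathbb R X_{x_0}$ respectively, with arbitrarily small Lipschitz constants; two such transverse Lipschitz graphs of complementary dimension meet in exactly one point by a contraction argument, which yields existence, uniqueness and continuity simultaneously and uniformly in $x_0$; or (b) obtain uniqueness dynamically from expansivity (two points in the same local strong unstable leaf and the same local weak stable leaf stay close for all positive and negative times, hence coincide), and existence from the continuity of the foliations plus a degree or connectedness argument. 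Either repair also handles the point you leave implicit, namely that the graph is defined over the whole transversal (so the intersection is nonempty), which is exactly where the $2\delta$ margin is consumed. As written, the proof asserts rather than establishes the two key facts (at most one intersection, at least one intersection), so it needs this patch before it is complete.
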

In fact, this result allows us to define a $C^0$ atlas with $C^\infty$ leaves adapted to $\mathcal{F}^u$ and $\mathcal{F}^{ws}$ simultaneously.
\begin{lemma}
\label{lem:equivmetricweak}
    Let $(\varphi^t)$ a smooth flow on a smooth compact manifold M, induced by a nowhere vanishing vector field $X$.
    Let $\mathcal{F}$ a continuous foliation of $M$ with $C^r$ leaves $(r \geq 1)$, which is invariant under the flow $\varphi^t$ and such that $X$ is nowhere tangent to $\mathcal{F}$. \\
    Then there exist positive constants $C_1<1$, $C_2>1$ and $\delta_0$ such that for every $\delta<\delta_0$ and $x \in M$, 
    $$\bigcup_{z \in  \Phi_{C_1\delta}(x)}\mathcal{F}_{C_1\delta}(z)\subset \mathcal{F}^{w}_{\delta}(x)\subset \bigcup_{z \in  \Phi_{C_2\delta}(x)}\mathcal{F}_{C_2\delta}(z)$$
    and
    $$\bigcup_{z \in \mathcal{F}_{C_1\delta}(x) }\Phi_{C_1\delta}(z)\subset \mathcal{F}^{w}_{\delta}(x)\subset \bigcup_{z \in \mathcal{F}_{C_2\delta}(x) }\Phi_{C_2\delta}(z).$$
\end{lemma}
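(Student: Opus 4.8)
The plan is to compare the intrinsic (leaf) metric of the weak leaf $L:=\mathcal{F}^{w}(x)=\bigcup_{t\in\mathbb R}\varphi^t(\mathcal{F}_x)$ with the two ``product'' metrics obtained by concatenating a flow segment and an $\mathcal{F}$-segment, and to make every constant uniform in $x$ using compactness of $M$. Recall that $L$ is a $C^r$ immersed submanifold with $T L=T\mathcal{F}\oplus\mathbb R X$, equipped with the metric induced from $M$. The two left-hand inclusions are elementary and give $C_1=\tfrac12$: if $w\in\mathcal{F}_{C_1\delta}(z)$ with $z\in\Phi_{C_1\delta}(x)$ (and symmetrically for the $\mathcal{F}$-first ordering), then concatenating the orbit segment from $x$ to $z$ with the $\mathcal{F}_z$-segment from $z$ to $w$ produces a path inside $L$ of length at most $2C_1\delta$, because the orbit of $x$ and the leaf $\mathcal{F}_z$ both lie in $L$ and carry the metric induced from $M$, so their segment lengths are unchanged when read in $L$. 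Hence $d_L(x,w)\leq 2C_1\delta\leq\delta$, i.e. $w\in\mathcal{F}^{w}_{\delta}(x)$.

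For the right-hand inclusions I would first record two uniform estimates. Since $T\mathcal{F}$ is a \emph{continuous} distribution on the compact $M$ and $X$ is nowhere tangent to it, the angle between $X_p$ and $T_p\mathcal{F}$ is bounded below by some $\theta_0>0$, uniformly in $p$; and since the flow is smooth and $M$ is compact, there is $B\geq1$ with $B^{-1}\lVert v\rVert\leq\lVert d\varphi^\tau(v)\rVert\leq B\lVert v\rVert$ for all $|\tau|\leq\varepsilon_0$ and all $v$, while $\lVert X\rVert$ stays in a fixed interval $[m,M_0]$. I then consider the $C^r$ map
\[
\Psi_x:\;(z,\tau)\longmapsto\varphi^\tau(z),\qquad z\in\mathcal{F}_x,\ |\tau|<\varepsilon_0,
\]
whose image is a neighbourhood of $x$ in $L$. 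Its differential sends $(v,s)$ to $d\varphi^\tau(v)+sX(\varphi^\tau(z))$; by flow-invariance $d\varphi^\tau(v)\in T_{\varphi^\tau(z)}\mathcal{F}$, hence is transverse to $X(\varphi^\tau(z))$ with angle $\geq\theta_0$, and the two summands have norms controlled by $B$ and by $[m,M_0]$. Thus $D\Psi_x$ is uniformly bounded and uniformly invertible, so on a product domain of uniform size $\Psi_x$ is a bi-Lipschitz equivalence, with constant $A\geq1$ independent of $x$, between the product metric $g_{\mathcal{F}_x}\oplus d\tau^2$ and the induced metric of $L$.

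The last step, converting this into the stated inclusions, is where the actual content lies. The key point is a \emph{localisation} of the intrinsic distance: because $\Psi_x$ is uniformly bi-Lipschitz on a fixed-size domain, its image contains an intrinsic ball of uniform radius $R_0$, and for $\delta<\delta_0:=R_0$ no path in $L$ of length $<\delta$ issuing from $x$ can reach the chart boundary, so on $\mathcal{F}^{w}_{\delta}(x)$ the intrinsic distance is faithfully read in the chart. Pulling the orbit-distance and the $\mathcal{F}$-leaf distance back through $\Psi_x$ (using $B$, $m$, $M_0$, and for the flow-first ordering the uniform distortion of $\mathcal{F}$-distances by $\varphi^\tau$) shows that both families of boxes in the statement are, in $\Psi_x$-coordinates, comparable up to a uniform factor to coordinate rectangles. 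Comparing these with the bi-Lipschitz image of the intrinsic $\delta$-ball and choosing $C_2$ large enough, depending only on $A,B,m,M_0$, yields $\mathcal{F}^{w}_{\delta}(x)\subset\bigcup_{z\in\Phi_{C_2\delta}(x)}\mathcal{F}_{C_2\delta}(z)$ and likewise for the $\mathcal{F}$-first box.

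The hard part will be exactly this: producing the product chart $\Psi_x$ with constants \emph{and} domain-size independent of the base point $x$ (the compactness and flow-invariance estimates above), and then arguing carefully that small intrinsic balls of the immersed leaf $L$ genuinely localise inside a single chart. I expect the low transverse regularity of $\mathcal{F}$ to cause no difficulty here, since $\Psi_x$ merely flows the \emph{fixed} $C^r$ leaf $\mathcal{F}_x$ by the globally smooth flow; the continuity of $T\mathcal{F}$ enters only through the uniform angle bound $\theta_0$.
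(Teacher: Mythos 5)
The paper itself gives no proof of this lemma: it is grouped with the ``general results concerning foliations transverse to flows'' whose proofs are deferred to an (empty) citation, so there is nothing to compare your argument against line by line. On its own merits your proposal is essentially correct and is the standard route: the left-hand inclusions by concatenation of an orbit segment with a leaf segment (with $C_1=\tfrac12$), and the right-hand inclusions by a uniformly bi-Lipschitz product chart $\Psi_x(z,\tau)=\varphi^\tau(z)$ together with the localisation of short intrinsic paths of the weak leaf inside a single chart. The quantitative ingredients you list (uniform angle bound $\theta_0$ from continuity of $T\mathcal{F}$ and compactness, uniform bounds on $d\varphi^\tau$ and on $\lVert X\rVert$) are exactly what is needed, and the final conversion into the two ``box'' inclusions, via $z=\varphi^{\tau'}(x)$ in one ordering and $z=z'$ in the other, goes through.

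The one step you pass over too quickly is the \emph{injectivity} of $\Psi_x$ on a domain of uniform size. A uniformly bounded and uniformly invertible differential makes $\Psi_x$ a local immersion with uniform constants, but does not by itself rule out $\varphi^{\tau_1}(z_1)=\varphi^{\tau_2}(z_2)$ with $(z_1,\tau_1)\neq(z_2,\tau_2)$ in the domain; without injectivity the map is not a chart and the localisation argument (reading intrinsic distances of $L$ in product coordinates) does not apply. This is where the transversality of $X$ to $\mathcal{F}$ must be used globally on the chart, not just pointwise: one reduces to showing that for $0<|\sigma|\le\sigma_0$ the point $\varphi^\sigma(p)$ cannot lie on the plaque of $p$, and since the transverse coordinate of the continuous foliation is not differentiable one argues by contradiction, using that the plaques are uniformly $C^1$ graphs (continuity of $T\mathcal{F}$ plus compactness) so that normalised chords of plaques with endpoints converging to $p$ have directions converging into $T_p\mathcal{F}$, contradicting $X_p\notin T_p\mathcal{F}$. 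Equivalently, you may simply invoke the local product structure of $\mathcal{F}^w$ furnished by the preceding lemma of the paper (the one constructing $\mathcal{F}^w$), which is precisely the statement that these maps $\Psi_x$ are homeomorphisms onto their images on domains of uniform size. With that point supplied, your proof is complete.
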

\begin{lemma}
    Let $(\varphi^t)$ a smooth flow on a smooth compact manifold M, induced by a nowhere vanishing vector field $X$.\\
    Then there exist positive constants $C_1<1$, $C_2>1$ and $\delta_0$ such that for every $\delta<\delta_0$ and $x \in M$, 
    $$\bigcup_{t \in  \left]-C_1\delta, C_1\delta \right[}\varphi^t(x)\subset \Phi_\delta(x)\subset \bigcup_{t \in  \left]-C_2\delta, C_2\delta \right[}\varphi^t(x).$$
\end{lemma}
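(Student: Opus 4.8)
The plan is to reduce the whole statement to an elementary comparison between the time parameter of the flow and the arc-length along an orbit, using that on a compact manifold the speed of the flow is pinched between two positive constants. First I would set $m := \min_{y\in M}\|X(y)\|$ and $\mathcal{M} := \max_{y\in M}\|X(y)\|$; since $X$ is a nowhere vanishing continuous vector field and $M$ is compact, $0 < m \le \mathcal{M} < \infty$. For a fixed $x\in M$ the curve $t \mapsto \varphi^t(x)$ parametrizes the leaf $\Phi_x$ with velocity $X(\varphi^t(x))$, so the arc-length of the orbit segment joining $x$ to $\varphi^t(x)$ is $L_x(t) := \left|\int_0^t \|X(\varphi^s(x))\|\,ds\right|$, and the speed bounds give the key two-sided estimate $m\,|t| \le L_x(t) \le \mathcal{M}\,|t|$ for every $t\in\mathbb R$. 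Here $d_{\Phi_x}$ denotes the intrinsic (path) distance on the leaf $\Phi_x$, with respect to which $\Phi_\delta(x)$ is the open $\delta$-ball.

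For the left inclusion I would observe that $L_x(t)$ is the length of one particular path inside the leaf from $x$ to $\varphi^t(x)$, so $d_{\Phi_x}(x,\varphi^t(x)) \le L_x(t) \le \mathcal{M}\,|t|$ regardless of the orbit's topology. Choosing $C_1 := \min(\tfrac12, \tfrac{1}{\mathcal{M}})$, which is $<1$ and satisfies $\mathcal{M} C_1 \le 1$, any $t$ with $|t| < C_1\delta$ then gives $d_{\Phi_x}(x,\varphi^t(x)) < \mathcal{M} C_1 \delta \le \delta$, i.e. $\varphi^t(x) \in \Phi_\delta(x)$.

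For the right inclusion I would take $y \in \Phi_\delta(x)$ and produce a good time-representative. Since $X$ never vanishes there are no fixed points, so the orbit is either a non-periodic injectively immersed line or a periodic circle. In the first case $y = \varphi^t(x)$ for a unique $t$ and the intrinsic distance is exactly the length of the single joining arc, so $L_x(t) = d_{\Phi_x}(x,y) < \delta$. In the second case the intrinsic distance is realized by the shorter of the two direct arcs, which again corresponds to a representative $t$ (the minimal time, forward or backward) with $L_x(t) = d_{\Phi_x}(x,y) < \delta$. In either case $m\,|t| \le L_x(t) < \delta$, so with $C_2 := \max(2, \tfrac{1}{m})$ (hence $C_2 > 1$) one obtains $|t| < \delta/m \le C_2\delta$, proving $y \in \bigcup_{|t|<C_2\delta}\varphi^t(x)$. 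The threshold $\delta_0$ plays no essential role since all these estimates are global in $\delta$; one may take any $\delta_0 > 0$, and it is kept only to state the lemma in the same form as the preceding ones.

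The step I expect to be the main (though mild) obstacle is the right inclusion for periodic orbits, where the time-representation of a point is not unique and where, for $\delta$ larger than half the orbit length, $\Phi_\delta(x)$ wraps all the way around a short closed orbit. I would handle this uniformly by always selecting the minimal-time representative that realizes the distance and invoking only the lower speed bound: a closed orbit of minimal period $T$ has length $\int_0^T\|X\| \ge mT$, so the shortest arc from $x$ to any point of it admits a representative $t$ with $m\,|t| \le d_{\Phi_x}(x,y) < \delta$. Thus no lower bound on the periods of closed orbits is ever required, and the single constant $C_2 = \max(2, \tfrac{1}{m})$ works for all orbits simultaneously.
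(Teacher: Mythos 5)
Your argument is correct: the two-sided pinching $m\,|t|\le L_x(t)\le \mathcal{M}\,|t|$ of arc-length against time, valid because $\|X\|$ is bounded away from $0$ and $\infty$ on the compact manifold $M$, together with the observation that on a one-dimensional leaf the intrinsic distance is realized by an orbit arc (the unique arc on a line, the shorter arc on a closed orbit), is exactly the intended elementary proof, and your treatment of short periodic orbits via the lower speed bound is the right way to make $C_2$ uniform. The paper states this lemma without proof, so there is nothing to compare against beyond noting that your route is the standard one.
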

We now state a general fact concerning the flow-holonomy with respect to transversals containing local leaves of a flow-invariant foliation transverse to the flow.

\begin{proposition}
\label{prop:holoinvfol}
    Let $(\varphi^t)_{t\in \mathbb R}$ a complete smooth flow on a smooth manifold M, induced by a nowhere vanishing vector field $X$.
    Let $\mathcal{F}$ a continuous foliation of $M$ with $C^1$ leaves, which is invariant under the flow $\varphi^t$ (i.e. for every $t \in \mathbb R$ and $x \in M$, $\varphi^t(\mathcal{F}_x)=\mathcal{F}_{\varphi^t(x)}$), such that $X$ is nowhere tangent to $\mathcal{F}$.\\
    For $x \in M$ and $t\in \mathbb{R}$, let $W_{\varphi^t(x)}$ be a small neighborhood of $\varphi^t(x)$ in $\mathcal{F}_{\varphi^t(x)}$ and by $T_{\varphi^t(x)}$ a small transversal to the flow in $\varphi^t(x)$ containing $W_{\varphi^t(x)}$. \\
    Fix $x\in M, t\in \mathbb R$ and let the path 
    $\beta^x_t: \left \{\begin{array}{ccc}
        [0,1] & \to & M \\
        s  &\mapsto  &\varphi^{st}(x)
    \end{array} \right. .$\\
    Then if $T_x$ is taken sufficiently small, the restriction to $W_x$ of the $\Phi$-holonomy of the path $\beta^x_t$ with respect to the transversals $T_{x}$ and $T_{\varphi^t(x)}$ is the restriction to $W_x$ of the map 
        \[\restr{\varphi^t}{\mathcal{F}_x}: \mathcal{F}_x \to \mathcal{F}_{\varphi^t(x)}.\]
\end{proposition}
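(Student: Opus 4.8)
The plan is to combine the path-based definition of holonomy (Definition/Proposition \ref{defprop:2.2}), applied to the orbit foliation $\Phi$, with the flow-invariance $\varphi^t(\mathcal F_x)=\mathcal F_{\varphi^t(x)}$. The decisive observation is that the leaves of $\Phi$ are exactly the orbits of the flow, so every plaque of $\Phi$ is a short orbit arc. Consequently, for $y$ near $x$ in $T_x$, property $(2)$ of the holonomy together with the composition rule $(i)$ forces $h(\beta^x_t)^{T_{\varphi^t(x)},T_x}(y)$ to lie on the $\Phi$-leaf through $y$, that is, on the orbit of $y$; writing the image as $\varphi^{\tau(y)}(y)$, the holonomy is completely determined once the correct time $\tau(y)$ is identified.

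I would proceed in three steps. First, cover the compact arc $\beta^x_t([0,1])$ by finitely many flow boxes and use them, as in Definition/Proposition \ref{defprop:2.2}, to realize the holonomy along $\beta^x_t$; the relevant content is that $\varphi^{\tau(y)}(y)$ is the point where the orbit of $y$, followed along $\beta^x_t$, reaches the final transversal $T_{\varphi^t(x)}$. Second, since $T_{\varphi^t(x)}$ is transverse to $X$ and chosen small, a flow-box/implicit-function argument at $\varphi^t(x)$ shows that for $y$ close enough to $x$ the orbit of $y$ meets $T_{\varphi^t(x)}$ at a \emph{unique} time $\tau(y)$ close to $t$ within a small neighbourhood of $\varphi^t(x)$, and that this crossing point is precisely the holonomy image. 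This reduces the statement to proving $\tau(y)=t$ for $y\in W_x$.

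The final and central step uses the hypotheses on $\mathcal F$. For $y\in W_x\subset\mathcal F_x$, invariance gives $\varphi^t(y)\in\varphi^t(\mathcal F_x)=\mathcal F_{\varphi^t(x)}$, and by continuity of $\varphi^t$ I may shrink $T_x$ (hence $W_x$) once and for all so that $\varphi^t(W_x)\subset W_{\varphi^t(x)}\subset T_{\varphi^t(x)}$, with $\varphi^t(W_x)$ lying inside the neighbourhood of $\varphi^t(x)$ where the crossing is unique. Thus $\varphi^t(y)$ is a point of the orbit of $y$ on $T_{\varphi^t(x)}$ near $\varphi^t(x)$; by the uniqueness just established, $\tau(y)=t$, so the holonomy image of $y$ is $\varphi^t(y)$. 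As $y\in W_x$ was arbitrary, $\restr{h(\beta^x_t)^{T_{\varphi^t(x)},T_x}}{W_x}$ coincides with $\restr{\varphi^t}{\mathcal F_x}$ restricted to $W_x$, as claimed.

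The main obstacle I anticipate is making the uniqueness of the crossing time rigorous and uniform: one must ensure, after a single shrinking of $T_x$, both that the $\Phi$-holonomy is well defined along $\beta^x_t$ (the possibly non-embedded orbit arc is handled by the path-definition of holonomy through a finite subdivision into flow boxes) and that $\tau(y)$ is the unique intersection of the orbit of $y$ with $T_{\varphi^t(x)}$ near $\varphi^t(x)$, so that the flow-invariance of $\mathcal F$ can pin it to $t$. Once these smallness choices are fixed, the invariance $\varphi^t(\mathcal F_x)=\mathcal F_{\varphi^t(x)}$ does all the real work, since it is exactly what places $\varphi^t(y)$ on the chosen transversal.
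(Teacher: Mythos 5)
Your proposal is correct and follows essentially the same route as the paper: subdivide $\beta^x_t$ into finitely many flow-box pieces to realize the holonomy, then use the flow-invariance $\varphi^t(\mathcal F_x)=\mathcal F_{\varphi^t(x)}$ together with continuity of $\varphi^t$ to place $\varphi^t(y)$ on the final transversal, and conclude by the uniqueness of the plaque--transversal intersection. Your crossing-time function $\tau(y)$ is just a repackaging of the paper's plaque-by-plaque composition $h(\alpha)^{T_{\varphi^t(x)},T_x}(z)=\varphi^{t-t_{N-1}}(\cdots(\varphi^{t_1}(z)))=\varphi^t(z)$, so no substantive difference.
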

\begin{proof}
    First assume that $x$ and $\varphi^t(x)$ belong to the same plaque for the orbit foliation $\Phi$. Take $T_x$ sufficiently small inside this foliation chart. 
    Since $\varphi^t(W_x)\subset\mathcal{F}_{\varphi^t(x)}$ and because $\varphi^t$ is continuous on $M$, it comes by definition of holonomy that for every $z \in W_x$,
    \[h(\alpha)^{T_{\varphi^t(x)}, T_x} (z) = \varphi^t(z).\]
    In general, we take intermediate points $x_i=\varphi^{t_i}(x)$ between $x$ and $\varphi^t(x)$ in $\Phi_x$, $0\leq i \leq N$, $N \in \mathbb N^*$, $t_0=0, t_N=t$, such that $\varphi^{t_{i-1}}(x)$ and $\varphi^{t_{i}}(x)$ belong to the same plaque for $\Phi$.
    If $T_{x_i}$ is sufficiently small for every $i \in \llbracket0,N-1\rrbracket$, the above shows that the restriction to $W_{x_{i-1}}$ of the holonomy of $\beta^{x_{i-1}}_{t_i-t_{i-1}}$ between $x_{i-1}$ and $x_i=\varphi^{t_i-t_{i-1}}(x_{i-1})$ with respect to $T_{x_{i-1}}$ and $T_{x_i}$ is precisely the restriction of $\varphi^{t_i-t_{i-1}}$ to $W_{x_{i-1}}$.\\
    If $T_x$ is sufficiently small and $z \in W_x$, then by definition and by the above:
    \[h(\alpha)^{T_{\varphi^t(x)}, T_x} (z)=\varphi^{t-t_{N-1}}(\varphi^{t_{N-1}-t_{N-2}}(\cdots(\varphi^{t_2-t_1}(\varphi^{t_1}(z))))) = \varphi^{t}(z).\] 
\end{proof}\begin{corollary}
\label{cor:holofloFsu}
    Let $(\varphi^t)_{t\in \mathbb R}$ a transversely holomorphic Anosov flow on a smooth compact manifold $M$.\\
    For $x \in M$ and $t\in \mathbb{R}$, denote by $W^{u}_{\varphi^t(x)}$ a small neighborhood of $\varphi^t(x)$ in $\mathcal{F}^{u}_{\varphi^t(x)}$ and by $T_{\varphi^t(x)}$ a small transversal to the flow in $\varphi^t(x)$ containing $W^{u}_{\varphi^t(x)}$. \\
    Fix $x\in M, t\in \mathbb R$ and let the path 
    $\beta^x_t: \left \{\begin{array}{ccc}
        [0,1] & \to & M \\
        s  &\mapsto  &\varphi^{st}(x)
    \end{array} \right. .$\\
    Then:
    \begin{enumerate}
        \item if $T_x$ is taken sufficiently small, the restriction to $W^{u}_x$ of the $\Phi$-holonomy of the path $\beta^x_t$ with respect to the transversals $T_{x}$ and $T_{\varphi^t(x)}$ is the restriction to $W^{u}_x$ of the map 
        \[\restr{\varphi^t}{\mathcal{F}^{u}_x}: \mathcal{F}^{u}_x \to \mathcal{F}^{u}_{\varphi^t(x)}\]
        \item the germ of the $\mathcal{F}^{ws}$-holonomy of the path $\beta^x_t$ with respect to the transversals $W^{u}_x$ and $W^{u}_{\varphi^t(x)}$ is the germ of the map
        \[\restr{\varphi^t}{\mathcal{F}^{u}_x}: \mathcal{F}^{u}_x \to \mathcal{F}^{u}_{\varphi^t(x)}.\]
    \end{enumerate}
    The same statement is true when replacing the   unstable leaves by the   stable leaves and the weak stable foliation by the weak unstable foliation. 
\end{corollary}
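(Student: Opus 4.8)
The plan is to derive part (1) as an immediate instance of Proposition~\ref{prop:holoinvfol}, and to obtain part (2) by re-running that proposition's argument for the weak stable foliation itself. For part (1), I would apply Proposition~\ref{prop:holoinvfol} with $\mathcal{F}=\mathcal{F}^{u}$. One only has to check its three hypotheses: $\mathcal{F}^{u}$ is a continuous foliation with $C^\infty$ (in particular $C^1$) leaves; it is $\varphi^t$-invariant, as recalled for Anosov flows; and $X$ is nowhere tangent to it, because the Anosov splitting $TM=\mathbb R X\oplus E^{s}\oplus E^{u}$ forces $E^{u}$ to avoid the flow direction. The proposition then yields verbatim that, for $T_x$ small enough, the restriction to $W^{u}_x$ of the $\Phi$-holonomy of $\beta^x_t$ between $T_x$ and $T_{\varphi^t(x)}$ equals $\restr{\varphi^t}{\mathcal{F}^{u}_x}$.

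For part (2) the proposition cannot be invoked directly, since $X$ \emph{is} tangent to $\mathcal{F}^{ws}$; so I would reprove the analogous statement for the holonomy of $\mathcal{F}^{ws}$ itself. The first point is that $W^{u}_x$ and $W^{u}_{\varphi^t(x)}$ are genuine transversals to $\mathcal{F}^{ws}$: indeed $T\mathcal{F}^{ws}=\mathbb R X\oplus E^{s}$ and $TW^{u}=E^{u}$ are complementary in $TM$ by the splitting, and $\dim W^{u}=\dim E^{u}$ matches the codimension of $\mathcal{F}^{ws}$. Hence the $\mathcal{F}^{ws}$-holonomy of $\beta^x_t$ with respect to these transversals is well defined, and it remains to identify its germ.

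I would then compute this holonomy by imitating the proof of Proposition~\ref{prop:holoinvfol}. Choose $0=t_0<\dots<t_N=t$ with $x_i:=\varphi^{t_i}(x)$ so that each pair $x_{i-1},x_i$ lies in a single $\mathcal{F}^{ws}$-plaque. For $z$ close to $x_{i-1}$ in $W^{u}_{x_{i-1}}$, the orbit segment $s\mapsto\varphi^{s(t_i-t_{i-1})}(z)$ joins $z$ to its image \emph{within} the leaf $\mathcal{F}^{ws}_z$, since $\mathbb R X\subset T\mathcal{F}^{ws}$; moreover $\varphi^{t_i-t_{i-1}}(z)$ stays in $W^{u}_{x_i}$ by the flow-invariance of the strong unstable foliation. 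By the very definition of holonomy this forces the germ of each elementary $\mathcal{F}^{ws}$-holonomy to coincide with the corresponding flow map, and composing the $N$ segments shows that the germ of the full $\mathcal{F}^{ws}$-holonomy of $\beta^x_t$ is the germ of $\restr{\varphi^t}{\mathcal{F}^{u}_x}$.

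The main obstacle I expect is exactly in this last step: one must ensure that moving along the orbit genuinely realizes the weak stable holonomy, i.e.\ that the flow-path follows the same sequence of $\mathcal{F}^{ws}$-plaques as $\beta^x_t$ rather than landing on a distinct intersection of the leaf with the transversal. This is resolved by passing to germs and shrinking the transversals $W^{u}$ (equivalently, restricting to $z$ close to $x$), just as in the proposition, the decomposition into plaque-sized steps removing any ambiguity. Finally, the symmetric statement, for the strong stable leaves and the weak unstable foliation, follows by running the same argument for the reversed flow $(\varphi^{-t})$, which interchanges the roles of $E^{s}$ and $E^{u}$ and of $\mathcal{F}^{ws}$ and $\mathcal{F}^{wu}$.
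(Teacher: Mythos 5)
Your proposal is correct and follows essentially the same route as the paper: part (1) is exactly the specialization of Proposition \ref{prop:holoinvfol} to $\mathcal{F}=\mathcal{F}^{u}$, after checking (as you do) that $\mathcal{F}^{u}$ is flow-invariant, has $C^1$ leaves, and is nowhere tangent to $X$. For part (2) the paper merely observes that, since orbits lie inside weak stable leaves and $W^{u}_x\subset T_x$, the $\mathcal{F}^{ws}$-holonomy of $\beta^x_t$ with respect to $W^{u}_x$ and $W^{u}_{\varphi^t(x)}$ coincides with the restriction to $W^{u}_x$ of the $\Phi$-holonomy already identified in (1); your plaque-by-plaque re-derivation for $\mathcal{F}^{ws}$ is just an unwinding of that one-line reduction and arrives at the same conclusion, including the time-reversal argument for the stable/weak unstable case.
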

\begin{proof}
    The first point is immediate by the previous proposition.
    As for the second point, the $\mathcal{F}^{ws}$-holonomy of the path $\beta^x_t$ with respect to the transversals $W^{u}_x$ and $W^{u}_{\varphi^t(x)}$ in $M$ coincides with the restriction to $W^{u}_x$ of the $\Phi$-holonomy of the path $\beta^x_t$ with respect to the transversals $T^{u}_x$ and $T^{u}_{\varphi^t(x)}$. The same can be said for the weak unstable foliation if we consider stable leaves as transversals.
\end{proof}
\begin{corollary}
\label{cor:diffholo}
    Let $(\varphi^t)_{t\in \mathbb R}$ a transversely holomorphic Anosov flow on a smooth compact manifold $M$. 
    For $x \in M$ and $t\in \mathbb{R}$, denote by $W^{u}_{\varphi^t(x)}$ (respectively $W^{s}_{\varphi^t(x)}$) a small neighborhood of $\varphi^t(x)$ in $\mathcal{F}^{u}_{\varphi^t(x)}$ (respectively $\mathcal{F}^{s}_{\varphi^t(x)}$) and by $S_{\varphi^t(x)}$ a small transversal to the flow in $\varphi^t(x)$ containing $W^{u}_{\varphi^t(x)}$ and $W^{s}_{\varphi^t(x)}$.\\
    Then $d_x(h(\beta^x_t)^{S_{\varphi^t(x)}, S_x})=d_x\varphi^t$.
    \end{corollary}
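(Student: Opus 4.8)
The plan is to deduce this differential identity directly from Corollary~\ref{cor:holofloFsu}, exploiting the fact that a transversal to the flow through $x$ which contains both a strong unstable and a strong stable local leaf is, infinitesimally, nothing more than the direct sum $E^{u}_x\oplus E^{s}_x$. Concretely, since $S_x$ is transverse to the flow it has codimension one, so $T_xS_x$ has dimension $\dim E^{u}+\dim E^{s}$; as $S_x$ contains $W^{u}_x$ and $W^{s}_x$, whose tangent spaces at $x$ are $E^{u}_x$ and $E^{s}_x$ respectively, and as $E^{u}_x\cap E^{s}_x=\{0\}$ by the splitting $TM=\mathbb RX\oplus E^{s}\oplus E^{u}$ of Definition~\ref{def:2.6}, a dimension count forces
\[
T_xS_x=E^{u}_x\oplus E^{s}_x.
\]
The same holds at $\varphi^t(x)$, where moreover $d_x\varphi^t(E^{u}_x\oplus E^{s}_x)=E^{u}_{\varphi^t(x)}\oplus E^{s}_{\varphi^t(x)}=T_{\varphi^t(x)}S_{\varphi^t(x)}$ by invariance of the strong distributions, so that $\restr{d_x\varphi^t}{T_xS_x}$ is a well-defined isomorphism $T_xS_x\to T_{\varphi^t(x)}S_{\varphi^t(x)}$, which is the object we shall compare with the differential of the holonomy.

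Writing $h:=h(\beta^x_t)^{S_{\varphi^t(x)},S_x}$, I would first apply Corollary~\ref{cor:holofloFsu}(1) with the transversals $T_x=S_x$ and $T_{\varphi^t(x)}=S_{\varphi^t(x)}$ (both admissible, being small and containing the relevant unstable local leaves): it gives that the restriction of $h$ to $W^{u}_x$ coincides with $\restr{\varphi^t}{\mathcal{F}^{u}_x}$. Since $\varphi^t(W^{u}_x)\subset W^{u}_{\varphi^t(x)}\subset S_{\varphi^t(x)}$ for $W^{u}_x$ small, both maps send $W^{u}_x$ into $S_{\varphi^t(x)}$, and differentiating at $x$ yields $\restr{d_xh}{E^{u}_x}=\restr{d_x\varphi^t}{E^{u}_x}$. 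Applying the stable version of the same corollary (its last sentence, with $W^{s}$ in place of $W^{u}$) in exactly the same way gives $\restr{d_xh}{E^{s}_x}=\restr{d_x\varphi^t}{E^{s}_x}$.

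It then remains only to glue the two pieces: $d_xh$ and $\restr{d_x\varphi^t}{T_xS_x}$ are two linear maps $T_xS_x\to T_{\varphi^t(x)}S_{\varphi^t(x)}$ which agree on each of the summands $E^{u}_x$ and $E^{s}_x$ of the decomposition $T_xS_x=E^{u}_x\oplus E^{s}_x$, hence they coincide, which is the asserted identity $d_x(h(\beta^x_t)^{S_{\varphi^t(x)},S_x})=d_x\varphi^t$. I do not expect any serious obstacle here; the only point requiring care is conceptual rather than technical, namely that $h$ does \emph{not} equal $\varphi^t$ on all of $S_x$ (the flow does not map $S_x$ into $S_{\varphi^t(x)}$ in general), so one genuinely needs the two separate tangency computations on $W^{u}_x$ and $W^{s}_x$, combined with linearity of the differential, to recover the full equality on $T_xS_x$.
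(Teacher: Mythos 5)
Your proposal is correct and follows essentially the same route as the paper: the paper's own (one-line) proof likewise rests on the decomposition $T_xS_x=E^{u}_x\oplus E^{s}_x$ together with Corollary \ref{cor:holofloFsu} applied in both the unstable and stable directions, and then concludes by linearity of the differential. You have merely spelled out the dimension count and the gluing step that the paper leaves implicit.
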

    \begin{proof}
        Since $T_xS_x=E^{u}_x \oplus E^{s}_x$, the first point of the previous corollary gives the result.
    \end{proof}
Therefore, we can prove the following essential result:
\begin{proposition}
\label{prop:complexFsuFss}
    Let $(\varphi^t)$ a transversely holomorphic Anosov flow on a smooth compact manifold $M$.\\
    Then each leaf of the unstable and stable foliations is an immersed complex manifold.\\
    \noindent Moreover, for any $t\in \mathbb{R}$ and $x\in M$, the well defined maps:
    \[\restr{\varphi^t}{\mathcal{F}^{u}_x}: \mathcal{F}^{u}_x \to \mathcal{F}^{u}_{\varphi^t(x)} \quad \text{and} \quad  \restr{\varphi^t}{\mathcal{F}^{s}_x}: \mathcal{F}^{s}_x \to \mathcal{F}^{s}_{\varphi^t(x)}\]
    are holomorphic with respect to these complex structures.
\end{proposition}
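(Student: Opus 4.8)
The plan is to extract from the transverse holomorphicity of the orbit foliation a transverse almost complex structure on the normal bundle, to prove that it respects the Anosov splitting $E^s\oplus E^u$, and then to read off both the complex structures on the strong leaves and the holomorphy of the flow.

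First I would use that the orbit foliation $\Phi$ (whose tangent bundle is $\mathbb R X$, so $\nu = TM/\mathbb R X$) is a transversely holomorphic $C^\infty$ foliation. The equivalence (i)$\Leftrightarrow$(iii) of Proposition \ref{prop:caractrholo} then furnishes a smooth almost complex structure $I$ on $\nu$ which is integrable on every transversal and invariant under the $\Phi$-holonomy. Using the $d\varphi^t$-invariant decomposition $TM = \mathbb R X \oplus E^s \oplus E^u$ of Definition \ref{def:2.6}, I identify $\nu \cong E^s \oplus E^u$ and give it the metric induced by the adapted metric on $M$ (so $C=1$). Specializing holonomy-invariance to the flow path $\beta^x_t$ and invoking Corollary \ref{cor:diffholo}, whose content is that the differential of the $\Phi$-holonomy of $\beta^x_t$ is $d_x\varphi^t$, I obtain the commutation relation $I\circ d\varphi^t = d\varphi^t\circ I$ on $\nu$ for every $t$.

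The key step, and the one I expect to be the main obstacle, is to show that $I$ preserves each of $E^u$ and $E^s$; I would argue this dynamically. Fix $v\in E^u_x$ and write $Iv = a+b$ with $a\in E^s_x$, $b\in E^u_x$. Applying $d\varphi^{-t}$ (for $t\geq 0$) and using the commutation relation gives $d\varphi^{-t}(Iv) = I(d\varphi^{-t}v)$; the unstable estimate of Definition \ref{def:2.6} yields $\|d\varphi^{-t}v\|\leq \mu^{-t}\|v\|\to 0$, and since $I$ is continuous on the compact manifold $M$ it is bounded, so $\|d\varphi^{-t}(Iv)\|\to 0$. On the other hand $d\varphi^{-t}(Iv) = d\varphi^{-t}a + d\varphi^{-t}b$ splits along $E^s\oplus E^u$, and compactness forces a uniform lower bound on the angle between $E^s$ and $E^u$, whence $\|d\varphi^{-t}(Iv)\|\geq c\,\|d\varphi^{-t}a\|$ for some $c>0$; but applying the stable estimate to $d\varphi^t(d\varphi^{-t}a)=a$ gives $\|d\varphi^{-t}a\|\geq \lambda^{-t}\|a\|\to\infty$ unless $a=0$. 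Therefore $a=0$ and $Iv\in E^u_x$, and the symmetric argument with $t\to+\infty$ shows $I(E^s)\subseteq E^s$. Thus $I$ restricts to smooth almost complex structures $I^u := \restr{I}{E^u}$ and $I^s := \restr{I}{E^s}$ along the (smooth) strong leaves.

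It then remains to establish integrability and holomorphy. For a leaf $\mathcal{F}^u_x$, I would use Corollary \ref{cor:holofloFsu} to realize a small piece $W^u_x$ of the leaf inside a smooth transversal $T_x$ to the flow, on which $I_{T_x}$ is integrable by Proposition \ref{prop:caractrholo}(iii)(a), so that $(T_x, I_{T_x})$ is a complex manifold. Since $W^u_x$ is a smooth submanifold of $T_x$ whose tangent spaces $E^u$ are $I$-invariant, its Nijenhuis tensor is the restriction of that of $I_{T_x}$ and hence vanishes; by the Newlander–Nirenberg theorem $W^u_x$ is a complex submanifold with complex structure $I^u$, and covering $\mathcal{F}^u_x$ by such pieces makes the whole leaf an immersed complex manifold (the stable case being identical, using negative time). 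Finally, the differential of $\restr{\varphi^t}{\mathcal{F}^u_x}$ at $y$ is $\restr{d_y\varphi^t}{E^u_y}$, which commutes with $I^u$ by the relation established above; this is exactly the Cauchy–Riemann condition, so $\restr{\varphi^t}{\mathcal{F}^u_x}$ is holomorphic (equivalently, by Corollary \ref{cor:holofloFsu} it coincides near each point with the restriction to $W^u_x$ of the holomorphic $\Phi$-holonomy of $\beta^x_t$), and likewise for $\restr{\varphi^t}{\mathcal{F}^s_x}$.
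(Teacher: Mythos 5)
Your proposal is correct and follows essentially the same route as the paper: extract the holonomy-invariant integrable transverse almost complex structure $I$ on $\nu\cong E^{s}\oplus E^{u}$, use Corollary \ref{cor:diffholo} to get $I\circ d\varphi^t=d\varphi^t\circ I$, show dynamically that $I$ preserves the Anosov splitting, and conclude integrability via the vanishing of the Nijenhuis tensor on transversals and holomorphy via the $\mathbb C$-linearity of $d\varphi^t$ along the leaves. The only cosmetic difference is in proving $I(E^{u})\subseteq E^{u}$: you decompose $Iv$ along $E^{s}\oplus E^{u}$ and use a uniform angle bound, whereas the paper uses a hermitian metric adapted to $I$, the equivalence of continuous metrics on a compact manifold, and the norm characterization of $E^{u}$ --- the same dynamical argument in a slightly different dressing.
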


\begin{proof}
    We prove the result for the unstable case, as the stable case is analogous. Let $p$ the complex codimension of $\Phi$.\\
    Fix $x \in M$. For any $t\in \mathbb R$, let $S_{\varphi^t(x)}$ a small transversal to the flow containing a small open neighborhood $W^{u}_{\varphi^t(x)}$ of $\varphi^t(x)$ in $\mathcal{F}^{u}_{\varphi^t(x)}$ and a small open neighborhood $W^{s}_{\varphi^t(x)}$ of $\varphi^t(x)$ in $\mathcal{F}^{s}_{\varphi^t(x)}$.
    Let $\Phi$ the foliation defined by the transversely holomorphic flow $(\varphi^t)_t$. The normal bundle $\nu = TM\diagup \Phi$ is naturally isomorphic (in the $C^0$ category) to $E^{u} \oplus E^{s}$.
    Therefore, the smooth almost complex structure $I$ on $\nu$ invariant by holonomy maps (see Proposition \ref{prop:caractrholo}) gives rise to a continuous almost complex structure, still denoted by $I$, on $E^{u} \oplus E^{s}$ which makes it a continuous vector subbundle of $TM$ with complex fibers.
    Moreover, since $I$ is invariant under $\Phi$-holonomy maps, then the restriction $I^{(t)}$ of $I$ on $TS_{\varphi^t(x)}$ is a smooth almost complex structure on $S_{\varphi^t(x)}$ satisfying 
    \[I^{(t)}\circ d(h(\beta^x_t)^{S_{\varphi^t(x)}, S_x}) = d(h(\beta^x_t)^{S_{\varphi^t(x)}, S_x}) \circ I^{(0)}  \quad (*)\]
    Since each $I^{(t)}$ is integrable by Proposition \ref{prop:caractrholo}, we can restrict each $S_{\varphi^t(x)}$ so that it is diffeomorphic to an open set of $\mathbb C^p$ and $I^{(t)}$ coincides with the multiplication by $i \in \mathbb C$. 
    With that in mind, thanks to Corollary \ref{cor:diffholo}, by taking $S_{x}$ even smaller, it comes for $v\in E^{u}_x$ and $t \in \mathbb R$, that 
    $id_x\varphi^t(v)=d_x\varphi^t(iv).$
    Take a continuous metric  $\|\cdot\|_{(0)}$ on $M$ which is hermitian on $E^{u} \oplus E^{s}$ with respect to $I$.
    It comes, for $v \in E^{u}_x$ and $t \in \mathbb R$:
    \begin{align*} \|d_x\varphi^t(iv)\|_{(0)}=\|id_x\varphi^t(v)\|_{(0)}=\|d_x\varphi^t(v)\|_{(0)} \leq C\lambda^t\|v\|_{(0)}= C\lambda^t\|iv\|_{(0)}.
    \end{align*}
    Now take a smooth Riemannian metric $\|\cdot\|$ on $M$.
    Since $M$ is compact and both metrics are continuous, $\|\cdot\|$ and $\|\cdot\|_{(0)}$ are equivalent on $M$.
    Therefore, by the above and the characterization of the unstable space, it comes $iv\in E^{u}_x$.\\
    To summarize, we have defined on $\mathcal{F}^{u}_x$ a continuous almost complex structure $J^{(x)}$ whose restriction to $W^{u}_y$ for every $y \in \mathcal{F}^{u}_x$ is a smooth almost complex structure, making $J^{(x)}$ a smooth almost complex structure on $\mathcal{F}^{u}_x$.\\
    Now fix $y \in \mathcal{F}^{u}_x$ and let $S_{y}$ a small transversal to the flow containing a small open neighborhood $W^{u}_{y}$ of $y$ in $\mathcal{F}^{u}_{x}$. Denote by $I^{(y)}$ the restriction of $I$ to $S_y$. We know it is integrable by Corollary \ref{cor:holofloFsu}. Therefore, its Nijenhuis tensor $N_{I^{(y)}}$ vanishes everywhere. Since the restriction of $I^{(y)}$ to $W^{u}_y$ is exactly the restriction of $J^{(x)}$ to $W^{u}_y$, it follows that $N_{J^{(x)}}(X,Y)=0$ for any vector fields $X,Y$ tangent to $W^{u}_{y}$. Since $y \in \mathcal{F}^{u}_x$ is arbitrary, it comes $N_{J^{(x)}}=0$ so that $J^{(x)}$ is an integrable almost complex structure on $\mathcal{F}^{u}_x$. Moreover, the equality $(*)$ still holds for $y \in \mathcal{F}^{u}_x$ so that for any $t\in \mathbb R$: 
    \[J^{(\varphi^t(x))}\circ d(\restr{\varphi^t}{\mathcal{F}^{u}_x}) = d(\restr{\varphi^t}{\mathcal{F}^{u}_x}) \circ J^{(x)}.\]
    Since for any $t\in \mathbb R$, $J^{(\varphi^t(x))}$ is integrable, it comes that $d(\restr{\varphi^t}{\mathcal{F}^{u}_x})$ is $\mathbb C$-linear, which means exactly that
    \[\restr{\varphi^t}{\mathcal{F}^{u}_x}: \mathcal{F}^{u}_x \to \mathcal{F}^{u}_{\varphi^t(x)}\]
    is holomorphic with respect to the complex structures induced by $J^{(x)}$ and $J^{(\varphi^t(x))}$.
\end{proof}
\begin{corollary}
\label{cor:conthololeaves}
    Let $(\varphi^t)_{t\in \mathbb R}$ a transversely holomorphic Anosov flow on a smooth compact manifold $M$.\\
    Then the stable and unstable foliations are continuous foliations with holomorphic leaves.
    \end{corollary}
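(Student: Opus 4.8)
The plan is to upgrade the already-established smooth leaf structure to a holomorphic one. Recall that, by standard Anosov theory and the local product structure of Proposition \ref{prop:locprodstru}, $\mathcal{F}^u$ and $\mathcal{F}^s$ are continuous foliations with $C^\infty$ leaves; thus we already dispose of a $C^0$ foliated atlas with smooth plaques, and the only remaining task is to produce such an atlas in which the plaque parametrizations are holomorphic for the leafwise complex structure. By Proposition \ref{prop:complexFsuFss} this leafwise structure is the restriction to $E^{u}$ (respectively $E^{s}$) of the continuous, holonomy-invariant almost complex structure $I$ on the normal bundle $\nu = E^{u} \oplus E^{s}$; in particular it depends continuously on the point of $M$, which is exactly the transverse continuity we must exploit. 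I will treat the unstable case, the stable one being identical.

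First I would localize the problem to a flow transversal. Fix $x \in M$ and, as in the setup of Proposition \ref{prop:complexFsuFss}, choose a small transversal $S$ to the flow containing a neighborhood $W^{u}_x$ of $x$ in $\mathcal{F}^{u}_x$. Since $\Phi$ is transversely holomorphic, Proposition \ref{prop:caractrholo} endows $S$ with a genuine integrable complex structure $I_S$ of complex dimension $q$, and the proof of Proposition \ref{prop:complexFsuFss} shows that $E^{u}|_S$ is a continuous $I_S$-invariant subbundle of $TS$, integrable to the trace foliation $\mathcal{F}^{u} \cap S$ whose plaques are complex submanifolds of $S$. Using the flow-box identification $U \cong (-\varepsilon,\varepsilon) \times S$ given by $(t,z) \mapsto \varphi^t(z)$, the flow-invariance of $\mathcal{F}^{u}$ together with the holomorphy of $\restr{\varphi^t}{\mathcal{F}^{u}_x}$ (Proposition \ref{prop:complexFsuFss}) shows that in these coordinates the plaques of $\mathcal{F}^{u}|_U$ are exactly the slices $\{t\} \times P$, where $P$ is a plaque of $\mathcal{F}^{u} \cap S$. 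Hence it suffices to prove that a continuous foliation of a complex manifold by complex submanifolds is a foliation with holomorphic leaves, and then to adjoin the flow direction as an extra transverse factor.

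For this latter statement I would argue locally: in holomorphic coordinates on $S$ centered so that the central plaque is tangent to $\mathbb{C}^p \times \{0\} \subset \mathbb{C}^q$, each nearby plaque is a $p$-dimensional complex submanifold and, by the holomorphic implicit function theorem, is the graph $w \mapsto (w, g_c(w))$ of a holomorphic map $g_c$ over $\mathbb{C}^p$, where $c$ labels the plaque. The straightening $(w,c) \mapsto (w, g_c(w))$ is then the required foliated chart: along each plaque it is holomorphic, and the induced transition maps are continuous with holomorphic leafwise part $f_{ij}(\cdot, y)$. The \textbf{main obstacle} is the continuous dependence of $g_c$ on the transverse parameter $c$; this does not follow from holomorphy of each $g_c$ alone. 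It comes from the continuity of the foliation $\mathcal{F}^{u}$, which forces the plaques, hence the graphs $g_c$, to vary continuously in the $C^0$ topology, after which Cauchy estimates (Montel's theorem) upgrade this to uniform convergence of $g_c$ together with all leafwise derivatives on slightly smaller polydisks, yielding continuity of the chart as a homeomorphism. Equivalently, one may bypass the transversal reduction and invoke a parameter-dependent Newlander--Nirenberg theorem directly for the continuous, leafwise-integrable structure $J^{(x)}$ of Proposition \ref{prop:complexFsuFss}; in either formulation, establishing the \emph{joint} continuity of the leafwise holomorphic straightening is the crux of the argument.
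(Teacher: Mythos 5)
Your overall strategy is sound, and you correctly identify the point that the paper's one-line proof (which simply invokes Proposition \ref{prop:locprodstru} and the existence of holomorphic charts on each leaf) does not address: the joint continuity of the leafwise holomorphic straightening in the transverse direction. However, there is a genuine error in your reduction to a flow transversal. You assert that $E^{u}|_S$ is a subbundle of $TS$, that $\mathcal{F}^{u}\cap S$ is a foliation of $S$ by $p$-dimensional complex submanifolds, and that in the flow-box coordinates $(t,z)\mapsto\varphi^t(z)$ the plaques of $\mathcal{F}^{u}|_U$ are the horizontal slices $\{t\}\times P$. None of this holds for a smooth transversal $S$ to the flow: one has $E^{u}_y\subset T_yS$ only at the base point (a smooth $S$ everywhere tangent to $E^{u}$, let alone to $E^{u}\oplus E^{s}$, does not exist in general, since $E^{u}\oplus E^{s}$ is not integrable and the unstable plaques vary only continuously transversely). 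What the proof of Proposition \ref{prop:complexFsuFss} actually gives is an $I_S$-invariant subbundle of $TS$ obtained from $E^{u}$ via the identification $TS\cong\nu|_S$, i.e.\ $(E^{u}\oplus\mathbb{R}X)\cap TS$; its integral plaques are the trace of the \emph{weak} unstable foliation on $S$, not of $\mathcal{F}^{u}$. Concretely, a strong unstable plaque through a point of $S$ leaves $S$ immediately, and the return-time function to $\varphi^t(S)$ is not constant along it, so the plaques of $\mathcal{F}^{u}$ are genuinely tilted in your flow-box coordinates and are not of the form $\{t\}\times P$.

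The error is localized and repairable, and the rest of your argument is the right one. The flow-projection $\pi:U\to S$ (the $\Phi$-holonomy of Proposition \ref{prop:holoinvfol}) maps each strong unstable plaque biholomorphically onto a plaque of $\mathcal{F}^{wu}\cap S$, which \emph{is} a continuous foliation of the complex manifold $(S,I_S)$ by complex $p$-dimensional submanifolds: this uses Corollary \ref{cor:holofloFsu} and the holonomy-invariance of $I$ from Proposition \ref{prop:caractrholo}. Your graph argument (holomorphic implicit function theorem, $C^0$-continuity of the plaques upgraded to $C^\infty$-continuity by Cauchy estimates/Montel) then applies verbatim to $\mathcal{F}^{wu}\cap S$, and composing the resulting leafwise-holomorphic straightening of $S$ with $\pi$ and with the transverse coordinates supplied by the local product structure of Proposition \ref{prop:locprodstru} yields the required foliated atlas for $\mathcal{F}^{u}$; your alternative route via a parameter-dependent Newlander--Nirenberg theorem applied to the continuously varying family $(W^{u}_y,J^{(y)})$ avoids the transversal altogether and is also viable. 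With this correction your proof is substantially more complete than the paper's, which leaves the transverse-continuity issue implicit.
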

    \begin{proof}
        It comes directly from the local product structure (Proposition \ref{prop:locprodstru}) since we can now use holomorphic diffeomorphisms for local unstable and stable leaves.
    \end{proof}

\begin{remark}
 The unstable $E^{u}$ and   stable $E^{s}$ distributions are thus complex vector bundles over $M$.
\end{remark}

\subsection{Transverse holomorphic structure of the weak foliations}
By following the ideas in \cite{ghys_holomorphic_1995}, we prove:
\begin{theorem}
\label{thm:Fstrholo}
    Let $(\varphi^t)_{t\in \mathbb R}$ a transversely holomorphic Anosov flow on a smooth manifold $M$. Suppose the   unstable distribution $E^{u}$ is of complex dimension $1$.\\
    Then the weak stable foliation $\mathcal{F}^{ws}$ is transversely holomorphic.
\end{theorem}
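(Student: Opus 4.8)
The plan is to verify the holonomy characterization of transverse holomorphicity (Proposition \ref{prop:caractrholo}) for $\mathcal{F}^{ws}$, using local strong unstable leaves as transversals. By Corollary \ref{cor:conthololeaves} each such leaf is a holomorphic curve of complex dimension one, and I will read holonomy maps in a holomorphic coordinate $\theta$ on it. Since $T\mathcal{F}^{ws}=\mathbb{R}X\oplus E^{s}$, any path in a weak stable leaf is, up to homotopy relative to its endpoints (under which holonomy is invariant), a concatenation of flow segments and strong stable segments; as holomorphic maps compose, it suffices to treat these two kinds of holonomy separately. For a flow segment $\beta^x_t$, Corollary \ref{cor:holofloFsu}(2) identifies the $\mathcal{F}^{ws}$-holonomy with respect to unstable transversals with $\restr{\varphi^t}{\mathcal{F}^{u}_x}$, which is holomorphic by Proposition \ref{prop:complexFsuFss}. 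The whole content is therefore to show that the $\mathcal{F}^{ws}$-holonomy $h_\gamma$ of a path $\gamma$ lying in a single strong stable leaf, read in these holomorphic coordinates, is holomorphic.

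First I would record a flow-conjugation identity. Since $\varphi^t$ is a homeomorphism preserving $\mathcal{F}^{ws}$ and sending unstable leaves to unstable leaves, it intertwines the corresponding holonomies:
$$h_{\varphi^t\gamma}^{\,W^{u}_{\varphi^t(y)},\,W^{u}_{\varphi^t(x)}}=\restr{\varphi^t}{\mathcal{F}^{u}_y}\circ h_\gamma^{\,W^{u}_y,\,W^{u}_x}\circ(\restr{\varphi^t}{\mathcal{F}^{u}_x})^{-1},$$
so that $h_\gamma=(\restr{\varphi^t}{\mathcal{F}^{u}_x})^{-1}\circ h_{\varphi^t\gamma}\circ(\restr{\varphi^t}{\mathcal{F}^{u}_x})$ up to the obvious base changes, with \emph{conformal} (holomorphic) conjugators. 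Hence the linear dilatation is preserved: $L_{h_\gamma}(x)=L_{h_{\varphi^t\gamma}}(\varphi^t(x))$ for every $t$. Next I would establish uniform quasiconformality. In a single foliated chart of $\mathcal{F}^{ws}$ adapted to the local product structure (Proposition \ref{prop:locprodstru}), the plaque holonomy between two unstable transversals, read in the holomorphic coordinates, takes the form $g_{b'}\circ g_b^{-1}$, where $g_b$ is the smooth change of coordinates on the unstable leaf through $b$ between the product-smooth chart and the holomorphic coordinate $\theta_b$. This is a genuine diffeomorphism, and compactness of $M$ together with finiteness of the atlas bounds its dilatation by a universal $K_0$. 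Flowing $\gamma$ forward until its endpoints, exponentially contracted along the strong stable leaf by $d_{s}(\varphi^t(x),\varphi^t(y))\le\lambda^t d_{s}(x,y)$, lie in one plaque, the map $h_{\varphi^t\gamma}$ becomes a single plaque holonomy, so $L_{h_{\varphi^t\gamma}}(\varphi^t(x))\le K_0$; by the conjugation identity $L_{h_\gamma}(x)\le K_0$, uniformly.

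The heart of the proof is the upgrade from quasiconformal to conformal, following \cite{ghys_holomorphic_1995}. The key point is that $L_{h_\gamma}(x)$ does not depend on $t$, whereas $L_{h_{\varphi^t\gamma}}(\varphi^t(x))$ can be forced to tend to $1$ as $t\to+\infty$. Indeed, as $t\to+\infty$ the endpoints $\varphi^t(x),\varphi^t(y)$ of the contracted path merge; writing the corresponding plaque holonomy as $g_{b'}\circ g_b^{-1}$ with $b'\to b$, and using that the transverse complex structure on $E^{u}$ is continuous (Proposition \ref{prop:complexFsuFss}) while the strong unstable leaves vary continuously in the $C^1$ topology, one gets $g_{b'}\to g_b$ together with their leafwise differentials, whence $g_{b'}\circ g_b^{-1}\to\operatorname{Id}$ in $C^1$ and $L_{h_{\varphi^t\gamma}}(\varphi^t(x))\to 1$. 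Combined with $L_{h_\gamma}(x)=L_{h_{\varphi^t\gamma}}(\varphi^t(x))$ this forces $L_{h_\gamma}(x)=1$, so $h_\gamma$ is $1$-quasiconformal, i.e.\ conformal; since it preserves orientation, being conjugate by the orientation-preserving maps $\restr{\varphi^t}{\mathcal{F}^{u}}$ to orientation-preserving plaque holonomies, it is holomorphic. This is precisely where $\dim_{\mathbb{C}}E^{u}=1$ is indispensable, as the equivalence between unit dilatation, conformality and holomorphy is a two-real-dimensional phenomenon.

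Assembling the pieces, every $\mathcal{F}^{ws}$-holonomy with respect to unstable transversals is a composition of the holomorphic flow holonomies and the now-holomorphic strong stable holonomies, hence holomorphic in the coordinates $\theta$; by Proposition \ref{prop:caractrholo} this is exactly the transverse holomorphicity of $\mathcal{F}^{ws}$. The main obstacle is the conformality step: controlling the dilatation of the contracted plaque holonomy requires that the holomorphic coordinates on strong unstable leaves depend $C^1$-continuously on the transverse parameter, which rests on the continuity of the almost complex structure on $E^{u}$ and of the strong unstable foliation. This is the only genuinely non-formal input and the reason the argument is dynamical rather than purely foliated.
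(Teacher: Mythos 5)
Your overall strategy is the same as the paper's: reduce to the holonomy of a strong stable segment with unstable transversals, conjugate by the flow (which is holomorphic, hence conformal, on strong unstable leaves by Proposition \ref{prop:complexFsuFss}), exploit the contraction along $\mathcal{F}^{s}$ to push the two transversals together, and conclude via the equivalence of $1$-quasiconformality, conformality and holomorphy in complex dimension one. However, the central analytic step has a genuine gap. You treat the weak stable plaque holonomy between two unstable transversals as a composition $g_{b'}\circ g_b^{-1}$ of \emph{diffeomorphisms}, bound its dilatation ``by compactness and finiteness of the atlas,'' and later assert $g_{b'}\circ g_b^{-1}\to\operatorname{Id}$ in $C^1$. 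But the local product structure (Proposition \ref{prop:locprodstru}) is only a homeomorphism in the transverse direction, so the maps $g_b$ and the plaque holonomy are a priori only continuous; their linear dilatation need not be finite, let alone uniformly bounded, and ``$C^1$ convergence'' is not even meaningful. This is circular: the transverse regularity of the $\mathcal{F}^{ws}$-holonomy is precisely what the theorem (and its corollary that $\mathcal{F}^{ws}$ is $C^\infty$) is meant to establish. Relatedly, even where the dynamics does give you control --- namely $C^0$ closeness of the pushed-forward holonomy to a nice map --- the pointwise linear dilatation $L_h(x)$ is a $\limsup$ over arbitrarily small scales and is not controlled by $C^0$ proximity, so the claim $L_{h_{\varphi^t\gamma}}(\varphi^t(x))\to 1$ does not follow from the endpoints merging.

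The paper's proof repairs exactly this point. It never differentiates the holonomy $h$. Instead, using that strong unstable leaves vary continuously in the $C^\infty$ topology, it constructs for each $k$ a genuinely smooth comparison map $\theta_k:\varphi^k(W^{u}_x)\to\mathcal{F}^{u}_{\varphi^k(y)}$ between the two \emph{nearby leaves} (not the holonomy itself), which is $C^0$-close to $\varphi^k\circ h\circ\varphi^{-k}$ and whose differential is $\epsilon_k$-close to an isometry. Then $h_k:=\varphi^{-k}\circ\theta_k\circ\varphi^k$ is an honest $\tfrac{1+\epsilon_k}{1-\epsilon_k}$-quasiconformal diffeomorphism (here your conjugation-by-conformal-maps observation is used correctly, on the smooth $\theta_k$), and the contraction estimate $d_{u}(\varphi^{-k}(p),\varphi^{-k}(q))\le C\mu^{-k}d_{u}(p,q)$ converts the $C^0$ closeness upstairs into uniform convergence $h_k\to h$. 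The conclusion that $h$ is $(1+\epsilon)$-quasiconformal for every $\epsilon>0$, hence conformal, then comes from the nontrivial stability theorem for uniform limits of $K$-quasiconformal homeomorphisms --- not from a pointwise dilatation computation. You should restructure your conformality step along these lines; the flow-conjugation identity and the reduction to strong stable segments can stay as they are.
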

\begin{proof}
    We will use point $(ii)$ of Proposition \ref{prop:caractrholo} in order to prove that the weak stable foliation $\mathcal{F}^s$ is transversely holomorphic. We will even take small open neighborhoods of   unstable leaves as transversals to $\mathcal{F}^s$. 
    Firstly, take two points $x$ and $y$ on the same weak stable leaf. Take small open neighborhoods $W^{u}_x$ and $W^{u}_y$ of $x$ and $y$ in $\mathcal{F}^{u}_x$ and $\mathcal{F}^{u}_y$ respectively. Assume the distance between $x$ and $y$ is less than $\delta$ so that the holonomy with respect to these transversals does not depend on the path between $x$ and $y$. We note $h$ such holonomy. There exists $y'\in \mathcal{F}^{s}_x$ and $t\in \mathbb R$ such that $y=\varphi^t(y')$. By Lemma \ref{lem:equivmetricweak}, we can take $y$ sufficiently close to $x$ (at a distance $\delta'<\delta$ independent of $x$) such that the distance between $y'$ and $x$ is at most $\delta$. Fix again a small open neighborhood $W^{u}_{y'}$ of $y'$ in $\mathcal{F}^{u}_{y'}$. Then $h$ is the composition of the (path-independent) holonomy between $x$ and $y'$ by the (path-independent) holonomy between $y'$ and $y$. However, this last holonomy is just the germ of $\restr{\varphi^t}{\mathcal{F}^{u}_{y'}}$ (by Corollary \ref{cor:holofloFsu}) which is holomorphic with respect to these transversals (by Proposition \ref{prop:complexFsuFss}).
    Therefore, we can assume that $y\in \mathcal{F}^{s}_x$ and its distance to $x$ is at most $\delta$.\\
    We assume $W^{u}_x$ is compact so that $W^{u}_y=h(W^{u}_x)$ is also compact. 
    Let $k \in \mathbb N$, $W^{k}_x:=\varphi^k(W^{u}_x)$ and $W^{k}_y:=\varphi^k(W^{u}_y)$.
    Recall that the leaves of $\mathcal{F}^{u}$ vary continuously in the $C^\infty$-topology (see \cite{katok_introduction_1995} \cite{hirsch_invariant_1977}).
    Therefore, for $k \in \mathbb N$, there exists a diffeomorphism $ \theta_k: W^k_x \to \mathcal{F}^{u}_{\varphi^k(y)}$ onto its image which satisfies the following properties:
    \begin{enumerate}
        \item The distance between $\theta_k(z')$ and $\varphi^k(h(\varphi^{-k}(z')))$ converges uniformly to $0$ for $z' \in W^{k}_x$ and $k$ goes to $+ \infty$ ;
        \item There is a sequence $(\epsilon_k)_{k \in \mathbb N}$ of positive numbers converging to zero such that for any $v \in TW^k_x$:
        \[(1-\epsilon_k)\|v\| \leq \| d\theta_k(v)\| \leq (1+\epsilon_k) \|v\|.\]
    \end{enumerate}
    Consider now, for $k \in \mathbb N$, $h_k:=\varphi^{-k} \circ \theta_k \circ \varphi^k: W^{u}_x \to \mathcal{F}^{u}_y$ which is a diffeomorphism onto its image. 
    Since each $\varphi^k$ and $\varphi^{-k}$ is holomorphic thus conformal between unstable leaves by Proposition \ref{prop:complexFsuFss}, and since $\theta_k$ is $\frac{1+\epsilon_k}{1-\epsilon_k}$-quasi-conformal, $h_k$ is also $\frac{1+\epsilon_k}{1-\epsilon_k}$-quasi-conformal. In addition,
    $h_k$ converges uniformly to $h$ when $k$ goes to $+\infty$: indeed, let $z \in W^{u}_x$ and $k \in \mathbb N$. Since $\theta_k (\varphi^k(z))$ and $\varphi^k(h(z))$ belong to the same   unstable leaf, it comes 
    \[d_{u}(\varphi^{-k}(\varphi^k(h(z))), \varphi^{-k}(\theta_k (\varphi^k(z)))) \leq C \mu^{-k} d_{u}(\varphi^k(h(z)),\theta_k (\varphi^k(z))). \]
    But then $(1)$ concludes.    
    By a result on quasiconformal mappings (see \cite{ahlfors_quasiconformal_1953}), we deduce that $h$ is $(1+\epsilon)$-quasi-conformal for every $\epsilon>0$, that is $h$ is conformal thus holomorphic.\\
    Now if $x$ and $y$ are two points on the same weak stable leaf, $W^{u}_x$, $W^{u}_y$ are small open neighborhoods of $x$ and $y$ in $\mathcal{F}^{u}_x$ and $\mathcal{F}^{u}_y$ respectively, and if $\alpha$ is any $\mathcal{F}^{ws}$-path joining $x$ and $y$, then as before, we can take intermediate points in the weak stable leaf of $x$ and $y$ whose successive distances are bounded by $\delta$. If we take the same type of transversals at these points, $h(\alpha)^{W^{u}_y,W^{u}_x}$ is the composition of holonomy maps which were just proven to be holomorphic, which gives the result. 
\end{proof}
\begin{corollary}
     Let $(\varphi^t)_{t\in \mathbb R}$ a transversely holomorphic Anosov flow on a smooth manifold $M$. Suppose the   unstable distribution $E^{u}$ is of complex dimension $1$.\\
    Then the weak stable foliation $\mathcal{F}^s$ is $C^\infty$.
\end{corollary}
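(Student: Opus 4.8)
The plan is to upgrade the transverse holomorphicity furnished by Theorem \ref{thm:Fstrholo} to genuine $C^\infty$ regularity of the foliation, by exhibiting a smooth foliated atlas adapted to $\mathcal{F}^s$. The subtlety is that, a priori, $\mathcal{F}^s$ is only a $C^0$ foliation with $C^\infty$ leaves: Theorem \ref{thm:Fstrholo} establishes the holonomy characterization (point $(ii)$ of Proposition \ref{prop:caractrholo}) at the $C^0$ level, so the transition maps one obtains are jointly continuous and holomorphic in the transverse variable, but only \emph{separately} smooth along the leaves. The whole content of the corollary is to promote this separate regularity to \emph{joint} smoothness.

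First I would set up a local model near a point $x$. Using that the unstable leaves are smooth (indeed holomorphic) submanifolds (Corollary \ref{cor:conthololeaves}) together with the local product structure (Proposition \ref{prop:locprodstru}), I fix a smooth chart in which the weak stable leaf through $x$ is a coordinate slice and the unstable leaf through $x$ is a transversal $T$ carrying a holomorphic coordinate $\zeta$. Nearby weak stable leaves are then smooth graphs $w=\phi(u,\zeta)$ over the central leaf, where $u$ is the smooth leaf coordinate and $\zeta$ parametrizes the leaves through the points of $T$. The foliation $\mathcal{F}^s$ is $C^\infty$ near $x$ as soon as $\phi$ is jointly smooth, the nondegeneracy $\partial_\zeta\phi\neq 0$ being automatic from transversality of the leaves. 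The three properties of $\phi$ I can record are: $\phi$ is \emph{jointly continuous} (the foliation is $C^0$); for each fixed $\zeta$, the map $u\mapsto\phi(u,\zeta)$ is $C^\infty$ with \emph{uniform} bounds on all derivatives, since the invariant foliations of an Anosov flow have uniformly bounded geometry and vary continuously in the $C^\infty$-topology (see \cite{katok_introduction_1995}, \cite{hirsch_invariant_1977}); and for each fixed $u$, the map $\zeta\mapsto\phi(u,\zeta)$ is essentially the $\mathcal{F}^s$-holonomy between unstable transversals, hence \emph{holomorphic} by Theorem \ref{thm:Fstrholo}.

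The heart of the argument, and the step I expect to be the main obstacle, is the analytic claim that a jointly continuous function on $V\times D$ (with $V\subset\mathbb{R}^k$ open and $D\subset\mathbb{C}$ a disc) which is $C^\infty$ in the real variables and holomorphic in the complex variable is in fact jointly $C^\infty$. Separate smoothness does not imply joint smoothness in general, so the holomorphicity in $\zeta$ must be exploited decisively. Here the hypothesis $\dim_{\mathbb{C}}E^{u}=1$ is crucial: since $\zeta$ is a \emph{single} complex variable, I can represent $\phi$ through the Cauchy integral formula $\phi(u,\zeta)=\frac{1}{2\pi i}\oint_{|\eta-\zeta_0|=r}\frac{\phi(u,\eta)}{\eta-\zeta}\,d\eta$ and differentiate under the integral. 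The uniform $C^\infty$-bounds along the leaves provide exactly the local uniform boundedness needed to justify differentiation in $u$ under the integral sign and to apply Vitali's convergence theorem to the $u$-difference quotients, yielding that every mixed derivative $\partial_u^\alpha\partial_\zeta^n\phi$ exists and is jointly continuous; this gives the joint smoothness of $\phi$.

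Finally, I would stress that the delicate point hidden in the second step is the reconciliation of two coordinate systems: leaf-smoothness is natural in the ambient smooth chart, whereas transverse holomorphicity is natural for transversals that are unstable leaves, and the unstable foliation itself is only H\"older-continuous. Verifying that $\zeta\mapsto\phi(u,\zeta)$ is genuinely holomorphic for the relevant transverse coordinate, and not merely after a continuous change of transversal, is where the bounded-geometry estimates and the one-dimensionality of the transverse direction must be combined with care. Once the joint smoothness of $\phi$ is in hand, the graph charts assemble into a $C^\infty$ foliated atlas for $\mathcal{F}^s$, which proves the corollary.
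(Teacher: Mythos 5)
There is a genuine gap, and it sits exactly where you place the ``heart of the argument.'' Your Cauchy-integral/Vitali step requires that, for each fixed $u$, the map $\zeta\mapsto\phi(u,\zeta)$ be holomorphic \emph{in the fixed ambient coordinates} $(u,w)$ of the smooth chart. But Theorem \ref{thm:Fstrholo} only gives holomorphy of the holonomy after conjugating by the distinguished transverse charts $\Psi_u:S_u\to\mathbb C$ furnished by the transverse holomorphic structure (point $(ii)$ of Proposition \ref{prop:caractrholo}); i.e.\ $\Psi_u\circ\phi(u,\cdot)\circ\Psi_0^{-1}$ is holomorphic, while each $\Psi_u$ is a smooth diffeomorphism of the slice whose dependence on $u$ is a priori only continuous (the transverse complex structure is carried by the unstable directions, which are merely H\"older). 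Hence in the fixed coordinate $w$ the map $\phi(u,\cdot)$ is only a smooth quasiconformal map, not holomorphic, and the identity $\phi(u,\zeta)=\frac{1}{2\pi i}\oint\frac{\phi(u,\eta)}{\eta-\zeta}\,d\eta$ on which your whole differentiation-under-the-integral scheme rests is simply not available. You acknowledge this ``reconciliation of coordinate systems'' in your last paragraph, but flagging it is not closing it: as written, the central analytic claim (separately smooth in $u$ $+$ holomorphic in $\zeta$ $\Rightarrow$ jointly smooth) is applied to a function that is not holomorphic in $\zeta$. Passing instead to the intrinsic holomorphic coordinates on unstable-leaf transversals restores holomorphy but destroys the other half of your hypotheses, since those coordinates no longer assemble into a smooth chart of $M$.

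The paper avoids this by never fixing a transverse coordinate: holomorphy plus continuous dependence of the holonomies on the base points yields, via Cauchy estimates (which are coordinate-robust, applied leaf by leaf in the intrinsic charts), that the holonomy maps between transversals are \emph{uniformly} $C^\infty$ as maps of submanifolds, with all derivatives depending continuously on the points. One then invokes Lemma 3.2 of \cite{sadovskaya_uniformly_nodate}, a Journ\'e-type regularity lemma, which is precisely the (nontrivial) theorem asserting that a foliation with uniformly smooth leaves and uniformly smooth holonomies is smooth. Your proposal is in effect an attempt to reprove that lemma by an elementary integral-formula argument; in the form given it does not go through, and to repair it you would either have to control the $u$-regularity of the slice charts $\Psi_u$ (which is tantamount to the conclusion) or fall back on the Journ\'e/Sadovskaya machinery as the paper does.
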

\begin{proof}
    By the previous Theorem, the weak stable holonomy is holomorphic thus uniformly $C^\infty$ (that is the partial derivatives of the holonomy depend continuously on the second point with respect to which the holonomy is computed), so by Lemma 3.2 of \cite{sadovskaya_uniformly_nodate}, the weak stable foliation is $C^\infty$.
\end{proof}
By exchanging the roles of the   stable and   unstable distributions, the preceding theorem has the following immediate corollary:
\begin{corollary}
\label{cor:4.3.1}
    Let $(\varphi^t)_{t\in \mathbb R}$ a transversely holomorphic Anosov flow on a five dimensional smooth compact manifold $M$.\\
    Then the weak stable and weak unstable foliations are $C^\infty$ transversely holomorphic foliations.
\end{corollary}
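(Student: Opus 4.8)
The plan is to reduce the statement to Theorem \ref{thm:Fstrholo} (and to its symmetric counterpart obtained by exchanging the stable and unstable data) by means of a dimension count, since that theorem already delivers transverse holomorphy of a weak foliation as soon as the \emph{opposite} strong distribution is of complex dimension one. So the only thing special to dimension five that I need to extract is that both $E^s$ and $E^u$ are complex lines.

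First I would exploit the hyperbolic splitting $TM = \mathbb{R}X \oplus E^s \oplus E^u$ from Definition \ref{def:2.6}. Since $\dim M = 5$, the strong distributions satisfy $\dim_{\mathbb R} E^s + \dim_{\mathbb R} E^u = 4$. By the remark following Corollary \ref{cor:conthololeaves}, both $E^s$ and $E^u$ are complex vector bundles over $M$, hence each has even real rank. As an Anosov flow has nontrivial strong stable and strong unstable distributions, each real rank is at least $2$; together with the constraint that their sum equals $4$, this forces $\dim_{\mathbb R} E^s = \dim_{\mathbb R} E^u = 2$, i.e. both $E^s$ and $E^u$ are of complex dimension one. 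With this in hand I would apply Theorem \ref{thm:Fstrholo} directly, using that $E^u$ is of complex dimension one, to obtain that $\mathcal{F}^{ws}$ is transversely holomorphic; then, applying the same theorem after exchanging the roles of the stable and unstable data (now using that $E^s$ is of complex dimension one), to obtain that $\mathcal{F}^{wu}$ is transversely holomorphic. The $C^\infty$ regularity of both weak foliations is finally inherited from the corollary immediately preceding this one, which upgrades transverse holomorphy of a weak foliation to $C^\infty$ smoothness through the uniform regularity of its holonomy.

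The argument is essentially immediate given the two preceding results, so there is no genuine obstacle; the only step demanding any care is the dimension bookkeeping that forces both strong distributions to be complex lines. For this one really needs \emph{both} that $E^s, E^u$ are complex (hence even-rank) bundles \emph{and} that each is nonzero, since a mere count of real dimensions would otherwise also permit the split $(\dim_{\mathbb R} E^s, \dim_{\mathbb R} E^u) = (0,4)$ or $(4,0)$, which the nontriviality of the hyperbolic splitting excludes. It is worth noting that five is precisely the dimension in which both distributions are compelled to be one-dimensional over $\mathbb{C}$: in higher dimensions one of them could a priori have larger complex dimension, Theorem \ref{thm:Fstrholo} would no longer apply to the corresponding weak foliation, and the clean conclusion would fail.
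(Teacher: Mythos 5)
Your proposal is correct and follows essentially the same route as the paper: the paper's proof likewise observes that in dimension five one necessarily has $\dim_{\mathbb C}E^{u}=\dim_{\mathbb C}E^{s}=1$, applies Theorem \ref{thm:Fstrholo} together with the preceding corollary to $(\varphi^t)$ for $\mathcal{F}^{ws}$, and then to the reversed flow $(\varphi^{-t})$ (which swaps $E^{s}$ and $E^{u}$) for $\mathcal{F}^{wu}$. Your more explicit dimension bookkeeping (even real rank from the complex structure plus nontriviality of both strong distributions) only spells out what the paper leaves as ``necessarily.''
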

\begin{proof}
    Necessarily $\dim_\mathbb C E^{u}=\dim_\mathbb C E^{s}=1$. Since $(\varphi^{-t})_{t \in \mathbb R}$ is a transversely holomorphic Anosov flow whose stable distribution is $E^{u}$ and unstable distribution is $E^{s}$, the previous Theorem and Corollary applied to $(\varphi^{-t})_{t \in \mathbb R}$ gives the result for the weak unstable foliation.
\end{proof}
\begin{corollary}
\label{cor:flotrholocarac}
    Let $(\varphi^t)_{t\in \mathbb R}$ an Anosov flow on a five dimensional smooth compact manifold $M$.\\
    Then its orbit foliation is transversely holomorphic if and only if its weak stable and weak unstable foliations are transversely holomorphic
\end{corollary}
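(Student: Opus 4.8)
The forward implication requires no new work: if the orbit foliation $\Phi$ is transversely holomorphic, then $(\varphi^t)$ is by definition a transversely holomorphic Anosov flow on a smooth compact five-manifold, and Corollary \ref{cor:4.3.1} immediately gives that $\mathcal{F}^{ws}$ and $\mathcal{F}^{wu}$ are (even $C^\infty$) transversely holomorphic. So the entire content lies in the converse, which I plan to establish through point $(ii)$ of Proposition \ref{prop:caractrholo}: given two points on the same orbit, I will produce transversals to $\Phi$ together with $C^r$ identifications with open sets of $\mathbb{C}^2$ that conjugate the flow-holonomy to a holomorphic map.

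Fix $x\in M$, $t\in\mathbb{R}$ and set $y=\varphi^t(x)$. The geometric heart of the argument is that, since $\dim_{\mathbb C}E^u=\dim_{\mathbb C}E^s=1$ in dimension five, a flow-transversal carries two complementary one-complex-dimensional transverse structures, one from each weak foliation. Concretely, using the local product structure (Proposition \ref{prop:locprodstru}) I choose a transversal $S_x$ to the flow of the form (strong stable disk) $\times$ (strong unstable disk) through $x$, and similarly $S_y$ at $y$. A small strong unstable disk is a transversal to $\mathcal{F}^{ws}$, so the transverse holomorphic structure of $\mathcal{F}^{ws}$ furnishes a holomorphic coordinate on it; letting this coordinate label the local $\mathcal{F}^{ws}$-leaves produces a map $w\colon S_x\to\mathbb{C}$ whose fibres are the traces $S_x\cap\mathcal{F}^{ws}$, tangent to $E^s$. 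Symmetrically, a strong stable disk is a transversal to $\mathcal{F}^{wu}$, and its holomorphic coordinate yields $z\colon S_x\to\mathbb{C}$ with fibres $S_x\cap\mathcal{F}^{wu}$, tangent to $E^u$. Because $\ker dz=E^u$, $\ker dw=E^s$ and $E^s\oplus E^u=TS_x$, the pair $\Psi_x=(z,w)\colon S_x\to\mathbb{C}^2$ is an injective submersion onto an open set, i.e. a chart; I define $\Psi_y$ at $y$ in the same way.

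The remaining task is to see that the flow-holonomy $H=h(\beta^x_t)^{S_y,S_x}$ is holomorphic in these charts. Since the flow preserves both weak foliations, $H$ carries $\mathcal{F}^{ws}$-leaves to $\mathcal{F}^{ws}$-leaves and $\mathcal{F}^{wu}$-leaves to $\mathcal{F}^{wu}$-leaves, so it descends in each coordinate separately: there are maps $f,g$ with $w\circ H=f\circ w$ and $z\circ H=g\circ z$. Exactly as in the proof of Corollary \ref{cor:holofloFsu}, the restriction of the flow-holonomy to the strong-unstable transverse direction is the $\mathcal{F}^{ws}$-holonomy of $\beta^x_t$ with respect to the unstable-disk transversals; hence $f$ is that holonomy read in the $w$-coordinates and is holomorphic by the transverse holomorphicity of $\mathcal{F}^{ws}$ (point $(ii)$ of Proposition \ref{prop:caractrholo} applied to $\mathcal{F}^{ws}$), and symmetrically $g$ is the $\mathcal{F}^{wu}$-holonomy in the $z$-coordinates, also holomorphic. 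Therefore $\Psi_y\circ H\circ\Psi_x^{-1}\colon(z,w)\mapsto(g(z),f(w))$ is holomorphic, and point $(ii)$ of Proposition \ref{prop:caractrholo} yields that $\Phi$ is transversely holomorphic.

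The step I expect to be the main obstacle is regularity. As $E^s$ and $E^u$ are only Hölder-continuous, the weak foliations are a priori $C^0$ foliations with smooth leaves, so I must verify that the two transverse coordinates $z,w$ assemble into a chart $\Psi_x$ of the regularity demanded by point $(ii)$ of Proposition \ref{prop:caractrholo} (the orbit foliation $\Phi$ being itself $C^\infty$), and that the ``descends in each coordinate'' step is legitimate at the level of germs rather than merely set-theoretically. The clean way to handle both is to keep the strong stable and strong unstable disks as transversals throughout, as in Corollary \ref{cor:holofloFsu}, so that the flow-holonomy literally restricts, along these disks, to the two weak holonomies, which the hypothesis supplies as holomorphic maps with their $C^\infty$ transverse coordinates; the product structure then packages $f$ and $g$ into a single holomorphic map without ever having to differentiate a transverse coordinate across the Hölder directions.
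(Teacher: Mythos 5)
Your overall strategy coincides with the paper's: the forward direction is exactly Corollary \ref{cor:4.3.1}, and the converse is obtained by combining the two transverse holomorphic structures of $\mathcal{F}^{ws}$ and $\mathcal{F}^{wu}$ into a single transverse structure for $\Phi$, using that the two weak tangent distributions meet precisely in $\mathbb R X$. The paper does this at the level of submersion atlases (setting $f_i=(f_i^u,f_i^s)$ and $\gamma_{ij}=\gamma_{ij}^u\otimes\gamma_{ij}^s$), while you do it at the level of holonomy via point $(ii)$ of Proposition \ref{prop:caractrholo}; these two formulations are interchangeable, and your observation that the transition/holonomy maps split as a product $(z,w)\mapsto(g(z),f(w))$ with each factor holomorphic is the same computation.

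There is, however, one genuine gap: your treatment of the regularity issue, which you correctly single out as the main obstacle, does not actually resolve it. The chart $\Psi_x=(z,w)$ is built by projecting along the local leaves of $\mathcal{F}^{wu}$ and $\mathcal{F}^{ws}$, and the ``product structure'' you fall back on (Proposition \ref{prop:locprodstru}) is only a homeomorphism. Keeping the strong stable and unstable disks as transversals tells you what the flow-holonomy does on two $2$-dimensional slices of $S_x$, but to ``package $f$ and $g$ into a single holomorphic map'' on the full $4$-dimensional transversal you still need $(z,w)$ to be a $C^\infty$ chart --- equivalently, you need the local submersions defining $\mathcal{F}^{ws}$ and $\mathcal{F}^{wu}$ to be smooth rather than merely continuous, and this is not supplied by the hypothesis alone, since the weak foliations are a priori only $C^0$ foliations with smooth leaves and H\"older transverse distributions. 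The missing ingredient, which the paper invokes explicitly, is that a transversely holomorphic foliation has holomorphic, hence uniformly $C^\infty$, holonomy, so by Lemma 3.2 of \cite{sadovskaya_uniformly_nodate} each weak foliation is in fact a $C^\infty$ foliation; only then do the two defining submersions combine into a smooth submersion $f_i=(f_i^u,f_i^s)$ with $\ker df_i=\mathbb R X$. Once you insert that step (together with the small remark that the resulting atlas is compatible with $\Phi$ because the line field $\mathbb R X$ is uniquely integrable), your argument closes and agrees with the paper's.
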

\begin{proof}
    The necessary condition is the previous corollary.
    Conversely, assume that both the weak stable and weak unstable foliations are transversely holomorphic.
    By definition, since both foliations are smooth by Lemma 3.2 of \cite{sadovskaya_uniformly_nodate},
    there exist an atlas of submersions $(U_i, f_i^s)_{i\in I}$ and holomorphic transition maps $\gamma^s_{ij}$ defining the transverse holomorphic structure of the weak stable foliation. There also exist an atlas of submersions $(V_i, f_i^u)_{i\in J}$ and holomorphic transition maps $\gamma^u_{ij}$ defining the transverse holomorphic structure of the weak unstable foliation. 
    By taking a refinement if necessary, we can assume $I=J$ and $V_i=U_i$ for every $i\in I$.
    We define:
    \begin{align*}
        f_i &= (f_i^u, f_i^s)\\
        \gamma_{ij}&=\gamma_{ij}^u \otimes \gamma_{ij}^s.
    \end{align*}
    By Hartog's theorem, $\gamma_{ij}$ is holomorphic because it is holomorphic along each variable. 
    Also, since for every $p\in U_i$
    $$\ker(df_i^s)\cap\ker(df_i^u)=(E^{s}_p \oplus \langle X_p \rangle )\cap (E^{u}_p \oplus \langle X_p \rangle )=\langle X_p \rangle,$$
    $f_i: U_i \to \mathbb C^2$ is a submersion.
    This defines an atlas of submersions $(U_i, f_i)$ onto $\mathbb C^2$ with holomorphic transition maps $\gamma_{ij}$ which is compatible with the one defining the orbit foliation of $(\varphi^t)_{t\in \mathbb R}$, since the latter is uniquely integrable.
    \end{proof}
\section{Complex affine structures of strong leaves and transverse complex projective structures of weak foliations}
\label{sec:6}
We first recall some notions about (transverse) $(G,X)$-structures for (foliated) manifolds (see \cite{thurston_geometry_2022}, \cite{cordero_examples_1990}, \cite{haefliger_groupoides_1984}, \cite{fang_rigidity_2007}, \cite{godbillon_feuilletages_1991}).
\begin{definition}
\label{def:trGXstru}
Let $X$ a smooth connected manifold of dimension $q$ and $G$ a group of diffeomorphisms of $X$ acting analytically on $X$.
A $C^r$-foliation $\mathcal{F}$ of dimension $k \in \mathbb N$ on a smooth manifold of dimension $k+q$ ($q\in \mathbb N^*)$ admits a \emph{transverse $(G,X)$-structure} if there exists an atlas $(U_i, \psi_i)$ of foliated charts compatible with the one defining $\mathcal{F}$ such that:
        \begin{enumerate}
            \item For all $i$, $\psi_i(U_i)=U_i^1 \times U_i^2$, where $U_i^1 \subset \mathbb R^k$ is a connected open ball and $U_i^2$ is the domain of a connected chart of $X$,
            \item For all $i,j$, there exist a map $f_{ij}$ and an element $h_{ij} \in G$, 
                \[\forall(x,y) \in \psi_i(U_i \cap U_j) \subset \mathbb R^k \times X, \; \psi_i \circ \psi_j^{-1}(x,y)=(f_{ij}(x,y), h_{ij}(y)).\]
        \end{enumerate}
        If $r\geq1$, such a structure can be given by a maximal $C^r$ atlas $(U_i,s_i)_i$ of submersions compatible with the one defining $\mathcal{F}$ such that:
        \begin{enumerate}
            \item For all $i$, $s_i:U_i \to X$ is a submersion,
            \item For all $i,j$, there exist an element $\gamma_{ij} \in G$ such that $\restr{s_i}{U_i\cap U_j} = \gamma_{ij}\cdot\restr{s_j}{U_i\cap U_j}.$
        \end{enumerate}
    \end{definition}
    For example, if $X= \mathbf{P}^1(\mathbb C)$ and $G=\mathrm{PSL}(2,\mathbb C)$ is the group of projective automorphisms (homographies) of $\mathbf{P}^1(\mathbb C)$, then a transverse $(G,X)$-structure is exactly a transverse projective structure of complex dimension one. \\
    The following is a characterization of transverse $(G,X)$-structures in terms of holonomy maps.
    \begin{proposition}
    \label{prop:caracGXstru}
    Let $X$ a smooth manifold of dimension $q$ and $G$ a group acting analytically on $X$. Let also a $C^r$-foliation $\mathcal{F}$ of dimension $k \in \mathbb N$ on a smooth manifold of dimension $k+q$ $(q\in \mathbb N^*)$.\\
    Then the following assertions are equivalent:
    \begin{enumerate} 
        \item $\mathcal{F}$ admits a transverse $(G,X)$-structure ;
        \item For any points $x, y$ on the same leaf of $\mathcal{F}$, there exist (hence for all) small transversals to the foliation $T$ and $T'$ at $x$ and $y$ respectively, there exist diffeomorphisms $\Psi: T\to V$ and $\Psi' \to V'$, where $V, V'$ are open sets of $X$, such that for any $\mathcal{F}$-path $\alpha$ joining $x$ and $y$, 
        \[\Psi' \circ h(\alpha)^{T',T} \circ \Psi^{-1}
        \]
        is the restriction of an element of $G$.
    \end{enumerate}
    \end{proposition}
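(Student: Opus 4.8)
The plan is to follow, almost verbatim, the argument proving Proposition \ref{prop:caractrholo}, replacing the property \emph{``being holomorphic''} by the property \emph{``being the restriction of an element of $G$''} throughout, and inserting one extra observation that exploits the analyticity of the $G$-action.

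For the implication $(1)\Rightarrow(2)$, I would use the second point of Remark \ref{rem:2.2}: with respect to the natural transversals provided by a foliated $(G,X)$-atlas, the holonomy of a path is expressed in local coordinates as a composition of the transition maps, which here are the maps $h_{ij}$ (or $\gamma_{ij}$), each an element of $G$. Taking for $\Psi$ and $\Psi'$ the transverse parts of the corresponding charts, the conjugated holonomy $\Psi'\circ h(\alpha)^{T',T}\circ\Psi^{-1}$ becomes a finite composition $h_{k,k-1}\circ\cdots\circ h_{2,1}$. Since $G$ is a group, this composition is again an element of $G$, and $(2)$ follows. The passage from ``there exist $T,T'$'' to ``for all $T,T'$'' is handled exactly as in the proof of Proposition \ref{prop:caractrholo}: if $\Psi,\Psi'$ work for a pair $T,T'$ at $x,y$ and $S,S'$ is any other pair, one sets $\Phi=\Psi\circ h(\overline{x})^{T,S}$ and $\Phi'=\Psi'\circ h(\overline{y})^{T',S'}$, and property $(iii)$ of holonomy maps yields $\Phi'\circ h(\alpha)^{S',S}\circ\Phi^{-1}=\Psi'\circ h(\alpha)^{T',T}\circ\Psi^{-1}$, which is a restriction of an element of $G$.

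For the converse $(2)\Rightarrow(1)$, I would again reproduce the construction of Proposition \ref{prop:caractrholo}. Using that $(2)$ holds for all transversals, choose a covering family $(T_\beta)_\beta$ of small transversals together with diffeomorphisms $\Psi_\beta:T_\beta\to V_\beta\subset X$ such that, for any $\mathcal F$-path $\alpha$ from $x\in T_\beta$ to $y\in T_{\beta'}$, the map $\Psi_{\beta'}\circ h(\alpha)^{T_{\beta'},T_\beta}\circ\Psi_\beta^{-1}$ is the restriction of an element of $G$. Given a foliated chart $(U,\psi)$ centered at $x$, with $\psi(U)=U^1\times U^2$ and $T=\psi^{-1}(\{0\}\times U^2)$, pick a member $S$ of the family meeting $\mathcal F_x$ at a point $y$, a path $\alpha$ from $x$ to $y$, and replace $\psi$ by $\psi^*=(\operatorname{Id}\otimes(\Psi\circ h(\alpha)^{S,T}\circ\restr{\psi^{-1}}{U^2}))\circ\psi$. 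The transverse part of any transition map of the resulting atlas is then a conjugated holonomy of the above form, hence a restriction of an element of $G$.

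The step I expect to be the main obstacle --- and the only genuine difference from the transversely holomorphic case, where the structure ``group'' is merely a pseudo-group of local biholomorphisms --- is to upgrade these local transition maps to honest elements of $G$ satisfying the cocycle condition $h_{ij}h_{jk}=h_{ik}$ in $G$. This is precisely where the hypothesis that $G$ acts \emph{analytically} on the connected manifold $X$ enters: two elements of $G$ that coincide on a nonempty open subset coincide everywhere, so each transverse transition map determines a \emph{unique} element of $G$. Uniqueness then promotes the identities between restrictions --- which hold because holonomies compose along concatenated paths, by property $(i)$ of holonomy maps --- to identities in $G$, yielding a bona fide transverse $(G,X)$-structure and completing the proof.
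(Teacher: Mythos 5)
Your proposal is correct and follows essentially the same route as the paper, which simply states that the proof is analogous to that of Proposition \ref{prop:caractrholo}. Your closing observation --- that the analytic action of $G$ on the connected manifold $X$ makes the element of $G$ extending each transverse transition map unique, so that the composition identities satisfied by the holonomy restrictions become genuine identities in $G$ --- is precisely the one point where the analogy needs the extra hypothesis, and you justify it correctly.
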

    \begin{proof}
    The proof is analogous to that of transversely holomorphic foliations (see Proposition \ref{prop:caractrholo}).
    \end{proof}
    From a transverse $(G,X)$-structure, we can define two fundamental maps (see \cite{thurston_geometry_2022}, \cite{godbillon_feuilletages_1991}).
    \begin{definition}
    \label{def:devmap}
    Let a foliation $\mathcal{F}$ on a smooth manifold $M$ with a transverse $(G,X)$-structure.\\
    There exists a (non-unique) submersion, from the universal covering space $\widetilde{M}$ of $M$,
    \[ D: \widetilde{M} \to X\]
    called the \emph{developing map} of the transverse $(G,X)$-structure, and a group homomorphism, from the fundamental group $\pi_1(M)$ of $M$,
    \[H: \pi_1(M) \to G\]
    called the \emph{holonomy representation} of $D$ such that:
    \begin{enumerate}
        \item The lifted foliation $\widetilde{\mathcal{F}}$ of $\mathcal{F}$ on $\widetilde{M}$ is defined by $D$; that is the leaves of $\widetilde{\mathcal{F}}$ are precisely the connected components of the fibers of $D$,
        \item $\forall \widetilde{x} \in \widetilde{M}, \; \forall \gamma \in \pi_1(M), \quad D(\gamma \cdot \widetilde{x})= H(\gamma)D(\widetilde{x}).$
    \end{enumerate}
    Moreover, all other developing maps, of the same transverse $(G,X)$-structure, and holonomy representations corresponding to it, are exactly of the form $g\circ D$ and $g\circ H\circ g^{-1}$ respectively, where $g \in G$.\\
    A transverse $(G,X)$-structure for a foliation $\mathcal{F}$ on a manifold $M$ is called \emph{complete} if $\widetilde{M} \xrightarrow{D} X$ is a smooth fiber bundle.
    \end{definition}In case of a $(G,X)$-structure on $M$ (i.e. a transverse $(G,X)$-structure for the foliation by points), $D$ is a local diffeomorphism and the structure is complete precisely if $\widetilde{M} \xrightarrow{D} X$ is a covering map. If $X$ is simply connected, the structure is complete precisely if $D$ is a diffeomorphism.
\subsection{Complex affine structures for the strong leaves}
\label{subsec:6.1}
Here we strengthen the results of Proposition \ref{prop:complexFsuFss} in case the strong unstable distribution is of complex dimension one, by proving that each strong unstable leaf can be uniquely equipped with a complex affine structure invariant by the flow. 
The main result is:

\begin{theorem}
\label{thm:compaffstru}
Let $(\varphi^t)_{t\in \mathbb R}$ a transversely holomorphic Anosov flow on a smooth compact manifold $M$. Suppose the unstable distribution $E^{u}$ is of complex dimension $1$.\\
Then there exists a unique family of complex affine structures on the strong unstable leaves $(\mathcal{F}^{u}_{x})_{x\in M}$, holomorphically compatible with the initial complex structures introduced in Proposition \ref{prop:complexFsuFss} such that: for every $x \in M$ and every $t \in \mathbb R$, the map 
    $\restr{\varphi^t}{\mathcal{F}^{u}_{x}}: \mathcal{F}^{u}_{x} \to \mathcal{F}^{u}_{\varphi^t(x)}$ 
    is affine.\\
Moreover, each of these complex affine structures is complete.
\end{theorem}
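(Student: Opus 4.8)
The plan is to realise the desired affine structures as a single flow-invariant section of an appropriate jet bundle over $M$, produced as the unique fixed point of a contraction built from the flow. By Proposition \ref{prop:complexFsuFss} each leaf $\mathcal{F}^{u}_x$ is a holomorphic curve on which $\restr{\varphi^t}{\mathcal{F}^{u}_x}$ is holomorphic, and by Corollary \ref{cor:conthololeaves} these complex structures form a continuous foliation with holomorphic leaves, varying in the leafwise $C^\infty$ topology. Using Lemma \ref{lem:affstruL} I would encode a complex affine structure on $\mathcal{F}^{u}_x$, holomorphically compatible with its complex structure, by its \emph{affine connection}: in a leafwise holomorphic coordinate one records the function $\Gamma = c''/c'$, where $c$ is a local affine chart, equivalently the value of the $2$-jet of a developing chart modulo affine reparametrisation of the target $\mathbb{C}$. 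Since in complex dimension one every holomorphic affine connection on a simply connected curve is automatically flat and torsion-free, and the strong unstable leaves are simply connected, such a connection integrates to a genuine affine structure with a globally defined developing map $\mathcal{F}^{u}_x \to \mathbb{C}$. The problem thus becomes: find a continuous section $\Gamma$ of the bundle of leafwise affine connections, holomorphic along the leaves, invariant under the flow.

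Next I would write the invariance condition explicitly. By the transformation law of an affine connection under the (leafwise holomorphic) map $\restr{\varphi^t}{\mathcal{F}^{u}_x}$, requiring it to be affine is the cocycle equation
\begin{equation*}
\Gamma(x) = \Gamma(\varphi^t(x))\,(\varphi^t)'(x) + \frac{(\varphi^t)''(x)}{(\varphi^t)'(x)},
\end{equation*}
where the derivatives are taken along the leaf. Substituting $\varphi^{-t}(x)$ for $x$ and solving for $\Gamma(x)$ yields the affine operator
\begin{equation*}
(\Psi_t\Gamma)(x) = \frac{1}{(\varphi^t)'(\varphi^{-t}(x))}\left(\Gamma(\varphi^{-t}(x)) - \frac{(\varphi^t)''(\varphi^{-t}(x))}{(\varphi^t)'(\varphi^{-t}(x))}\right),
\end{equation*}
whose fixed points are precisely the connections invariant under time $t$. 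The crucial point is that the Anosov expansion on $E^{u}$ gives $\lvert(\varphi^t)'\rvert \geq \mu^t/C$ along unstable leaves (from the inequalities \ref{eq:ano}), so the linear part of $\Psi_t$ is multiplication by a factor of modulus $\leq C\mu^{-t}$, which is $<1$ for $t$ large. On the complete metric space of continuous, leafwise holomorphic sections of the connection bundle with the sup norm (the inhomogeneous term is continuous and leafwise holomorphic by the $C^\infty$ dependence of Corollary \ref{cor:conthololeaves}, so $\Psi_t$ preserves this space), $\Psi_t$ is a contraction, and the Banach fixed point theorem provides a unique invariant section $\Gamma^\ast$.

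I would then upgrade invariance from a single time to all times: the $2$-jet cocycle of $\varphi^t$ obeys the chain rule, so $\Psi_{t+s}=\Psi_t\circ\Psi_s$ and the $\Psi_t$ commute; hence $\Psi_s\Gamma^\ast$ is again a fixed point of $\Psi_{t_0}$ and therefore equals $\Gamma^\ast$ for every $s$. Thus $\Gamma^\ast$ is invariant under the whole flow, yielding the family of complex affine structures on the $\mathcal{F}^{u}_x$ for which every $\restr{\varphi^t}{\mathcal{F}^{u}_x}$ is affine; holomorphic compatibility with Proposition \ref{prop:complexFsuFss} is built into the construction, and uniqueness is exactly uniqueness of the fixed point. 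For completeness I would attach to each $x$ the \emph{completeness radius} $\rho(x)\in(0,+\infty]$ of the developing map of $\mathcal{F}^{u}_x$ at $x$ (the supremum of affine-lengths up to which geodesics from $x$ extend). By compactness of $M$ and continuity of the structure, $\rho$ is bounded below by a uniform $\rho_0>0$, while affineness of the flow and contraction of $E^{u}$ backward in time give $\rho(\varphi^{-t}(x)) = \lvert(\varphi^{-t})'(x)\rvert\,\rho(x) \leq C\mu^{-t}\rho(x)$. If some $\rho(x)$ were finite, this would force $\rho(\varphi^{-t}(x))\to 0$, contradicting the lower bound $\rho_0$; hence $\rho\equiv+\infty$, every developing map is a biholomorphism onto $\mathbb{C}$, and the structures are complete.

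The step I expect to be the main obstacle is the functional-analytic set-up of the second paragraph: choosing the jet bundle and the norm so that $\Psi_t$ genuinely preserves the space of continuous sections that are holomorphic along the leaves while being uniformly contracting there. This rests on reconciling the merely continuous transverse regularity of the unstable foliation with its leafwise $C^\infty$ dependence from Corollary \ref{cor:conthololeaves}, and on matching the flow's expansion rate against the weight by which an affine connection transforms.
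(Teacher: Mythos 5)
Your strategy coincides with the paper's: Theorem \ref{thm:compaffstru} is restated there as Theorem \ref{thm:AffStruAlternatif}, the affine structures are encoded as a $\mathbb C^*$-equivariant section of the jet bundle $M_2 \xrightarrow{\rho_2} M_1$ (your connection coefficient $\Gamma = c''/c'$ is exactly the local representative $w_i$ of such a section, and your cocycle equation is the paper's ``global section equation'' combined with flow-invariance), the invariant section is produced by the contraction mapping principle with contraction factor $\mu^{-t}$ coming from the expansion on $E^{u}$, invariance for all times follows from the semigroup property of the operators, and completeness is obtained from a uniform lower bound on the radius of the image of normalized developing maps together with backward contraction.

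There is, however, a genuine gap in your second paragraph, and it sits exactly where you suspected. You apply the Banach fixed point theorem on ``the complete metric space of continuous, leafwise holomorphic sections of the connection bundle'', but you never show this space is nonempty, and its non-emptiness is essentially the existence half of the theorem: a global continuous section that is holomorphic along every leaf already equips every leaf with a holomorphic affine structure varying continuously. The chart-induced flat structures do not patch (they disagree on overlaps precisely by the inhomogeneous term of your cocycle), and a partition of unity destroys leafwise holomorphy. The paper resolves this by running the contraction on the larger space $\mathcal{H}$ of all continuous $\mathbb C^*$-equivariant sections --- nonempty because it is the section space of a fiber bundle with contractible fiber $\mathbb C$ over $M$ --- and then proving \emph{a posteriori} that the unique fixed point is leafwise holomorphic, by exhibiting it on each plaque as the uniform limit of the push-forwards $(\varphi^k)_2 \circ s_{i_k} \circ (\varphi^{-k})_1$ of the chart-induced flat local sections, which are leafwise holomorphic; the uniform convergence again uses the factor $\mu^{-k}$. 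You need either this detour or some other construction of one leafwise holomorphic continuous section to start from. Two smaller points: a ``sup norm'' on $\Gamma$ itself is not invariantly defined, since a connection coefficient is not a tensor --- only differences of connections are --- so the metric must be defined on differences and normalized against $\|v\|$ for $v \in T\mathcal{F}^{u}\setminus 0$, which is what makes the supremum finite (by homogeneity it reduces to the compact unit sphere bundle of $E^{u}$) and the contraction constant exactly $\mu^{-t}$; and your completeness argument gives surjectivity of the developing maps but not their injectivity, which the paper obtains separately from a uniform local injectivity scale $\epsilon_0$ together with $d(\varphi^{-k}(x),\varphi^{-k}(y)) \to 0$ for $x,y$ on the same strong unstable leaf.
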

We first give some results on jets of diffeomorphisms and how they are linked to complex affine structures. This will lead to an alternative version of Theorem \ref{thm:compaffstru}, in Theorem \ref{thm:AffStruAlternatif}, which we will prove in the next pages.\\

Let $M$ and $N$ two complex manifolds. We note
\[\mathcal{A}_h(M,N)=\{(x, U, f), \, x\in M, \, U \text{ open neighborhood of } x \text{ in } M,\,  f \in \mathcal{H}(U,N)\}.\]
where $\mathcal{H}(U,N)$ denote the set of all holomorphic maps between $U$ and $N$.\\
For $r\in \mathbb N$, we define on $\mathcal{A}_h(M,N)$ an equivalence relation by: $(x, U, f) \sim_r (y, V, g)$ if and only if $x=y, \,  f(x)=g(x)$, there exists (hence for all) $(V, \psi)$ a holomorphic local chart of $M$ centered at $x$, there exists (hence for all) $(W, \theta)$ a holomorphic local chart  of $N$ at $f(x)$ such that the partial derivatives of $\theta\circ f \circ \psi^{-1} \text{ and } \theta \circ g \circ \psi^{-1}$ at $0$ agree up to (and including) the $r$-th order.\\
The equivalence class of $(x, U, f)$ for the equivalence relation $\sim_r$ is noted $\overline{f}^r$ by abuse of notation and is called the \emph{$r$-th jet of the map $f$}. Also, we will use the same notation $\overline{f}^r$ for maps $f$ with different target manifolds, but it will be clear from the context which equivalence relation we refer to.\\

Let $L$ a complex manifold of complex dimension $1$. For $r \in \mathbb N$, we note 
\[
L_r:= \{ \overline{(0, U, f)}^r, \, (0, U, f) \in \mathcal{A}_h(\mathbb{C},L), \, f'(0) \neq 0 \}
\]
the set of $r$-jets of holomorphic diffeomorphisms between open sets of $0 \in \mathbb C^*$ and open sets in $L$.
\begin{lemma}
\label{lem:jetL}
\leavevmode
    \begin{enumerate} 
        \item If $\varphi:L\to L'$ is a holomorphic diffeomorphism, then it induces, for each $r\in \mathbb N$, a homeomorphism 
        \[\varphi_r: \left \{\begin{array}{ccc}
        L_r & \to & L'_r \\
        \overline{f}^r & \mapsto & \overline{\varphi \circ f}^{r}
    \end{array} \right. ;\]
        
        \item $L_0$ is naturally identified with $L$, and $L_1$ is naturally identified with the set of non-zero tangent vectors to L, that is $TL\setminus 0$ ;
        \item If $\varphi:L\to L'$ is a holomorphic diffeomorphism, then $\varphi_1:L_1 \to L'_1$ is identified with the action of the differential of $\varphi$ on non-zero tangent vectors to $L$ ;
        \item There is a chain of fiber bundles:
        \[\cdots \xrightarrow{} L_2 \xrightarrow{\rho_2} L_1 \xrightarrow{\rho_1} L_0\]
        where, for $k \in \mathbb{N^*}$, 
        \[ \rho_k: \left \{\begin{array}{ccc}
        \, L_k & \to & L_{k-1} \\
        \overline{f}^k & \mapsto & \overline{f}^{k-1}
    \end{array} \right.; \]
        \item $L_1  \xrightarrow{\rho_1} L_0$ is a principal $\mathbb C^*$-bundle; the action of $\mathbb C^*$ on $L_1$ is given by:
    \[\left \{\begin{array}{lcl}
        \, \mathbb C^* \times L_1 & \to & L_{1} \\
        (\lambda,\overline{f}^1) & \mapsto & \lambda \cdot \overline{f}^1:= \overline{f(\lambda \cdot \,)}^{1}
    \end{array} \right. \]
    where $f(\lambda \cdot \,) = \left (z \mapsto f(\lambda z) \right ) $ ;
    \item $L_2  \xrightarrow{\rho_1 \circ \rho_2} L_0$ is a principal $\mathbb C^* \times \mathbb C$-bundle; the action of $\mathbb C^* \times \mathbb C$ on $L_2$ is given by:
    \[\left \{\begin{array}{lcl}
        \, \mathbb C^* \times \mathbb C \times L_2 & \to & L_{2} \\
        ((\lambda,\mu),\overline{f}^2) & \mapsto & (\lambda, \mu) \cdot \overline{f}^2:=\overline{f(\frac{\lambda \cdot \,}{1+\mu \, \cdot\,})}^{2}
    \end{array} \right. \]
    where $f(\frac{\lambda \cdot \,}{1+\mu\cdot\,}) = \left (z \mapsto f(\frac{\lambda z }{1+\mu z}) \right) $ ;
    \item $L_2  \xrightarrow{\rho_2} L_1$ is a principal $\mathbb C$-bundle; the action of $\mathbb C$ on $L_2$ is given by:
    \[\left \{\begin{array}{lcl}
        \, \mathbb C \times L_2 & \to & L_2 \\
        (\mu,\overline{f}^2) & \mapsto & (1, \mu) \cdot \overline{f}^2 = \overline{f(\frac{\cdot}{1+\mu\,\cdot\,})}^{2}
    \end{array} \right. \]
    \end{enumerate}
\end{lemma}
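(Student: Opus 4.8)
The plan is to reduce every assertion to an explicit description of $L_r$ in a local holomorphic chart, and then read off each claim from the way Taylor coefficients transform under composition and under reparametrization of the source. First I would fix a holomorphic chart $\theta:W\to\mathbb C$ on $L$. For a germ $(0,U,f)$ representing a point of $L_r$ whose base point $f(0)$ lies in $W$, the composite $\theta\circ f$ is holomorphic near $0$ with Taylor expansion $\theta\circ f(z)=a_0+a_1z+\cdots+a_rz^r+o(z^r)$, and $f'(0)\neq 0$ reads $a_1\neq 0$. Since $\sim_r$ compares, by definition, only the partial derivatives up to order $r$ in charts, the assignment $\overline{f}^r\mapsto(a_0,a_1,\dots,a_r)$ is a bijection from the part of $L_r$ over $W$ onto $\theta(W)\times\mathbb C^*\times\mathbb C^{r-1}$. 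I would take these as the trivializing charts of $L_r$; the transition maps between two such descriptions are governed by the higher chain rule (Fa\`a di Bruno's formula) applied to $(\theta_2\circ\theta_1^{-1})\circ(\theta_1\circ f)$, whose coefficients are polynomial in the $a_i$ and depend holomorphically on the base point, so that $L_r$ inherits a manifold structure and all the statements become coordinate computations.

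With this description in hand, assertions (1)--(3) are immediate. For (1), composition with a biholomorphism $\varphi$ preserves $\sim_r$ by the chain rule, $\varphi_r$ has inverse $(\varphi^{-1})_r$, and in the above coordinates both maps are given by holomorphic (hence continuous) formulas, so $\varphi_r$ is a homeomorphism. For (2), the $0$-jet retains only $f(0)\in L$, giving $L_0\cong L$, while the $1$-jet retains $(f(0),f'(0))$ with $f'(0)\neq 0$, i.e. the nonzero tangent vector $d_0f(\partial_z)\in T_{f(0)}L\setminus 0$; this identification is chart-independent precisely because under a change of chart the pair $(a_0,a_1)$ transforms exactly as a point of $TL\setminus 0$ (base by the transition map, derivative by its differential). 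For (3), the chain rule gives $(\varphi\circ f)'(0)=d\varphi(f'(0))$, which is exactly the statement that $\varphi_1$ is $d\varphi$ acting on $TL\setminus 0$.

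The assertions (4)--(7) are then computations with reparametrizations of the source, each of which manifestly descends to jets and preserves the condition $a_1\neq 0$. For (4), in the above coordinates $\rho_k$ is $(a_0,\dots,a_k)\mapsto(a_0,\dots,a_{k-1})$, visibly a locally trivial (affine) fibration. For (5), the reparametrization $f\mapsto f(\lambda\,\cdot\,)$ sends $a_1\mapsto\lambda a_1$ and fixes $a_0$, so $\mathbb C^*$ acts freely and transitively on the fibers $\mathbb C^*$ of $\rho_1$. For (6) and (7) I would expand $w(z)=\frac{\lambda z}{1+\mu z}=\lambda z-\lambda\mu z^2+o(z^2)$ and substitute into $f(z)=a_0+a_1z+a_2z^2+o(z^2)$, obtaining $b_0=a_0$, $b_1=\lambda a_1$, $b_2=\lambda^2a_2-\lambda\mu a_1$; since $a_1\neq 0$, any target $(b_1,b_2)\in\mathbb C^*\times\mathbb C$ determines $(\lambda,\mu)$ uniquely, proving freeness and transitivity on the fibers of $\rho_1\circ\rho_2$. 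Writing $\phi_{\lambda,\mu}(z)=\frac{\lambda z}{1+\mu z}$, the identity $\phi_{\lambda_1,\mu_1}\circ\phi_{\lambda_2,\mu_2}=\phi_{\lambda_1\lambda_2,\,\mu_2+\mu_1\lambda_2}$ shows these reparametrizations form a group isomorphic to $\mathbb C^*\ltimes\mathbb C$ and that the stated formula is a (right) action, giving (6); restricting to $\lambda=1$ yields (7), where $b_1=a_1$ is preserved (so the action covers the identity on $L_1$) and $b_2\mapsto a_2-\mu a_1$ is a free transitive $\mathbb C$-action on the fibers of $\rho_2$.

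The only genuinely delicate point, and the one I would treat most carefully, is checking that all of these local identifications are chart-independent and patch together into global structures: concretely, that the transition cocycle produced by Fa\`a di Bruno's formula takes values in, and is equivariant for, the relevant structure groups, so that the fiberwise identification of $L_1$ with $TL\setminus 0$ in (2) and the principal actions of (5)--(7) are well-defined on all of $L_r$ and not merely over a single chart. Everything else is bookkeeping with Taylor coefficients.
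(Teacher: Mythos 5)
Your proposal is correct and follows essentially the same route as the paper: both describe $L_r$ over a holomorphic chart by the base point together with the Taylor coefficients $(a_1,\dots,a_r)$, $a_1\neq 0$, take these as the trivializations of the jet bundles, and read off the principal actions from how precomposition by $z\mapsto \lambda z/(1+\mu z)$ transforms those coefficients. You carry out explicitly the computations for $(v)$--$(vii)$ that the paper declares ``not difficult'' and omits, and your identification of the structure group of $L_2\to L_0$ as $\mathbb C^*\ltimes\mathbb C$ acting on the right is in fact a more precise statement than the paper's.
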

\begin{proof}
    $(i)$ comes from the fact that if $(U,\psi)$ is a holomorphic chart for $L'$, then $(\varphi^{-1}(U),\psi \circ \varphi)$ is a holomorphic chart for $L$. \\
    $(ii)$ and $(iii)$ are immediate from the definition of a tangent vector by an equivalence class of curves.\\
    $(iv):$ Let $x \in L$ and $(U, \psi)$ a holomorphic chart for $L$ around $x$. The maps 
    \[\theta: \left \{\begin{array}{lcl}
        \rho_1^{-1}(U)  &\to & U \times \mathbb C^* \\
        (\overline{\gamma}^1)  &\mapsto & (\gamma(0),(\psi \circ \gamma)'(0))
    \end{array} \right. \quad \text{and} \quad \eta: \left \{\begin{array}{lcl}
        \rho_2^{-1}(\rho_1^{-1}(U)) & \to & \rho_1^{-1}(U) \times \mathbb C \\
        (\overline{\gamma}^2) & \mapsto & (\overline{\gamma}^1,(\psi \circ \gamma)''(0))
    \end{array} \right.\]
    are homeomorphisms (by definition of the topology on each $L_r$) whose inverses are respectively
    \[\left \{\begin{array}{l}
        U \times \mathbb C^* \to \rho_1^{-1}(U) \\
        (p,v)  \mapsto  \overline{\psi^{-1}(\psi(p)+vz)}^1
    \end{array} \right. \; \text{and} \;\left \{\begin{array}{l}
        \rho_1^{-1}(U) \times \mathbb C \to \rho_2^{-1}(\rho_1^{-1}(U)) \\
        (\overline{\gamma}^1, w) \quad \mapsto \quad \overline{\psi^{-1}((\psi\circ \gamma)(0)+(\psi\circ \gamma)'(0)z + \frac{w}{2}z^2)}^2
    \end{array} \right. .\]
    They are local trivializations of $L_1 \xrightarrow{\rho_1} L_0$ and $L_2 \xrightarrow{\rho_2} L_1$ respectively.\\
    The proofs of $(v), (vi)$ and $(vii)$ are not difficult.
\end{proof}
As an immediate corollary, by pulling-back the complex structure of $L_1$ and that of $\mathbb C$ thanks to the local trivializations, $L_2$ is a complex manifold. \\
With that in mind, we can link affine structures on a holomorphic curve $L$ with holomorphic $\mathbb C^*$-sections of $L_2 \xrightarrow{\rho_2} L_1$ (see \cite{ghys_holomorphic_1995} Lemma 3.2 p.591):
\begin{lemma}
\label{lem:affstruL}
    Let $L$ a complex manifold of complex dimension one.\\
    There is a one-to-one correspondence between complex affine structures on $L$ and holomorphic sections $\sigma$ of $L_2 \xrightarrow{\rho_2} L_1$ which are equivariant with respect to the $\mathbb C^*$-actions on $L_1$ and $L_2$ defined in Lemma \ref{lem:jetL}, that is holomorphic sections $\sigma$ of $L_2 \xrightarrow{\rho_2} L_1$ satisfying:
    \[\forall \lambda \in \mathbb C^*, \; \forall \overline{f}^1 \in L_1, \quad \sigma(\lambda\cdot \overline{f}^1 ) = (\lambda,0) \cdot\sigma(\overline{f}^1) .\]
\end{lemma}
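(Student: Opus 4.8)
The plan is to build the bijection explicitly in both directions and then check the two constructions are mutually inverse. For the \emph{forward} direction, suppose $L$ carries a complex affine structure. Given a $1$-jet $\overline{f}^1 \in L_1$, sitting over $p = f(0)$ with nonzero derivative $v$, I would pick any affine chart $\phi$ near $p$ and consider the parametrization $g(z) = \phi^{-1}(\phi(p) + cz)$ that is affine in the $\phi$-coordinate and whose $1$-jet equals $\overline{f}^1$ (this fixes $c \in \mathbb{C}^*$). I then set $\sigma(\overline{f}^1) := \overline{g}^2 \in L_2$. Because any two affine charts differ by a map $z \mapsto az+b$, the resulting $2$-jet is independent of the chart, so $\sigma$ is well defined; and using the trivializations of Lemma \ref{lem:jetL}(iv) the assignment is visibly holomorphic. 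Equivariance is the observation that rescaling the source by $\lambda$, i.e. replacing $g$ by $g(\lambda\,\cdot)$, simultaneously implements $\overline{f}^1 \mapsto \lambda\cdot\overline{f}^1$ on $L_1$ and $(\lambda,0)\cdot$ on $L_2$, so $\sigma(\lambda\cdot\overline{f}^1) = (\lambda,0)\cdot\sigma(\overline{f}^1)$.

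For the \emph{converse}, starting from an equivariant holomorphic section $\sigma$, I would fix a holomorphic chart $\psi$ and use the trivialization $L_2|_U \cong U \times \mathbb{C}^* \times \mathbb{C}$, sending a jet over $p$ to $(x, \xi, w)$ with $x = \psi(p)$ and $\xi,w$ the first two $\psi$-derivatives, writing $\sigma(x,\xi) = (x,\xi,W(x,\xi))$ for a holomorphic $W$. A direct computation shows that under $(\lambda,0)\cdot$ the last coordinate scales by $\lambda^2$ while $\xi$ scales by $\lambda$; equivariance therefore forces $W(x,\xi) = \xi^2\, w_0(x)$ for a single holomorphic function $w_0$ on the chart. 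The section thus records precisely the coefficient of a one-dimensional connection. To recover affine charts I would straighten: the preferred parametrization with $1$-jet $(x,\xi)$ has $\psi$-expansion $x + \xi z + \tfrac{1}{2}\xi^2 w_0(x) z^2 + \cdots$, and a new coordinate $u = U(x)$ makes all such parametrizations affine exactly when $U'' + w_0\,U' = 0$ with $U' \neq 0$. This linear ODE is locally solvable with nonvanishing derivative, any two solutions differ by an affine map, and so $\Phi := U \circ \psi$ determines affine transition maps inside the patch.

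The main obstacle, and the step I would spend the most care on, is checking that these locally manufactured affine atlases glue into a single \emph{global} affine structure. The key is that the $2$-jets picked out by $\sigma$ are intrinsic, so ``being affine'' is chart-independent; concretely I would compute the transformation of $w_0$ under a holomorphic change $\tilde\psi = \chi\circ\psi$ and verify
\[
\tilde w_0 = \frac{1}{\chi'}\Bigl(w_0 + \frac{\chi''}{\chi'}\Bigr),
\]
the transformation law of a connection coefficient in one variable. This is exactly what guarantees that the straightening ODEs in overlapping charts produce coordinates differing by affine maps, hence a genuine affine atlas on all of $L$. Finally I would verify that the two constructions invert one another: passing from an affine structure to $\sigma$ and back returns the affine charts as solutions of $U'' + w_0 U' = 0$, while starting from $\sigma$ the parametrizations made affine by the reconstructed charts have exactly the prescribed $2$-jets, so the correspondence is bijective.
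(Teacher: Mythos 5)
Your argument is correct and complete: identifying $\mathbb C^*$-equivariant holomorphic sections with holomorphic affine connection coefficients $w_0$ obeying the transformation law you state, and then straightening via the ODE $U'' + w_0 U' = 0$, is exactly the standard proof — it is the content of Lemma 3.2 of \cite{ghys_holomorphic_1995}, to which the paper defers without supplying its own argument. Nothing essential is missing; in a written version the gluing step is cleanest phrased as: a transition map between two flat coordinates has vanishing connection coefficient on both sides, so by your transformation law its second derivative vanishes identically and it is affine.
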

\begin{lemma}
    Let $\varphi:L\to L'$ a holomorphic diffeomorphism between two complex affine manifolds of complex dimension $1$. Denote by $\sigma$ and $\sigma '$ the holomorphic $\mathbb C^*$-equivariant sections of $L_2 \xrightarrow{\rho_2} L_1$ and $L'_2 \xrightarrow{\rho'_2} L'_1$ respectively corresponding to the given affine structures of $L$ and $L'$.\\
    Then $\varphi:L\to L' $ is affine if and only if $\varphi_2 \circ \sigma = \sigma'\circ \varphi_1.$
\end{lemma}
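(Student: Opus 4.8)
The plan is to reduce the statement to a single second-order coordinate computation, using the explicit description of the sections $\sigma,\sigma'$ coming from the proof of Lemma \ref{lem:affstruL} together with the local trivializations of $L_1\to L_0$ and $L_2\to L_1$ exhibited in Lemma \ref{lem:jetL}.

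First I would record the concrete form of the section attached to an affine structure. In an affine chart $\psi$ of $L$, the section $\sigma$ sends a $1$-jet $\overline{f}^1$ over $p$, with $(\psi\circ f)'(0)=v\neq0$, to the unique $2$-jet extending it whose representative is \emph{straight} in the chart, i.e. $(\psi\circ f)''(0)=0$. This is well defined precisely because an affine change of chart $w\mapsto aw+b$ multiplies the second derivative by $a$ and therefore preserves the condition of vanishing second derivative; the same description holds for $\sigma'$ in an affine chart $\psi'$ of $L'$. I would also record the naturality $\rho'_2\circ\varphi_2=\varphi_1\circ\rho_2$, which is immediate from $\varphi_r(\overline{f}^r)=\overline{\varphi\circ f}^r$. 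Consequently both $\varphi_2\circ\sigma$ and $\sigma'\circ\varphi_1$ are maps $L_1\to L'_2$ covering $\varphi_1$, so they can differ only in the $\mathbb C$-fibre coordinate of $\rho'_2\colon L'_2\to L'_1$, and the whole identity reduces to comparing a single scalar.

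Next I would fix affine charts $\psi,\psi'$ and write $\Phi=\psi'\circ\varphi\circ\psi^{-1}$ for the local expression of $\varphi$, so that $\varphi$ is affine if and only if $\Phi''\equiv0$. Take a $1$-jet $\overline{f}^1$ over $p$ with $(\psi\circ f)'(0)=v$, and set $p_0=\psi(p)$. On one hand, $\sigma(\overline{f}^1)$ is the $2$-jet of $f_0=\psi^{-1}(p_0+vz)$; pushing it forward by $\varphi$ and reading off in $\psi'$ via the chain rule gives the representative $\Phi(p_0+vz)$, whose second derivative at $0$ equals $\Phi''(p_0)\,v^2$. On the other hand, $\varphi_1(\overline{f}^1)$ has representative $\Phi\circ(\psi\circ f)$, and applying $\sigma'$ produces the straight $2$-jet over $\varphi(p)$, whose second derivative in $\psi'$ is $0$. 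Since the two $2$-jets already share the underlying $1$-jet $\varphi_1(\overline{f}^1)$, they coincide if and only if $\Phi''(p_0)\,v^2=0$, that is (as $v\neq0$) if and only if $\Phi''(p_0)=0$.

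Finally I would let $p$ and $v$ vary: the identity $\varphi_2\circ\sigma=\sigma'\circ\varphi_1$ holds at every point and for every nonzero velocity exactly when $\Phi''$ vanishes identically on each chart, which is precisely the affineness of $\varphi$. The only point requiring care---the main, and rather mild, obstacle---is justifying that the ``vanishing second derivative'' description of $\sigma$ is intrinsic to the affine structure rather than an artifact of the chosen chart; but this is exactly the content already established in Lemma \ref{lem:affstruL}, where holomorphy together with $\mathbb C^*$-equivariance pin down the section uniquely, so no genuinely new argument is needed beyond invoking it.
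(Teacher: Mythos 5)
Your proof is correct and takes essentially the same route as the paper's: both identify $\sigma(\overline{f}^1)$ with the ``straight'' $2$-jet (vanishing second derivative) in an affine chart and reduce the identity $\varphi_2\circ\sigma=\sigma'\circ\varphi_1$ to the vanishing of $(\psi'\circ\varphi\circ\psi^{-1})''$. Your version is marginally tidier because you compute in a fixed, not re-centered, affine chart, so the equality at the jet $\overline{f}^1$ is directly equivalent to $\Phi''(\psi(f(0)))=0$ and both implications follow at once, making the paper's re-centering (translation) step in the converse direction unnecessary.
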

\begin{proof}
    Assume $\varphi: L \to L'$ affine. Let $\overline{f}^1 \in L_1$ and $(U', \psi')$ a complex affine chart of $L'$ centered at $\varphi(f(0))$ containing $\text{Im}(\varphi \circ f)$. By assumption, $(\varphi^{-1}(U'),\psi' \circ \varphi)$ is a complex affine chart of $L$ centered at $f(0)$. Therefore
    \begin{align*}
        \varphi_2^{-1} (\sigma'(\varphi_1(\overline{f}^1)))&=  \varphi_2^{-1}(\sigma'(\overline{\varphi \circ f}^1))=
        \varphi_2^{-1} \left (\overline{\psi'^{-1}((\psi' \circ\varphi \circ f)(0)+(\psi' \circ \varphi \circ f)'(0)z) )}^2 \right)\\
        &=\overline{(\varphi^{-1}\circ\psi'^{-1})((\psi' \circ\varphi \circ f)(0)+(\psi' \circ \varphi \circ f)'(0)z) )}^2=\sigma(\overline{f}^1).
    \end{align*}
    Conversely, the above equality shows that for any $\overline{f}^1 \in L_1$, if $(U, \psi)$ is a complex affine chart for $L$ at $f(0) \in L$ containing $\text{Im}(f)$, and if $(U', \psi')$ is a complex affine chart for $L'$ at $\varphi(f(0)) \in L'$ containing $\text{Im}(\varphi \circ f)$, then 
    \[\overline{(\varphi^{-1}\circ\psi'^{-1})((\psi' \circ\varphi \circ f)(0)+(\psi' \circ \varphi \circ f)'(0)z) )}^2=\overline{\psi^{-1}(\psi(f(0))+(\psi \circ f)'(0)z)}^2\]
    thus
    $(\psi' \circ \varphi \circ \psi ^{-1})''(0)=0$. Therefore, if $p \in L$, if $(U, \psi)$ is a complex affine chart for $L$ at $p$, and if $(U', \psi')$ is a complex affine chart for $L'$ at $\varphi(p) \in L'$ containing $\varphi(U)$, then $(\psi' \circ \varphi \circ \psi ^{-1})''(0)=0$. So fix such $p$ and affine charts. Assume, by restricting and translating, that $\psi(U)$ is an open ball centered at $0 \in \mathbb C$. Let $z_0 \in \psi(U)$. The map $-z_0 + \psi$ is also an affine chart for $L$ at $p$, whose inverse is $\psi^{-1}(\cdot +z_0)$. As a result, 
    \[(\psi' \circ \varphi \circ (\psi ^{-1}(\cdot +z_0)))''(0)=(\psi' \circ \varphi \circ \psi ^{-1})''(z_0)=0.\]
    Therefore $\psi' \circ \varphi \circ \psi ^{-1}$ is affine and thus $\varphi$ also by definition.
\end{proof}
Let's reconsider our transversely holomorphic Anosov flow $(\varphi^t)_{t\in \mathbb R}$ on $M$ smooth compact manifold.  For $r \in \mathbb N$, we note 
\[
M_r:= \{ \overline{(0, U, f)}^r, \, (0, U, f) \in \mathcal{A}_h(\mathbb{C},L), \, L \text{ leaf of } \mathcal{F}^{u}, \,  f'(0) \neq 0\} = \bigsqcup_{L \text{ leaf of } \mathcal{F}^{u}} L_r 
\]
the set of $r$-jets of holomorphic diffeomorphisms between open sets of $0 \in \mathbb C^*$ and open sets in leaves of the strong stable foliation $\mathcal{F}^{u}$. \\
As an analog of Lemma \ref{lem:jetL}, we have the following lemma:
\begin{lemma}
\label{lem:jetM}
\leavevmode
    \begin{enumerate} 
        \item The flow $(\varphi^t)_{t\in \mathbb R}$ induces, for each $r\in \mathbb N$ and $t \in \mathbb R$, a homeomorphism 
        \[(\varphi^t)_r: \left \{\begin{array}{lcl}
        \, M_r & \to & M_r \\
        \overline{f}^r & \mapsto & \overline{\varphi^t \circ f}^{r}
        \end{array} \right. ;\]
        such that for every $x \in M$,
        \[\restr{(\varphi^t)_r}{(\mathcal{F}^{u}_{x})_r}:(\mathcal{F}^{u}_{x})_r \to (\mathcal{F}^{u}_{\varphi^t(x)})_r\]
        is a well defined homeomorphism, satisfying moreover:
        \begin{align*}
            \forall t,s \in \mathbb R, \quad (\varphi^t)_r \circ (\varphi^s)_r =(\varphi^{t+s})_r \; ;
        \end{align*}
        \item $M_0$ is naturally identified with $M$, and $M_1$ is naturally identified with the set of non-zero tangent vectors to the strong unstable foliation, that is $T\mathcal{F}^{u}\setminus 0$ ;
        \item For every $t\in \mathbb R$, $(\varphi^t)_1:M_1 \to M_1$ is identified with the action of the differential of $\varphi^t$ on non-zero tangent vectors to the strong unstable foliation ;
        \item There is a chain of fiber bundles:
        \[\cdots \xrightarrow{} M_2 \xrightarrow{\rho_2} M_1 \xrightarrow{\rho_1} M_0\]
        where, for $k \in \mathbb{N^*}$, 
        \[ \rho_k: \left \{\begin{array}{lcl}
        \, M_k & \to & M_{k-1} \\
        \overline{f}^k & \mapsto & \overline{f}^{k-1}
    \end{array} \right.; \]
        \item $M_1  \xrightarrow{\rho_1} M_0$ is a principal $\mathbb C^*$-bundle; the action of $\mathbb C^*$ on $M_1$ is given by:
    \[\left \{\begin{array}{lcl}
        \, \mathbb C^* \times M_1 & \to & M_1 \\
        (\lambda,\overline{f}^1) & \mapsto & \overline{f(\lambda \cdot \,)}^{1}
    \end{array} \right. ;\]
    \item $M_2  \xrightarrow{\rho_1 \circ \rho_2} M_0$ is a principal $\mathbb C^* \times \mathbb C$-bundle; the action of $\mathbb C^* \times \mathbb C$ on $M_2$ is given by:
    \[\left \{\begin{array}{lcl}
        \, \mathbb C^* \times \mathbb C \times M_2 & \to & M_2 \\
        ((\lambda,\mu),\overline{f}^2) & \mapsto & \overline{f(\frac{\lambda \cdot \,}{1+\mu \, \cdot\,})}^{2}
    \end{array} \right. ; \]
    \item $M_2  \xrightarrow{\rho_2} M_1$ is a principal $\mathbb C$-bundle; the action of $\mathbb C$ in $M_2$ is given by:
    \[\left \{\begin{array}{lcl}
        \, \mathbb C \times M_2 & \to & M_2 \\
        (\mu,\overline{f}^2) & \mapsto & \overline{f(\frac{\cdot}{1+\mu\,\cdot\,})}^{2}
    \end{array} \right. \]
    \end{enumerate}
\end{lemma}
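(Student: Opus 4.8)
The plan is to treat Lemma \ref{lem:jetM} as the global-over-$M$ counterpart of Lemma \ref{lem:jetL}: every assertion is a leafwise statement which already holds on each single strong unstable leaf $L = \mathcal{F}^u_x$, viewed as a complex curve via Proposition \ref{prop:complexFsuFss}, and the only real work is to glue these leafwise structures into genuine topological objects over $M$. First I would fix, via Corollary \ref{cor:conthololeaves}, a continuous foliated atlas $(U_i, \psi_i)$ for $\mathcal{F}^u$ with holomorphic leaves, writing $\psi_i : U_i \to U_i^1 \times U_i^2$ with $U_i^1 \subset \mathbb{C}$, so that on each plaque the map $\mathrm{pr}_1 \circ \psi_i$ is a holomorphic chart for the corresponding leaf. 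The topology on $M_r$ is then defined chartwise: over $U_i$, a jet $\overline{f}^r \in (\mathcal{F}^u_z)_r$ based at a point of the plaque through $z$ is recorded by its base point together with the derivatives at $0$ of $\mathrm{pr}_1 \circ \psi_i \circ f$ up to order $r$, identifying $M_r|_{U_i}$ with $U_i^1 \times U_i^2 \times (\mathbb{C}^* \times \mathbb{C}^{r-1})$.

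With this atlas in hand, points (ii)--(vii) reduce to Lemma \ref{lem:jetL} applied in each plaque, with continuous dependence on the transverse coordinate in $U_i^2$. Concretely, $M_0 = \bigsqcup_L L_0 = \bigsqcup_L L = M$ and $M_1 = \bigsqcup_L (TL \setminus 0) = T\mathcal{F}^u \setminus 0$, the latter being a continuous complex line bundle minus its zero section by the remark following Corollary \ref{cor:conthololeaves}. The explicit local trivializations $\theta$ and $\eta$ built in the proof of Lemma \ref{lem:jetL}$(iv)$ from a leaf chart $\psi$ now make sense with $\psi = \mathrm{pr}_1 \circ \psi_i$ and depend continuously on the plaque, giving the bundle chain of (iv); the same formulas, which are holomorphic in the jet data, produce the principal $\mathbb{C}^*$-, $\mathbb{C}^* \times \mathbb{C}$- and $\mathbb{C}$-bundle structures of (v), (vi), (vii), with the very same group actions as in Lemma \ref{lem:jetL}. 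Point (iii) is then just the leafwise identification of Lemma \ref{lem:jetL}$(iii)$.

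For the flow in point (i), I would use that for every $t$ the map $\varphi^t$ sends $\mathcal{F}^u_x$ to $\mathcal{F}^u_{\varphi^t(x)}$ and is holomorphic between them (Proposition \ref{prop:complexFsuFss}). Hence, leaf by leaf, Lemma \ref{lem:jetL}$(i)$ yields a bijection $(\varphi^t)_r$ with the stated formula $\overline{f}^r \mapsto \overline{\varphi^t \circ f}^r$ and the leaf-compatibility $\restr{(\varphi^t)_r}{(\mathcal{F}^u_x)_r} : (\mathcal{F}^u_x)_r \to (\mathcal{F}^u_{\varphi^t(x)})_r$. The group law $(\varphi^t)_r \circ (\varphi^s)_r = (\varphi^{t+s})_r$ is formal from $\varphi^t \circ \varphi^s = \varphi^{t+s}$ together with associativity of jet composition. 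It remains to see $(\varphi^t)_r$ is a homeomorphism: this follows from the continuous variation of the leaves of $\mathcal{F}^u$ in the $C^\infty$ topology, already invoked in the proof of Theorem \ref{thm:Fstrholo}, which guarantees that the leafwise derivatives of $\mathrm{pr}_1 \circ \psi_j \circ \varphi^t \circ f$ at $0$ depend continuously on the base point and on $\overline{f}^r$, so that $(\varphi^t)_r$ is continuous in the charts above, with continuous inverse $(\varphi^{-t})_r$.

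The main obstacle is exactly this transverse continuity. Because $\mathcal{F}^u$ is only a continuous foliation --- holomorphic along its leaves but merely continuous in the transverse directions --- the leafwise constructions of Lemma \ref{lem:jetL} are a priori defined independently on each leaf, and one must verify that they assemble into honest topological fiber bundles over $M$ and that the flow-induced maps vary continuously transverse to the foliation. The continuous foliated atlas of Corollary \ref{cor:conthololeaves}, in which the leafwise holomorphic chart depends continuously on the base point, is precisely what makes the explicit trivializations of Lemma \ref{lem:jetL} continuous across plaques; combined with the $C^\infty$-continuous variation of the strong unstable leaves under the flow, it resolves the gluing both for the bundle structures (iv)--(vii) and for the continuity of $(\varphi^t)_r$ in (i). Once transverse continuity is secured, no further computation beyond the already-verified formulas of Lemma \ref{lem:jetL} is needed.
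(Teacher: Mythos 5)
Your proposal is correct and follows essentially the same route as the paper: reduce each assertion to the leafwise Lemma \ref{lem:jetL} and glue via the continuous foliated atlas with holomorphic leaves of Corollary \ref{cor:conthololeaves}, using the same explicit trivializations $(\gamma(0),(\psi_i\circ\gamma)'(0))$ and $(\overline{\gamma}^1,(\psi_i\circ\gamma)''(0))$. Your added care about the transverse topology on $M_r$ and the continuity of $(\varphi^t)_r$ via the $C^\infty$-continuous variation of the strong unstable leaves simply fills in the steps the paper dismisses as ``not difficult.''
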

\begin{proof}
    As before, the proof of $(i), (ii)$ and $(iii)$ is not difficult.\\
    $(iv):$ By Corollary \ref{cor:conthololeaves} , let $(U_i, \psi_i)_i$ a foliated $C^0$ atlas for the unstable foliation $\mathcal{F}^{u}$ such that the restriction of each chart to a unstable leaf is holomorphic. We note $\psi_i(U_i) = U_i^1 \times U_i^2$ where $U_i^1$ and $U_i^2$ are open balls of $\mathbb C$ and $\mathbb R^{n-2}$ respectively.
    If an unstable leaf $L$ is understood from the context, we will still write $\psi_i$ instead of the heavy notation $\text{pr}_1 \circ \restr{\psi_i}{(L\cap U_i)_b}$ to mean the holomorphic diffeomorphism which sends the connected component $(L\cap U_i)_b$ of the leaf $L$ restricted to $U_i$, to $U_i^1 \subset \mathbb C$.
    We can define the maps
    \[\theta_i: \left \{\begin{array}{ccl}
        \rho_1^{-1}(U_i)  & \to & U_i \times \mathbb C^* \\
        (\overline{\gamma}^1) & \mapsto & (\gamma(0),(\psi_i \circ \gamma)'(0))
    \end{array} \right. \; \text{and} \; \eta_i: \left \{\begin{array}{ccl}
        \rho_2^{-1}(\rho_1^{-1}(U_i)) & \to & \rho_1^{-1}(U_i) \times \mathbb C \\
        (\overline{\gamma}^2) & \mapsto & (\overline{\gamma}^1,(\psi_i \circ \gamma)''(0))
    \end{array} \right. .\]
    Let
    \[\Theta_i: \left \{\begin{array}{ccl}
       U_i \times \mathbb C^* & \to & \rho_1^{-1}(U_i)   \\
        (p,v) & \mapsto & \overline{\psi_{i}^{-1}(\psi_{i}(p)+zv)}^1.
    \end{array} \right. \]
    We can check that $\theta_i$ and $\Theta_i$ are inverse of each other. Moreover, by definition of the (disjoint union) topology on each $M_r$, $\theta_i$ and $\Theta_i$ are continuous so we have defined a local trivialization of $L_1 \xrightarrow{\rho_1} L_0$.\\
    Now let $(\overline{\gamma}^1, w) \in \rho_1^{-1}(U_i) \times \mathbb C$.
    It is not difficult to prove that $\overline{\gamma \left(z+ \frac{z^2}{2} \cdot  \frac{w-(\psi_i \circ \gamma)''(0)}{(\psi_i \circ \gamma)'(0)} \right)}^2$
    does not depend on $\alpha \in \overline{\gamma}^1$. 
Therefore, we can define
    \[H_i: \left \{\begin{array}{ccl}
        \rho_1^{-1}(U_i) \times \mathbb C & \to & \rho_2^{-1}(\rho_1^{-1}(U_i))  \\
        (\overline{\gamma}^1,w) & \mapsto &  \overline{\gamma \left(z+ \frac{z^2}{2} \cdot \frac{w-(\psi_i \circ \gamma)''(0)}{(\psi_i \circ \gamma)'(0)} \right)}^2
    \end{array} \right. \]
and check that $\eta_i$ and $H_i$ are inverse of each other. Moreover, by definition of the (disjoint union) topology on each $M_r$, $\eta_i$ and $H_i$ are continuous so we have defined a local trivialization of $L_2 \xrightarrow{\rho_2} L_1$.\\
The proof of $(v), (vi)$ and $(vii)$ is in the same spirit as that of Lemma 6.1 and are left aside.
\end{proof}
With all these lemmas, we understand that we want to prove:
\begin{theorem}
\label{thm:AffStruAlternatif}
    Let $(\varphi^t)_{t\in \mathbb R}$ a transversely holomorphic Anosov flow on a smooth compact manifold $M$. Suppose the unstable distribution $E^{u}$ is of complex dimension $1$.\\
    Then there exists a unique continuous section $\sigma$ of $M_2 \xrightarrow{\rho_2} M_1$ satisfying:
    \begin{enumerate} 
        \item For every leaf $L$ of $\mathcal{F}^{u}$, $\restr{\sigma}{L_1}:L_1 \to L_2$ is holomorphic;
        \item $\sigma$ is $\mathbb C^*$-equivariant, that is:
        $\forall \lambda \in \mathbb C^*, \; \forall \overline{f}^1 \in M_1, \quad \sigma(\lambda\cdot \overline{f}^1 ) = (\lambda,0) \cdot\sigma(\overline{f}^1) \;; $
        \item $\forall t \in \mathbb R, \quad (\varphi^t)_2 \circ \sigma = \sigma\circ (\varphi^t)_1. $
    \end{enumerate}
    Moreover, for each leaf $L$ of $\mathcal{F}^{u}$, the complex affine structure induced by $\restr{\sigma}{L_1}$ is complete.

\end{theorem}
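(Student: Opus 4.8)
The plan is to realize $\sigma$ as the fixed point of a contraction operator built from the flow, exploiting that the backward flow contracts $E^{u}$. By Lemma~\ref{lem:affstruL} and the subsequent lemma characterizing affine maps through $\varphi_2\circ\sigma=\sigma'\circ\varphi_1$, conditions $(1)$ and $(2)$ say exactly that $\sigma$ equips each leaf $L$ of $\mathcal F^{u}$ with a complex affine structure, while condition $(3)$ says precisely that every $\restr{\varphi^t}{\mathcal F^{u}_x}$ is affine; so Theorem~\ref{thm:AffStruAlternatif} is equivalent to Theorem~\ref{thm:compaffstru}. The equation $(\varphi^t)_2\circ\sigma=\sigma\circ(\varphi^t)_1$ is equivalent to $\sigma=\Psi_t(\sigma)$, where $\Psi_t(\sigma):=(\varphi^t)_2\circ\sigma\circ(\varphi^{-t})_1$, and I would prove that $\Psi_{t}$ is a contraction for each fixed $t>0$.

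The first observation is that the $\mathbb C^{*}$-equivariance $(2)$ is very rigid. Writing a section over a $1$-jet $(p,v)\in M_1$, with $v$ a non-zero tangent vector (Lemma~\ref{lem:jetM}$(2)$), through its second-order datum $f''(0)$, equivariance forces $f''(0)=v^{2}s(p)$ for a function $s$ on the leaf, the coefficient of the associated leafwise affine connection. Consequently, for two equivariant sections the fibrewise difference in the principal $\mathbb C$-bundle $M_2\xrightarrow{\rho_2}M_1$ is
\[
\delta_{\sigma_1,\sigma_2}(p,v)=-\tfrac{1}{2}\,v\,\big(s_1-s_2\big)(p),
\]
which is $\mathbb C$-\emph{linear} in $v$: the difference of two equivariant sections is a continuous family of leafwise $1$-forms evaluated on tangent vectors. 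I would then equip the space $\mathcal B$ of continuous $\mathbb C^{*}$-equivariant sections with the complete metric $d(\sigma_1,\sigma_2)=\sup\{|\delta_{\sigma_1,\sigma_2}(p,v)|:(p,v)\in M_1,\ \|v\|=1\}$, the norm being the adapted Riemannian metric (see Definition~\ref{def:2.6}). A direct chain-rule computation shows that a leafwise biholomorphism acts on $M_2\to M_1$ preserving the fibrewise difference $\delta$, whence
\[
\delta_{\Psi_t\sigma_1,\Psi_t\sigma_2}(p,v)=\delta_{\sigma_1,\sigma_2}\big(\varphi^{-t}p,\,d\varphi^{-t}v\big).
\]
Since $\delta$ is linear in its vector argument and $\|d\varphi^{-t}v\|\le \mu^{-t}\|v\|$ by the defining inequalities of $E^{u}$, this gives $d(\Psi_t\sigma_1,\Psi_t\sigma_2)\le \mu^{-t}d(\sigma_1,\sigma_2)$, a contraction for $t>0$.

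Fixing $t_0>0$, the Banach fixed point theorem yields a unique $\sigma_\infty\in\mathcal B$ with $\Psi_{t_0}\sigma_\infty=\sigma_\infty$; for the iteration I would start from a leafwise-smooth equivariant section obtained by gluing the flat connections of the foliated holomorphic charts of Corollary~\ref{cor:conthololeaves} with a partition of unity, which also shows $\mathcal B\neq\emptyset$. It then remains to upgrade $\sigma_\infty$. For flow-invariance, the maps $(\varphi^t)_r$ form one-parameter groups (Lemma~\ref{lem:jetM}$(1)$), so $\Psi_s$ commutes with $\Psi_{t_0}$; thus $\Psi_s\sigma_\infty$ is again a fixed point of $\Psi_{t_0}$ and equals $\sigma_\infty$ by uniqueness, for every $s\in\mathbb R$, which is exactly $(3)$ and simultaneously gives the asserted uniqueness. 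The more delicate point — and what I regard as the main obstacle — is holomorphicity: the natural complete space on which $\Psi_{t_0}$ contracts consists only of \emph{continuous} equivariant sections, so one must show the leafwise antiholomorphic defect $\bar\partial s$ of $\sigma_\infty$ vanishes. The key is that, since $\varphi^{\pm t_0}$ are leafwise biholomorphic (Proposition~\ref{prop:complexFsuFss}), $\bar\partial$ of the connection coefficient transforms tensorially under $\Psi_{t_0}$ (no inhomogeneous term survives) and is damped by the same factor $\mu^{-t_0}$; hence $\bar\partial s_\infty=0$ and $\restr{\sigma_\infty}{L_1}$ is holomorphic for each leaf $L$, establishing $(1)$.

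Finally, for the completeness of the induced affine structures I would argue dynamically, following \cite{ghys_holomorphic_1995}: each leaf is simply connected, hence as a Riemann surface is $\mathbb C$ or a disc, and its affine structure carries a developing map with respect to which the flow acts by affine maps $z\mapsto a_t z+b_t$; the uniform expansion of $E^{u}$ forces $|a_t|\to\infty$, and combining this with the recurrence provided by compactness of $M$ rules out the incomplete (disc) case, so each structure is complete and each leaf is affinely $\mathbb C$. I expect the contraction estimate itself to be routine once equivariance has been used to linearize $\delta$; the two steps demanding genuine care are the passage from a merely continuous fixed point to a leafwise holomorphic one, and this concluding completeness argument.
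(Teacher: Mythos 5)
Your overall strategy coincides with the paper's: the same space of continuous $\mathbb C^{*}$-equivariant sections, the same degree-one homogeneity of the difference of two such sections, the same sup-metric normalized on the unit sphere bundle of $E^{u}$, the same contraction factor $\mu^{-t}$ coming from the backward flow, and the same derivation of flow-equivariance and uniqueness from the Banach fixed point theorem (your partition-of-unity construction of an initial section is a harmless substitute for the paper's appeal to sections of bundles with contractible fibers). However, the two steps you yourself flag as delicate are exactly where your write-up has genuine gaps.

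First, holomorphicity. You propose to show $\bar\partial s_\infty=0$ by arguing that $\bar\partial$ of the connection coefficient ``transforms tensorially and is damped by $\mu^{-t_0}$.'' But $\sigma_\infty$ is only known to be continuous, so $\bar\partial s_\infty$ is not defined pointwise and the damping estimate has nothing to act on; to make this rigorous you would have to either work distributionally (and then justify passing the damping through the pairing with test functions, where the Jacobians of $\varphi^{-kt_0}$ along the leaves enter) or run the contraction on a space of leafwise $C^{1}$ sections, which is not complete for your metric. The paper avoids differentiating $\sigma$ altogether: it takes the explicitly leafwise-holomorphic sections $s_{i}$ induced by the flat structures of the foliated charts of Corollary \ref{cor:conthololeaves}, pushes them forward by $(\varphi^{k})_2\circ s_{i_k}\circ(\varphi^{-k})_1$, and shows via the same $\mu^{-k}$ estimate that these converge uniformly on compact pieces of each leaf to $\sigma$; the uniform limit of holomorphic maps is holomorphic. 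Your damping intuition is the right mechanism, but it must be implemented as an approximation argument, not as an a priori identity for $\bar\partial$ of the fixed point.

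Second, completeness. You argue that the leaf is conformally $\mathbb C$ or the disc and that expansion ``rules out the incomplete (disc) case.'' This conflates the conformal type of the leaf with completeness of its affine structure: a simply connected affine curve is complete iff its developing map $D:L\to\mathbb C$ is a bijection, and there are local biholomorphisms of $\mathbb C$ onto proper subsets (e.g.\ $z\mapsto e^{z}$) whose source is still conformally $\mathbb C$. Your sketch addresses surjectivity of $D$ (via $|a_t|\to\infty$ and a uniform lower bound on the image, which is the paper's $\delta_0$-lemma) but says nothing about injectivity. The paper proves injectivity separately: a uniform scale $\epsilon_0$ at which developing maps are injective (from the finiteness of the affine foliated atlas), combined with $d(\varphi^{-k}x,\varphi^{-k}y)\to 0$ for $x,y$ on the same strong unstable leaf and the fact that $D\circ\varphi^{k}$ is again a developing map. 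You should add this step.
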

We will apply the fixed point theorem to a certain map from a certain complete metric space in order to guarantee the existence, the uniqueness and property $(iii)$ of such $\sigma$. $(ii)$ will be automatic from the construction of our space. However we will need to prove it is not empty. We'll also have to prove $(i)$ and the completeness of the affine structures thus constructed.
We split the proof of the previous theorem into several lemmas.\\
    Let's call $\mathcal{H}$ the set of continuous sections $\sigma$ of the $\mathbb C$-fiber bundle $M_2 \xrightarrow{\rho_2} M_1$ which are $\mathbb C^*$ equivariant with respect to the $\mathbb C^*$-actions introduced in Lemma \ref{lem:jetM}.
    \begin{lemma}
        $\mathcal{H}$ is in bijective correspondence with the set of continuous sections of a fiber bundle $E \xrightarrow{\pi} M$ whose fibers are contractible. In particular, $\mathcal{H}$ is not empty.
    \end{lemma}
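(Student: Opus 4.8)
The plan is to realize every $\mathbb{C}^*$-equivariant section of $\rho_2\colon M_2\to M_1$ as an ordinary section of the space obtained by collapsing the $\mathbb{C}^*$-direction, and then to identify that space as an affine bundle over $M$ with complex-line fibers. First I would record two structural facts from Lemma \ref{lem:jetM}: the subgroup $\mathbb{C}^*\cong\{(\lambda,0)\}\subset\mathbb{C}^*\times\mathbb{C}$ acts freely on $M_2$ (since $M_2\xrightarrow{\rho_1\circ\rho_2}M$ is principal), and $\rho_2$ is equivariant for the two $\mathbb{C}^*$-actions, because $\rho_2((\lambda,0)\cdot\overline{f}^2)=\rho_2(\overline{f(\lambda\,\cdot)}^2)=\overline{f(\lambda\,\cdot)}^1=\lambda\cdot\overline{f}^1$. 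Hence the free quotient $p\colon M_2\to E:=M_2/\mathbb{C}^*$ is well defined and $\rho_1\circ\rho_2$ descends to a continuous surjection $\pi\colon E\to M$.

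Next I would check that $\pi\colon E\to M$ is locally trivial with contractible fiber. Reading off from $z\mapsto\frac{\lambda z}{1+\mu z}$ the group law $(\lambda_1,\mu_1)(\lambda_2,\mu_2)=(\lambda_1\lambda_2,\mu_2+\mu_1\lambda_2)$, one sees that $\mathbb{C}^*\times\mathbb{C}$ is really the semidirect product $\mathbb{C}\rtimes\mathbb{C}^*$ and that $\{(\lambda,0)\}$ is \emph{not} normal; nevertheless the orbit space of its action on a single fiber, which is a $(\mathbb{C}^*\times\mathbb{C})$-torsor, is a copy of $\mathbb{C}$, and the residual action of the structure group on this $\mathbb{C}$ is by complex affine transformations $\mu\mapsto\alpha\mu+\beta$. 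Using the $C^0$ foliated charts of Corollary \ref{cor:conthololeaves} to trivialize $M_1\to M$ and $M_2\to M_1$, this exhibits $\pi\colon E\to M$ as a fiber bundle whose fiber is a complex affine line — in particular an \emph{affine} bundle with contractible fibers.

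Then I would set up the bijection $\mathcal{H}\leftrightarrow\Gamma^0(E)$. Given $\sigma\in\mathcal{H}$, the composite $p\circ\sigma\colon M_1\to E$ is constant on the fibers of $\rho_1$ (these are exactly the $\mathbb{C}^*$-orbits), since $p\circ\sigma(\lambda\cdot\overline{f}^1)=p((\lambda,0)\cdot\sigma(\overline{f}^1))=p(\sigma(\overline{f}^1))$, so it descends to a continuous section $\overline{\sigma}$ of $\pi$. Conversely, given a section $s$ of $\pi$, the set $p^{-1}(s(x))$ is a single $\mathbb{C}^*$-orbit in $M_2$, and by equivariance of $\rho_2$ together with freeness of the $\mathbb{C}^*$-action on $\rho_1^{-1}(x)$, the map $\rho_2$ restricts to a bijection $p^{-1}(s(x))\to\rho_1^{-1}(x)$; defining $\sigma(v)$ to be the unique preimage of $v$ inside $p^{-1}(s(x))$ yields a $\mathbb{C}^*$-equivariant section of $\rho_2$. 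Local triviality of $\rho_1$, $\rho_2$ and $p$ makes both assignments continuous, and they are visibly mutually inverse.

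Finally, for existence I would invoke that $M$ is a compact manifold, hence a finite CW complex, and that $\pi$ is a fibration whose fiber is weakly contractible, so every obstruction in $H^{k+1}(M;\pi_k(\mathbb{C}))$ vanishes and a continuous section exists; equivalently, an affine bundle over a paracompact base is a torsor under a continuous vector bundle, whose classifying $H^1$ vanishes by softness of the sheaf of continuous sections. Transporting such a section through the bijection produces an element of $\mathcal{H}$, so $\mathcal{H}\neq\emptyset$. The main point to get right is the second step: verifying that the naive quotient $M_2/\mathbb{C}^*$ is genuinely a locally trivial \emph{affine} bundle over $M$, and not merely over $M_1$, which requires the semidirect-product bookkeeping above to see that the surviving coordinate is affine and the fiber contractible. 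Once this is in place, both the bijection and the existence of a section are routine.
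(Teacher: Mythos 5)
Your construction is correct and is essentially the paper's: the quotient $E=M_2/\mathbb{C}^*$ you form is canonically the bundle the paper builds fiberwise, since its fiber over $x$ (the orbit space of $(\rho_1\circ\rho_2)^{-1}(x)\cong\mathbb{C}^*\times\mathbb{C}$ under $\lambda\cdot(\mu,w)=(\lambda\mu,\lambda^2w)$) is naturally identified with the space of $\mathbb{C}^*$-equivariant sections over $\rho_1^{-1}(x)$, which is the paper's $E_x\cong\mathbb{C}$, and both arguments conclude by producing a global section of a contractible-fiber bundle over a paracompact base. Your obstruction-theoretic/affine-bundle justification of that last step is a harmless variant of the paper's appeal to the classical section-existence theorem.
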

    \begin{proof}
    For $x \in M$, let $E_x$ the set of continuous sections of $\rho_2^{-1} (\rho_1^{-1}(x)) \xrightarrow{\rho_2} \rho_1^{-1}(x)$ which are $\mathbb C^*$-equivariant.
    We know that $\rho_2^{-1} (\rho_1^{-1}(x))$ is homeomorphic to $\mathbb C^* \times \mathbb C$ and $\rho_1^{-1}(x)$ is homeomorphic to $\mathbb C^*$. With this identification, the $\mathbb C^*$-action on $\rho_2^{-1} (\rho_1^{-1}(x))$ defined in Lemma \ref{lem:jetM} corresponds to the action
         $\lambda \cdot (\mu,w)  =(\lambda\mu,\lambda^2w)$
    while the $\mathbb C^*$-action on $\rho_1^{-1}(x)$ corresponds to the trivial action. Therefore, $E_x$ is identified with the set of continuous maps $s: \mathbb C^* \to \mathbb C$ satisfying
    $s(\lambda \mu)=\lambda^2 s(\mu)$ for all $\lambda,\mu \in \mathbb C^*$, 
    that is the set of continuous maps $s: \mathbb C^* \to \mathbb C$ satisfying
    $s(\lambda)=\lambda^2 s(1)$ for all $\lambda,\mu \in \mathbb C^*$.
    So each $E_x$ is homeomorphic to $\mathbb C$.
    Now let $E:= \bigsqcup_{x \in M}E_x$ endowed with the disjoint union topology and let $\pi: E \to M$ the natural projection map. It is naturally a $\mathbb C$-fiber bundle over $M$. Denote by $\Gamma(E, \pi)$ the set of continuous sections of $E \xrightarrow{\pi} M$. 
    It is not difficult to see that there is a natural bijection between $\Gamma(E, \pi)$ and $\mathcal{H}$.
    A classical result in algebraic topology (see \cite{james_general_1984}) says that a fiber bundle over a paracompact Hausdorff space whose fibers are contractible admits necessarily a global continuous section. Therefore, since $\mathbb C$ is contractible, $\Gamma(E, \pi)$ and thus $\mathcal{H}$ is not empty.
    \end{proof}
    Let $\eta_i$ a local trivialization of $M_2 \xrightarrow{\rho_2} M_1$ over $\rho_1^{-1}(U_i)$. A global section $\sigma$ of $M_2 \xrightarrow{\rho_2} M_1$ can be expressed on $\rho_1^{-1}(U_i)$ as 
    $\sigma(\overline{\gamma}^1)=\eta_i^{-1}(\overline{\gamma}^1, w_i(\overline{\gamma}^1))$, where the map $w_i: \rho_1^{-1}(U_i) \to \mathbb C $ is continuous.
    One can show that $w_i$ and $w_j$ are linked by the \textit{global section equation} for $\overline{\gamma}^1 \in \rho_1^{-1}(U_i \cap U_j)$,
    \[\frac{w_i(\overline{\gamma}^1)-(\psi_i \circ \gamma)''(0)}{(\psi_i \circ \gamma)'(0)} = \frac{w_j(\overline{\gamma}^1)-(\psi_j \circ \gamma)''(0)}{(\psi_j \circ \gamma)'(0)}.\]
    Conversely a family $(w_i:\rho_1^{-1}(U_i) \to \mathbb C)_i$ of continuous maps satisfying the global section equation for each $(i,j)$ gives rise to a global continuous section $\sigma$ of $M_2 \xrightarrow{\rho_2} M_1$ defined on $\rho_1^{-1}(U_i)$ by $\sigma(\overline{\gamma}^1)=\eta_i^{-1}(\overline{\gamma}^1, w_i(\overline{\gamma}^1))$.
    Moreover, if $L$ is a strong unstable leaf, by the definition of the complex structure on $L_2$, then $\restr{\sigma}{L_1}:L_1 \to L_2$ is holomorphic if and only if for each $i$, $\restr{w_i}{\rho_1^{-1}(L \cap U_i)}: \rho_1^{-1}(L \cap U_i) \to \mathbb C$ is holomorphic. 
    Also, $\sigma$ is $\mathbb C^*$-equivariant if and only if for each $i$, for every $\lambda \in \mathbb C^*$ and $\overline{\gamma}^1 \in \rho_1^{-1}(U_i)$, $w_i(\lambda\cdot\overline{\gamma}^1) = \lambda^2 w_i(\overline{\gamma}^1)$.\\
    Now let $\sigma, \sigma^*$ global sections of $M_2 \xrightarrow{\rho_2} M_1$. Call respectively $(w_i)_i$ and $(w_i^*)_i$ the corresponding family of continuous maps to $\mathbb C$. The global section equations satisfied by $\sigma$ and $\sigma^*$ on each $\rho_1^{-1}(U_i \cap U_j)$ yield by subtracting one to the other:
    \[\frac{w_i(\overline{\gamma}^1)-w_i^*(\overline{\gamma}^1)}{(\psi_i \circ \gamma)'(0)}=\frac{w_j(\overline{\gamma}^1)-w_j^*(\overline{\gamma}^1)}{(\psi_j \circ \gamma)'(0)}.\]
    As a result, we can define the difference map $(\sigma - \sigma^*): M_1 \to \mathbb C$ by
    \[ (\sigma - \sigma^*): 
          \overline{\gamma}^1 \mapsto \frac{w_i(\overline{\gamma}^1)-w_i^*(\overline{\gamma}^1)}{(\psi_i \circ \gamma)'(0)} \quad \text{ if } \overline{\gamma}^1 \in \rho_1^{-1}(U_i).\]
    It is a continuous map which allows us to define a distance on $\mathcal{H}$ as follows. Fix a Riemannian metric $\| . \|$ on $M$, so that it defines a norm on $TM$ and thus on $M_1 \cong T\mathcal{F}^{u} \setminus 0$. Let $\sigma, \sigma^* \in \mathcal{H}$. Since for every $\lambda \in \mathbb C^*$ and $\overline{\gamma}^1 \in M_1$ (for example $\overline{\gamma}^1 \in\rho_1^{-1}(U_i)$), 
    \begin{align*}
        (\sigma-\sigma^*)(\lambda \cdot \overline{\gamma}^1)= \frac{\lambda^2 w_i(\overline{\gamma}^1)-\lambda^2w_i^*(\overline{\gamma}^1)}{\lambda(\psi_i \circ \gamma)'(0)}= \lambda \, (\sigma-\sigma^*)(\overline{\gamma}^1) \;,
    \end{align*}
    then for every $\overline{\gamma}^1 \in M_1$, 
    $\frac{(\sigma-\sigma^*)(\overline{\gamma}^1)}{\|\overline{\gamma}^1\|} = (\sigma-\sigma^*)(\frac{1}{\|\overline{\gamma}^1\|} \cdot \overline{\gamma}^1). $
    Denote by $S^1\mathcal{F}^{u}:= S^1M \cap T\mathcal{F}^{u}$ the unit tangent bundle to the strong unstable foliation with respect to the metric $\|.\|$. It is a smooth $\mathbb S^1$-fiber bundle over $M$, and is therefore compact.
    As a result, we can define the distance between $\sigma$ and $\sigma^*$:
    \[d(\sigma, \sigma^*):=\sup_{\overline{\gamma}^1 \in M_1} \frac{|(\sigma-\sigma^*)(\overline{\gamma}^1)|}{\|\overline{\gamma}^1\|} < \infty.\]
    It is straightforward to see that $d$ is indeed a metric on $\mathcal{H}$. Moreover,
    \begin{lemma}
        $(\mathcal{H},d)$ is a complete metric space.
    \end{lemma}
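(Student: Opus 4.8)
The plan is to realize $(\mathcal{H},d)$ as an affine space modeled on a genuine Banach space, so that completeness reduces to that of a space of continuous functions on a compact set. First I would fix once and for all a reference section $\sigma_0\in\mathcal{H}$, which exists by the previous lemma, and study the difference maps $\sigma\mapsto\sigma-\sigma_0$. As computed above, each $\sigma-\sigma_0$ is a continuous map $M_1\to\mathbb{C}$ that is homogeneous of degree one for the $\mathbb{C}^*$-action, i.e. $(\sigma-\sigma_0)(\lambda\cdot\overline{\gamma}^1)=\lambda\,(\sigma-\sigma_0)(\overline{\gamma}^1)$. Conversely, given any such homogeneous $\delta$, adding it to $\sigma_0$ through the local data $w_i\mapsto w_i+\delta\cdot(\psi_i\circ\gamma)'(0)$ produces another element of $\mathcal{H}$: the global section equation is preserved because $\delta$ is added identically in every chart, and the degree-two equivariance $w_i(\lambda\cdot\overline{\gamma}^1)=\lambda^2 w_i(\overline{\gamma}^1)$ is preserved because the factor $(\psi_i\circ\gamma)'(0)$ is itself homogeneous of degree one. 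Hence $\mathcal{H}$ is an affine space over the complex vector space $V$ of continuous degree-one $\mathbb{C}^*$-homogeneous maps $M_1\to\mathbb{C}$, with $d(\sigma,\sigma^*)=\|\sigma-\sigma^*\|$ for the norm $\|\delta\|:=\sup_{\overline{\gamma}^1\in M_1}|\delta(\overline{\gamma}^1)|/\|\overline{\gamma}^1\|$ on $V$. Completeness of $(\mathcal{H},d)$ is therefore equivalent to completeness of the normed space $(V,\|\cdot\|)$.

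Next I would identify $(V,\|\cdot\|)$ with a familiar Banach space. By degree-one homogeneity, every $\delta\in V$ is determined by its restriction to the unit bundle $S^1\mathcal{F}^{u}$, via $\delta(\overline{\gamma}^1)=\|\overline{\gamma}^1\|\,\delta(\overline{\gamma}^1/\|\overline{\gamma}^1\|)$, and $\|\delta\|$ is exactly the sup-norm of $\restr{\delta}{S^1\mathcal{F}^{u}}$, which is finite precisely because $S^1\mathcal{F}^{u}$ is compact, as already noted. The restriction map $\delta\mapsto\restr{\delta}{S^1\mathcal{F}^{u}}$ is then a linear isometry of $(V,\|\cdot\|)$ onto the subspace of $C(S^1\mathcal{F}^{u},\mathbb{C})$ consisting of the $\mathbb{S}^1$-equivariant functions $g$, namely those with $g(e^{i\theta}\cdot v)=e^{i\theta}g(v)$; surjectivity onto this subspace follows by extending such a $g$ back to $M_1$ through $v\mapsto\|v\|\,g(v/\|v\|)$, which one checks directly to be continuous and $\mathbb{C}^*$-homogeneous of degree one.

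The conclusion is then immediate: the equivariance relation $g(e^{i\theta}v)=e^{i\theta}g(v)$ is preserved under uniform limits, so it cuts out a closed linear subspace of the Banach space $C(S^1\mathcal{F}^{u},\mathbb{C})$ equipped with its sup-norm; a closed subspace of a Banach space is complete, whence $(V,\|\cdot\|)$ and therefore $(\mathcal{H},d)$ are complete.

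The one point requiring care — and the only place where the fibered, rather than vector-space-valued, nature of the sections enters — is that the values $\sigma_n(\overline{\gamma}^1)$ lie in the fibers of $M_2\xrightarrow{\rho_2}M_1$ and not in a fixed target, so one cannot speak naively of a Cauchy sequence of $\mathbb{C}$-valued functions; it is exactly the difference map that linearizes the problem and makes the argument go through. I expect this to be the main conceptual obstacle, but it is a mild one. If one prefers a hands-on argument avoiding the affine-space language, one may instead take a Cauchy sequence $(\sigma_n)$ with local data $(w_i^{(n)})$ and use the bound $|w_i^{(n)}-w_i^{(m)}|\leq|(\psi_i\circ\gamma)'(0)|\,\|\overline{\gamma}^1\|\,d(\sigma_n,\sigma_m)$ to see that each $(w_i^{(n)})_n$ is uniformly Cauchy on compact subsets of $\rho_1^{-1}(U_i)$, hence converges locally uniformly to a continuous limit; these limits inherit the global section equation and the $\mathbb{C}^*$-equivariance by passage to the limit, assembling into an element $\sigma\in\mathcal{H}$ with $\sigma_n\to\sigma$ for $d$.
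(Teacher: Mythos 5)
Your proof is correct, and it supplies precisely the details that the paper dismisses with ``it is a standard completeness proof'': the identification of $\mathcal{H}$ as an affine space over the Banach space of degree-one $\mathbb C^*$-homogeneous continuous functions on $M_1$, itself isometric to the closed subspace of $\mathbb S^1$-equivariant functions in $C(S^1\mathcal{F}^{u},\mathbb C)$, is exactly the intended mechanism, and your verification that adding a degree-one homogeneous $\delta$ to the local data $w_i$ preserves both the global section equation and the degree-two equivariance is the one point that genuinely needed checking. No gaps.
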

    \begin{proof}
        It is a standard completeness proof. 
    \end{proof}
    For $t \in \mathbb R$, one can show that $\overline{\varphi^t}: \left \{ \begin{array}{ccl}
              \mathcal{H} & \to & \mathcal{H} \\
              \sigma & \mapsto & (\varphi^t)_2 \circ \sigma \circ (\varphi^{-t})_1
        \end{array} \right. $
        is a well-defined map, satisfying moreover for $t,s \in \mathbb R$, $\overline{\varphi^t} \circ \overline{\varphi^s}= \overline{\varphi^{t+s}}$.
        \begin{lemma}
            For every $t_0 >0$, there exists $k \in \mathbb N$ such that $(\overline{\varphi^{-t_0}})^k$ is contracting for the metric $d$.
        \end{lemma}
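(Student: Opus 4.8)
The plan is to show that $\overline{\varphi^{-t_0}}$ is Lipschitz for the metric $d$ with a multiplicative constant that decays to $0$ along the flow, so that although a single step need not contract, a high enough iterate does. Recall that $d$ is read off from the difference map: for $\sigma,\sigma^{*}\in\mathcal{H}$ the map $\sigma-\sigma^{*}\colon M_1\to\mathbb{C}$ is continuous and homogeneous of degree one for the $\mathbb{C}^{*}$-action, that is $(\sigma-\sigma^{*})(\lambda\cdot\overline{\gamma}^1)=\lambda\,(\sigma-\sigma^{*})(\overline{\gamma}^1)$, and $d(\sigma,\sigma^{*})=\sup_{\overline{\gamma}^1\in M_1}|(\sigma-\sigma^{*})(\overline{\gamma}^1)|/\|\overline{\gamma}^1\|$, the supremum being realised over the compact unit bundle $S^1\mathcal{F}^{u}$. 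Hence it suffices to describe how the difference map transforms under $\overline{\varphi^{-t_0}}$ and then to feed in the uniform contraction of $E^{u}$ by the flow in negative time.

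The crux is the transformation law
\[
(\overline{\varphi^{-t_0}}\sigma-\overline{\varphi^{-t_0}}\sigma^{*})(\overline{\gamma}^1)=(\sigma-\sigma^{*})\big((\varphi^{-t_0})_1(\overline{\gamma}^1)\big),\qquad \overline{\gamma}^1\in M_1.
\]
To establish it I would use that the second-jet maps $(\varphi^{s})_2$ are $\mathbb{C}$-equivariant for the fibrewise action of Lemma \ref{lem:jetM}: since $\varphi^{s}\circ\big(f(\tfrac{\,\cdot\,}{1+\mu\,\cdot\,})\big)=(\varphi^{s}\circ f)(\tfrac{\,\cdot\,}{1+\mu\,\cdot\,})$, one has $(\varphi^{s})_2(\mu\cdot\overline{f}^{2})=\mu\cdot(\varphi^{s})_2(\overline{f}^{2})$, so that postcomposing two sections by the same second-jet map preserves their pointwise $\mathbb{C}$-difference, while the accompanying precomposition only reparametrises the base $M_1$. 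Converting the fibre $\mathbb{C}$-difference back into the degree-one difference map $\sigma-\sigma^{*}$ then yields the displayed identity, whose linear part, under the identification $M_1\cong T\mathcal{F}^{u}\setminus 0$ and point $(iii)$ of Lemma \ref{lem:jetM}, is precomposition by $d\varphi^{-t_0}\!\restriction_{E^{u}}$. This equivariance bookkeeping -- tracking the Jacobian factors that relate the raw fibre $\mathbb{C}$-difference to the normalised difference map $\sigma-\sigma^{*}$, and thereby pinning down that the relevant differential is the \emph{contracting} one $d\varphi^{-t_0}\!\restriction_{E^{u}}$ -- is the delicate point and is where I expect the main effort to lie.

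With the transformation law in hand the estimate is immediate. Fix the Riemannian metric $\|\cdot\|$ that defines $d$; since $(\varphi^{t})$ is Anosov there are $C\ge 1$ and $\mu>1$ with $\|d\varphi^{-t}(v)\|\le C\mu^{-t}\|v\|$ for every $v\in E^{u}$ and $t\ge 0$. Given $\overline{\gamma}^1\in M_1$, set $w=(\varphi^{-t_0})_1(\overline{\gamma}^1)=d\varphi^{-t_0}(\overline{\gamma}^1)$; using degree-one homogeneity,
\[
|(\sigma-\sigma^{*})(w)|=\|w\|\,\Big|(\sigma-\sigma^{*})\big(\tfrac{1}{\|w\|}\cdot w\big)\Big|\le \|w\|\,d(\sigma,\sigma^{*})\le C\mu^{-t_0}\,\|\overline{\gamma}^1\|\,d(\sigma,\sigma^{*}).
\]
Dividing by $\|\overline{\gamma}^1\|$ and taking the supremum gives $d(\overline{\varphi^{-t_0}}\sigma,\overline{\varphi^{-t_0}}\sigma^{*})\le C\mu^{-t_0}\,d(\sigma,\sigma^{*})$.

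Finally I would pass to iterates through the group law $\overline{\varphi^{t}}\circ\overline{\varphi^{s}}=\overline{\varphi^{t+s}}$, which gives $(\overline{\varphi^{-t_0}})^{k}=\overline{\varphi^{-kt_0}}$. The same computation with $t_0$ replaced by $kt_0$ yields
\[
d\big((\overline{\varphi^{-t_0}})^{k}\sigma,(\overline{\varphi^{-t_0}})^{k}\sigma^{*}\big)\le C\mu^{-kt_0}\,d(\sigma,\sigma^{*}).
\]
Since $\mu>1$ and $t_0>0$, we have $C\mu^{-kt_0}\to 0$, so any $k$ with $C\mu^{-kt_0}<1$ makes $(\overline{\varphi^{-t_0}})^{k}$ a contraction. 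It is essential here to invoke the group law rather than to iterate the one-step constant: the fixed prefactor $C$ coming from the non-adapted metric may well exceed $\mu^{t_0}$, which is precisely why a single step need not contract and why one must pass to an iterate $\overline{\varphi^{-kt_0}}$, for which the single constant $C$ is beaten by $\mu^{-kt_0}$.
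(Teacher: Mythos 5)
Your proposal is correct and takes essentially the same route as the paper: the paper's proof establishes exactly your transformation law, $(\overline{\varphi^{-t_0k}}(\sigma)-\overline{\varphi^{-t_0k}}(\sigma^*))(\overline{\gamma}^1)=(\sigma-\sigma^*)\bigl((\varphi^{-t_0k})_1(\overline{\gamma}^1)\bigr)$, by an explicit computation of the fibre coordinates $w_i$ in the trivializations $\eta_{i_k}$ (your invariant derivation via $\mathbb C$-equivariance of $(\varphi^s)_2$ is the same bookkeeping, and indeed the difference map is exactly twice the fibrewise translation parameter, with no extra Jacobian factor), and then concludes with the same degree-one homogeneity trick and the Anosov bound $\|(\varphi^{-t_0k})_1(\overline{\gamma}^1)\|\le\mu^{-t_0k}\|\overline{\gamma}^1\|$. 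The only cosmetic difference is that the paper works with the adapted metric fixed after Definition \ref{def:2.6} (so $C=1$ and it simply chooses $k$ with $\mu^{-t_0k}\le\frac12$, computing directly with $\overline{\varphi^{-t_0k}}$), whereas you keep a general constant $C\ge1$ and invoke the group law $(\overline{\varphi^{-t_0}})^k=\overline{\varphi^{-kt_0}}$, which makes your argument robust to a non-adapted choice of metric.
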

    \begin{proof}
        Fix $t_0>0$ and let $k \in \mathbb Z$. Take $\overline{\gamma}^1 \in M_1$ and suppose $(\varphi^{-t_0k})_1(\overline{\gamma}^1)\in \rho_1^{-1}(U_{i_k})$. Then
        \begin{align*}
            \sigma((\varphi^{-t_0k})_1(\overline{\gamma}^1))&=\eta_{i_k}^{-1}\left(\overline{\varphi^{-t_0k}\circ\gamma}^1, w_{i_k}(\overline{\varphi^{-t_0k}\circ\gamma}^1) \right)\\
            &=\overline{\varphi^{-t_0k}\circ\gamma \left(z+ \frac{z^2}{2} \cdot \frac{w_{i_k}(\overline{\varphi^{-t_0k}\circ\gamma}^1)-(\psi_{i_k} \circ \varphi^{-t_0k}\circ\gamma)''(0)}{(\psi_{i_k} \circ \varphi^{-t_0k}\circ\gamma)'(0)} \right) }^2
        \end{align*}
        so
        \[\overline{\varphi^{-t_0k}}(\sigma)(\overline{\gamma}^1)=\overline{\gamma \left(z+ \frac{z^2}{2} \cdot \frac{w_{i_k}(\overline{\varphi^{-t_0k}\circ\gamma}^1)-(\psi_{i_k} \circ \varphi^{-t_0k}\circ\gamma)''(0)}{(\psi_{i_k} \circ \varphi^{-t_0k}\circ\gamma)'(0)} \right) }^2.\]
        Therefore, if we note, for $k \in \mathbb Z$, $(v_i^{(k)})_i$ the family of continuous $\mathbb C^*$-equivariant maps corresponding to $\overline{\varphi^{-t_0k}}(\sigma)$, then, for $\overline{\gamma}^1\in \rho_1^{-1}(U_{i_0})$, one can prove that 
        \begin{align*}
            v_{i_0}^{(k)}(\overline{\gamma}^1)=(\psi_{i_0}\circ \gamma)''(0)+(\psi_{i_0}\circ \gamma)'(0)\cdot \frac{w_{i_k}(\overline{\varphi^{-t_0k}\circ\gamma}^1)-(\psi_{i_k} \circ \varphi^{-t_0k}\circ\gamma)''(0)}{(\psi_{i_k} \circ \varphi^{-t_0k}\circ\gamma)'(0)}.
        \end{align*}
        Now, take $\sigma, \sigma^* \in \mathcal{H}$ and denote by $(w_i)_i$ and $(w_i^*)_i$ the family of continuous $\mathbb C^*$-equivariant maps to $\mathbb C$ corresponding to $\sigma$ and $\sigma^*$ respectively. \\
        We compute the distance $d(\overline{\varphi^{-t_0k}}(\sigma), \overline{\varphi^{-t_0k}}(\sigma^*))$: for $\overline{\gamma}^1 \in \rho_1^{-1}(U_{i_0})$, by keeping the above notations:
        \begin{align*}
            \frac{|(\overline{\varphi^{-t_0k}}(\sigma)-\overline{\varphi^{-t_0k}}(\sigma^*))(\overline{\gamma}^1)|}{\|\overline{\gamma}^1\|}&=\frac{1}{\|\overline{\gamma}^1\|}\cdot \left|\frac{v_{i_0}^{(k)}-v_{i_0}^{*(k)}}{(\psi_{i_0}\circ \gamma)'(0)}\right|
            =\frac{1}{\|\overline{\gamma}^1\|}\cdot\left |\frac{w_{i_k}(\overline{\varphi^{-t_0k}\circ\gamma}^1)-w_{i_k}^*(\overline{\varphi^{-t_0k}\circ\gamma}^1)}{(\psi_{i_k} \circ \varphi^{-t_0k}\circ\gamma)'(0)}\right|\\
            &=\frac{|(\sigma-\sigma^*)(\overline{\varphi^{-t_0k}\circ\gamma}^1)|}{\|\overline{\gamma}^1\|}=\frac{\|\overline{\varphi^{-t_0k}\circ\gamma}^1\|}{\|\overline{\gamma}^1\|} \cdot\frac{|(\sigma-\sigma^*)(\overline{\varphi^{-t_0k}\circ\gamma}^1)|}{\|\overline{\varphi^{-t_0k}\circ\gamma}^1\|}\\
            &\leq \frac{\|(\varphi^{-t_0k})_1(\overline{\gamma}^1)\|}{\|\overline{\gamma}^1\|} \cdot d(\sigma, \sigma^*) \leq \mu^{-t_0k} d(\sigma, \sigma^*).
        \end{align*}
        Therefore, for $t_0>0$ fixed and $k \in \mathbb N$ large enough so that $\mu^{-t_0k} \leq \frac{1}{2}$, we conclude.
        \end{proof}
    \begin{proof}[Proof of Theorem \ref{thm:AffStruAlternatif}]
        By the contracting mapping principle, for every $t_0>0$, there exists a unique element $\sigma_{t_0} \in \mathcal{H}$ such that $(\varphi^{-t_0})_2 \circ \sigma_{t_0} \circ (\varphi^{{t_0}})_1 = \sigma_{t_0}$.
        Let $\sigma:=\sigma_1$. We want to prove that for any $t_0>0$, $\sigma_{t_0} = \sigma$: since $((\varphi^{-{t_0}})_r)^{-1}=(\varphi^{{t_0}})_r$ for every $r \in \mathbb N$, it will follow that $(\varphi^{t_0})_2 \circ \sigma = \sigma \circ (\varphi^{{t_0}})_1$ for every $t_0 \in \mathbb R$, i.e. $(iii)$ of Theorem \ref{thm:compaffstru}. As a matter of fact, we want to show that for every $t_0>0$, $\sigma=\sigma_1$ is a fixed point of $\overline{\varphi^{-t_0}}$. But we know by definition of $\sigma_1$ that $\overline{\varphi^1}(\sigma_1) = \sigma_{1}.$
        So by composing this equality with $\overline{\varphi^{-t_0}}$, and using $(ii)$ of the previous claim, it comes $\overline{\varphi^1}(\overline{\varphi^{-t_0}}(\sigma_1)) = \overline{\varphi^{-t_0}}(\sigma_1).$
        Therefore, the element $\overline{\varphi^{-t_0}}(\sigma_1) \in \mathcal{H}$ is a fixed point of $\overline{\varphi^{1}}$, that is $\overline{\varphi^{-t_0}}(\sigma_1)=\sigma_1$, so $\sigma_1$ is a fixed point of $\overline{\varphi^{-t_0}}$ which gives the result.\\
        It remains to prove that for each strong unstable leaf $L$, $\restr{\sigma}{L_1}: L_1 \to L_2$ is holomorphic. This would imply the existence of a section satisfying $(i)$, $(ii)$ and $(iii)$ of Theorem \ref{thm:compaffstru}. The uniqueness of such section is immediate by the contraction mapping principle applied to $\overline{\varphi^{1}}$ on $\mathcal{H}$ for example.\\
        For every strong unstable leaf $L_\alpha$ and each $i \in I$, consider a connected component $(L_\alpha\cap U_i)_b$ of $L_\alpha\cap U_i$. We can define a complex affine structure on $(L_\alpha\cap U_i)_b$ by transporting the complex affine structure of $U_i^1 \subset \mathbb C$ via $\text{pr}_1\circ \restr{\psi_i}{(L_\alpha\cap U_i)_b}$, still denoted $\psi_i$ without any risk of confusion. This (unnatural) complex affine structure is holomorphically compatible with the initial complex structure. 
        Thus the space of $r$-jets of holomorphic diffeomorphisms (for the unnatural complex affine structure) from an open set of $0\in \mathbb C$ to an open set of $(L_\alpha\cap U_i)_b$ is holomorphically diffeomorphic to $((L_\alpha\cap U_i)_b)_r$ (which is $\rho_1^{-1}((L_\alpha\cap U_i)_b)$ if $r=1$ and $\rho_2^{-1}(\rho_1^{-1}((L_\alpha\cap U_i)_b))$ if $r=2$).
        By lemma \ref{lem:affstruL}, there exists a unique $\mathbb C^*$-equivariant continuous section $s_{\alpha,i,b}$ of $((L_\alpha\cap U_i)_b)_2 \xrightarrow{\rho_2} ((L_\alpha\cap U_i)_b)_1$ defining this unnatural complex affine structure on $(L_\alpha\cap U_i)_b$. We define, for $i \in I$ and $\overline{\gamma}^1\in \rho_1^{-1}((L_\alpha\cap U_i)_b)$, 
        $s_i(\overline{\gamma}^1)=s_{\alpha,i,b}(\overline{\gamma}^1)$.
        The map $s_i$ is a continuous $\mathbb C^*$-equivariant section of $\rho_2^{-1}(\rho_1^{-1}(U_i))\xrightarrow{\rho_2} \rho_1^{-1}(U_i)$.\\
        For $\overline{\gamma}^1 \in M_1$ and $k \in \mathbb N$, let $i_k=i(k, \overline{\gamma}^1) \in I$ such that $\varphi^{-k}(\gamma(0)) \in U_{i_k}$. 
        By definition of $s_i$, we can establish that 
            \[\left( (\varphi^k)_2\circ s_{i_k} \circ (\varphi^{-k})_1 \right)(\overline{\gamma}^1)= \overline{\gamma\left( z-\frac{z^2}{2}\cdot \frac{(\psi_{i_k} \circ \varphi^{-k} \circ \gamma)''(0) }{(\psi_{i_k} \circ \varphi^{-k} \circ \gamma))'(0)}\right)}^2.\]
    In order to prove that for each strong unstable leaf $L$, $\restr{\sigma}{L_1}: L_1 \to L_2$ is holomorphic, we consider $\overline{\gamma}^1 \in (L\cap U_{i_0})_1$.
    We write 
    \[\eta_{i_0}(\left( (\varphi^k)_2\circ s_{i_k} \circ (\varphi^{-k})_1 \right)(\overline{\gamma}^1))=(\overline{\gamma}^1, w_{i_0}^{(k)}(\overline{\gamma}^1)).\]
    By the same calculus as before, 
     \begin{align*}
            w_{i_0}^{(k)}(\overline{\gamma}^1)=(\psi_{i_0}\circ \gamma)''(0)-(\psi_{i_0}\circ \gamma)'(0)\cdot \frac{(\psi_{i_k} \circ \varphi^{-k}\circ\gamma)''(0)}{(\psi_{i_k} \circ \varphi^{-k}\circ\gamma)'(0)}.
        \end{align*}
    
    We still write $(w_i)_i$ the family of continuous $\mathbb C^*$-equivariant maps to $\mathbb C$ corresponding to $\sigma$. We want to show that the restriction of each $w_{i}$ to $L$ is holomorphic, or for any small open set $V_i$ of $L\cap U_i$, $\restr{w_i}{V_i}$ is holomorphic.
    First, it can be proven that the $\varphi$-invariance property $(iii)$ of $\sigma$ translates for $(w_i)$ as:
        \begin{align*}
             \frac{w_{i_0}(\overline{\gamma}^1)-(\psi_{i_0} \circ \gamma)''(0)}{(\psi_{i_k} 
             \circ\gamma)'(0)}=\frac{w_{i_k}(\overline{\varphi^{-k}\circ\gamma}^1)-(\psi_{i_k} \circ \varphi^{-k}\circ\gamma)''(0)}{(\psi_{i_k} \circ \varphi^{-k}\circ\gamma)'(0)} \qquad\text{ for }k \in \mathbb Z.
        \end{align*}
    Therefore, we obtain the following equality for $k \in \mathbb Z$ and $\overline{\gamma}^1 \in \rho_1^{-1}(U_{i_0})$,
    \begin{align*}
           \frac{w_{i_0}^{(k)}(\overline{\gamma}^1)-w_{i_0}(\overline{\gamma}^1)}{(\psi_{i_0} \circ\gamma)'(0)}=-\frac{w_{i_k}(\overline{\varphi^{-k}\circ\gamma}^1)}{(\psi_{i_k} \circ \varphi^{-k}\circ\gamma)'(0)}.
        \end{align*}
    Let $\overline{\gamma}^1 \in (L\cap U_{i_0})_1$ and $V_{i_0}$ a small neighborhood of $\overline{\gamma}^1$ in $(L\cap U_{i_0})_1$.
    Since the action of $(\varphi)_1$ on $M_1$ is identified to the action of the differential $d\varphi$ on $T\mathcal{F}^{u}\setminus 0$, then by compactness of $M$, if $V_{i_0}$ is taken sufficiently small, for every $k\in \mathbb N$ and $\overline{\alpha}^1 \in V_{i_0}$, $i(k,\overline{\alpha}^1)=i(k,\overline{\gamma}^1)$. We can assume $V_{i_0}$ compact also. \\
    For $\overline{\alpha}^1 \in M_1$, let \[f(\overline{\alpha}^1)=\sup_{i\in I}\left|(\psi_i \circ \alpha)'(0) \right| < \infty \quad \text{ and } \quad g(\overline{\alpha}^1)=\sup_{i\in I}|w_i(\overline{\alpha}^1)| < \infty. \] $f$ and $g$ are well defined continuous maps from $M_1$ to $\mathbb R^+$. Therefore, they are bounded on $S^1\mathcal{F}^{u}$ and $V_{i_0}$ and attain their bounds. Let 
    \[C_0=\sup(\restr{f}{V_{i_0}})<\infty, \, C_1'=\inf(\restr{f}{S^1\mathcal{F}^{u}})>0, \, C_2=\sup(\restr{g}{S^1\mathcal{F}^{u}})<\infty, \, C_3=\sup_{\overline{\alpha}^1 \in V_{i_0}} \|\overline{\alpha}^1\| <\infty .\]
    It comes for $k \in \mathbb N$ and $\overline{\alpha}^1 \in V_{i_0}$:
    \begin{align*}
        &|w_{i_0}^{(k)}(\overline{\alpha}^1)- w_{i_0}(\overline{\alpha}^1)| = \frac{|(\psi_{i_0}\circ\alpha)'(0)|}{|(\psi_{i_k} \circ \varphi^{-k}\circ\alpha)'(0) |} \cdot|w_{i_k}(\overline{\varphi^{-k}\circ\alpha}^1)|\\
        &=\frac{|(\psi_{i_0}\circ\alpha)'(0)|}{|(\psi_{i_k} \circ \varphi^{-k}\circ\alpha)'(0) |}  \|\overline{\varphi^{-k}\circ\alpha}^1 \|^2 \cdot \left|w_{i_k} \left(\frac{1}{\|\overline{\varphi^{-k}\circ\alpha}^1 \|}\cdot\overline{\varphi^{-k}\circ\alpha}^1 \right) \right|\\
        &=\frac{|(\psi_{i_0}\circ\alpha)'(0)|}{|(\psi_{i_k} \circ \left((\varphi^{-k}\circ\alpha)(\frac{1}{\|\overline{\varphi^{-k}\circ\alpha}^1 \|}\cdot) \right))'(0) |}  \|\overline{\varphi^{-k}\circ\alpha}^1 \| \cdot \left|w_{i_k} \left(\frac{1}{\|\overline{\varphi^{-k}\circ\alpha}^1 \|}\cdot\overline{\varphi^{-k}\circ\alpha}^1 \right) \right|\\
        &\leq \frac{C_0}{C'_1}\cdot C_2 \cdot \mu^{-k} \|\overline{\alpha}^1\| \leq \frac{C_0}{C'_1}\cdot C_2 \cdot C_3 \cdot \mu^{-k}.
    \end{align*}
    Therefore, $(w_{i_0}^{(k)})_{k\in \mathbb N}$ converges uniformly to $w_{i_0}$ on $V_{i_0}$. Also, each $w_i^{(k)}$ for $k\in \mathbb N$ is holomorphic on $\interior{V_{i_0}}$, so $w_{i_0}$ is holomorphic on $\interior{V_{i_0}}$. Since $V_{i_0} \subset L\cap U_{i_0}$ is arbitrary, $\restr{w_{i_0}}{(L\cap U_{i_0})_1}$ is holomorphic. Since $i_0$ is arbitrary, $\restr{\sigma}{L_1}:L_1 \to L_2$ is holomorphic, which proves $(ii)$ of Theorem \ref{thm:compaffstru}.\\
    It remains to prove that these complex affine structures are complete. 
    We will need the following corollary:
\begin{corollary}
\label{cor:contaffstru}
    Let $M$ a smooth compact manifold and $(\varphi^t)_{t\in \mathbb R}$ a transversely holomorphic Anosov flow on $M$. Suppose the unstable distribution $E^{u}$ is of complex dimension $1$. \\
    Then the strong unstable foliation is a continuous foliation with complex affine leaves.
    \end{corollary}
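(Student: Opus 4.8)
The plan is to upgrade the continuous foliation with holomorphic leaves of Corollary \ref{cor:conthololeaves} to a foliation with complex affine leaves, using the flow-invariant section $\sigma$ produced in Theorem \ref{thm:AffStruAlternatif}. By Lemma \ref{lem:affstruL}, the restriction $\restr{\sigma}{L_1}$ defines, on every strong unstable leaf $L$, a complex affine structure holomorphically compatible with the initial one; so all the pointwise data are already in place, and the only thing to establish is that these leafwise structures can be read off a \emph{single} $C^0$ foliated atlas whose leafwise transition maps are complex affine. Throughout I keep the holomorphic foliated atlas $(U_i,\psi_i)$ of Corollary \ref{cor:conthololeaves}, with $\psi_i(U_i)=U_i^1\times U_i^2$, $U_i^1\subset\mathbb C$, and the local trivializations $\eta_i$ of Lemma \ref{lem:jetM}.

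First I would record $\sigma$ in these coordinates by its continuous, leafwise holomorphic, $\mathbb C^*$-equivariant components $w_i:\rho_1^{-1}(U_i)\to\mathbb C$. Equivariance $w_i(\lambda\cdot\overline{\gamma}^1)=\lambda^2 w_i(\overline{\gamma}^1)$ lets me write
\[w_i(\overline{\gamma}^1)=\bigl((\psi_i\circ\gamma)'(0)\bigr)^2\,a_i(\gamma(0)),\]
where $a_i\colon U_i\to\mathbb C$ is continuous on $U_i$ and holomorphic along each plaque (because $\sigma$ is continuous on $M_1$ and $\restr{\sigma}{L_1}$ is holomorphic). Reading $a_i$ as a function of the leafwise coordinate $\zeta=\operatorname{pr}_1\circ\psi_i$, the affine $\sigma$-geodesics are exactly the solutions of $\zeta''=a_i(\zeta)\,(\zeta')^2$; hence a holomorphic chart $A_i(\zeta)$ is complex affine for the structure $\sigma$ precisely when $A_i''+a_i A_i'=0$, i.e. when
\[A_i(\zeta)=\int \exp\Bigl(-\int a_i\Bigr).\]
Performing this double integration plaque by plaque, starting from the centre of $U_i^1$, produces for each transverse value $y\in U_i^2$ a holomorphic embedding $A_i(\cdot,y)\colon U_i^1\to\mathbb C$ (its derivative $\exp(-\int a_i)$ never vanishes) which straightens the $\sigma$-geodesics and is therefore, by Lemma \ref{lem:affstruL}, a complex affine chart of the leaf.

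Next I would assemble these into new charts $\tilde\psi_i:=\bigl(A_i(\operatorname{pr}_1\circ\psi_i,\;\operatorname{pr}_2\circ\psi_i),\,\operatorname{pr}_2\circ\psi_i\bigr)$. Since $a_i$ is continuous in $(\zeta,y)$ and holomorphic in $\zeta$, the iterated holomorphic primitives depend continuously on $y$, so $A_i$ is jointly continuous; as it is a holomorphic embedding on each fibre and the identity in the $y$-direction, $\tilde\psi_i$ is a homeomorphism onto its image, hence a legitimate $C^0$ foliated chart whose leafwise parts are complex affine charts for $\sigma$. On an overlap, the leafwise transition $\operatorname{pr}_1\circ\tilde\psi_i\circ\tilde\psi_j^{-1}$ restricted to a plaque is the composite of two complex affine charts of the \emph{same} affine structure $\restr{\sigma}{L_1}$, hence a complex affine diffeomorphism, and it varies continuously in the transverse direction; this is exactly the statement that $\mathcal{F}^{u}$ is a continuous foliation with complex affine leaves.

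The main obstacle is the penultimate step: proving that the plaquewise double integration yields charts that are genuinely continuous (indeed foliated homeomorphisms) in the transverse direction. This rests on continuous dependence of the holomorphic antiderivatives $\zeta\mapsto\int a_i$ and $\zeta\mapsto\int\exp(-\int a_i)$ on the parameter $y$, which follows from the joint continuity of $a_i$ together with its leafwise holomorphy (via, e.g., uniform convergence of difference quotients on compact plaques); the remaining verifications — non-vanishing of $A_i'$, the affine nature of the transition maps, and compatibility with the initial holomorphic structure — are then formal consequences of Lemma \ref{lem:affstruL} and the construction.
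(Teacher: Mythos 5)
Your proposal is correct and is in essence the argument the paper intends: the paper's own proof is a one-line deferral to the local-product-structure argument of Corollary \ref{cor:conthololeaves} (``since we can now use complex affine charts for the strong unstable leaves''), and what you do is make explicit the only nontrivial point hidden in that phrase, namely that the leafwise affine charts obtained by integrating the section $\sigma$ (solving $A_i''+a_iA_i'=0$ plaque by plaque) depend continuously on the transverse parameter because $\sigma$, hence $a_i$, is jointly continuous. One small caveat: non-vanishing of $A_i'=\exp\bigl(-\int a_i\bigr)$ only gives \emph{local} injectivity of $A_i(\cdot,y)$, not that it embeds all of $U_i^1$; you should shrink the charts, using a Hurwitz-type argument or uniform local injectivity on compacts to get a radius of injectivity locally uniform in $y$, after which the rest of your construction goes through as written.
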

\begin{proof}
        The proof is identical to that of Corollary \ref{cor:conthololeaves} since we can now use complex affine charts for the strong unstable leaves.
    \end{proof}
\begin{corollary}
    Let $M$ a smooth compact manifold of dimension five and $(\varphi^t)_{t\in \mathbb R}$ a transversely holomorphic Anosov flow on $M$. \\
    Then the strong unstable and strong stable foliations are continuous foliations with complex affine leaves.
    \end{corollary}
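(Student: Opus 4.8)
The plan is to reduce the statement directly to Corollary \ref{cor:contaffstru}, applied both to the flow $(\varphi^t)_{t\in \mathbb R}$ and to its time-reversal $(\varphi^{-t})_{t\in \mathbb R}$. The only preliminary point is to check that the complex-dimension-one hypothesis of that corollary is automatically satisfied when $\dim M = 5$.

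First I would record the dimension count. By the Remark following Corollary \ref{cor:conthololeaves}, each strong distribution is a complex vector bundle over $M$, so the splitting $TM = \mathbb R X \oplus E^{s} \oplus E^{u}$ gives $5 = 1 + 2\dim_{\mathbb C} E^{s} + 2\dim_{\mathbb C} E^{u}$, whence $\dim_{\mathbb C} E^{s} + \dim_{\mathbb C} E^{u} = 2$. Since both distributions are nontrivial for an Anosov flow, each has complex dimension at least one, and therefore $\dim_{\mathbb C} E^{u} = \dim_{\mathbb C} E^{s} = 1$, exactly as already noted in the proof of Corollary \ref{cor:4.3.1}.

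With $\dim_{\mathbb C} E^{u} = 1$, Corollary \ref{cor:contaffstru} applies verbatim to $(\varphi^t)_{t\in \mathbb R}$ and shows that the strong unstable foliation is a continuous foliation with complex affine leaves. For the strong stable foliation, I would pass to the reversed flow $(\varphi^{-t})_{t\in \mathbb R}$: it shares the same orbit foliation $\Phi$, hence the same transverse holomorphic structure, and is Anosov with the roles of the stable and unstable distributions exchanged, so its strong \emph{unstable} distribution is precisely $E^{s}$, of complex dimension one. Applying Corollary \ref{cor:contaffstru} to $(\varphi^{-t})_{t\in \mathbb R}$ then endows the strong stable foliation of the original flow with the structure of a continuous foliation with complex affine leaves, which is the claim.

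There is no genuine obstacle here; the statement is a formal consequence combining the dimension constraint with the time-reversal symmetry already exploited in Corollary \ref{cor:4.3.1}. The one point deserving care is confirming that time-reversal genuinely preserves transverse holomorphicity, but since $\Phi$ is the common orbit foliation and the transverse holomorphic structure is a property of $\Phi$ alone (independent of the parametrization or the direction of traversal), this is immediate.
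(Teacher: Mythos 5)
Your argument is correct and is exactly the one the paper intends: the paper leaves this corollary without an explicit proof, but the combination of the dimension count $\dim_{\mathbb C}E^{u}=\dim_{\mathbb C}E^{s}=1$ with an application of Corollary \ref{cor:contaffstru} to the time-reversed flow is precisely the argument spelled out for the analogous Corollary \ref{cor:4.3.1}. No discrepancy to report.
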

    For every strong unstable leaf $L$ (which is simply connected), there exists a local diffeomorphism $D: L \to \mathbb C$ such that the above complex affine structure on $L$ is the pull back by $D$ of the natural complex affine structure of $\mathbb C$. We want to show that for every strong unstable leaf, $D$ is a diffeomorphism. 
    \begin{lemma}
    \leavevmode
        \begin{enumerate} 
            \item There exists $\epsilon_0>0$ such that for every strong unstable leaf $L$, for every developing map $D$ defining the above complex affine structure of $L$, for every distinct $x,y \in L$ satisfying $d(x,y)< \epsilon_0$, then $D(x) \neq D(y)$.
            \item There exists $\delta_0>0$ such that for every point $x\in M$, for every developing map $D$ defining the above complex affine structure of $\mathcal{F}^{u}_x$ satisfying $\|d_xD\|=1$, then $\text{Im}(D)\supset B(0,\delta_0)$.
        \end{enumerate}
    \end{lemma}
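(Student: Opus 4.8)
The plan is to derive both statements from the single structural fact, provided by Corollary \ref{cor:contaffstru}, that $\mathcal{F}^{u}$ is a continuous foliation with complex affine leaves. Fix once and for all a finite $C^0$ foliated atlas $(U_i,\psi_i)_{i\in I}$, with $\psi_i(U_i)=U_i^1\times U_i^2$ and $U_i^1\subset\mathbb C$ a ball, such that on each plaque $(L\cap U_i)_b$ the map $\psi_i^1:=\operatorname{pr}_1\circ\restr{\psi_i}{(L\cap U_i)_b}$ is a complex affine chart for the affine structure of $L$. The crucial consequence is that on such a plaque the developing map is an affine reparametrisation of this universal chart: since $\psi_i^1$ and $\restr{D}{(L\cap U_i)_b}$ are two affine charts of the same structure, there is an affine map $A=A_{i,b}$ of $\mathbb C$, $A(z)=az+b$, with $\restr{D}{(L\cap U_i)_b}=A\circ\psi_i^1$. \emph{All} of the dependence on the leaf $L$ and on the chosen developing map $D$ is thereby absorbed into the affine factor $A$, while the atlas $(U_i,\psi_i)$ is fixed; this is what makes the constants below uniform. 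From compactness of $M$ and the fact that the leaves of $\mathcal{F}^{u}$ vary continuously in the $C^\infty$ topology, $z\mapsto\|d_z\psi_i^1\|$ (the tangential derivative measured with a fixed Riemannian metric on $M$) is continuous on each $U_i$, so after refining the atlas I may fix constants $r_0>0$ and $C_0<\infty$ such that every point lies in some $U_i$ with $U_i^1\supset B(\psi_i^1(x),r_0)$ and $\sup_i\sup_{z}\|d_z\psi_i^1\|\le C_0$, together with a Lebesgue-type radius $\epsilon_0>0$ such that any two points of a common leaf lying within leaf-distance $\epsilon_0$ sit on a single plaque.

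For $(1)$ I interpret $d$ as the distance induced along the leaf, which is the estimate needed in the sequel and the only sensible reading, since two distinct sheets of one leaf may pass arbitrarily close in $M$ yet receive independent affine factors. Let $x\neq y$ lie on a leaf $L$ at leaf-distance less than $\epsilon_0$. By the choice of $\epsilon_0$ they lie on one plaque $(L\cap U_i)_b$, on which $D=A\circ\psi_i^1$ with $A$ affine, hence injective, and $\psi_i^1$ a homeomorphism onto $U_i^1$, hence injective. Therefore $\psi_i^1(x)\neq\psi_i^1(y)$ forces $D(x)\neq D(y)$. The radius $\epsilon_0$ depends only on the fixed atlas, not on $L$ nor on $D$, which is exactly the asserted uniformity.

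For $(2)$, given $x$ choose $U_i\ni x$ with $U_i^1\supset B(\psi_i^1(x),r_0)$ as above, and write $\restr{D}{(L\cap U_i)_b}=A\circ\psi_i^1$ with $A(z)=az+b$ on the plaque of $x$. Normalising (as is implicit in the statement) $D(x)=0$ forces $b=-a\,\psi_i^1(x)$, so $A\bigl(B(\psi_i^1(x),r_0)\bigr)=B(0,|a|\,r_0)$; and since $d_xD=a\,d_x\psi_i^1$ we have $1=\|d_xD\|=|a|\,\|d_x\psi_i^1\|$, i.e.\ $|a|=\|d_x\psi_i^1\|^{-1}\ge C_0^{-1}$. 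Consequently $\operatorname{Im}(D)\supset A\bigl(B(\psi_i^1(x),r_0)\bigr)=B\bigl(0,\,r_0/\|d_x\psi_i^1\|\bigr)\supset B(0,r_0/C_0)$, and $\delta_0:=r_0/C_0$ works for every $x$.

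The only genuine work lies in the uniformity assertions collected in the first paragraph; the heart of the matter is that, because $D$ is affinely conjugate on each plaque to the fixed chart $\psi_i$, every leaf and every developing map are governed by the same finitely many charts, so compactness of $M$ together with the continuous (and $C^\infty$ along leaves) variation of the affine foliated atlas yields constants $r_0,C_0,\epsilon_0$ independent of $L$ and $D$. I expect the main obstacle to be checking precisely that $z\mapsto\|d_z\psi_i^1\|$ is continuous across plaques, equivalently that the affine charts vary continuously transverse to $\mathcal{F}^{u}$; this is where the continuity of $\mathcal{F}^{u}$ as a foliation with affine leaves (Corollary \ref{cor:contaffstru}) and the $C^\infty$ continuity of the unstable leaves are essential.
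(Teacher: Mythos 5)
Your proof is correct and follows essentially the same route as the paper: a finite foliated atlas with affine plaques, the observation that any developing map restricted to a plaque is an affine reparametrisation of the fixed chart $\psi_i$, and compactness of $M$ to make the constants uniform in the leaf and in the developing map. The only divergence is that you read $d$ in $(1)$ as the leaf metric and argue plaque-by-plaque rather than chart-by-chart; this is harmless for the later application (the leafwise distance $d_{u}(\varphi^{-k}(x),\varphi^{-k}(y))$ also tends to $0$) and is in fact the more careful reading, since two plaques of one leaf inside a single chart carry independent affine factors.
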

    \begin{proof}
         $(i):$ Consider a $C^0$ atlas of complex affine leaves $(U_i, \psi_i)_i$ for the strong unstable foliation given by Corollary \ref{cor:contaffstru}, which is finite (by compactness of $M$). By the definition and notations of the developing map, since the restriction of $\psi_i$ to each plaque is an affine chart for the corresponding strong unstable leaf, we define on $U_i$ a developing map for every strong unstable leaf $L$ intersecting $U_i$ by $D_L=\restr{\psi_i}{L}$.
        By compactness of $M$, there exists $\eta>0$ such that if two points in $M$ are of distance less than $\eta$, then they belong to the same foliation chart $U_i$.
        Therefore if $x,y \in L$ are distinct and of distance less than $\eta>0$, they belong to the same foliation chart given by the above, say $U_0$. Since $\psi_0$ is injective on $U_0$, necessarily $D_L(x)\neq D_L(y)$. Since any other developing map of $L$ differs by post composition by an invertible affine map, the same is true for any other developing map.\\
        $(ii):$ In the same spirit as $(i)$, we define on $U_i$ a developing map for every strong unstable leaf $L$ intersecting $U_i$ by $D_L=\restr{\psi_i}{L}$.
        The condition $\|d_xD_L\|=1$ for $x \in L\cap U_i$ is independent of $x\in U_i$ and $L$ strong unstable leaf intersecting $U_i$, and two such developing maps differ by post-composition by a rotation independent of $x \in U_i$ and $L$.
        Therefore, for each $i$, there exists $\delta_i>0$ such that for every point $x\in U_i$, for every developing map $D_{\mathcal{F}^{u}_x}$ of $\mathcal{F}^{u}_x$ satisfying $\|d_xD_{\mathcal{F}^{u}_x}\|=1$, then $\text{Im}(D_{\mathcal{F}^{u}_x})\supset B(0,\delta_i)$.
        Since there is only a finite number of charts, $\delta_*=\min_i\delta_i >0$ suits.
    \end{proof}
    These two results are mostly useful because of the following fact:
    \begin{lemma}
        Let $t\in \mathbb R$, $x \in M$ and $D$ a developing map defining the above complex affine structure of $\mathcal{F}^{u}_x$. \\
        Then $D\circ \varphi^t$ is a developing map defining the above complex affine structure of $\varphi^{-t}(\mathcal{F}^{u}_x)= \mathcal{F}^{u}_{\varphi^{-t}(x)}.$
    \end{lemma}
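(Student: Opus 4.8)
The plan is to reduce the statement to the elementary fact that a composition of affine maps is affine, feeding in two inputs that are already available: that $D$, being a developing map, is an affine local diffeomorphism onto an open subset of $\mathbb C$, and that the flow acts affinely on the strong unstable leaves by Theorem \ref{thm:compaffstru}. Recall that a developing map for the complex affine structure on a (simply connected) strong unstable leaf $L$ is exactly a local diffeomorphism $\Delta: L \to \mathbb C$ that is affine with respect to that structure and the natural affine structure of $\mathbb C$; equivalently, the complex affine structure of $L$ is the pull back $\Delta^*\mathcal{A}_{\mathbb C}$ of the natural affine structure $\mathcal{A}_{\mathbb C}$ of $\mathbb C$. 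So the whole matter is to check that $D\circ\varphi^t$ is both a local diffeomorphism and affine.

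First I would fix the domains. Since the strong unstable foliation is $\varphi$-invariant, $\varphi^t$ sends $\mathcal{F}^{u}_{\varphi^{-t}(x)}$ diffeomorphically onto $\mathcal{F}^{u}_{\varphi^t(\varphi^{-t}(x))}=\mathcal{F}^{u}_x$, so that $D\circ\varphi^t$ is well defined on $\mathcal{F}^{u}_{\varphi^{-t}(x)}$ with values in $\mathbb C$. By Proposition \ref{prop:complexFsuFss}, the restriction $\restr{\varphi^t}{\mathcal{F}^{u}_{\varphi^{-t}(x)}}$ is a biholomorphism onto $\mathcal{F}^{u}_x$ (its inverse being the holomorphic map $\restr{\varphi^{-t}}{\mathcal{F}^{u}_x}$), and by Theorem \ref{thm:compaffstru} it is moreover affine for the complex affine structures on these two leaves. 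Since $D$ is itself an affine local diffeomorphism, the composition $D\circ\varphi^t$ is a composition of a diffeomorphism with a local diffeomorphism, hence a local diffeomorphism, and a composition of affine maps, hence affine.

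To conclude, denote by $\mathcal{A}_y$ the complex affine structure of $\mathcal{F}^{u}_y$ furnished by Theorem \ref{thm:compaffstru}. The affineness of $\restr{\varphi^t}{\mathcal{F}^{u}_{\varphi^{-t}(x)}}$ means precisely that $(\varphi^t)^*\mathcal{A}_x=\mathcal{A}_{\varphi^{-t}(x)}$, while $D$ being a developing map for $\mathcal{A}_x$ means $D^*\mathcal{A}_{\mathbb C}=\mathcal{A}_x$. Contravariance of the pull back then gives
\[
(D\circ\varphi^t)^*\mathcal{A}_{\mathbb C} = (\varphi^t)^*\bigl(D^*\mathcal{A}_{\mathbb C}\bigr) = (\varphi^t)^*\mathcal{A}_x = \mathcal{A}_{\varphi^{-t}(x)},
\]
so the affine local diffeomorphism $D\circ\varphi^t$ is indeed a developing map defining the complex affine structure of $\mathcal{F}^{u}_{\varphi^{-t}(x)}$. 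There is no genuine obstacle here: the entire content is already packaged in Theorem \ref{thm:compaffstru}, and the only points requiring (routine) care are the correct identification of domains through the flow invariance $\varphi^t(\mathcal{F}^{u}_{\varphi^{-t}(x)})=\mathcal{F}^{u}_x$ and the bookkeeping of composition directions in the pull back identity.
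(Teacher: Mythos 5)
Your proof is correct and follows essentially the same route as the paper: the paper also reduces the claim to the affineness of $\varphi^t$ from Theorem \ref{thm:compaffstru} together with the definition of the developing map, phrasing the pull-back identity $(\varphi^t)^*\mathcal{A}_x=\mathcal{A}_{\varphi^{-t}(x)}$ concretely as the statement that $(\varphi^{-t}(V_\alpha), l_\alpha\circ\varphi^t)_\alpha$ is an affine atlas for $\mathcal{F}^{u}_{\varphi^{-t}(x)}$ with the same transition maps. The only difference is cosmetic (atlases versus pull-backs of structures), and your extra check that $D\circ\varphi^t$ is a local diffeomorphism is harmless.
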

    \begin{proof}
        This result comes directly from the very definition of the developing map and the fact that each $\varphi^t$ is affine with respect to the above complex affine structures, so that if $(V_\alpha, l_\alpha)_\alpha$ is an atlas of complex affine structures for $\mathcal{F}^{u}_x$, then $(\varphi^{-t}(V_\alpha), l_\alpha\circ \varphi^t)_\alpha$ is an atlas of complex affine structures for $\mathcal{F}^{u}_{\varphi^{-t}(x)}$ with the same transition maps.
    \end{proof}     
    With that in mind, we conclude the proof of Theorem \ref{thm:compaffstru}.
    Let $L$ a strong unstable leaf and fix a developing map $D$ on $L$ defining the above complex affine structure. We show that $D$ is injective on $L$. 
    Let $x,y \in L$. Since $d(\varphi^{-k}(x), \varphi^{-k}(y))\xrightarrow[k \to +\infty]{} 0 $, there exists $k_0 \in \mathbb N$ such that $d(\varphi^{-k_0}(x), \varphi^{-k_0}(y)) < \epsilon_0$. By the previous claims, $(D\circ \varphi^{k_0})(\varphi^{-k_0}(x)) \neq (D\circ \varphi^{k_0})(\varphi^{-k_0}(y))$, i.e $D(x) \neq D(y)$.\\
    Let $x \in M$ and $D$ a developing map defining the above complex affine structure of $\mathcal{F}^{u}_x$ such that $D(x)=0$. Let $k \in \mathbb N$ and $g_k \in \mathbb C^*$ such that $|g_k|= \frac{1}{\|d_{\varphi^{-k}(x)}(D\circ \varphi^k)\|}$. By the previous claims, since $g_k \circ D \circ \varphi^k$ is a developing map defining the above complex affine structure of $\mathcal{F}^{u}_{\varphi^{-k}(x)}$, then 
    \[\text{Im}(g_k \circ D \circ \varphi^k) \supset B \left(0, \delta_0 \right).\]
    As a result, for every $k \in \mathbb N,\quad \text{Im}(D) \supset B\left(0, \frac{\delta_0}{g_k} \right),$
     which concludes the proof since $\frac{1}{g_k} \xrightarrow[k \to +\infty]{} +\infty$.
         \end{proof}

\subsection{Transverse projective structure for weak foliations}
\label{subsec:6.2}
Now that we have proven that the strong unstable leaves are complex affine manifolds in case the strong unstable distribution is of complex dimension one, a natural question that arises is wether the weak stable foliation is transversely affine, or at least transversely projective. 
We show that this is true in case the flow is topologically transitive. The main result is:
\begin{theorem}
\label{thm:FsTrProj}
    Let $(\varphi^t)_{t\in \mathbb R}$ a transversely holomorphic Anosov flow on a smooth compact manifold $M$. Suppose the strong unstable distribution $E^{u}$ is of complex dimension $1$. Suppose moreover that $(\varphi^t)_{t\in \mathbb R}$ is transitive.\\
    Then the weak stable foliation $\mathcal{F}^{ws}$ is transversely projective.
\end{theorem}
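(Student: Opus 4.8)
The plan is to characterise transverse projectivity through the vanishing of a \emph{Schwarzian cocycle}, and to kill that cocycle by combining the affine action of the flow on the strong unstable leaves (Theorem \ref{thm:compaffstru}) with a Markov partition.

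First I would fix the transverse data. By Theorem \ref{thm:Fstrholo} the foliation $\mathcal{F}^{ws}$ is already transversely holomorphic of complex codimension one, and by Theorem \ref{thm:compaffstru} its natural transversals — small pieces $W^{u}_x$ of strong unstable leaves — carry complete complex affine structures on which the flow acts affinely. Recall that the Schwarzian derivative $S(f)=\frac{f'''}{f'}-\frac32\left(\frac{f''}{f'}\right)^2$ vanishes exactly on Möbius maps, obeys the cocycle identity $S(f\circ g)=(S(f)\circ g)(g')^2+S(g)$, is unchanged under post-composition by affine maps, and transforms as a quadratic differential under pre-composition by affine maps (affine maps being Möbius, so contributing no Schwarzian). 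Consequently the affine structures let me regard the Schwarzian $S(h)$ of any weak stable holonomy $h$ as an intrinsic quadratic differential on the source strong unstable leaf. By Proposition \ref{prop:caracGXstru} applied with $X=\mathbf{P}^1(\mathbb C)$ and $G=\mathrm{PSL}(2,\mathbb C)$, it then suffices to prove that $S(h)\equiv 0$ for every weak stable holonomy taken between such transversals.

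Next I would reduce to the strong stable direction. As in the proof of Theorem \ref{thm:Fstrholo}, any $\mathcal{F}^{ws}$-path decomposes into a path inside a strong stable leaf followed by a flow segment; by Corollary \ref{cor:holofloFsu} the holonomy of the flow segment is the restriction of $\restr{\varphi^t}{\mathcal{F}^{u}}$, which is affine, hence projective, so its Schwarzian vanishes. By the cocycle identity it therefore suffices to treat a holonomy $h$ between $W^{u}_x$ and $W^{u}_y$ with $y\in\mathcal{F}^{s}_x$. Writing $Q:=S(h)$, I would invoke flow-invariance of $\mathcal{F}^{ws}$: conjugating by the flow, the holonomy $h_t$ between the strong unstable transversals at $\varphi^t(x)$ and $\varphi^t(y)\in\mathcal{F}^{s}_{\varphi^t(x)}$ equals $\restr{\varphi^t}{\mathcal{F}^{u}_y}\circ h\circ(\restr{\varphi^t}{\mathcal{F}^{u}_x})^{-1}$. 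Since $\restr{\varphi^t}{\mathcal{F}^{u}_x}$ is affine and expanding, the quadratic-differential transformation rule yields the pointwise identity $\|Q\|_x=E(t,x)^2\,\|S(h_t)\|_{\varphi^t(x)}$, where $E(t,x)=\|\restr{d\varphi^t}{E^{u}_x}\|\ge\mu^t$ is the unstable expansion factor.

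Finally I would drive $\|Q\|_x$ to zero as $t\to+\infty$. As the strong stable leaf contracts, $\varphi^t(x)$ and $\varphi^t(y)$ approach one another at rate $\lambda^t$, so $h_t$ tends to the identity; the uniform $C^\infty$-regularity of the weak stable holonomy established just after Theorem \ref{thm:Fstrholo}, together with compactness of $M$, bounds $\|S(h_t)\|_{\varphi^t(x)}$ by a uniform multiple of the transverse distance between $\varphi^t(x)$ and $\varphi^t(y)$. The delicate point is that this decay must outpace the growth of $E(t,x)^2$, and a crude Lyapunov estimate does not suffice, since the unstable expansion and stable contraction rates need not be pinched. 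This is the main obstacle, and it is precisely where I would use a Markov partition for the flow (Theorem \ref{thm:rat}): its bounded-distortion property, combined with topological transitivity, lets me select return times $t_n\to+\infty$ along which $E(t_n,x)^2\,\|S(h_{t_n})\|_{\varphi^{t_n}(x)}\to 0$, forcing $Q=S(h)=0$ at every point of a dense orbit. Continuity of the Schwarzian cocycle and transitivity then propagate the vanishing to all of $M$, so every weak stable holonomy is Möbius and $\mathcal{F}^{ws}$ is transversely projective.
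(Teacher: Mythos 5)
Your setup is sound and matches the paper's: reducing to the $\mathcal{F}^{ws}$-holonomy $h$ between strong unstable transversals at two points of the same strong stable leaf, measuring the failure of projectivity by the Schwarzian quadratic differential $S(h)$ (well defined thanks to the affine structures of Theorem \ref{thm:compaffstru}), and using the cocycle identity together with the fact that flow segments contribute nothing. The transformation rule $\|S(h)\|_x=E(t,x)^2\,\|S(h_t)\|_{\varphi^t(x)}$ is also correct. But the step where you actually kill the Schwarzian has a genuine gap. That identity holds for \emph{every} $t$, so the quantity $E(t,x)^2\|S(h_t)\|_{\varphi^t(x)}$ is constant in $t$, equal to $\|S(h)\|_x$; it cannot ``tend to $0$ along a subsequence'' unless it is already zero. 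To conclude you would need an a priori upper bound on $\|S(h_t)\|$ decaying strictly faster than $E(t,x)^{-2}$, and the bound you propose ($\|S(h_t)\|\lesssim d(\varphi^t(x),\varphi^t(y))\lesssim\lambda^t$) only gives $E(t,x)^2\|S(h_t)\|\lesssim\mu^{2t}\lambda^t$, which need not go to zero without a pinching hypothesis $\mu^2\lambda<1$ that is nowhere assumed. You flag this yourself, but the proposed fix --- ``bounded distortion of a Markov partition lets me select return times where the product tends to zero'' --- is not substantiated and, as far as I can see, cannot work: no choice of return times changes the value of a quantity that is independent of $t$.

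The paper's use of the Markov partition is entirely different and is the actual content of the proof. From the Markovian system of parallelograms one builds the expanding, holomorphic, transitive projection map $\phi$ on the disjoint union $\mathcal{D}$ of the unstable edges $D_i$, with injective contracting inverse branches $\Psi^i_j$ (Lemma \ref{lem:Psij}). One then considers not $\|S(h)\|$ at a single pair of points but the \emph{diameter} $\delta(v)=\operatorname{diam}\{q_{x,y}(v):y\in C_i(x)\}$ of the set of Schwarzian values over a whole stable edge. The key inequality is the sub-additivity $\delta(v)\leq\sum_j\delta(d\Psi^i_j(v))$, which upon integration shows that the continuous $(1,1)$-form $\mu$ built from $\delta$ is $\phi$-invariant; hence either $\delta\equiv 0$ or $\delta>0$ everywhere and \emph{all} the sub-additivity inequalities are equalities. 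Iterating the equality case produces finer and finer decompositions of a fixed compact connected set $K$ into compacta whose diameters sum exactly to $\operatorname{diam}(K)$, and this is ruled out by the purely topological Lemmas \ref{lem:fact1} and \ref{lem:fact2} (a covering-dimension argument). None of this requires comparing expansion and contraction rates, which is precisely how the paper escapes the pinching obstruction you ran into. To repair your proposal you would need to replace your final paragraph by this measure-invariance and diameter-additivity argument (or an equivalent rigidity mechanism); as written, the proof does not close.
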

    As we will see, this hypothesis of transitivity is only used to ensure the existence of a Markov partition of arbitrarily small size for the flow $(\varphi^t)_{t\in \mathbb R}$ (see \cite{ratner_markov_1973}). It is not needed in the case of a holomorphic Anosov diffeomorphism on a compact manifold (see \cite{ghys_holomorphic_1995}).\\
    We recall that a flow $(\varphi^t)_{t\in \mathbb R}$ on $M$ is called (topologically) \textit{transitive} if it has an orbit dense in $M$. This is the same as saying: for every non-empty open sets $U$ and $V$, there exists $t>0$ such that $\varphi^{-t}(U)\cap V \neq \emptyset$.\\

Before going into the proof of Theorem \ref{thm:FsTrProj}, we give some reminders on diffeomorphims defined on complex projective manifolds which we will need for our study (see \cite{ghys_rigidite_1993}, \cite{ghys_deformations_1992}, \cite{ghys_holomorphic_1995}).
Additionally, we will discuss the notion of Markov partition for a transitive flow, as was studied in \cite{ratner_markov_1973}.

    A complex projective manifold $L$ of complex dimension one is a $(G,X)$-manifold, where $X=\mathbf{P}^1(\mathbb C)$ and $G=\mathrm{PSL}(2,\mathbb C)$, that is there exist local charts covering $L$ taking values in $\mathbf{P}^1(\mathbb C)$ such that the transition maps are homographies (elements of $\mathrm{PSL}(2,\mathbb C)$). For example, a complex affine manifold of complex dimension one is a complex projective manifold of complex dimension one.\\
    Let $L$ and $L'$ complex projective manifolds of dimension one and $f: L\to L'$ a holomorphic diffeomorphism. We can consider the following quadratic differential, defined in local projective coordinates, by the Schwarzian derivative:
    \[s(f)=\left ( \frac{f'''}{f'} -\frac{3}{2}\left (\frac{f''}{f'} \right )^2 \right ) dz\otimes dz.\]
    It is well defined as it does not depend on the local projective chart used. Moreover, it satisfies the following properties:
    \begin{enumerate} 
        \item $s(f)$ is identically zero if and only if $f$ is a projective diffeomorphism;
        \item If $f:L \to M$ and $g:M \to N$ are holomorphic diffeomorphisms between complex projective manifolds of complex dimension one, then
        \[s(g\circ f) = f^*s(g) + s(f);\]
        \item In particular, if $g_1:L \to L'$ and $g_2:M \to M'$ are projective diffeomorphisms between complex projective manifolds of complex dimension one, then
         \[s(g_2\circ f \circ g_1^{-1}) = g_1^*s(f).\]
    \end{enumerate}
In order to prove that the weak stable foliation $\mathcal{F}^s$ is transversely holomorphic with respect to the complex affine structures constructed before on the   unstable leaves, we follow the ideas mentioned in \cite{ghys_holomorphic_1995}. 
we begin with the same remark that we did in Theorem \ref{thm:Fstrholo}: we take two points $x$ and $y$ on the same weak stable leaf, such that the distance between $x$ and $y$ in the leaf is less than $\delta$. As mentioned before, if $W^{u}_x$ and $W^{u}_y$ are small (affine) open sets in $\mathcal{F}^{u}_x$ and $\mathcal{F}^{u}_y$ respectively, the holonomy of any path with respect to $W^{u}_x$ and $W^{u}_y$ does not depend on this very path. We call $h$ such holonomy.
In the general case, if $x$ and $y$ belong to the same weak stable leaf and are not necessarily close enough, we conclude in the same manner as before, that is we take intermediate points in the weak stable leaf of $x$ whose successive distances are bounded by $\delta$. Then, the holonomy of any path with respect to small open (affine) subsets of the   unstable leaves of $x$ and $y$ is the composition of projective maps and is therefore projective as well.

With that in mind, let $x_0 \in M$ and consider $x \in \mathcal{F}^{s}_{x_0}$. Consider small open neighborhoods $W^{u}_{x_0}$ in $\mathcal{F}^{u}_{x_0}$, and $W^{u}_{x}$ in $\mathcal{F}^{u}_{x}$, of $x_0$ and $x$ respectively, viewed as transversals to the weak stable leaf $\mathcal{F}^{s}_{x_0}$ at $x_0$ and $x$ respectively. Denote by $h_\alpha$ the $\mathcal{F}^s$-holonomy of the path $\alpha$ from $x_0$ to $x$ in $\mathcal{F}^{s}_{x_0}$ with respect to $W^{u}_{x_0}$ and $W^{u}_{x}$ respectively. 
As mentioned earlier, since the   unstable leaves are complex affine (thus projective) manifolds of complex dimension $1$, we can define the Schwarzian derivative of $h_\alpha$ which is a quadratic differential in $W^{u}_{x_0}$. Evaluated at $x_0$, we get a complex quadratic form $q_\alpha$ on the complex vector space $E_{x_0}^{u}$. 
We want to show that $h_\alpha$ is projective, which is the same as proving that it Schwarzian derivative vanishes identically. It is enough to prove that result for $x$ at a distance at most $\delta$ to $x_0$ since we can always take a finite family of intermediate points in order to compute the holonomy of $\alpha$.
It is enough to prove that $q_{x_0,x}=0$ by the following argument: if $z\in W^{u}_{x_0}$, then the $\mathcal{F}^s$-holonomy between $z$ and $h_{x_0,x}(z)$ with respect to small transversals $V\subset W^{u}_{x_0}$ and $V' \subset h_{x_0,x}(W^{u}_{x_0})$ is just the restriction to $V$ of $h_{x_0,x}$. Therefore, the Schwarzian derivative of $h_{x_0,x}$ evaluated at $z$ is just the Schwarzian derivative of $h_{z,h_{x_0,x}(z)}$ evaluated at $z$, i.e. $q_{z,h_{x_0,x}(z)}$.
As a result, for $v \in E_{x_0}^{u}$ fixed, we want to prove that for any $x\in \mathcal{F}^{s}_{x_0, \delta}$, $q_{x_0,x}(v)=0$, i.e. the continuous map
\[q_{x_0, \cdot}(v):\left \{\begin{array}{c}
     \mathcal{F}^{s}_{x_0, \delta} \to \mathbb C  \\
     x \mapsto q_{x_0,x}(v)
\end{array} \right.\]
is equal to zero. Therefore, since $\mathcal{F}^{s}_{x_0, \delta}$ is connected, we only need to show that this map is locally constant.

We will need the notion of Markov partition (\cite{ratner_markov_1973}).
Let $(\varphi^t)_{t\in \mathbb R}$ an Anosov flow on a smooth compact manifold $M$.\\
    A set $A$ included in one of the leaves of the strong, or weak, unstable, or stable, foliations is said to be \textit{admissible} if, with respect to the leaf metric, its interior is non-empty, the closure of its interior is equal to $A$, the measure of the boundary $\partial A$ is equal to $0$.\\
    Let $x_0 \in M$, $C \subset \mathcal{F}^{s}_{x_0, \epsilon}$ and $D \subset \mathcal{F}^{u}_{x_0, \epsilon}$ two admissible subsets.
    The set 
    \[P = [C,D]:= \left \{ [y,z], \; y \in C, \, z\in D  \right \}\]
    is called a \textit{parallelogram}. The \textit{sizes of $P$} are the diameters of $C$ and $D$. \\
    For $x=[y,z] \in P$, we note $$C(x):=[C,z] \subset \mathcal{F}^{ws}_{z, \epsilon} \quad \text{ and } \quad D(x):=[y,D] \subset \mathcal{F}^{u}_{y, \epsilon}\subset \mathcal{F}^{u}_{x, \epsilon} .$$
    Note also
    \begin{align*}
        \partial_cP=\bigcup_{z \in \partial D}[C,z],\quad 
        \partial_eP=\bigcup_{y \in \partial C}[y,D],\quad 
        \partial P=\partial_cP \cup \partial_eP.
    \end{align*}
    Let $P=[C_P, D_P]$ a parallelogram, and $\tau$ a continuous positive function on $C_P$. The set 
    \[V=\bigcup_{x\in D_P} \bigcup_{0 \leq t\leq \tau(x)} \varphi^t(C_P(x))\] is called a \textit{parallelepiped}. $P$ is called the \textit{lower face} of the parallelepiped $V$. The \textit{sizes of $V$} are the sizes of its lower face and the maximum of $\tau$.
    A finite family $(V_i)_{1\leq i \leq k}$ of parallelepipeds satisfying
    \[M=\bigcup_{i=1}^k V_i \qquad \text{and} \qquad \forall i,j \in \llbracket 1;k \rrbracket, \, i\neq j, \quad V_i\cap V_j = \partial V_i \cap \partial V_j\]
    is called a \textit{partition of $M$ by parallelepipeds}.\\
    A family $(P_i)_{i}$ of parallelepipeds is \textit{complete} if the positive orbit of every point $x \in M$, that is $(\varphi^t(x))_{t>0}$, intersect at least one parallelogram of the family. Since $M$ is supposed to be compact, then there exists a finite complete family of disjoint parallelograms for the Anosov flow $(\varphi^t)_{t\in \mathbb R}$.\\
    Let $(P_i)_{1\leq i \leq k}$ such a finite complete family of parallelograms ($k \in \mathbb N^*$), and $\Tau:= \bigsqcup_{i=1
    }^k P_i.$\\
    For $x\in \Tau$, let $l(x):= \min \left \{t>0, \, \varphi^t(x)\in \Tau \right \} $.
    Let also $f: \left \{
        \begin{array}{ccl}
        \Tau & \to & \Tau \\
        x & \mapsto & \varphi^{l(x)}(x)
    \end{array} \right. .$\\
    Since our flow is transversely holomorphic, $f$ is holomorphic.\\
    The system $(P_i)_{1\leq i \leq k}$, where $P_i=[C_i, D_i]$, is called \textit{Markovian} for the flow $(\varphi^t)_{t\in \mathbb R}$ if:
    \[\forall x \in \interior{P_i}\cap f^{-1}(\interior{P_j}), \quad 
    \begin{array}{l}
       f(\interior{\overbrace{C_i(x)}}) \subset C_j(f(x)) \\
        f(D_i(x)) \supset \interior{\overbrace{D_i(f(x))}}
    \end{array}. \]
    A partition $(V_i)_{1\leq i \leq k}$ of $M$ by parallelepipeds is a \textit{Markov partition} for the flow $(\varphi^t)_{t\in \mathbb R}$ if the system of its lower faces is Markovian for the flow $(\varphi^t)_{t\in \mathbb R}$. \\
    The main result proved in \cite{ratner_markov_1973} is:
    \begin{customthm}{2.1}[\cite{ratner_markov_1973}]
    \label{thm:rat}
        Let $(\varphi^t)_{t\in \mathbb R}$ a transitive Anosov flow on a smooth compact manifold $M$.
        Then for every $\epsilon>0$, there is a Markov partition of $M$ by parallelepipeds $(V_i)_{1\leq i \leq k}$ whose sizes are less than $\epsilon$ and whose boundaries are of measure $0$.
    \end{customthm}
    From now, for each parallelogram $P_i$, we fix a base point $x_i$ in the interior of $P_i$ and denote by $D_i$ the set $D_{P_i}(x_i)$. Sometimes, we will omit the index $_i$ if we do not care to which parallelogram a point of $\Tau$ belongs.\\
    The main properties of such a Markov partition that will be used are contained in the following proposition:
    \begin{lemma}
    \label{lem:pptesrat}
    \leavevmode
        \begin{enumerate} 
            \item Let $x\in P_i\cap f^{-1}(P_j)$.\\
            If $x\in \partial_c P_i$, then $f(x) \in \partial P_j$. If $x\in \partial_e P_i$, then $f^{-1}(x) \in \partial P_j$.
            \item $f$ is topologically mixing.
            \item For every sufficiently small open set $V$ of $\interior{D_i}$, there exists $r>0$ such that 
            \[\restr{f^{-1}}{V}=\restr{\varphi^{-r}}{V}.\]
        \end{enumerate}
    \end{lemma}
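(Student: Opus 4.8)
The plan is to read the three assertions through the two kinds of boundary of a parallelogram. For $x=[y,z]\in P_i$ the point $x$ lies on the boundary of its own strong unstable slice $D_i(x)$ exactly when $z\in\partial D_i$, so $\partial_c P_i$ is the \emph{unstable boundary} of $P_i$; likewise $\partial_e P_i$ is the \emph{stable boundary}, where $x$ sits at the boundary of its stable slice $C_i(x)$. With this reading, assertion (1) is the classical statement that the unstable boundary is forward invariant and the stable boundary is backward invariant, namely $f(\partial_c\Tau)\subset\partial_c\Tau$ and $f^{-1}(\partial_e\Tau)\subset\partial_e\Tau$; this is what I would prove, since it gives at once $f(x)\in\partial_c P_j\subset\partial P_j$ when $x\in\partial_c P_i$, and symmetrically for $f^{-1}$.

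For (1) the only input is the Markov covering condition, in the closed form $f(D_i(x))\supset D_j(f(x))$ valid up to the boundary. Fix $x\in\partial_c P_i$ and let $P_j\ni f(x)$. Since $f$ coincides near $x$ with a time map of the flow on the unstable slice (by the transversality argument used for (3) below) it preserves the strong unstable foliation, so it restricts to a homeomorphism of a sub-arc of $D_i(x)$ with endpoint $x$ onto an unstable arc in $\mathcal F^u_{f(x)}$; as $x$ is an endpoint, its image $f(x)$ is an endpoint of that arc. If $f(x)$ were an interior point of its slice $D_j(f(x))$, the covering property would force the image arc to contain a neighbourhood of $f(x)$ straddling it on both sides, contradicting that $f(x)$ is an endpoint. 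Hence $f(x)\in\partial D_j(f(x))$, i.e. $f(x)\in\partial_c P_j$. Running the same argument for the time-reversed flow $(\varphi^{-t})$, whose strong unstable foliation is $\mathcal F^s$ and whose first-return map is $f^{-1}$, exchanges the two boundaries and yields $f^{-1}(\partial_e\Tau)\subset\partial_e\Tau$. The only technical care is the passage to boundary points and the corner set $\partial_c\cap\partial_e$, handled by approximating $x$ by interior points lying on the same slice and taking closures.

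For (3) I would take $V\subset\interior{D_i}\subset\mathcal F^u_{x_i}$ small enough that $f^{-1}$ is continuous on $V$ (so $V$ avoids the nowhere dense set where the backward orbit first meets $\partial\Tau$) and $f^{-1}(V)\subset\interior{P_j}$ for a single $j$. The Markov covering property read backwards confines $f^{-1}(V)$ to one strong unstable slice $D_j(w_0)\subset\mathcal F^u_{w_0}$: if $x,x'\in V$ then $f(D_j(f^{-1}(x)))\supset\interior{D_i}\ni x'$, so $f^{-1}(x')\in D_j(f^{-1}(x))$ and the two slices coincide. Writing $f^{-1}(x)=\varphi^{-s(x)}(x)$ with $s(x)>0$, the relations $\varphi^{s(x)}(\mathcal F^u_{w_0})=\mathcal F^u_{x_i}=\varphi^{s(x')}(\mathcal F^u_{w_0})$ give $\varphi^{s(x)-s(x')}(\mathcal F^u_{w_0})=\mathcal F^u_{w_0}$. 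Because $X$ is nowhere tangent to $\mathcal F^u$, the orbit of $w_0$ is transverse to its strong unstable leaf, uniformly by compactness of $M$; shrinking $V$ makes $|s(x)-s(x')|$ smaller than this transversality scale, forcing $s(x)=s(x')$. Thus $s\equiv r$ on $V$ and $\restr{f^{-1}}{V}=\restr{\varphi^{-r}}{V}$.

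Finally, for (2) I would pass to the symbolic model: let $A$ be the transition matrix with $A_{ij}=1$ iff $\interior{P_i}\cap f^{-1}(\interior{P_j})\neq\emptyset$, so that $f$ is topologically conjugate, off a nowhere dense set, to the subshift $\sigma_A$, and topological mixing of $f$ amounts to $A$ being primitive. Irreducibility of $A$ is the symbolic form of topological transitivity of the flow, since a dense orbit meets $\Tau$ in a set whose itinerary connects every pair of symbols. The delicate point, and the main obstacle, is aperiodicity, i.e. ruling out a cyclic splitting $\Tau=\Tau_0\sqcup\cdots\sqcup\Tau_{p-1}$ permuted by $f$: transitivity alone gives irreducibility but not primitivity, and for a flow this must be extracted from the continuous variation of the return time $l$, a cyclic decomposition of the section map being incompatible with the way the flow lines glue $\Tau$ to itself. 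Concretely I would produce a self-loop $A_{ii}=1$ from a periodic orbit returning to a single parallelogram in one step, available by density of periodic orbits of a transitive Anosov flow together with the freedom, granted by Theorem \ref{thm:rat}, to refine the partition; with irreducibility this forces period one. By contrast the geometric assertions (1) and (3) are comparatively routine consequences of the Markov property and of the transversality of $X$ to $\mathcal F^u$.
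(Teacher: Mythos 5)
The paper disposes of (1) and (2) by citing Ratner directly (Proposition 1.1 and Theorem 4.2 of \cite{ratner_markov_1973}) and only argues (3): there, point (1) plus the Markov property confine $f^{-1}(V)$ inside the single unstable slice $\interior{D_j(y_0)}$, after which $\restr{f^{-1}}{V}$ is recognised as the $\mathcal{F}^{ws}$-holonomy of the flow path $\beta^{x_0}_{-r}$ between the unstable transversals $V$ and $\interior{D_j(y_0)}$, hence equals $\restr{\varphi^{-r}}{V}$ by Corollary \ref{cor:holofloFsu}. Your treatment of (3) reaches the same conclusion by a different and essentially correct route (local constancy of the return time $s$); note only that the step ``$\varphi^{s(x)-s(x')}(\mathcal{F}^{u}_{w_0})=\mathcal{F}^{u}_{w_0}$ with $|s(x)-s(x')|$ small forces $s(x)=s(x')$'' is not a pointwise transversality statement --- the leaf is only immersed and the orbit may re-enter it globally --- so you must pass through the local product structure: the two nearby points $\varphi^{-s(x)}(x')$ and $f^{-1}(x')$ both lie in $\mathcal{F}^{u}_{w_0}$ on one short orbit segment, hence coincide. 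For (1) your idea (boundary invariance extracted from the covering half of the Markov property) is the right one, but it is written for one-dimensional slices (``sub-arc with endpoint $x$'', ``straddling on both sides''); here $D_i(x)$ has real dimension two, so the contradiction must instead come from invariance of domain: $f^{-1}$ maps $\interior{D_j(f(x))}$ homeomorphically onto an open subset of the unstable leaf contained in $D_i(x)$ and containing $x$, contradicting $x\in\partial D_i(x)$.

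The genuine gap is in (2). You correctly isolate primitivity of the transition matrix as the obstacle, but the proposed fix does not work: a self-loop $A_{ii}=1$ cannot be manufactured from a periodic orbit ``returning to a single parallelogram in one step''. The sizes of the parallelepipeds, hence the return time $l$ to $\Tau$, are at most $\epsilon$, while the periods of periodic orbits of an Anosov flow are bounded below; a periodic orbit of period $T$ therefore crosses $\Tau$ on the order of $T/\epsilon$ times before closing up, producing a long cycle in $A$ rather than a loop of length one, and refining the partition only makes this worse. Establishing aperiodicity genuinely requires comparing the crossing numbers of several periodic orbits (or the precise statement proved by Ratner), and your sketch does not supply this. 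The honest options are either to quote Ratner's Theorem 4.2 as the paper does, or to observe that the argument downstream only uses transitivity of the induced map, which your irreducibility argument does give.
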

    \begin{proof}
        $(i)$ See Proposition $1.1$ of \cite{ratner_markov_1973}.\\
        $(ii)$ See Theorem $4.2$ of \cite{ratner_markov_1973}.\\
        $(iii)$ Let $x_0$ fixed, in $V$ and $y_0=f^{-1}(x_0)=\varphi^{-r}(x_0) \in \interior{P_j}$ for some $r>0$.
        By the previous point $(i)$, $y_0 \in \interior{\overbrace{D_j(y_0)}}$.
        Combining that with the Markov property, it comes $f^{-1}(V) \subset \interior{\overbrace{D_j(y_0)}}$.
        Therefore, the restriction of $f^{-1}$ to $V$ is just the restriction to $V$ of the $\mathcal{F}^{ws}$-holonomy of the path $\beta^{x_0}_{-r}$ (see Proposition \ref{prop:holoinvfol}) with respect to the transversals $V$ and $\interior{\overbrace{D_j(y_0)}}$, which by Corollary \ref{cor:holofloFsu} is precisely the restriction to $V$ of $\varphi^{-r}$.
    \end{proof}
Now in order to prove that the map $q_{x_0,.}(v)$ is locally constant on $\mathcal{F}_{x_0,\epsilon}^{s}$, we first remark by property $(ii)$ of the reminder that, if $x_0'\in \mathcal{F}^s_{x_0}$, $\alpha$ (respectively, $\alpha'$) is a path in the weak stable leaf of $x_0$ connecting $x_0$ (respectively, $x_0'$) and $x$, then
\[q_{\alpha'}(dh_{\overline{\alpha'}\alpha}(v))=q_{\alpha}(v)+q_{\overline{\alpha}\alpha'}(dh_{\overline{\alpha'}\alpha}(v)).\]
Therefore, if $x,x_0'\in \mathcal{F}^{s}_{x_0,\epsilon}$ and $U$ is a connected open neighborhood of $x$ in $\mathcal{F}_{x_0,\epsilon}^{s}$, then
\[\text{diam}\{q_{x_0,x}(v), \;x \in U\}=\text{diam}\{q_{x_0',x}(dh_{x_0,x'_0}(v)), \;x \in U\}.\]
Now recall that we want to prove that $q_{x_0,x}=0$ for $x\in \mathcal{F}^s_{x_0,\epsilon}$.
We remark that the proof of Theorem \ref{thm:rat} shows that for any lower face $P_i$ of a parallelepiped of the Markov partition, its edges $C_i$ and $D_i$ can be assumed to be connected.
Therefore, we can take $x$ belonging to a parallelepiped $V=\bigcup_{x^*\in D_P} \bigcup_{t=0}^{\tau(x^*)} \varphi^t(C_P(x^*))$ of a Markov partition of size small enough and change our base point $x_0$ to $x^* \in P$ such that $x\in \bigcup_{0 \leq t\leq \tau(x^*)}\varphi^t(C_P(x^*))$. 
As a result, we can assume $x^* \in P,\; x\in \bigcup_{t=0}^{\tau(x^*)}\varphi^t(C_P(x^*))$ and prove that $h_{x^*,x}$ is projective. But again, if we note $x' \in C_P(x^*)$ such that $\varphi^{t_0}(x')=x$, with $0\leq t_0 \leq \tau(x^*)$, since $\phi^{t_0
}$ is affine, by $(ii)$ of the reminder it is sufficient to prove that $h_{x^*,x'}$ is projective, which as we said earlier is the same as proving that $q_{x^*,x'}$ is zero. \\
From now on, every weak stable holonomy will be computed with respect to transversals of the form $D(x) \subset \mathcal{F}^{u}_x$. \\
Therefore, let $\mathcal{D}=\bigsqcup_{i=1}^kD_i$ be the disjoint union of the $D_i$'s. The aim is to prove that for any vector $v$ tangent to $\mathcal{D}$ at some point $x \in D_i\subset \mathcal{ D}$, seen as an element of $T_x\mathcal{F}^{u}$, $\delta(v):=\text{diam}\{q_{x,y}(v), \;y \in C_i(x)\}$
is equal to $0$.
Since the Schwarzian derivative is a complex quadratic differential, the map $\delta$ satisfies, for $v\in T\mathcal{D}$ and $\lambda \in \mathbb C$, $\delta(\lambda\, v)=|\lambda|^2 \delta(v).$
Let, for $p \in \mathcal{D}$ and $z=x+iy$ a holomorphic coordinate of $\mathcal{D}$ around $p$, 
\[\mu(p):=\frac{i}{2}\delta
(\frac{\partial}{\partial z}|_p)d_pz\wedge d_p\overline{z}=\delta(\frac{\partial}{\partial z}|_p)d_px\wedge d_py.\]
If $z'$ is another holomorphic coordinate of $\mathcal{D}$ around $p$,
we know that
$dz'\wedge d\overline{z'}=\left|\frac{\partial z'}{\partial z} \right|^2dz\wedge d\overline{z}.$
As a result, $\mu$ is a well-defined continuous $2$-form of type $(1,1)$ on $\mathcal{D}$. \\
For $x \in \mathcal{D}\cap f^{-1}(\interior{ P_i})$, we define $\phi(x)$ to be the projection of $f(x)$ onto $D_i$ (see also \cite{butterley_open_2020} p.25-28). $\phi$ is a well-defined expanding holomorphic transitive map from $\mathcal{D}\cap f^{-1}(\bigcup_{j=1}^k\interior{ P_j})$, which is an open dense subset of $\mathcal{D}$, to $\mathcal{D}$. \\
The map $\phi$ is not injective, as $\phi(x)=\phi(y)$ if and only if $f(x)$ and $f(y)$ belong to the same parallelogram and $C(f(x))=C(f(y))$. 
However:
\begin{lemma}
\label{lem:Psij}
    For every $i \in \llbracket 1, k \rrbracket$, there exist $N_i \in \llbracket 1, k \rrbracket$ and holomorphic injective contracting maps $\Psi_j^i: \interior {D_i} \to \interior{D_{i_j}}$ for $j\in \llbracket 1, N_i \rrbracket$ such that
    $$\phi \circ \Psi^i_j = I_d.$$
\end{lemma}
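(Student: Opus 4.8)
The plan is to realize each $\Psi^i_j$ as a holomorphic inverse branch of the expanding Markov map $\phi$, using the Markovian covering property of Theorem \ref{thm:rat} to control surjectivity and the holonomy description of $\phi$ to obtain injectivity. First I would identify the restriction of $\phi$ to each unstable piece with a weak stable holonomy map. Fix the target index $i$ and a source index $m$, and consider the piece $D_m \cap f^{-1}(\interior{P_i})$. For $x$ in this piece, $f(x)=\varphi^{l(x)}(x)$ lies on the same orbit, hence on the same weak stable leaf, as $x$; therefore its projection $\phi(x)$ onto $D_i$ along $\mathcal{F}^{ws}$ is exactly the point of $D_i$ lying on $\mathcal{F}^{ws}_x$. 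In other words $\restr{\phi}{D_m \cap f^{-1}(\interior{P_i})}$ coincides with the $\mathcal{F}^{ws}$-holonomy from $D_m$ into $D_i$, which by Corollary \ref{cor:holofloFsu} (and Proposition \ref{prop:holoinvfol}) is a germ of homeomorphism and hence injective. Since $\phi$ is in addition holomorphic and expanding, its differential never vanishes, so this restriction is a local biholomorphism, thus a biholomorphism onto its image.

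Next I would use the Markov property to pin down that image. Let $N_i$ be the number of source indices $m$ for which the transition is admissible, i.e. $D_m \cap f^{-1}(\interior{P_i}) \neq \emptyset$, and relabel these as $i_1,\dots,i_{N_i}$. For $m=i_j$ the covering relation $f(D_{i_j}(x)) \supset \interior{D_i(f(x))}$ from the definition of a Markovian system shows that $\phi$ maps $W_j := D_{i_j} \cap f^{-1}(\interior{P_i})$ \emph{onto} all of $\interior{D_i}$; moreover Lemma \ref{lem:pptesrat}(i), which says that contracting boundary points go to boundary points, guarantees that $\phi(W_j)$ meets $D_i$ in interior points only. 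Thus $\restr{\phi}{W_j}:W_j \to \interior{D_i}$ is a holomorphic bijection. The injectivity of $\restr{\phi}{D_{i_j}}$ established above also forces each admissible source to contribute a single branch, which is exactly why $N_i \le k$ (and why the targets $D_{i_j}$ are indexed by distinct source pieces).

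Finally I would set $\Psi^i_j := (\restr{\phi}{W_j})^{-1}:\interior{D_i} \to W_j \subset \interior{D_{i_j}}$. By construction $\phi \circ \Psi^i_j = \mathrm{Id}$ and $\Psi^i_j$ is injective. Being the inverse of an injective holomorphic map between one-dimensional complex manifolds with nonvanishing derivative, $\Psi^i_j$ is holomorphic. Since $\phi$ is uniformly expanding (the return times are bounded below and the flow expands $E^{u}$ at a uniform rate, so $|d\phi| \ge \kappa$ for some $\kappa>1$), its inverse branch satisfies $|d\Psi^i_j| \le \kappa^{-1} < 1$ and is therefore contracting, as required.

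I expect the main obstacle to be the combinatorial bookkeeping of the Markov property rather than any analytic difficulty. The delicate points are verifying that $\phi(W_j)$ is \emph{exactly} $\interior{D_i}$, with neither a gap nor an overshoot into the boundary — here one uses that the edges $C_i,D_i$ may be taken connected and that admissible boundaries map to boundaries — and confirming that distinct branches correspond to distinct admissible source pieces, so that indeed $N_i\le k$. The identification of $\restr{\phi}{\,\cdot\,}$ with a single weak stable holonomy map, which is what makes injectivity transparent, is the key structural observation that keeps this bookkeeping manageable.
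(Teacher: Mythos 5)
Your proposal is correct in substance and shares the paper's key structural observation -- that the inverse branches of $\phi$ are weak stable holonomies composed with the flow, so that injectivity, holomorphy and contraction all come for free from Corollary \ref{cor:holofloFsu}, Theorem \ref{thm:Fstrholo} and the uniform expansion of $E^{u}$ -- but you organize the count $N_i$ differently. The paper defines $N_i$ as the cardinality of $\phi^{-1}\{x\}$ for $x\in\interior{D_i}$ and proves it is independent of $x$ by an explicit transport argument: given a branch over $x$, it produces a branch over any other $y\in\interior{D_i}$ by projecting $y$ onto $D_i(x')$ with $x'=f(\Psi^i_j(x))$ and applying $f^{-1}$; this yields $N_x\leq N_y$ and concludes by symmetry, and it simultaneously produces the formula $\Psi^i_j(y)=f^{-1}\circ h_{x,f(\Psi^i_j(x))}(y)$ from which holomorphy is read off via the transverse holomorphy of $\mathcal{F}^{ws}$. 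You instead define $N_i$ as the number of admissible source pieces and argue that each such piece carries exactly one full branch (injective by the holonomy description, surjective onto $\interior{D_i}$ by the Markov covering relation together with Lemma \ref{lem:pptesrat}(i)). Your route is arguably more transparent for the surjectivity and for why $N_i\leq k$; the paper's route has the advantage of handing you the explicit formula for $\Psi^i_j$ that is reused in the subsequent lemmas.

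The one point where your bookkeeping needs care is the implicit identification of ``branches'' with ``admissible source indices.'' A priori a single piece $D_m$ could contain two disjoint components of $D_m\cap f^{-1}(\interior{P_i})$, with different return times, each mapped by $\phi$ onto $\interior{D_i}$; your injectivity claim for $\restr{\phi}{D_m\cap f^{-1}(\interior{P_i})}$ excludes this only if one knows that a weak stable leaf meets the small unstable disc $D_m$ in at most one point along the relevant excursion, which is a local-product-structure/smallness condition on the partition rather than a formal consequence of the holonomy being a germ of homeomorphism. The paper is careful to keep these two counts separate inside the lemma (its $\Psi^i_j$ and $\Psi^i_{j'}$ are allowed to land in the same $D_{i_j}$) and only imposes, immediately after the lemma, that the Markov partition be taken small enough that $i_j\neq i_{j'}$ for $j\neq j'$. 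You should either add that smallness hypothesis up front or justify the single-branch-per-source claim directly; with that fixed, your argument goes through.
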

\begin{proof}
    Fix $i \in \llbracket 1, k \rrbracket$. 
    Let $f^{-1}(x_i) \in \interior{P_{i_1}}$ for some index $i_1$. Then by the Markov property and point $(i)$ of Lemma \ref{lem:pptesrat}, $f^{-1}\interior{(D_i)} \subset \interior{\overbrace{D_{i_1}(f^{-1}(x_i))}}$.
    By projecting on $D_{i_1}$, we obtain a map $\Psi^i_{i_1}: \interior D_i \to \interior D_{i_1}$ such that: for $x \in \interior D_i$, $\Psi^i_{i_1}(x) = \interior{\overbrace{C_{i_1}(f^{-1}(x))}} \cap D_{i_1}$ so by the Markov property, $f(\Psi^i_{i_1}(x))\in C_i(x)$ which means exactly that $\phi (\Psi^i_{i_1}(x)) = x$.\\
    Now assume $x$ and $y$ both belong to $D_i$ and denote by $N_x, N_y \in \llbracket 1, k \rrbracket$ the number of elements in $\phi^{-1}\{x\}$ and $\phi^{-1}\{y\}$ respectively.
    Fix $j \in \llbracket 1, N_x \rrbracket$ and consider $x'=f(\Psi^i_j(x))\in \interior{C_i(x)}.$
    Consider also $y'= D_i(x') \cap C_i(y)$ the projection of $y$ on $D_i(x')$. 
    Then $y' \in \interior{D_i(x')}$ so $f^{-1}(y')\in \interior{ \overbrace{D_j(\Psi^i_j(x))}}= \interior D_j$ satisfies $\phi(f^{-1}(y'))=y$.
    Since this works for every $j \in  \llbracket 1, N_x \rrbracket$, we have proved that $N_x\leq N_y$.
    By symmetry, $N_y\leq N_x$ i.e. $N_i:=N_x$ does not depend on $x \in \interior D_i$.\\
    The fact that $\Psi^i_j$ is injective and contracting is immediate by construction.
    Also, as the above shows, we have a formula for $\Psi^i_j(y)$ for $y \in \interior D_i$ from the value of $\Psi^i_j(x)$, where $x \in \interior D_i$, that is 
    $$\Psi^i_j(y)=f^{-1}\circ h_{x,f(\Psi^i_j(x))}.$$
    Since the weak stable foliation is transversely holomorphic (by Theorem \ref{thm:Fstrholo}), $h_{x,f(\Psi^i_j(x))}$ is holomorphic and thus $\Psi^i_j$ also by the above equality.
\end{proof}
We can take the Markov partition small enough so that for every $i \in \llbracket 1, k \rrbracket$ two different $j,j' \in \llbracket1,N_i \rrbracket$ are such that $i_j \neq i_{j'}$.
\begin{lemma}
    For $x \in \interior{D_i}$, $f^{-1}(C_i(x))=\bigcup_{j=1}^{N_i}C(\Psi^i_j(x))$
\end{lemma}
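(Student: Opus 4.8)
The plan is to prove the two inclusions separately, relying on three ingredients already at hand: the Markov property of the partition (which says $f$ carries stable slices $C$ into stable slices and over-covers unstable slices $D$), the relation $f(\Psi^i_j(x)) \in \interior{C_i(x)}$ extracted from the proof of Lemma \ref{lem:Psij}, and the fact that the points $\Psi^i_1(x), \dots, \Psi^i_{N_i}(x)$ are pairwise distinct (they lie in the distinct parallelograms $D_{i_j}$ by the smallness assumption on the partition) and exhaust the fiber $\phi^{-1}(x)$. Throughout I will use the elementary observation that the stable slice $C_a(w)$ of a point $w \in P_a$ depends only on the unstable coordinate of $w$; equivalently, if $w'$ lies on the same stable slice as $w$ then $C_a(w') = C_a(w)$, and the projection onto $D_i$ of any point of $C_i(x)$ equals $x$ itself.

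For the inclusion $\bigcup_{j} C(\Psi^i_j(x)) \subset f^{-1}(C_i(x))$, I fix $j$ and set $w = \Psi^i_j(x) \in \interior{D_{i_j}} \subset \interior{P_{i_j}}$. Since $f(w) \in \interior{C_i(x)} \subset \interior{P_i}$, the point $w$ lies in $\interior{P_{i_j}} \cap f^{-1}(\interior{P_i})$, so the first Markov condition applies and gives $f(\interior{C_{i_j}(w)}) \subset C_i(f(w))$. Because $f(w)$ lies on the stable slice $C_i(x)$, we have $C_i(f(w)) = C_i(x)$, whence $\interior{C_{i_j}(w)} \subset f^{-1}(C_i(x))$; taking closures (each $C$ being admissible, hence the closure of its interior) yields $C(\Psi^i_j(x)) = C_{i_j}(w) \subset f^{-1}(C_i(x))$.

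For the reverse inclusion, I take $w \in f^{-1}(C_i(x))$, say $w \in P_b$, and project $w$ along its stable slice onto $D_b$, obtaining $\tilde w \in D_b$ with $\tilde w \in C_b(w)$, so that $C_b(\tilde w) = C_b(w)$. By the Markov property applied at $w$, the whole slice $C_b(w)$ maps into $C_i(f(w)) = C_i(x)$; in particular $f(\tilde w) \in C_i(x)$, so the projection of $f(\tilde w)$ onto $D_i$ is $x$, i.e. $\phi(\tilde w) = x$. Since $\phi^{-1}(x) = \{\Psi^i_1(x), \dots, \Psi^i_{N_i}(x)\}$, there is $j_0$ with $\tilde w = \Psi^i_{j_0}(x)$, and then $w \in C_b(w) = C_b(\tilde w) = C(\Psi^i_{j_0}(x))$, as required.

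The step needing the most care — and the main obstacle — is the identification $\phi^{-1}(x) = \{\Psi^i_j(x)\}_j$ together with the bookkeeping of interiors versus boundaries. That $\phi \circ \Psi^i_j = \mathrm{Id}$ and that the $N_i$ branches land in pairwise distinct parallelograms gives $N_i$ distinct elements of $\phi^{-1}(x)$; that these are all of $\phi^{-1}(x)$ follows because $N_i = \#\phi^{-1}(x)$ was shown in the proof of Lemma \ref{lem:Psij} to be the constant cardinality of the fibers of $\phi$ over $\interior{D_i}$. The boundary issues (points of $\partial P$, where the Markov conditions hold only on interiors) are absorbed by working on relative interiors and passing to closures, using Lemma \ref{lem:pptesrat}(i) to control how $f$ moves the boundaries $\partial_c P$ and $\partial_e P$; since all the sets involved are admissible, equality of the closures follows from equality of their interiors.
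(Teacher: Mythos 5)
Your overall strategy is the same as the paper's: prove both inclusions, using the Markov property, the relation $f(\Psi^i_j(x))\in C_i(x)$, and the identification of $\phi^{-1}(x)$ with $\{\Psi^i_1(x),\dots,\Psi^i_{N_i}(x)\}$ via the constancy of the fiber cardinality $N_i$. The one place where your argument is genuinely weaker than the paper's is the treatment of boundary points, and as written the justification does not go through. The Markov condition is only available at points of $\interior{P_b}\cap f^{-1}(\interior{P_i})$, so in your reverse inclusion the phrase ``by the Markov property applied at $w$, the whole slice $C_b(w)$ maps into $C_i(f(w))$'' fails twice: the hypothesis may not hold at $w$ itself (e.g.\ when $f(w)\in\partial_e P_i$, which is not excluded by $x\in\interior{D_i}$), and even when it holds the conclusion only concerns $f(\interior{C_b(w)})$, not the closed slice. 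Likewise, in the forward inclusion the step ``taking closures yields $C_{i_j}(w)\subset f^{-1}(C_i(x))$'' needs $f^{-1}(C_i(x))$ to be closed along the slice, and your closing remark that ``equality of the closures follows from equality of their interiors'' is not a valid substitute, since $f^{-1}(C_i(x))$ is not a priori admissible or the closure of its interior.

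The correct repair, which is exactly what the paper does, is to exploit that the return time $l$ is constant on each stable slice $C(y)$, so that $f$ restricted to that slice is the restriction of a single $\varphi^{l}$ and is therefore continuous there. One then approximates an arbitrary point $y$ of the slice by a sequence $(y_n)_n$ in $\interior{C(y)}\cap f^{-1}(\interior{P_i})$, applies the Markov inclusion to each $y_n$, and passes to the limit using that $C_i(x)$ is closed. If you insert this continuity-on-slices argument in place of the two closure steps, your proof becomes a faithful version of the paper's.
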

\begin{proof}
    Assume $y \in P_k$ is such that $f(y)\in C_i(x)$. Then $y \in \interior{\overbrace{D_k(y)}}$ because $x \in \interior{D_i}$. 
    Consider $y'$ the projection of $y$ on $D_k$. Then $y' \in \interior{D_k}$.
    Consider a sequence of elements $(y_n)_n$ in $\interior{\overbrace{C_k(y)}} \cap f^{-1}(\interior P_i)$ which converges to $y$.
    Then by the Markov property, $f(y') \in C_i(f(y_k))$. But since $l$ is constant on $C_k(y)$ and the flow is continuous on $M$, $(f(y_k))_k$ converges to $f(y)$, so $f(y') \in C_i(f(y))=C_i(x)$.
    This proves that $y'=\Psi^i_k(x)$, i.e. $y\in C(\Psi^i_k(x))$.\\
    Conversely, assume $y \in C(\Psi^i_j(x))$ for some $j \in \llbracket1, N_i \rrbracket$.
    Consider a sequence of elements $(y_n)_n$ in $\interior{\overbrace{C(y)}} \cap f^{-1}(\interior P_i)$ which converges to $y$.
    Then $f(y)$ is the limit of $(f(y_n))_n$ since $l$ is constant on $C(y)$.
    But we know by the Markov property that for every $n\in \mathbb N$, $f(y_n)\in C(\Psi^i_j(x))=C_i(x)$. 
    Therefore, $f(y) \in C_i(x)$ since the latter is closed, which concludes.
\end{proof}
In order to enlighten notations, we will omit to write the path with respect to which we compute the weak stable holonomy. If it is computed between $x$ and $y\in C(x)$, then we consider any path $\alpha$ in $C(x)$ as $h$ does not depend on the path between $x$ and $y$. 
If it is computed between $x$ and $f^{-1}(x)=\varphi^{-m}(x)$, then we consider the natural path $\beta^x_{-m}$ (see Proposition \ref{prop:holoinvfol}). 
If it is computed between $f^{-1}(x)=\varphi^{-m}(x)$ and $x$, then we consider the natural path $\overline{\beta^x_{-m}}$.
Therefore, it will be natural from the context to which path we refer to. For example, if $x \in \interior{D_i}$ and $y\in C_i(x)$, then $h_{f^{-1}(x), f^{-1}(y)}$ is the weak stable holonomy with respect to the path $\overline{\beta^x_{-m_x}} \alpha \beta^y_{-m_y}$, where $\alpha$ is a fixed path in $C_i(x)$ between $x$ and $y$.

\begin{lemma}
\label{lem:chgtptsdsC}
    Let $x \in \interior D_i$, $j\in \llbracket1, N_i \rrbracket$, $v \in T_{f^{-1}(x)}\mathcal{F}^{u}$, $z \in C_{j}(\Psi^i_j(x))$. Then:
    \[\emph{diam}\{q_{\Psi^i_j(x),z}(dh_{f^{-1}(x), \Psi^i_j(x)}(v)), \;z \in C_{j}(\Psi^i_j(x))\}=\emph{diam}\{q_{f^{-1}(x),z}(v), \;z \in C_{j}(\Psi^i_j(x))\}.\]
\end{lemma}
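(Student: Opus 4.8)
The plan is to recognize this as one more instance of the change-of-base-point identity for the Schwarzian derivative that was already used (just before the introduction of Markov partitions) to reduce matters to the local constancy of $q_{x_0,\cdot}(v)$. Write $p:=f^{-1}(x)$ and $p_j:=\Psi^i_j(x)$, and set $w:=dh_{p,p_j}(v)=dh_{f^{-1}(x),\Psi^i_j(x)}(v)\in T_{p_j}\mathcal{F}^{u}$, so that the left-hand family is $\{q_{p_j,z}(w):z\in C_j(p_j)\}$ and the right-hand family is $\{q_{p,z}(v):z\in C_j(p_j)\}$.

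First I would check that $p$ and $p_j$ lie on the same weak stable leaf, which is what makes the factorization below legitimate. By Lemma \ref{lem:Psij} we have $\phi(p_j)=x$, i.e.\ $f(p_j)\in C_i(x)\subset \mathcal{F}^{ws}_{x}$; since $f$ coincides locally with a negative-time map of the flow by Lemma \ref{lem:pptesrat}$(iii)$, it preserves weak stable leaves, so applying $f^{-1}$ yields $p_j\in\mathcal{F}^{ws}_{p}$. Consequently every $z\in C_j(p_j)\subset \mathcal{F}^{ws}_{p_j}=\mathcal{F}^{ws}_{p}$ lies on the common weak stable leaf of $p$ and $p_j$, so both holonomies $h_{p,z}$ and $h_{p_j,z}$ — and hence the associated Schwarzian quadratic forms $q_{p,z}$, $q_{p_j,z}$, which are well defined because $\mathcal{F}^{ws}$ is transversely holomorphic (Theorem \ref{thm:Fstrholo}) and the strong unstable leaves carry the complex affine, hence projective, structures of Theorem \ref{thm:compaffstru} — make sense.

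Next I would route the holonomy from $p$ to $z$ through $p_j$. Fixing once and for all a path from $p$ to $p_j$ in the leaf and, for each $z$, a path inside the connected set $C_j(p_j)$ from $p_j$ to $z$, property $(i)$ of holonomy maps gives the factorization $h_{p,z}=h_{p_j,z}\circ h_{p,p_j}$. Applying the cocycle property $(2)$ of the Schwarzian derivative, $s(h_{p,z})=(h_{p,p_j})^{*}s(h_{p_j,z})+s(h_{p,p_j})$, and evaluating this quadratic differential at $p$ on $v$ gives
\[q_{p,z}(v)=q_{p_j,z}\big(dh_{p,p_j}(v)\big)+s(h_{p,p_j})\big|_{p}(v)=q_{p_j,z}(w)+c,\]
where $c:=s(h_{p,p_j})|_{p}(v)$ depends only on the fixed data $p,p_j,v$ and not on $z$. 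Since the two families therefore differ by the single additive constant $c$, and the diameter is translation invariant, I would conclude
\[\text{diam}\{q_{p_j,z}(w):z\in C_j(p_j)\}=\text{diam}\{q_{p,z}(v):z\in C_j(p_j)\},\]
which is the asserted equality.

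The argument is essentially formal, so I expect no serious obstacle; the points requiring care are purely bookkeeping. The first is the verification that $p$ and $p_j$ share a weak stable leaf, which is what makes the routing through $p_j$ meaningful. The second is ensuring that the correction term $c$ is genuinely independent of $z$: this rests on choosing a single fixed path from $p$ to $p_j$ together with path-independence of the holonomy over the connected transversal $C_j(p_j)$ — connectedness of the edges $C_i$ being precisely the normalization of the Markov partition recorded earlier, and path-independence holding because the weak stable leaves are simply connected, so that $h_{p,z}$ computed through $p_j$ agrees with the holonomy defining $q_{p,z}$.
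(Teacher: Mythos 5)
Your proposal is correct and follows essentially the same route as the paper: both rest on the cocycle identity for the Schwarzian derivative applied to the factorization of the holonomy through $\Psi^i_j(x)$, producing a correction term independent of $z$, and then conclude by translation invariance of the diameter. The paper states this in two lines by citing the earlier change-of-base-point remark; your version merely spells out the supporting bookkeeping (that $f^{-1}(x)$ and $\Psi^i_j(x)$ share a weak stable leaf, and that path-independence makes the constant well defined), which is consistent with the paper's conventions.
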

\begin{proof}
    By the above and the previous remark, we know that 
    \[q_{\Psi^i_j(x),z}(dh_{f^{-1}(x), \Psi^i_j(x)}(v))=q_{f^{-1}(x),z}(v)+q_{\Psi^i_j(x),f^{-1}(x)}(dh_{f^{-1}(x),\Psi^i_j(x)}(v)).\]
    The result is proven since the second term of the right hand side of this equality does not depend on $z \in C_{j}(\Psi^i_j(x))$.
\end{proof}

\begin{lemma}
\label{lem:holof-1(x)}
    Let $x \in \interior{D_i}$ and $y\in C_i(x)$.\\
    Then, if $V$ is a sufficiently small open neighborhood of $x$ in $\interior D_i$,
    \[\restr{f^{-1} \circ h_{x,y}}{V}=\restr{h_{f^{-1}(x), f^{-1}(y)} \circ f^{-1}}{V}.\]
    Therefore, for every $v \in T_x\mathcal{D}$,
    \[q_{x,y}(v)=q_{f^{-1}(x),f^{-1}(y)}(d_xf^{-1}(v)).\]
\end{lemma}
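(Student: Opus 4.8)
The plan is to prove the commutation identity first and then read off the Schwarzian relation as a purely formal consequence. First I would observe that, after shrinking $V$, every map occurring on both sides is a weak stable holonomy. By Lemma~\ref{lem:pptesrat}$(iii)$ there is $r>0$ with $\restr{f^{-1}}{V}=\restr{\varphi^{-r}}{V}$, and by Corollary~\ref{cor:holofloFsu} this flow map, restricted to the strong unstable leaf, coincides with the $\mathcal{F}^{ws}$-holonomy of the flow path $\beta^x_{-r}$ with respect to the transversals $W^{u}_x$ and $W^{u}_{f^{-1}(x)}$. Shrinking $V$ further so that $h_{x,y}(V)$ is a small unstable neighbourhood of $y$, the same applies there: $f^{-1}$ agrees with a flow map $\varphi^{-r'}$, hence with the holonomy of $\beta^{y}_{-r'}$. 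Finally $h_{x,y}$ is by construction the weak stable holonomy from $x$ to $y$.

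Using functoriality of holonomy (property $(i)$) I would then rewrite both sides as a single holonomy along a concatenated $\mathcal{F}^{ws}$-path joining $x$ to $f^{-1}(y)$, computed with respect to transversals inside $\mathcal{F}^{u}_x$ and $\mathcal{F}^{u}_{f^{-1}(y)}$: the left-hand side $f^{-1}\circ h_{x,y}$ becomes the holonomy of $\beta^{y}_{-r'}$ concatenated after a path $\alpha\subset\mathcal{F}^{ws}$ from $x$ to $y$, while the right-hand side $h_{f^{-1}(x),f^{-1}(y)}\circ f^{-1}$ becomes the holonomy of a path $\alpha'$ from $f^{-1}(x)$ to $f^{-1}(y)$ concatenated after $\beta^{x}_{-r}$. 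Both paths start at $x$ and end at $f^{-1}(y)$ inside a single weak stable leaf, which is simply connected (the stable leaves are simply connected and the weak stable leaf is their product with the flow line). Hence the two paths are homotopic relatively to endpoints, and homotopy invariance of holonomy (property $(ii)$) yields $\restr{f^{-1}\circ h_{x,y}}{V}=\restr{h_{f^{-1}(x),f^{-1}(y)}\circ f^{-1}}{V}$.

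For the second assertion I would take Schwarzian derivatives of this identity, viewing each factor as a holomorphic diffeomorphism between the complex affine (hence projective) structures on the unstable leaves given by Theorem~\ref{thm:compaffstru}; the holonomies are holomorphic by Theorem~\ref{thm:Fstrholo}. Since $f^{-1}$ is locally $\varphi^{-r}$, it is affine on the unstable leaves, so $s(f^{-1})=0$. The cocycle rule $s(g\circ h)=h^{*}s(g)+s(h)$ then gives $s(f^{-1}\circ h_{x,y})=s(h_{x,y})$ and $s(h_{f^{-1}(x),f^{-1}(y)}\circ f^{-1})=(f^{-1})^{*}s(h_{f^{-1}(x),f^{-1}(y)})$. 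Equating these, evaluating at $x$ and applying to $v$, and using that for a quadratic differential $\omega$ one has $[(f^{-1})^{*}\omega]_x(v)=\omega_{f^{-1}(x)}(d_xf^{-1}(v))$, I obtain exactly $q_{x,y}(v)=q_{f^{-1}(x),f^{-1}(y)}(d_xf^{-1}(v))$.

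The hard part will be the local identification in the first step: verifying that $f^{-1}$, applied after $h_{x,y}$ in a neighbourhood of $y$ that need not lie in $\interior{D_i}$, is still locally a single flow map (and thus a weak stable holonomy), and that the two concatenated paths genuinely lie in one simply connected weak stable leaf. This rests on the return time being locally constant in the unstable direction near $y$ through the Markov structure, together with the simple connectivity of the weak stable leaves. Once this geometric input is secured, the Schwarzian computation is entirely formal.
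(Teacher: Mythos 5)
Your overall strategy is the one the paper uses (its own proof is only two sentences long): identify $f^{-1}$ locally with a single flow map via Lemma~\ref{lem:pptesrat}$(iii)$, hence with a weak stable holonomy via Corollary~\ref{cor:holofloFsu}, deduce the commutation identity from the composition properties of holonomy, and then get the Schwarzian relation from the cocycle rule $s(g\circ f)=f^{*}s(g)+s(f)$ together with $s(f^{-1})=0$, which holds because the flow acts affinely on strong unstable leaves by Theorem~\ref{thm:compaffstru}. The second half of your argument is correct and is exactly what the paper does.

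There is, however, one genuinely wrong assertion in your justification of the homotopy step: the weak stable leaf is \emph{not} simply connected in general, and it is \emph{not} the product of a strong stable leaf with a flow line. If the orbit of $x$ is periodic of period $T$, the weak stable leaf is the mapping torus of $\varphi^{T}$ acting on $\mathcal{F}^{s}_x$, so its fundamental group is $\mathbb Z$; moreover the holonomy around such a loop is a nontrivial contraction, so path-dependence of weak stable holonomy is a real phenomenon, not a technicality. Since periodic orbits are dense for a transitive Anosov flow, you cannot appeal to simple connectivity here. What saves the argument is the convention fixed in the paper just before this lemma: $h_{f^{-1}(x),f^{-1}(y)}$ is \emph{defined} as the holonomy of the specific concatenated path $\overline{\beta^{x}_{-m_x}}$ followed by $\alpha$ followed by $\beta^{y}_{-m_y}$. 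With that choice, your two concatenations differ only by the insertion of the backtrack $\overline{\beta^{x}_{-m_x}}\,\beta^{x}_{-m_x}$, which is null-homotopic rel endpoints in any leaf whatsoever; equivalently, the identity follows directly from property $(i)$ of holonomy maps without any homotopy argument. So the lemma is fine, but the reason you give for the key step would fail if the path defining $h_{f^{-1}(x),f^{-1}(y)}$ were taken arbitrary; you should replace the simple-connectivity appeal by the path convention plus the backtrack cancellation.
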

\begin{proof}
    The first claim is a simple verification by taking $V$ small, since the restriction of $f^{-1}$ to such $V$ is a weak stable holonomy between $V$ and $\interior{\overbrace{D_j(f^{-1}(x))}}$ (see also point $(iii)$ of Lemma \ref{lem:pptesrat}).
    The second claim comes therefore from $(ii)$ of the reminder and the fact that $\varphi^t$ acts affinely on strong unstable leaves by Theorem \ref{thm:compaffstru}.
\end{proof}
With all that in mind, we are ready to prove that $\delta$ vanishes identically. 
\begin{lemma}
    The measure $\mu$ on $\mathcal{D}$ is invariant by $\phi$.
\end{lemma}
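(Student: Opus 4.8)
The plan is to prove invariance in the form $\phi_*\mu=\mu$, the point being that this does \emph{not} come from a pointwise transfer identity (which is false here, since $\delta$ is a diameter and the diameter of a union is not the sum of the diameters), but from a one-sided domination combined with equality of total masses. First I would reduce to a pointwise inequality for $\delta$. After shrinking the Markov partition so that $i_j\neq i_{j'}$ for $j\neq j'$, the holomorphic injective branches $\Psi^i_j:\interior{D_i}\to\interior{D_{i_j}}$ of Lemma \ref{lem:Psij} have pairwise disjoint images, so for a Borel set $A\subset\interior{D_i}$ one has $\phi^{-1}(A)=\bigsqcup_{j=1}^{N_i}\Psi^i_j(A)$ and hence $\phi_*\mu(A)=\sum_j\int_A(\Psi^i_j)^*\mu$. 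Using holomorphicity and the homogeneity $\delta(\lambda v)=|\lambda|^2\delta(v)$, the pullback is $(\Psi^i_j)^*\mu=\delta\bigl(d\Psi^i_j(\tfrac{\partial}{\partial z})\bigr)\tfrac{i}{2}dz\wedge d\bar z$, so it suffices to compare $\delta(v)$ with $\sum_j\delta(d\Psi^i_j(v))$ for $v\in T_x\mathcal D$.

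Next I would identify each term $\delta(d\Psi^i_j(v))$ as a partial diameter. Set $u=d_xf^{-1}(v)$. The commutation $\Psi^i_j=h_{f^{-1}(x),\Psi^i_j(x)}\circ f^{-1}$, which follows from the formula $\Psi^i_j=f^{-1}\circ h_{x,f(\Psi^i_j(x))}$ of Lemma \ref{lem:Psij} together with the relation $f^{-1}\circ h=h\circ f^{-1}$ of Lemma \ref{lem:holof-1(x)}, gives $d\Psi^i_j(v)=dh_{f^{-1}(x),\Psi^i_j(x)}(u)$; hence by Lemma \ref{lem:chgtptsdsC} and the definition of $\delta$ one gets $\delta(d\Psi^i_j(v))=\operatorname{diam}\{q_{f^{-1}(x),z}(u):z\in C_{i_j}(\Psi^i_j(x))\}$. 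On the other hand Lemma \ref{lem:holof-1(x)} yields $q_{x,y}(v)=q_{f^{-1}(x),f^{-1}(y)}(u)$, and as $y$ runs over $C_i(x)$ its preimage runs over $f^{-1}(C_i(x))=\bigsqcup_j C_{i_j}(\Psi^i_j(x))$ by the decomposition lemma. Writing $B_j:=f(C_{i_j}(\Psi^i_j(x)))$, so that $C_i(x)=\bigsqcup_j B_j$, this exhibits $\delta(v)=\operatorname{diam}(Q)$ with $Q=\bigcup_j Q_j$ and $Q_j=\{q_{x,y}(v):y\in B_j\}$, while $\operatorname{diam}(Q_j)=\delta(d\Psi^i_j(v))$.

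Then I would invoke subadditivity of the diameter over a connected cover: since $\overline{C_i(x)}=\bigcup_j\overline{B_j}$ is a finite cover of a connected set by connected sets with connected nerve and $y\mapsto q_{x,y}(v)$ is continuous, a chaining argument along paths gives $\operatorname{diam}(Q)\le\sum_j\operatorname{diam}(Q_j)$, that is $\delta(v)\le\sum_j\delta(d\Psi^i_j(v))$. Integrating, $\phi_*\mu\ge\mu$ as finite Borel measures on $\mathcal D$. Finally, admissibility of the faces makes all boundaries $\mu$-null, so $\phi$ is defined off a set of $\mu$-measure zero and is essentially surjective; consequently $\phi_*\mu(\mathcal D)=\mu(\phi^{-1}(\mathcal D))=\mu(\operatorname{dom}\phi)=\mu(\mathcal D)<\infty$. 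A nonnegative measure dominating $\mu$ with the same finite total mass must coincide with $\mu$, whence $\phi_*\mu=\mu$.

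I expect the main obstacle to be the diameter subadditivity step: one must justify $\operatorname{diam}(Q)\le\sum_j\operatorname{diam}(Q_j)$ for the two-real-dimensional Markov tiling $C_i(x)=\bigsqcup_j B_j$ (the reverse bound $\operatorname{diam}(Q)\ge\max_j\operatorname{diam}(Q_j)$ being merely the trivial containment), and to recognize that invariance is then forced by combining this one-sided estimate with conservation of total mass, rather than by any pointwise transfer equality, which indeed fails for a diameter-valued density.
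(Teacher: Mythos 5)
Your proposal is correct and follows essentially the same route as the paper: reduce invariance to the one-sided inequality $\mu(A)\le\mu(\phi^{-1}(A))$, decompose $\phi^{-1}$ into the branches $\Psi^i_j$, identify each $\delta(d\Psi^i_j(v))$ as the diameter of the Schwarzians over $C_j(\Psi^i_j(x))$ via Lemmas \ref{lem:holof-1(x)} and \ref{lem:chgtptsdsC}, bound $\delta(v)$ by the sum using subadditivity of the diameter over a connected finite union, apply the holomorphic change of variables, and conclude by conservation of total mass. The paper's proof is the same argument phrased via $\mu(\mathcal{B})+\mu(\mathcal{D}\setminus\mathcal{B})$ rather than $\phi_*\mu\ge\mu$ with equal total masses, so no substantive difference remains.
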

\begin{proof}
We prove that for every Borel set $\mathcal{B} \subset \mathcal{D}$,
$\mu(\mathcal{B}) \leq \mu(\phi^{-1}(\mathcal{B})).$
This will imply the result since
\begin{align*}
    \mu(\mathcal{D})=\mu(\mathcal{B}) + \mu(\mathcal{D} \setminus\mathcal{B}) \leq \mu(\phi^{-1}(\mathcal{B})) + \mu(\phi^{-1}(\mathcal{D} \setminus\mathcal{B}))=\mu(\phi^{-1}(\mathcal{D})) \leq \mu(\mathcal{D}),
\end{align*}
Since $\partial\mathcal{D}$ is of Lebesgue measure zero, and because $\mu$ is absolutely continuous with respect to the Lebesgue measure, we prove that for every Borel set $\mathcal{B} \subset \interior{\mathcal{D}}$,
$\mu(\mathcal{B}) \leq \mu(\phi^{-1}(\mathcal{B})).$
By a result on Borel measures (the outer regular property, see \cite{pages_analyse_2018}), it is enough to verify 
$\mu(\Omega) \leq \mu(\phi^{-1}(\Omega))$
for every open set $\Omega \subset \interior{\mathcal{D}}$ and thus for every open set $\Omega \subset \interior{D_i}$ for every $i \in \llbracket 1, k \rrbracket$. We fix therefore such $i$ and such open set $\Omega \subset \interior{D_i}$.
By definition,
$\phi^{-1}(\Omega)= \bigsqcup_{j=1}^{N_i} \Psi^i_j(\Omega).$
Thus, we prove that $\mu(\Omega) \leq \sum_{j=1}^{N_i} \mu(\Psi^i_j(\Omega)).$
Let $v$ a vector tangent to $D_i$ at $x\in \interior{D_i}$. Recall that the diameter of a connected finite union of sets is at most the sum of the diameters of these sets.
Therefore,
\begin{align*}
    \delta(v)&=\text{diam}\{q_{x,y}(v), \;y \in C_i(x)\}
    =\text{diam}\{q_{f^{-1}(x),z}(\restr{df^{-1}}{E^{u}_x}(v)), \;z \in f^{-1}(C_i(x))\}\\
    &\leq \sum_{j=1}^{N_i}\text{diam}\{q_{f^{-1}(x),z}(\restr{df^{-1}}{E^{u}_x}(v)), \;z \in C_j(\Psi^i_j(x))\}\\
    &=\sum_{j=1}^{N_i}\text{diam}\{q_{\Psi^i_j(x),z}(dh_{f^{-1}(x),\Psi^i_j(x)}(\restr{df^{-1}}{E^{u}_x}(v))), \;z \in C_j(\Psi^i_j(x))\}\\
    &=\sum_{j=1}^{N_i}\text{diam}\{q_{\Psi^i_j(x),z}(d\Psi_j^i(v)), \;z \in C_j(\Psi^i_j(x))\}= \sum_{j=1}^{N_i}\delta(d\Psi_j^i(v)).
\end{align*}
The penultimate equality comes from Lemma \ref{lem:holof-1(x)} and the remark at the end of the proof of Lemma \ref{lem:Psij} which gives a formula for $\Psi^i_j$ in a neighborhood of $x$.
Moreover, 
\[\delta(d\Psi_j^i(\frac{\partial}{\partial z}|_p))=\delta \left(\frac{\partial\Psi^i_j}{\partial z}|_p \cdot \frac{\partial}{\partial z}|_{\Psi^i_j(p)} \right) = \left|\frac{\partial\Psi^i_j}{\partial z}|_p \right |^2 \delta(\frac{\partial}{\partial z}|_{\Psi^i_j(p)}) = \det(d_p\Psi^i_j)\, \delta(\frac{\partial}{\partial z}|_{\Psi^i_j(p)})\]
since $\Psi^i_j$ is holomorphic.
Now, we compute:
\begin{align*}
    \mu(\Omega)=\int_\Omega\delta(\frac{\partial}{\partial z}|_p)\,dxdy &\leq \sum_{j=1}^{N_i}\int_\Omega\delta(d\Psi_j^i(\frac{\partial}{\partial z}|_p))\,dxdy=\sum_{j=1}^{N_i}\int_\Omega\det(d_p\Psi^i_j)\, \delta(\frac{\partial}{\partial z}|_{\Psi^i_j(p)})) \,dxdy\\
    &= \sum_{j=1}^{N_i}\int_{\Psi^i_j(\Omega)}\delta(\frac{\partial}{\partial z}|_p)\, dxdy =\sum_{j=1}^{N_i} \mu(\Psi^i_j(\Omega))
\end{align*}
by the change of variable formula since each $\Psi^i_j$ is holomorphic thus orientation preserving.
\end{proof}
\begin{lemma}
    $\delta$ vanishes identically.
\end{lemma}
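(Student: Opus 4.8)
The plan is to prove the equivalent statement that a certain continuous function obtained from $\delta$ vanishes, by converting the $\phi$-invariance of $\mu$ just established into a pointwise functional equation and then running a maximum-principle argument adapted to the expansion of $\phi$. First I would exploit the $2$-homogeneity $\delta(\lambda v)=|\lambda|^2\delta(v)$: since multiplication by unit complex numbers acts transitively on the unit circle of the complex line $E^u_x$ and leaves $\delta$ invariant, the value $\delta(v)$ for $\|v\|=1$ depends only on the base point $x$. This defines a continuous function $F:\mathcal D\to\mathbb R_{\ge 0}$ with $\delta(v)=\|v\|^2F(x)$ for $v\in T_x\mathcal F^{u}$, and the claim is exactly $F\equiv 0$.

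Next I would upgrade the inequality $\delta(v)\le\sum_{j=1}^{N_i}\delta(d\Psi^i_j(v))$ used in the previous lemma to an equality. Integrating it over an arbitrary open set $\Omega\subset\interior{D_i}$ reproduces precisely the two quantities $\mu(\Omega)$ and $\sum_j\mu(\Psi^i_j(\Omega))=\mu(\phi^{-1}(\Omega))$, which the $\phi$-invariance of $\mu$ forces to be equal for every such $\Omega$; since the two integrands are continuous, this yields the pointwise identity $\delta(v)=\sum_{j=1}^{N_i}\delta(d\Psi^i_j(v))$. As each $\Psi^i_j$ is holomorphic and contracting, $\|d_x\Psi^i_j(v)\|=\kappa_j(x)\|v\|$ with $\kappa_j(x)=\|d_x\Psi^i_j\|<1$ independent of the direction of $v$, so the identity becomes the transfer-type equation
\[ F(x)=\sum_{j=1}^{N_i}\kappa_j(x)^2\,F(\Psi^i_j(x)). \]

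The final step, deducing $F\equiv 0$ from this equation, is where the real difficulty lies. A naive maximum principle is not enough: at a maximizer $x_0$ it only gives $F(x_0)\le\big(\sum_j\kappa_j(x_0)^2\big)F(x_0)$, and $\sum_j\kappa_j(x_0)^2$ need not be $<1$. I would therefore return to the geometric meaning of the forced equality $\delta(v)=\sum_j\delta(d\Psi^i_j(v))$: for the connected planar value-sets $\{q_{x,y}(v):y\in C_i(x)\}$, it says that the diameter of the union of the sub-fibre pieces equals the sum of their diameters. For connected, chained subsets of $\mathbb C$ this equality case forces the pieces to be collinear segments laid end to end, so that each set $\{q_{x,y}(v):y\in C_i(x)\}$ must lie on a single real line through $0=q_{x,x}(v)$; equivalently the Schwarzian coefficient has constant argument along every stable fibre, i.e. the Schwarzian values define a $\phi$-invariant line field. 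I would then propagate this line field through the topologically mixing map $\phi$ (Lemma \ref{lem:pptesrat}(ii)), using the Schwarzian cocycle $c(x,y)=c(x,y')+h'_{x,y'}(x)^2\,c(y',y)$, whose quadratic factor $h'^2$ rotates these lines; the resulting compatibility constraints cannot be met by a nonzero, genuinely two-dimensional-valued Schwarzian, leaving $F\equiv 0$ as the only possibility. The main obstacle is exactly this last point — ruling out a nontrivial $\phi$-invariant line field of Schwarzian values — which is the rigid, dynamical heart of the argument, whereas the reduction to the functional equation is routine.
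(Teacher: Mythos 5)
Your first two steps are sound and are essentially what the paper does implicitly: the $2$-homogeneity $\delta(\lambda v)=|\lambda|^2\delta(v)$ reduces $\delta$ to a function of the base point, and integrating the subadditivity inequality $\delta(v)\leq\sum_j\delta(d\Psi^i_j(v))$ over arbitrary open sets and invoking the $\phi$-invariance of $\mu$ does upgrade it to a pointwise equality. The gap is in how you exploit that equality. You claim that for the connected compact sets $K_j=\{q_{\,\cdot\,,z}(\cdot):z\in C_j(\Psi^i_j(x))\}$, chained inside $\mathbb C$, the identity $\mathrm{diam}(\bigcup_j K_j)=\sum_j\mathrm{diam}(K_j)$ forces each $K_j$ to be a line segment, hence that the Schwarzian values along a stable fibre lie on a single real line through $0$. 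That is false. Equality in this diameter identity only constrains the extremal points and the pairwise intersections: consecutive pieces meet in a single point, those intersection points are aligned with the two points realizing $\mathrm{diam}(K)$, and non-consecutive pieces are disjoint (this is exactly the content of Lemma \ref{lem:fact1}). Each individual $K_j$ can still be genuinely two-dimensional — for instance a thin lens whose diameter is realized by its two chaining points. So there is no invariant line field of Schwarzian values to propagate, and the subsequent mixing/cocycle argument, which you in any case only sketch and identify yourself as the unproven heart of your route, has nothing to run on.

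The paper's actual mechanism is different and purely combinatorial-topological. Assuming $\delta\not\equiv 0$, transitivity of $\phi$ and the $\phi$-invariance of $\mu$ give $\delta>0$ everywhere on $\mathcal D$. Iterating the equality $N$ times writes $K=\{q_{x,y}(v_0):y\in C_i(x)\}$ as a chained union of compact connected sets $K^{(N,x')}$, $x'\in\phi^{-N}(x)$, each non-degenerate (because $\delta>0$) but with diameters tending to $0$ uniformly in $N$ (uniform continuity of $\delta$ on the compact unit ball bundle together with the contraction of the maps $\Psi^i_j$), while the additivity of diameters persists at every level. Lemma \ref{lem:fact1} then forbids any point of $K$ from lying in three distinct pieces, whereas Lemma \ref{lem:fact2} — a covering-dimension statement for compact subsets of $\mathbb R^2$ — produces, for $N$ large, a point lying in $3$ distinct pieces. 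That is the contradiction. If you want to salvage your approach you would need to replace the "collinear segments" claim by an argument at this level of the iterated decomposition; as written, the step fails.
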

\begin{proof}
    Otherwise, $\delta$ is positive on an non-empty open set and is thus positive everywhere on $\mathcal{D}$ by transitivity of $\phi$ and $\phi$-invariance of $\mu$.\\
    Fix $x \in \interior{D_i}$ and call $v_0=\frac{\partial}{\partial z}|_x\in E^{u}_x \setminus 0$. 
    All previous inequalities are equalities,
    i.e.
    \[\text{diam}\{q_{x,y}(v_0), \;y \in C_i(x)\} =\sum_{x'\in \phi^{-1}(x)}\text{diam}\{q_{f^{-1}(x),z}\left(\restr{df^{-1}}{E^{u}_x}
    (v_0) \right), \;z \in C(x')\}.\]
    By iterating, it comes for every $N \in \mathbb N$:
    \begin{align*}
        \text{diam}\{q_{x,y}(v_0), \;y \in C_i(x)\} = \sum_{x'\in \phi^{-N}(x)}\text{diam}\{q_{f^{-N}(x),z}\left((\restr{df^{-1}}{E^{u}_x}
    )^N(v_0) \right), \;z \in C(x')\}.
    \end{align*}
    We now use the following two facts in order to come to a contradiction and to prove therefore that $\delta$ indeed vanishes identically so that the proof of Theorem \ref{thm:FsTrProj} is complete (see Facts 1 and 2 of \cite{ghys_holomorphic_1995} p.596). We prove them in the appendix.
    \begin{lemma}
    \label{lem:fact1}
        Let $K$ a subset of $\mathbb R^p$ $(p\geq 1)$ which is the union of finitely many non-empty distinct compact sets $(K_j)_{1\leq j \leq q}$ $(q \geq 3)$, each non reduced to a point, satisfying for $1 \leq i \leq q-1$,
        \[K_i\cap K_{i+1} \neq \emptyset\]
        and such that
        \[\text{\emph{diam}}(K)=\sum_{j=1}^q \text{\emph{diam}}(K_j).\]
        Then no point of $K$ belongs to three distinct $K_j$.
    \end{lemma}
    
    \begin{lemma}
    \label{lem:fact2}
        Let $p\geq 1$ and $C$ a compact subset of $\mathbb R^p$ such that for every $\alpha>0$, there exists $N_\alpha \in \mathbb N^*$ and $N_\alpha$ compact subsets $C_i^\alpha$ of $\mathbb R^p$ covering $C$ whose diameters are less than $\alpha$ and satisfying: for every $\alpha'< \alpha$, and $i\in \llbracket1, N_{\alpha'} \rrbracket$, there exists $j\in \llbracket1, N_{\alpha} \rrbracket$ such that $C_i^{\alpha'} \subset C_j^\alpha$. \\
        Then there exists $\alpha >0$ and a point of $C$  belonging to $p+1$ distinct $C_i^\alpha$. 
    \end{lemma}
    We conclude the proof of $\delta=0$.
    Call $K:=\{q_{x,y}(v_0), \;y \in C_i(x)\}$
    and for every $N \in \mathbb N,\; x'\in \phi^{-N}(x) $, 
    \[K^{(N,x')}:=\{q_{f^{-N}(x),z}\left((\restr{df^{-1}}{E^{u}_x}
    )^N(v_0) \right), \;z \in C(x')\}.\]
   They are compact and connected. Therefore, we can arrange the indices so that the sets $(K^{(N,x')})_{N,x'}$ satisfy the intersection assumption of Fact (1).
    Moreover, each $K^{(N,x')}$ (which is non-empty) is non reduced to a point because its diameter is in the image of $\delta$ and $\delta$ is positive everywhere on $\mathcal{D}$ by a previous claim. 
    More precisely, the diameter of $K^{(N,x')}$ is the image by $\delta$ of the image of $v_0$ by the differential of a composition of $N$ maps of the form $\Psi^i_j$. 
    Therefore, since $\delta$ is continuous on $\overline{B^1}(\mathcal{D}):= T\mathcal{D} \cap \overline{B^1}(\mathcal{F}^{u})$ where $\overline{B^1}(\mathcal{F}^{u})$ is the set of tangent vectors to the strong unstable foliation whose norm is less or equal to $1$, and because $\overline{B^1}(\mathcal{D})$ is compact, $\delta$ is uniformly continuous on $\overline{B^1}(\mathcal{D})$ so for any $\alpha >0$, there exists $N_\alpha \in \mathbb N$ large enough so that for any $N \in \mathbb N$ and $x' \in \phi^{-N}(x)$, the diameter of $K^{(N,x')}$ does not exceed $\alpha$.
    By Fact (2), there exists $N^* \in \mathbb N$ large enough and a point of $K$ belonging to three distinct $K^{(N^*,x')}$. 
    This is absurd by Fact (1).
The proof of Theorem \ref{thm:FsTrProj} is now complete.
\end{proof}
\begin{corollary}
\label{cor:FsFuTrProj}
     Let $(\varphi^t)_{t\in \mathbb R}$ a transversely holomorphic Anosov flow on a five dimensional smooth compact manifold $M$. Suppose it is topologically transitive.\\
     Then the weak stable and weak unstable foliations $\mathcal{F}^{ws}$ ans $\mathcal{F}^{wu}$ are transversely projective.
\end{corollary}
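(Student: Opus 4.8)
The plan is to reduce the statement entirely to Theorem \ref{thm:FsTrProj} by means of the time-reversal symmetry, exactly as was done for the transversely holomorphic case in Corollary \ref{cor:4.3.1}. First I would pin down the transverse complex dimensions. Since $M$ has real dimension five and $TM = \mathbb R X \oplus E^{s} \oplus E^{u}$, the transverse complex dimension of the orbit foliation is $q=2$; by the remark following Corollary \ref{cor:conthololeaves}, both $E^{u}$ and $E^{s}$ are complex vector bundles, so $\dim_{\mathbb C} E^{u} + \dim_{\mathbb C} E^{s} = 2$. Since neither distribution is trivial for an Anosov flow, this forces $\dim_{\mathbb C} E^{u} = \dim_{\mathbb C} E^{s} = 1$, which is precisely the hypothesis required to invoke Theorem \ref{thm:FsTrProj}.

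With this in hand, the weak stable case is immediate: Theorem \ref{thm:FsTrProj} applies directly to $(\varphi^t)_{t\in\mathbb R}$, whose strong unstable distribution $E^{u}$ has complex dimension one and which is transitive by assumption, yielding that $\mathcal{F}^{ws}$ is transversely projective. For the weak unstable foliation I would apply the same theorem to the reversed flow $(\varphi^{-t})_{t\in\mathbb R}$. This is again a transversely holomorphic Anosov flow on $M$ whose stable and unstable distributions are $E^{u}$ and $E^{s}$ respectively; in particular its strong unstable distribution is $E^{s}$, of complex dimension one, and its weak stable foliation is precisely $\mathcal{F}^{wu}$. The one point to check is that $(\varphi^{-t})$ is again transitive, which is clear: the orbit $\{\varphi^{-t}(x)\}_{t\in\mathbb R}$ coincides as a subset of $M$ with $\{\varphi^{t}(x)\}_{t\in\mathbb R}$, so $(\varphi^{-t})$ admits a dense orbit whenever $(\varphi^t)$ does. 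Theorem \ref{thm:FsTrProj} applied to $(\varphi^{-t})$ then gives that $\mathcal{F}^{wu}$ is transversely projective.

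The argument is essentially bookkeeping, and I do not expect any genuine obstacle here: all the substantive work has already been carried out in Theorem \ref{thm:FsTrProj}, and the only mild subtlety is verifying that the hypotheses of that theorem, namely complex dimension one of the relevant strong unstable distribution and transitivity, are inherited by the time-reversed flow, both of which are straightforward. The role of the five-dimensionality is solely to guarantee, through the dimension count above, that \emph{both} strong distributions satisfy the complex-dimension-one hypothesis, so that Theorem \ref{thm:FsTrProj} can be applied to the flow and to its reverse simultaneously.
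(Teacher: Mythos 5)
Your proof is correct and is essentially the paper's own argument: the corollary is left as an immediate consequence of Theorem \ref{thm:FsTrProj}, obtained exactly as in Corollary \ref{cor:4.3.1} by the dimension count forcing $\dim_\mathbb C E^{u}=\dim_\mathbb C E^{s}=1$ and by applying the theorem to the time-reversed flow $(\varphi^{-t})_{t\in\mathbb R}$, whose strong unstable distribution is $E^{s}$ and whose weak stable foliation is $\mathcal{F}^{wu}$. Your explicit check that transitivity passes to the reversed flow is a worthwhile detail the paper leaves implicit.
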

\begin{corollary}
\label{cor:flotrproj}
    Let $(\varphi^t)_{t\in \mathbb R}$ a transversely holomorphic Anosov flow on a five dimensional smooth compact manifold $M$. Suppose it is topologically transitive.\\
    Then $(\varphi^t)_{t\in \mathbb R}$ admits a transverse $(\mathrm{PSL}(2, \mathbb C) \times \mathrm{PSL}(2, \mathbb C), \mathbf{P}^1(\mathbb C) \times \mathbf{P}^1(\mathbb C))$-structure.
\end{corollary}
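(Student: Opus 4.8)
The plan is to combine the two transverse projective structures furnished by Corollary~\ref{cor:FsFuTrProj} into a single transverse structure for the orbit foliation $\Phi$, exactly in the spirit of the proof of Corollary~\ref{cor:flotrholocarac}. Set $G=\mathrm{PSL}(2,\mathbb C)\times \mathrm{PSL}(2,\mathbb C)$ and $X=\mathbf P^1(\mathbb C)\times \mathbf P^1(\mathbb C)$, and let $G$ act on $X$ componentwise; this action is analytic since each factor acts by homographies. Since $\dim M=5$ forces $\dim_{\mathbb C}E^{u}=\dim_{\mathbb C}E^{s}=1$, Corollary~\ref{cor:FsFuTrProj} applies (here the transitivity hypothesis is exactly what is needed to invoke it), and both weak foliations carry transverse $(\mathrm{PSL}(2,\mathbb C),\mathbf P^1(\mathbb C))$-structures.

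Concretely, I would start from a maximal atlas of submersions $(U_i,s_i^{u})_i$ onto $\mathbf P^1(\mathbb C)$ defining the transverse projective structure of $\mathcal F^{wu}$, with transition maps $\gamma_{ij}^{u}\in \mathrm{PSL}(2,\mathbb C)$, and similarly $(U_i,s_i^{s})_i$ with transition maps $\gamma_{ij}^{s}\in \mathrm{PSL}(2,\mathbb C)$ for $\mathcal F^{ws}$. By passing to a common refinement (possible because $M$ is compact, so finite atlases suffice) I may assume both families share the same index set and the same open cover $(U_i)_i$. Then I set $s_i=(s_i^{u},s_i^{s}):U_i\to X$ and $\gamma_{ij}=(\gamma_{ij}^{u},\gamma_{ij}^{s})\in G$, so that on each overlap $\restr{s_i}{U_i\cap U_j}=\gamma_{ij}\cdot\restr{s_j}{U_i\cap U_j}$ with $\gamma_{ij}$ acting componentwise.

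It remains to check that $(U_i,s_i)_i$ is a genuine atlas of submersions compatible with $\Phi$ in the sense of Definition~\ref{def:trGXstru}. For submersivity, the key computation is the same pointwise kernel identity used in Corollary~\ref{cor:flotrholocarac}: for $p\in U_i$, $\ker(d_p s_i^{u})\cap\ker(d_p s_i^{s})=(E^{u}_p\oplus\langle X_p\rangle)\cap(E^{s}_p\oplus\langle X_p\rangle)=\langle X_p\rangle$, so $d_p s_i$ has rank $4=\dim_{\mathbb R}X$ and $s_i$ is a submersion with $\ker(d_p s_i)=T_p\Phi$. Unlike Corollary~\ref{cor:flotrholocarac}, no Hartogs-type argument is needed here, since the transition maps $\gamma_{ij}$ already lie in $G$ by construction. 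The only point requiring care --- and the main obstacle --- is verifying compatibility with the orbit foliation: the connected components of the fibers of $s_i$ must be exactly the plaques of $\Phi$. This follows because $\ker(d s_i)=T\Phi=\mathbb R X$ everywhere, so the fibers of $s_i$ are one-dimensional and everywhere tangent to the flow direction; unique integrability of $\Phi$ (it is tangent to the smooth nowhere-vanishing line field $\mathbb R X$) then forces the connected components of these fibers to coincide with the orbit plaques. Hence $(U_i,s_i)_i$ generates a maximal atlas of submersions with transition maps in $G$ compatible with $\Phi$, which is precisely a transverse $(\mathrm{PSL}(2,\mathbb C)\times\mathrm{PSL}(2,\mathbb C),\mathbf P^1(\mathbb C)\times\mathbf P^1(\mathbb C))$-structure for $(\varphi^t)_{t\in\mathbb R}$.
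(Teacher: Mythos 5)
Your proposal is correct and follows essentially the same route as the paper: both combine the two transverse $(\mathrm{PSL}(2,\mathbb C),\mathbf P^1(\mathbb C))$-structures from Corollary \ref{cor:FsFuTrProj} into a product atlas of submersions onto $\mathbf P^1(\mathbb C)\times\mathbf P^1(\mathbb C)$ after refining to a common cover, exactly as in Corollary \ref{cor:flotrholocarac}. You merely spell out the submersivity and fiber-compatibility checks in more detail than the paper, which leaves them implicit.
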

\begin{proof}
    By the previous corollary, the weak stable and the weak unstable foliations each admits a transverse $(\mathrm{PSL}(2, \mathbb C),\mathbf{P}^1(\mathbb C))$-structure. 
    We proceed as in Corollary \ref{cor:flotrholocarac} since both foliations are smooth:
    there exist an atlas of submersions $(U_i, f_i^s)_{i\in I}$ onto $\mathbf{P}^1(\mathbb C)$ and elements $(\gamma^s_{ij})_{ij}$ of $\mathrm{PSL}(2, \mathbb C)$ defining the transverse $(\mathrm{PSL}(2, \mathbb C),\mathbf{P}^1(\mathbb C))$-structure of the weak stable foliation. There also exist an atlas of submersions $(V_i, f_i^u)_{i\in J}$ onto $\mathbf{P}^1(\mathbb C)$ and elements $(\gamma^u_{ij})_{ij}$ of $\mathrm{PSL}(2, \mathbb C)$ defining the transverse $(\mathrm{PSL}(2, \mathbb C),\mathbf{P}^1(\mathbb C))$-structure of the weak unstable foliation.
    By taking a refinement if necessary, we can assume $I=J$ and $V_i=U_i$ for every $i\in I$.
    Again we define an atlas of submersions $(U_i, f_i)$ onto $\mathbf{P}^1(\mathbb C) \times \mathbf{P}^1(\mathbb C)$ and elements $\gamma_{ij}$ of $\mathrm{PSL}(2, \mathbb C) \times \mathrm{PSL}(2, \mathbb C)$ by:
    \begin{align*}
        f_i &= (f_i^u, f_i^s)\\
        \gamma_{ij}&=(\gamma_{ij}^u \, ,\, \gamma_{ij}^s).
    \end{align*}
    This atlas is compatible with the one defining the orbit foliation of $(\varphi^t)_{t\in \mathbb R}$ so the proof is complete.
    \end{proof}
\section{Classification of transversely holomorphic transitive Anosov flows on five-dimensional compact manifolds}
\label{sec:7}
The main result of this article is:
\begin{theorem}
\label{thm:classif}
    Let $(\varphi^t)_{t \in \mathbb R}$ a transversely holomorphic Anosov flow on a smooth compact manifold $M$ of dimension five. Suppose $(\varphi^t)_{t \in \mathbb R}$ topologically transitive.\\
    Then $(\varphi^t)$ is either $C^\infty$-orbit equivalent to the suspension of a hyperbolic automorphism of a complex torus, or, up to finite covers, $C^\infty$-orbit equivalent to the geodesic flow on the unit tangent bundle of a compact hyperbolic three-dimensional manifold.
\end{theorem}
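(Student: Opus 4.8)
The plan is to exploit the transverse $(\mathrm{PSL}(2,\mathbb C)\times\mathrm{PSL}(2,\mathbb C),\mathbf{P}^1(\mathbb C)\times\mathbf{P}^1(\mathbb C))$-structure produced by Corollary \ref{cor:flotrproj}. By Definition \ref{def:devmap} this furnishes a developing map $D=(D_1,D_2)\colon\widetilde M\to\mathbf{P}^1(\mathbb C)\times\mathbf{P}^1(\mathbb C)$ and a holonomy representation $H=(H_1,H_2)\colon\pi_1(M)\to\mathrm{PSL}(2,\mathbb C)\times\mathrm{PSL}(2,\mathbb C)$, where $D_1$ (resp. $D_2$) develops the weak stable (resp. weak unstable) foliation. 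Since $E^{ws}\cap E^{wu}=\mathbb R X$, the connected components of the fibres of $D$ are exactly the lifted orbits of the flow. I would first record that $\Omega:=D(\widetilde M)$ is open (as $D$ is a submersion), connected (as $\widetilde M$ is), and invariant under $\Gamma:=H(\pi_1(M))$, and that the lifted flow is a free $\mathbb R$-action whose orbits are the fibre-components of $D$.

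The second step is to prove that $D\colon\widetilde M\to\Omega$ is a locally trivial smooth fibre bundle with fibre $\mathbb R$. Restricted to any strong unstable leaf, which is a transversal to $\mathcal F^{ws}$, the map $D_1$ is, up to an element of $\mathrm{PSL}(2,\mathbb C)$, the developing map of the \emph{complete} complex affine structure of that leaf (Theorem \ref{thm:compaffstru}), hence a diffeomorphism onto $\mathbf{P}^1(\mathbb C)$ minus a single point; applying Theorem \ref{thm:compaffstru} to $(\varphi^{-t})$ gives the symmetric statement for $D_2$ on strong stable leaves. Combining this completeness with the local product structure (Proposition \ref{prop:locprodstru}) and the compactness of $M$, an Ehresmann-type argument yields the fibre bundle structure and, in particular, that each fibre of $D$ is a single orbit $\cong\mathbb R$.

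The heart of the proof is the determination of $\Omega$. By the previous step each nonempty vertical slice $\Omega\cap(\{c\}\times\mathbf{P}^1(\mathbb C))$ omits exactly one point, and likewise for horizontal slices; hence the $\Gamma$-invariant complement $\Sigma:=(\mathbf{P}^1(\mathbb C)\times\mathbf{P}^1(\mathbb C))\setminus\Omega$ is governed by the images $B_i:=D_i(\widetilde M)$ and by the variation of the omitted points. This produces a dichotomy. If some $\Gamma_i:=H_i(\pi_1(M))$ fixes a point of $\mathbf{P}^1(\mathbb C)$, then the corresponding weak foliation is transversely affine (its holonomy lies in the affine stabilizer of that point), and Plante's theorem (\cite{plante_anosov_1981}) gives that $(\varphi^t)$ is $C^\infty$-orbit equivalent to the suspension of an Anosov diffeomorphism $g$ of a compact complex surface $F$; since $g$ is holomorphic, Anosov and respects the induced complex affine structure, the classification of holomorphic Anosov diffeomorphisms of complex surfaces (\cite{ghys_holomorphic_1995}) forces $F$ to be a complex torus and $g$ a hyperbolic automorphism, the first alternative. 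Otherwise neither $\Gamma_i$ fixes a point, so $B_1=B_2=\mathbf{P}^1(\mathbb C)$ and $\Sigma$ is the graph of a $\Gamma$-equivariant bijection $b\colon\mathbf{P}^1(\mathbb C)\to\mathbf{P}^1(\mathbb C)$ intertwining $\Gamma_1$ and $\Gamma_2$; showing $b$ holomorphic, hence an element of $\mathrm{PSL}(2,\mathbb C)$, one changes coordinates in the second factor so that $\Sigma=\Delta$ and $\Omega=(\mathbf{P}^1(\mathbb C)\times\mathbf{P}^1(\mathbb C))\setminus\Delta$ with $H_1$ conjugate to $H_2$. Then $\Gamma$ embeds diagonally as a cocompact Kleinian group, and comparison with the model of \S\ref{sec:3} identifies the flow, up to finite covers, with the geodesic flow of $\mathbb H^3/\Gamma$, the second alternative.

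I expect the main obstacle to be the rigidity step in the non-affine case: proving that the $\Gamma$-invariant complement $\Sigma$ is exactly, after a coordinate change, the diagonal, equivalently that the intertwining bijection $b$ is a Möbius transformation. This is where the transverse holomorphy of the weak foliations (Theorem \ref{thm:Fstrholo}) and the non-elementarity of $\Gamma_i$ forced by topological transitivity and cocompactness of the $\Gamma$-action on $\Omega$ must be combined, following \cite{ghys_holomorphic_1995}. A secondary difficulty is the bookkeeping of finite covers needed to pass from an abstract cocompact subgroup of $\mathrm{PSL}(2,\mathbb C)$ to a torsion-free, orientation-coherent lattice producing an honest geodesic flow, together with the parallel recognition of the hyperbolic torus automorphism in the affine case.
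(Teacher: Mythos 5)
Your overall architecture coincides with the paper's: same developing map $D=(D_1,D_2)$ and holonomy $H=(H_1,H_2)$ from Corollary \ref{cor:flotrproj}, same dichotomy on the image of $D$ (product of punctured spheres versus complement of the graph of a homeomorphism $u$), same use of Plante plus Ghys's classification of holomorphic Anosov diffeomorphisms in the affine case, and the same reduction of the second case to the diagonal and the geodesic flow of $\mathbb H^3/H_1(\Gamma)$. Two of your steps, however, would not go through as stated. First, the fibration property of $D\colon\widetilde M\to D(\widetilde M)$ cannot be obtained by ``an Ehresmann-type argument'': $D$ is a submersion of a non-compact manifold with non-compact fibres, so properness fails and Ehresmann's theorem does not apply. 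The real content here -- and the longest part of the paper's proof (Proposition \ref{prop:Dbundle} and Lemma \ref{lem:DbundleRec}) -- is the global injectivity statement that two points of $\widetilde M$ with the same image under $D$ lie on the same lifted orbit. The paper proves this by exhausting $\widetilde M$ by a countable connected union of flow-saturated open sets $U_{\widetilde x}$ (saturations of weak stable leaves by strong unstable leaves), on each of which $D$ is an explicit trivial $\mathbb R$-bundle, and then running an induction that uses the homotopy lifting property, the connectedness of complements of codimension-two submanifolds in $\mathbf P^1(\mathbb C)\times\mathbf P^1(\mathbb C)$, and the Hausdorffness of the orbit space of the lifted flow (Corollary \ref{cor:QHausd}). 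Without some substitute for this, your description of the slices of $\Omega$ does not yet yield the global form of $\Sigma$.

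Second, in the non-affine case the intertwining map $u$ is a priori only a homeomorphism, and there is no direct access to its holomorphy; the paper instead first proves that $H$ and each $H_i$ are injective, that $H_1(\Gamma)$ is discrete (via a criterion requiring density of embedded leaves, supplied by density of periodic orbits, and connectedness of the fibres of $D$) and cocompact, and only then concludes that $u\in\mathrm{PSL}(2,\mathbb C)$ by Mostow rigidity, since $u$ conjugates two discrete cocompact subgroups of $\mathrm{PSL}(2,\mathbb C)$. Finally, the identification with the geodesic flow is not a mere ``comparison with the model'': after Selberg's lemma produces a torsion-free finite cover, the paper invokes Haefliger's realization proposition to get a leaf-preserving homotopy equivalence $h$, and then the Barbot--Ghys averaging trick $h_T(x)=\psi^{u_T(x)}(h(x))$ to repair the possible non-injectivity of $h$ on orbits and obtain an honest $C^\infty$-orbit equivalence. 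These are the three places where your plan needs to be completed by the paper's actual arguments.
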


So from now on, we suppose $(\varphi^t)_{t\in \mathbb R}$ is a transversely holomorphic Anosov flow on a five dimensional smooth compact manifold $M$, which is in addition topologically transitive.
We cannot conclude as in \cite{ghys_holomorphic_1995} since the Bowen-Margulis measure is not invariant by weak stable holonomy (see \cite{pollicott_margulis_1987}, \cite{hamenstadt_new_1989}).\\
We have shown in Corollary \ref{cor:FsFuTrProj} that the weak stable and weak unstable foliations $\mathcal{F}^{ws}$ and $\mathcal{F}^{wu}$ admit transverse projective structures.
As mentioned before, this implies that the lifted foliation $\widetilde{\mathcal{F}^{wu}}$ of $\mathcal{F}^{wu}$ in the universal covering space $\widetilde{M}$ of $M$ is defined by a global submersion: $D_1: \widetilde{M} \to \mathbf{P}^1(\mathbb C).$
Moreover, there is a homomorphism $H_1: \pi_1(M) \to \mathrm{PSL}(2,\mathbb C)$
such that:
\[ \forall \widetilde{x} \in \widetilde{M}, \; \forall \gamma \in \pi_1(M), \quad D_1(\gamma \cdot \widetilde{x})= H_1(\gamma)D_1(\widetilde{x}).\]
Define in the same way $D_2$ and $H_2$ for $\widetilde{\mathcal{F}^{ws}}$.
Let:
\[ D: \left \{\begin{array}{ccl}
        \widetilde{M} & \to & \mathbf{P}^1(\mathbb C) \times \mathbf{P}^1(\mathbb C) \\
        \widetilde{x} & \mapsto & (D_1(\widetilde{x}), D_2(\widetilde{x}))
    \end{array} \right. \quad \text{ and } \quad H: \left \{\begin{array}{ccl}
        \pi_1(M) & \to & \mathrm{PSL}(2,\mathbb C) \times \mathrm{PSL}(2,\mathbb C) \\
        \gamma & \mapsto & (H_1(\gamma), H_2(\gamma))
    \end{array} \right. .\] 
Naturally, we still have  for $\widetilde{x} \in \widetilde{M}$ and $\gamma \in \pi_1(M)$,
$D(\gamma \cdot \widetilde{x})= H(\gamma)D(\widetilde{x}),$
where the second action is the action component by component.
In fact $D$ is a developing map for the transverse $(\mathrm{PSL}(2, \mathbb C) \times \mathrm{PSL}(2, \mathbb C), \mathbf{P}^1(\mathbb C) \times \mathbf{P}^1(\mathbb C))$
structure of $(\varphi^t)_{t}$, and $H$ its corresponding holonomy representation (see the proof of Corollary \ref{cor:flotrproj}).\\
We first prove the following proposition which is at the basis of the classification of transversely holomorphic Anosov transitive flows on smooth compact five-dimensional manifolds:
\begin{proposition}
\label{prop:Dbundle}
\leavevmode
    \begin{enumerate}[label=(\roman*)]
        \item Every orbit of the lifted flow $(\widetilde{\varphi}^t)_{t \in \mathbb R}$ is diffeomorphic to $\mathbb R$ ;
        \item The orbits of the lifted flow are precisely the fibers of $D$ ;
        \item $\widetilde{M} \xrightarrow{D} D(\widetilde{M})$ is a smooth fiber bundle with fiber $\mathbb R$ ;
        \item The image $D(\widetilde{M})$ of $D$ is equal to one of the following sets:
        \begin{enumerate} 
            \item $(\mathbf{P}^1(\mathbb C) \setminus \{a\}) \times (\mathbf{P}^1(\mathbb C)  \setminus \{b\})$, where $a,b\in \mathbf{P}^1(\mathbb C)$ ;
            \item $(\mathbf{P}^1(\mathbb C)\times \mathbf{P}^1(\mathbb C)) \setminus \text{gr}^H(u)$, where $u: \mathbf{P}^1(\mathbb C) \to \mathbf{P}^1(\mathbb C)$ is a homeomorphism and $\text{gr}^H(u):=\{(x,u(x)), \; x\in \mathbf{P}^1(\mathbb C)\}$ is the horizontal graph of $u$.
        \end{enumerate}
    \end{enumerate}
\end{proposition}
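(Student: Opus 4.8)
The plan is to treat $D=(D_1,D_2)$ as a flow-invariant submersion and to read everything off from the way the two transverse weak foliations develop. The structural facts I would record first are these. Every orbit of $\widetilde{\varphi}$ lies inside a leaf of $\widetilde{\mathcal{F}^{wu}}$ and inside a leaf of $\widetilde{\mathcal{F}^{ws}}$, and $D_1$ (resp. $D_2$) is constant on the leaves of the first (resp. second); hence $D$ is constant on orbits, i.e. $D\circ\widetilde{\varphi}^t=D$. Since $\ker dD_1=T\widetilde{\mathcal{F}^{wu}}$ and $\ker dD_2=T\widetilde{\mathcal{F}^{ws}}$, we get $\ker dD=T\widetilde{\mathcal{F}^{wu}}\cap T\widetilde{\mathcal{F}^{ws}}=\mathbb{R}\widetilde{X}$, so $D$ is a submersion with one-dimensional fibres tangent to the flow, whose connected components are therefore orbits. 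Dually, a strong stable leaf is transverse to $\widetilde{\mathcal{F}^{wu}}$ and of complementary dimension, so $D_1|_{\widetilde{\mathcal{F}^s}}$ is a local diffeomorphism, and likewise $D_2|_{\widetilde{\mathcal{F}^u}}$. Using the complete complex affine structures on the strong leaves (Theorem \ref{thm:compaffstru} for $\mathcal{F}^u$, and its analogue for $\mathcal{F}^s$ obtained by applying the same results to $(\varphi^{-t})$), these restrictions are the projective developing maps of complete affine, hence projective, structures on $\mathbb{C}$; being developing maps of complete structures they are injective with image a once-punctured sphere $\mathbf{P}^1(\mathbb{C})\setminus\{\ast\}$.

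For $(i)$, suppose a lifted orbit $O$ were a circle. Then $D_1$ is constant on $O$, say equal to $p_1$. Take $\widetilde{x}\in O$ and any $\widetilde{y}$ in the strong stable leaf $\widetilde{\mathcal{F}^s}(\widetilde{x})$; then $\widetilde{\varphi}^{t}(\widetilde{y})$ accumulates on the compact set $O$ as $t\to+\infty$, so along a suitable subsequence $\widetilde{\varphi}^{t_n}(\widetilde{y})\to z\in O$, whence by continuity and flow-invariance $D_1(\widetilde{y})=D_1(\widetilde{\varphi}^{t_n}(\widetilde{y}))\to D_1(z)=p_1$. Thus $D_1$ is constant on all of $\widetilde{\mathcal{F}^s}(\widetilde{x})$, contradicting that $D_1|_{\widetilde{\mathcal{F}^s}(\widetilde{x})}$ is a non-constant local diffeomorphism; hence no lifted orbit is compact. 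For $(iii)$, granting $(ii)$: near any value $D$ admits a local section $s$ carried by a small transversal to the flow, and I would set $\Theta(q,t)=\widetilde{\varphi}^t(s(q))$. Flow-invariance gives $D\circ\Theta(q,t)=q$, while $(i)$ (the flow acts freely, with no return) and $(ii)$ (the fibre over $q$ is a single orbit) make $\Theta$ a bijection onto $D^{-1}(U)$, and a dimension count shows it is a diffeomorphism; these $\Theta$'s are the local trivializations exhibiting $\widetilde{M}\xrightarrow{D}D(\widetilde{M})$ as a smooth $\mathbb{R}$-bundle and identifying $\widetilde{M}$ with $D(\widetilde{M})\times\mathbb{R}$.

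The heart of the matter is $(ii)$ together with $(iv)$, which I would treat on the orbit space $\mathcal{O}=\widetilde{M}/\widetilde{\varphi}$, on which $D$ descends to a local diffeomorphism $\bar{D}=(\bar{D}_1,\bar{D}_2)$ carrying the two transverse foliations by strong-leaf classes; each strong-unstable class develops injectively onto a horizontal once-punctured sphere $\{q_1\}\times(\mathbf{P}^1(\mathbb{C})\setminus\{b\})$ and each strong-stable class onto a vertical one. Statement $(ii)$ is exactly the injectivity of $\bar{D}$, which I would reduce to the \emph{completeness} assertion that a strong-unstable class and a strong-stable class actually meet in $\mathcal{O}$ whenever their (horizontal and vertical) developed images have a common point: if they meet at $\zeta$, then $\bar{D}(\zeta)$ lies in both images and so equals that common point, and injectivity of $\bar{D}$ on each class forces the two originally given classes to coincide with $\zeta$. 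For $(iv)$, I observe that $D_1(\widetilde{M})=\bigcup(\mathbf{P}^1(\mathbb{C})\setminus\{a_i\})$ is a union of once-punctured spheres, hence equals either $\mathbf{P}^1(\mathbb{C})$ or $\mathbf{P}^1(\mathbb{C})\setminus\{a\}$, and similarly for $D_2$. When both omit a point one gets $D(\widetilde{M})\subseteq(\mathbf{P}^1(\mathbb{C})\setminus\{a\})\times(\mathbf{P}^1(\mathbb{C})\setminus\{b\})$, and equality gives case $(a)$; when the developing maps are onto, connectedness of the fibres (from $(ii)$) makes every horizontal and every vertical slice of $D(\widetilde{M})$ a once-punctured sphere, so its complement is at once a graph over each factor, which forces it to be $\text{gr}^H(u)$ for a homeomorphism $u$, i.e. case $(b)$.

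I expect the main obstacle to be precisely this completeness statement underlying $(ii)$ and the accompanying fine determination of the image in $(iv)$: showing that leaves of the two strong foliations intersect as soon as the projective geometry permits, that the two cases $(a)$ and $(b)$ exhaust the possibilities, and that no ``mixed'' image occurs. This is where the hyperbolic dynamics enters essentially — through the uniform expansion and contraction of the flow along $E^u$ and $E^s$, and, via transitivity, through the existence of arbitrarily fine Markov partitions (Theorem \ref{thm:rat}), which allow the local product structure to be propagated globally over the simply connected orbit space $\mathcal{O}$.
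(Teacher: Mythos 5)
Your treatment of $(i)$ is correct (and slightly more direct than the paper's: you contract a strong stable leaf onto a hypothetical compact periodic orbit and contradict the injectivity of $D_1$ on that leaf, whereas the paper runs the analogous argument through its distinguished base points $\widetilde{x_i}$), and your derivation of $(iii)$ and of $(iv)$ from $(ii)$ is essentially the right reduction. But the proposal does not prove $(ii)$, and $(ii)$ is the entire technical content of the proposition. You correctly identify that $(ii)$ amounts to the connectedness of the fibers of $D$, i.e.\ to the ``completeness'' assertion that a strong unstable class and a strong stable class in the orbit space meet whenever their developed images share a point — and then you stop, saying you expect this to be the main obstacle and gesturing at Markov partitions and at propagating the local product structure ``globally over the simply connected orbit space $\mathcal{O}$''. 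That is precisely the step that cannot be waved at: the orbit space is not known a priori to be a manifold, let alone simply connected (those facts are consequences of $(ii)$--$(iii)$, not inputs), and no argument is given for why two points of $\widetilde{M}$ with the same image under $D$ must lie on the same orbit when they are far apart.

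For comparison, the paper proves $(ii)$ by an induction that your sketch omits entirely. It covers $\widetilde{M}$ by countably many flow-saturated open sets $U_{\widetilde{x_i}}$ (saturations of local product neighborhoods) arranged so that each partial union $\Omega_k$ is connected, establishes the fibration statement on a single $U_{\widetilde{x}}$ via the injectivity of $D$ on the transversal $T_{\widetilde{x}}$ (Lemmas \ref{lem:liftFssaff} and \ref{lem:U_x}), and then in Lemma \ref{lem:DbundleRec} passes from $\Omega_k$ to $\Omega_{k+1}$: given $\widetilde{x}\in\Omega_k$ and $\widetilde{y}\in U_{k+1}$ with $D(\widetilde{x})=D(\widetilde{y})$, it uses that $D(\Omega_k)\cap D(U_{k+1})$ is the complement in $\mathbf{P}^1(\mathbb C)\times\mathbf{P}^1(\mathbb C)$ of finitely many closed codimension-two submanifolds, hence path-connected, lifts a path from a common value to $D(\widetilde{x})$ into both $\Omega_k$ and $U_{k+1}$ by the homotopy lifting property of the two partial fiber bundles, and shows by an open--closed argument in $[0,1]$ — the closedness relying on the Hausdorffness of the orbit space, Corollary \ref{cor:QHausd} — that the two lifts remain on a common orbit. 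The same induction simultaneously pins down the image, which is how the paper rules out the ``mixed'' possibilities in $(iv)$ (a vertical line together with a graph over a punctured sphere): it compares the descriptions of $D(\widetilde{M})$ obtained from the unstable-based and the stable-based exhaustions. Your sketch of $(iv)$ assumes without justification that the inclusion $D(\widetilde{M})\subseteq(\mathbf{P}^1(\mathbb C)\setminus\{a\})\times(\mathbf{P}^1(\mathbb C)\setminus\{b\})$ is an equality in the first case; this too requires the complement comparison. Until the inductive globalization (or some substitute for it) is supplied, the proof of $(ii)$, and with it $(iii)$ and $(iv)$, is missing.
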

We will need several lemmas in order to prove it:
\begin{lemma}
\label{lem:liftFssaff}
 For every $\widetilde{x} \in \widetilde{M}$:
\begin{enumerate}[label=(\roman*)]
    \item $\widetilde{\mathcal{F}^{u}_{\widetilde{x}}}$ and $\widetilde{\mathcal{F}^{s}_{\widetilde{x}}}$ are naturally equipped with a complete complex affine structure of complex dimension $1$.
    \item There exist elements $w_1(\widetilde{x})$ and $w_2(\widetilde{x})$ of $\mathbf{P}^1(\mathbb C)$ such that
    \[\restr{D_2}{\widetilde{\mathcal{F}^{u}_{\widetilde{x}}}}: \widetilde{\mathcal{F}^{u}_{\widetilde{x}}} \to \mathbf{P}^1(\mathbb C) \setminus \{w_2(\widetilde{x})\} \quad \text{ and } \quad \restr{D_1}{\widetilde{\mathcal{F}^{s}_{\widetilde{x}}}}: \widetilde{\mathcal{F}^{s}_{\widetilde{x}}} \to \mathbf{P}^1(\mathbb C) \setminus \{w_1(\widetilde{x})\}\]
    are well-defined diffeomorphisms.
\end{enumerate}
\end{lemma}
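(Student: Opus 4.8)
The plan is to read off both parts from the structures already assembled in Sections \ref{sec:5} and \ref{sec:6}, together with the standard uniqueness of developing maps recorded in Definition \ref{def:devmap}. Throughout, write $p\colon \widetilde M\to M$ for the universal covering and $x=p(\widetilde x)$.

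For \emph{(i)}, I would first invoke the fact (recalled in Section \ref{sec:5}) that the strong stable and strong unstable leaves are simply connected. The leaf $\widetilde{\mathcal{F}^{u}_{\widetilde{x}}}$ of the lifted foliation $\widetilde{\mathcal F^{u}}$ through $\widetilde x$ covers $\mathcal F^{u}_{x}$ under $p$, so $\restr{p}{\widetilde{\mathcal{F}^{u}_{\widetilde{x}}}}\colon \widetilde{\mathcal{F}^{u}_{\widetilde{x}}}\to\mathcal F^{u}_{x}$ is a covering of the leaf and hence a diffeomorphism, $\mathcal F^{u}_{x}$ being simply connected. By Theorem \ref{thm:compaffstru} the leaf $\mathcal F^{u}_{x}$ carries a canonical complete complex affine structure of complex dimension one, affinely diffeomorphic to $\mathbb C$; pulling it back through this diffeomorphism equips $\widetilde{\mathcal{F}^{u}_{\widetilde{x}}}$ with a complete complex affine structure, with no choice involved. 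Applying the same reasoning to $\mathcal F^{s}$, whose leaves carry complete complex affine structures by the version of Theorem \ref{thm:compaffstru} applied to $(\varphi^{-t})$ (as in Corollary \ref{cor:4.3.1}), yields the structure on $\widetilde{\mathcal{F}^{s}_{\widetilde{x}}}$.

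For \emph{(ii)}, the dimension count is the starting point: since $\dim M=5$, the strong unstable leaf is a real surface whose dimension matches the transverse (real) dimension of $\mathcal F^{ws}$, so $\widetilde{\mathcal{F}^{u}_{\widetilde{x}}}$ is a complementary-dimensional transversal to the lifted weak stable foliation $\widetilde{\mathcal F^{ws}}$, whose leaves are the connected components of the fibers of the submersion $D_2$. Hence $\restr{D_2}{\widetilde{\mathcal{F}^{u}_{\widetilde{x}}}}$ is a local diffeomorphism into $\mathbf P^1(\mathbb C)$. The key identification is that the transverse $(\mathrm{PSL}(2,\mathbb C),\mathbf P^1(\mathbb C))$-structure of $\mathcal F^{ws}$ produced in Theorem \ref{thm:FsTrProj} is precisely the one whose transverse charts are the affine charts of the strong unstable leaves, viewed as projective charts via $\mathrm{Aff}(\mathbb C)\subset \mathrm{PSL}(2,\mathbb C)$. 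Consequently the projective structure that this transverse structure induces on the transversal $\widetilde{\mathcal{F}^{u}_{\widetilde{x}}}$ is exactly the complex projective structure subordinate to the complex affine structure of part \emph{(i)}. I can then conclude as follows. By Definition \ref{def:devmap}, $\restr{D_2}{\widetilde{\mathcal{F}^{u}_{\widetilde{x}}}}$ is a developing map for this induced structure on the simply connected surface $\widetilde{\mathcal{F}^{u}_{\widetilde{x}}}$. Completeness of the affine structure provides an affine diffeomorphism $\mathrm{dev}\colon \widetilde{\mathcal{F}^{u}_{\widetilde{x}}}\xrightarrow{\sim}\mathbb C$, and composing with the inclusion $\iota\colon\mathbb C\hookrightarrow\mathbf P^1(\mathbb C)$ gives a second developing map for the same structure. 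Uniqueness of developing maps up to post-composition by $\mathrm{PSL}(2,\mathbb C)$ yields $g\in\mathrm{PSL}(2,\mathbb C)$ with $\restr{D_2}{\widetilde{\mathcal{F}^{u}_{\widetilde{x}}}}=g\circ\iota\circ\mathrm{dev}$, so this map is a diffeomorphism onto $g(\mathbb C)=\mathbf P^1(\mathbb C)\setminus\{g(\infty)\}$, and I set $w_2(\widetilde x)=g(\infty)$. Exchanging the roles of the stable and unstable data (using the weak unstable structure built from the affine structures on strong stable leaves) gives the statement for $\restr{D_1}{\widetilde{\mathcal{F}^{s}_{\widetilde{x}}}}$ verbatim.

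The hard part will be the identification step in the third paragraph: turning the a priori anonymous local diffeomorphism $\restr{D_2}{\widetilde{\mathcal{F}^{u}_{\widetilde{x}}}}$ into a \emph{developing map} for the affine-subordinate projective structure, to which completeness and uniqueness then apply mechanically. This requires extracting from the proof of Theorem \ref{thm:FsTrProj} that the transverse charts were manufactured out of the affine charts of the strong unstable leaves, so that restriction of the transverse structure to the leaf recovers the leaf's own structure; once this is made explicit, the remaining arguments are routine covering-space and $(G,X)$-structure generalities.
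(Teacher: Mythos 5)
Your proposal is correct and follows essentially the same route as the paper: part (i) by showing $\restr{p}{\widetilde{\mathcal{F}^{u}_{\widetilde{x}}}}$ is a diffeomorphism onto $\mathcal{F}^{u}_{x}$ (using simple connectedness of strong leaves) and pulling back the complete affine structure of Theorem \ref{thm:compaffstru}, and part (ii) by recognizing $\restr{D_2}{\widetilde{\mathcal{F}^{u}_{\widetilde{x}}}}$ as a developing map for the projective structure subordinate to that complete affine structure, since the transverse projective charts of $\mathcal{F}^{ws}$ were built from the affine charts of the strong unstable leaves. If anything, your uniqueness-up-to-post-composition argument for why the image is exactly $\mathbf{P}^1(\mathbb C)$ minus one point is spelled out more carefully than the paper's "open subset diffeomorphic to $\mathbb C$" phrasing.
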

\begin{proof}
    $(i):$ Denote by $x=p(\widetilde{x})$. Since, $\mathcal{F}^{u}_x$ has a complete complex affine structure of complex dimension $1$, it is enough to prove that $\restr{p}{\widetilde{\mathcal{F}^{u}_{\widetilde{x}}}}: \widetilde{\mathcal{F}^{u}_{\widetilde{x}}} \to \mathcal{F}^{u}_{x}$  is a well-defined diffeomorphism. The same can therefore be said for strong stable leaves.\\
    By definition of the lifted foliation, $\restr{p}{\widetilde{\mathcal{F}^{u}_{\widetilde{x}}}}: \widetilde{\mathcal{F}^{u}_{\widetilde{x}}} \to \mathcal{F}^{u}_{x}$  is well-defined. We first show it is surjective. Let $y \in \mathcal{F}^{u}_x$. Since $\mathcal{F}^{u}_x$ is connected, there exists a path $\gamma$ in $\mathcal{F}^{u}_x$ connecting $x$ and $y$. By the uniqueness lifting property, there exists a (unique) lift $\widetilde{\gamma}$ of $\gamma$ in $\widetilde{M}$ which starts at $\widetilde{x}$. We note $\widetilde{y}:=\widetilde{\gamma}(1)$. We prove that $\widetilde{\mathcal{F}^{u}_{\widetilde{y}}} = \widetilde{\mathcal{F}^{u}_{\widetilde{x}}}$ so that $y=p(\widetilde{y})$ is in the image of $\restr{p}{\widetilde{\mathcal{F}^{u}_{\widetilde{x}}}}$.
    We recall that for $\widetilde{z} \in \widetilde{M}$,
    $\widetilde{\mathcal{F}^{u}_{\widetilde{z}}}$ is the connected component of $\widetilde{z}$ in $p^{-1}(\mathcal{F}^{u}_{p(\widetilde{z})})$. Therefore, it sufficient to prove that there is path in $p^{-1}(\mathcal{F}^{u}_{x})$ connecting $\widetilde{\mathcal{F}^{u}_{\widetilde{x}}}$ and $\widetilde{\mathcal{F}^{u}_{\widetilde{y}}}$: $\widetilde{\gamma}$ fits since $p\circ \widetilde{\gamma}=\gamma$ is a path in $\mathcal{F}^{u}_x$.\\
    Moreover, if $\widetilde{y}, \widetilde{z}$ are elements of $\widetilde{\mathcal{F}^{u}_{\widetilde{x}}}$ satisfying $p(\widetilde{y})=p(\widetilde{z})=x$, then since $\widetilde{\mathcal{F}^{u}_{\widetilde{x}}}$ is connected (by definition of a leaf), let $\Gamma$ a path in $\widetilde{\mathcal{F}^{u}_{\widetilde{x}}}$ connecting $\widetilde{y}$ and $\widetilde{z}$. The path $p\circ \Gamma$ is therefore a loop in $\mathcal{F}^{u}_{x}$ at $x$. It is necessarily homotopic to the constant path at $x$ since $\mathcal{F}^{u}_{x}$ is simply connected. By a result of algebraic topology (\cite{hatcher_algebraic_2002}, \cite{paulin_introduction_nodate}), the constant path at $\widetilde{x}$ and $\Gamma$ have the same endpoint, i.e. $\widetilde{x}=\Gamma(1)=\widetilde{y}$.\\
    $(ii):$ The proof of Corollary \ref{cor:flotrproj} and the definition of the developing map corresponding to the transverse $( \mathrm{PSL}(2, \mathbb C), \mathbf{P}^1(\mathbb C))$ of the weak stable foliation imply that $\restr{D_2 \circ p}{\mathcal{F}^{u}_x}$ is a developing map corresponding to the complete complex affine structure of complex dimension one of the strong unstable leaf $\mathcal{F}^{u}_x$. Therefore, it is a diffeomorphism from $\mathcal{F}^{u}_x$ to an open subset of $\mathbf{P}^1(\mathbb C)$ diffeomorphic to $\mathbb C$, that is a set of the form $\mathbf{P}^1(\mathbb C) \setminus \{w_2(\widetilde{x})\}$ where $w_2(\widetilde{x})$ is a point of $\mathbf{P}^1(\mathbb C)$. The same can be said for the weak unstable foliation and strong stable leaves.
\end{proof}
\begin{corollary}
The intersection of a leaf of $\widetilde{\mathcal{F}^{u}}$ (respectively, $\widetilde{\mathcal{F}^{s}}$) with a leaf of $\widetilde{\mathcal{F}^{ws}}$ (respectively, $\widetilde{\mathcal{F}^{wu}}$) is either empty or a point.
\end{corollary}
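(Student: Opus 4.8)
The plan is to reduce everything to two facts that are already available: the defining property of the developing map (Definition \ref{def:devmap}(1)), which says that the leaves of a lifted foliation are exactly the connected components of the fibers of its developing map, and the injectivity statement contained in Lemma \ref{lem:liftFssaff}(ii). Indeed, recall that $D_2$ is the developing map for the weak stable foliation $\mathcal{F}^{ws}$, so $D_2$ is constant along every leaf of $\widetilde{\mathcal{F}^{ws}}$; and by Lemma \ref{lem:liftFssaff}(ii), for every $\widetilde{x}\in\widetilde{M}$ the restriction $\restr{D_2}{\widetilde{\mathcal{F}^{u}_{\widetilde{x}}}}$ is a diffeomorphism onto $\mathbf{P}^1(\mathbb{C})\setminus\{w_2(\widetilde{x})\}$, hence in particular injective.

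First I would treat the unstable case. Fix a leaf $L^{u}=\widetilde{\mathcal{F}^{u}_{\widetilde{x}}}$ of $\widetilde{\mathcal{F}^{u}}$ and a leaf $L^{ws}$ of $\widetilde{\mathcal{F}^{ws}}$, and suppose $p,q\in L^{u}\cap L^{ws}$. Since $L^{ws}$ is a connected component of a fiber of $D_2$, the map $D_2$ is constant on $L^{ws}$, so $D_2(p)=D_2(q)$. But $p$ and $q$ both lie in $L^{u}$, on which $D_2$ is injective; therefore $p=q$. This shows that $L^{u}\cap L^{ws}$ contains at most one point, i.e. it is either empty or reduced to a point. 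Then I would observe that the stable case is entirely symmetric: replacing $\widetilde{\mathcal{F}^{u}}$, $\widetilde{\mathcal{F}^{ws}}$, $D_2$ by $\widetilde{\mathcal{F}^{s}}$, $\widetilde{\mathcal{F}^{wu}}$, $D_1$ respectively, the same argument runs verbatim, now using that $D_1$ is constant on leaves of $\widetilde{\mathcal{F}^{wu}}$ and that $\restr{D_1}{\widetilde{\mathcal{F}^{s}_{\widetilde{x}}}}$ is a diffeomorphism onto $\mathbf{P}^1(\mathbb{C})\setminus\{w_1(\widetilde{x})\}$, hence injective.

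I do not expect any genuine obstacle: all of the analytic and dynamical content has been absorbed into Lemma \ref{lem:liftFssaff}, and the corollary is a one-line consequence of injectivity on a strong leaf combined with constancy along the transverse weak leaf. The only point deserving an explicit mention is that a leaf of $\widetilde{\mathcal{F}^{ws}}$ lies in a single fiber of $D_2$, which is precisely property (1) of Definition \ref{def:devmap}; once that is stated, the conclusion is immediate.
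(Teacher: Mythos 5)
Your proof is correct and is essentially the paper's own argument: both reduce the statement to the constancy of $D_2$ along leaves of $\widetilde{\mathcal{F}^{ws}}$ together with the injectivity of $\restr{D_2}{\widetilde{\mathcal{F}^{u}_{\widetilde{x}}}}$ from Lemma \ref{lem:liftFssaff}, and the symmetric case is handled the same way. If anything, your write-up is slightly cleaner, since the paper's proof normalizes to a point $\widetilde{x}'$ in the intersection and applies the same two facts in one line.
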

\begin{proof}
    If $\widetilde{\mathcal{F}^{u}_{\widetilde{x}}} \cap \widetilde{\mathcal{F}^{s}_{\widetilde{y}}}$ is not empty, then we can suppose $\widetilde{y}=\widetilde{x}$ and take $\widetilde{x}' \in \widetilde{\mathcal{F}^{u}_{\widetilde{x}}} \cap \widetilde{\mathcal{F}^{s}_{\widetilde{x}}}$. Since $D_2(\widetilde{x}')=D_2(\widetilde{x})$, the previous lemma gives immediately $\widetilde{x}'=\widetilde{x}$.
\end{proof}
\begin{corollary}
The intersection of a leaf of $\widetilde{\mathcal{F}^{ws}}$ with a leaf of $\widetilde{\mathcal{F}^{wu}}$ is either empty or an orbit of the lifted flow on $\widetilde{M}$.
\end{corollary}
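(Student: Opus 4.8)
The plan is to show that a non-empty intersection $L\cap N$, where $L=\widetilde{\mathcal{F}^{ws}_{\widetilde x}}$ is a weak stable leaf and $N=\widetilde{\mathcal{F}^{wu}_{\widetilde y}}$ a weak unstable leaf, is exactly one orbit of the lifted flow. I deliberately want to avoid invoking Proposition \ref{prop:Dbundle}, since the present corollary is one of the ingredients used to prove it; instead I will rely only on Lemma \ref{lem:liftFssaff}, on the developing map $D_1$ of $\mathcal{F}^{wu}$, and on the elementary description of weak leaves as flow-saturations of strong leaves from the beginning of Section \ref{sec:5}.

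First I would fix a point $\widetilde z\in L\cap N$ (assuming the intersection non-empty), so that $L=\widetilde{\mathcal{F}^{ws}_{\widetilde z}}$ and $N=\widetilde{\mathcal{F}^{wu}_{\widetilde z}}$. The two weak foliations are invariant under the lifted flow $(\widetilde\varphi^t)$, being lifts of the flow-invariant foliations $\mathcal{F}^{ws},\mathcal{F}^{wu}$ whose tangent distributions contain $X$; hence the whole orbit $\widetilde\Phi_{\widetilde z}=\{\widetilde\varphi^t(\widetilde z):t\in\mathbb R\}$ lies in both $L$ and $N$, which gives the inclusion $\widetilde\Phi_{\widetilde z}\subseteq L\cap N$. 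It then remains to establish the reverse inclusion, namely that $L\cap N$ contains no point off this orbit.

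The key step is the following. I would take an arbitrary $\widetilde w\in L\cap N$. Since $L=\widetilde{\mathcal{F}^{ws}_{\widetilde z}}=\bigcup_{t\in\mathbb R}\widetilde\varphi^t(\widetilde{\mathcal{F}^{s}_{\widetilde z}})$ (the lift of the description $\mathcal{F}^{w}(x)=\bigcup_t\varphi^t(\mathcal{F}_x)$), I can write $\widetilde w=\widetilde\varphi^{t_0}(\widetilde u)$ with $\widetilde u\in\widetilde{\mathcal{F}^{s}_{\widetilde z}}$ and $t_0\in\mathbb R$. Now $D_1$, the developing map of the weak unstable foliation, is constant on the leaf $N$, because by Definition \ref{def:devmap} the leaves of $\widetilde{\mathcal{F}^{wu}}$ are the connected components of the fibres of $D_1$; write $D_1\equiv a$ on $N$. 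Moreover, since orbits of the lifted flow are contained in weak unstable leaves, $D_1$ is constant along orbits, and therefore $D_1(\widetilde u)=D_1(\widetilde\varphi^{-t_0}(\widetilde w))=D_1(\widetilde w)=a=D_1(\widetilde z)$. But $\widetilde u$ and $\widetilde z$ both lie on the strong stable leaf $\widetilde{\mathcal{F}^{s}_{\widetilde z}}$, on which $D_1$ is injective by Lemma \ref{lem:liftFssaff}(ii); hence $\widetilde u=\widetilde z$ and $\widetilde w=\widetilde\varphi^{t_0}(\widetilde z)\in\widetilde\Phi_{\widetilde z}$. This yields $L\cap N=\widetilde\Phi_{\widetilde z}$, an orbit of the lifted flow, as claimed.

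The only genuinely delicate point, and the one I would be most careful about, is the bookkeeping of which developing map to use: although $L$ is a weak \emph{stable} leaf, the map that is simultaneously constant along orbits and injective along the strong \emph{stable} leaves sitting inside $L$ is $D_1$, the developing map of the weak \emph{unstable} foliation, and it is precisely Lemma \ref{lem:liftFssaff}(ii) that supplies this injectivity. Beyond this, the two soft facts invoked above, namely the flow-invariance of the lifted weak foliations and the flow-saturation identity $\widetilde{\mathcal{F}^{ws}_{\widetilde z}}=\bigcup_t\widetilde\varphi^t(\widetilde{\mathcal{F}^{s}_{\widetilde z}})$, are immediate lifts of the corresponding statements on $M$ established at the start of Section \ref{sec:5}, so no further work is needed there.
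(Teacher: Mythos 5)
Your proof is correct and is essentially the paper's argument with the roles of the two weak foliations mirrored: the paper writes the weak unstable leaf as $\bigcup_{t}\widetilde{\varphi}^t(\widetilde{\mathcal{F}^{u}_{\widetilde{x}}})$, intersects with the flow-invariant weak stable leaf, and invokes the preceding corollary (strong leaf $\cap$ transverse weak leaf is a point, via injectivity of $D_2$ on strong unstable leaves), whereas you saturate the weak stable leaf by strong stable leaves and inline that same point-intersection step via the injectivity of $D_1$ on strong stable leaves from Lemma \ref{lem:liftFssaff}. Both reductions rest on exactly the same two facts (flow-saturation of weak leaves by strong leaves, and Lemma \ref{lem:liftFssaff}), so nothing further is needed.
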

\begin{proof}
    This is just a consequence of
    \[\widetilde{\mathcal{F}^{wu}_{\widetilde{x}}} \cap \widetilde{\mathcal{F}^{ws}_{\widetilde{y}}}=\bigcup_{t \in \mathbb R}\widetilde{\mathcal{F}^{u}_{\widetilde{\varphi}^t(\widetilde{x})}} \cap \widetilde{\mathcal{F}^{ws}_{\widetilde{y}}}=\bigcup_{t \in \mathbb R}\widetilde{\varphi}^t\left (\widetilde{\mathcal{F}^{u}_{\widetilde{x}}}\cap \widetilde{\mathcal{F}^{ws}_{\widetilde{y}}} \right )\]
    and the previous corollary.
\end{proof}
\begin{corollary}
\label{cor:QHausd}
    The orbit space $Q_{\widetilde{\Phi}}$ of the lifted flow $(\widetilde{\varphi}^t)_{t \in \mathbb R}$ is Hausdorff.
\end{corollary}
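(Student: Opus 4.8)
The plan is to realize the orbit space as a subspace of a manifestly Hausdorff space through the developing map, the point being that $D=(D_1,D_2)$ is constant along orbits and separates them. First I note that the quotient map $\pi\colon\widetilde M\to Q_{\widetilde{\Phi}}$ is open, as is always the case for the orbit projection of a flow (the $\widetilde{\varphi}$-saturation of an open set is open). Since $D_1$ is constant on the leaves of $\widetilde{\mathcal{F}^{wu}}$ and $D_2$ on the leaves of $\widetilde{\mathcal{F}^{ws}}$, and since every orbit is contained in both its weak unstable and its weak stable leaf, the map $D$ is constant on orbits and therefore descends to a continuous map $\overline D\colon Q_{\widetilde{\Phi}}\to\mathbf{P}^1(\mathbb C)\times\mathbf{P}^1(\mathbb C)$. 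A continuous injection into a Hausdorff space has Hausdorff domain (separate the two images, then pull back), so it suffices to prove that $\overline D$ is injective.

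Injectivity of $\overline D$ means exactly that each fiber of $D$ is a single orbit, and I would deduce this from the two preceding corollaries once the fibers of $D_1$ and of $D_2$ are known to be connected. Indeed, suppose $D(\widetilde x)=D(\widetilde y)$. If $D_1(\widetilde x)=D_1(\widetilde y)$ forces $\widetilde x$ and $\widetilde y$ onto a common leaf of $\widetilde{\mathcal{F}^{wu}}$, and $D_2(\widetilde x)=D_2(\widetilde y)$ forces them onto a common leaf of $\widetilde{\mathcal{F}^{ws}}$, then $\widetilde x$ and $\widetilde y$ both lie in the intersection of a weak unstable and a weak stable leaf, which by the second corollary above is an orbit; hence they lie on the same orbit. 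Recalling that, by the very definition of the developing map (Definition \ref{def:devmap}), the leaves of $\widetilde{\mathcal{F}^{wu}}$ are the connected components of the fibers of $D_1$, the required statement is precisely that $D_1^{-1}(a)$ is connected for every $a$, and symmetrically for $D_2$.

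For connectedness I would use the completeness recorded in Lemma \ref{lem:liftFssaff}: for every $\widetilde p$ the restriction $\restr{D_1}{\widetilde{\mathcal{F}^{s}_{\widetilde p}}}$ is a diffeomorphism of the strong stable leaf onto $\mathbf{P}^1(\mathbb C)\setminus\{w_1(\widetilde p)\}$, so the strong stable leaves are global sections of $D_1$ away from a single missing value. Fixing $a$ and setting $\sigma_a(\widetilde p):=(\restr{D_1}{\widetilde{\mathcal{F}^{s}_{\widetilde p}}})^{-1}(a)$ for all $\widetilde p$ with $w_1(\widetilde p)\neq a$, one obtains a continuous map (the strong stable foliation and these leafwise diffeomorphisms vary continuously with the base point) that takes values in $D_1^{-1}(a)$ and fixes every point of $D_1^{-1}(a)$. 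Given $\widetilde x,\widetilde y\in D_1^{-1}(a)$, a path $\gamma$ from $\widetilde x$ to $\widetilde y$ in $\widetilde M$ avoiding the locus $\{w_1=a\}$ would yield, via $\sigma_a\circ\gamma$, a path joining $\widetilde x$ to $\widetilde y$ inside $D_1^{-1}(a)$; thus $\widetilde x$ and $\widetilde y$ lie in the same component, i.e. on the same weak unstable leaf.

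The main obstacle is the existence of such a connecting path, equivalently the control of the degeneracy locus $\{w_1=a\}$ where the strong stable leaf fails to attain the value $a$. Every point of the fiber $D_1^{-1}(a)$ avoids this locus (if $D_1(\widetilde p)=a$ then $a\in D_1(\widetilde{\mathcal{F}^{s}_{\widetilde p}})$, so $w_1(\widetilde p)\neq a$), so what is needed is that $\widetilde M\setminus\{w_1=a\}$ be path-connected. I would establish this by checking that $\widetilde p\mapsto w_1(\widetilde p)$ is continuous and constant along strong stable leaves, and that $\{w_1=a\}$ has real codimension at least two in $\widetilde M$, whence its complement remains connected; this dimension count, resting again on the completeness of the affine structures, is the delicate part of the argument. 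In the two model situations one in fact expects $w_1=D_2$, so that $\{w_1=a\}=D_2^{-1}(a)$ is visibly of codimension two, but such an identity is not yet available at this stage and must be circumvented.
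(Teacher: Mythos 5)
There is a genuine gap, and in fact two distinct problems. The first is circularity with respect to the paper's architecture: your argument rests on the injectivity of the induced map $\overline D\colon Q_{\widetilde{\Phi}}\to\mathbf{P}^1(\mathbb C)\times\mathbf{P}^1(\mathbb C)$, i.e.\ on the statement that each fiber of $D$ is a single orbit. But that statement is precisely part $(ii)$ of Proposition \ref{prop:Dbundle}, which is proved \emph{after} Corollary \ref{cor:QHausd} and \emph{uses} it: in the closedness step for the set $\Lambda$ in the proof of Lemma \ref{lem:DbundleRec}$(2)(i)$, the identification $\widetilde{\Phi_1}=\widetilde{\Phi_2}$ is obtained by invoking the Hausdorffness of $Q_{\widetilde{\Phi}}$. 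At the point where the corollary is stated, all that is available is that $D$ is constant on orbits and that two lifted weak leaves of opposite type meet in at most one orbit; nothing yet controls whether a fiber of $D_1$ (resp.\ $D_2$) is a single leaf of $\widetilde{\mathcal{F}^{wu}}$ (resp.\ $\widetilde{\mathcal{F}^{ws}}$) rather than a union of several connected components.

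The second problem is that your fallback route to injectivity does not close. You reduce connectedness of $D_1^{-1}(a)$ to path-connectedness of $\widetilde M\setminus\{w_1=a\}$ and then to a codimension-two estimate for the degeneracy locus $\{w_1=a\}$, which you acknowledge you cannot establish at this stage; that estimate is essentially equivalent to the description of $D(\widetilde M)$ as the complement of a graph, i.e.\ to the content of Proposition \ref{prop:Dbundle}$(iv)$, which again comes later and depends on the corollary. (The continuity of $\sigma_a$ and of $w_1$ would also need justification, but that is a minor point by comparison.) The paper's own proof is entirely different and avoids the developing map: it is the classical dynamical argument of Barbot (\cite{barbot_caracterisation_1995}, p.~254), which separates two non-separated orbits by means of the local product structure, the uniform contraction/expansion, and the fact that the lifted flow has no periodic orbits (established here via the simple connectivity of the lifted strong leaves and the completeness of their affine structures). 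If you want a self-contained proof at this point in the paper, that is the route to take.
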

\begin{proof}
    The proof is identical to that of \cite{barbot_caracterisation_1995} p.254.
\end{proof}
\begin{lemma}
\label{lem:U_x}
    For $\widetilde{x} \in \widetilde{M}$, we denote by 
    $U_{\widetilde{x}}=\bigcup_{\widetilde{y}'\in \widetilde{\mathcal{F}^{ws}_{\widetilde{x}}}} \widetilde{\mathcal{F}^{u}_{\widetilde{y}'}}.$
    Then:
    \begin{enumerate}[label=(\roman*)]
        \item For every $\widetilde{x}$ in $\widetilde{M}$, $U_{\widetilde{x}}$ is a non-empty connected open subset of $\widetilde{M}$, stable by the action of the lifted flow $(\widetilde{\varphi}^t)_{t \in \mathbb R}$ ;
        \item If we note 
        $T_{\widetilde{x}}=\bigcup_{\widetilde{y}'\in \widetilde{\mathcal{F}^{s}}(\widetilde{x})} \widetilde{\mathcal{F}}^{u}_{\widetilde{y}'}$,
        then $\restr{D}{T_{\widetilde{x}}}: T_{\widetilde{x}} \to D(U_{\widetilde{x}})$ is a well-defined homeomorphism.
        \item There exists a countable family $(\widetilde{x_i})_{i \in \mathbb N^*}$ of points of $\widetilde{M}$ such that:
        \begin{enumerate} 
            \item $\bigcup_{i \in \mathbb N^*} U_{\widetilde{x_i}} = \widetilde{M}$ ;
            \item For every $k \in \mathbb N^*$, the open set $\Omega_k:= \bigcup_{i=1}^k U_{\widetilde{x_i}}$ is connected.
        \end{enumerate}
    \end{enumerate}
\end{lemma}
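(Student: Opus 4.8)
The plan is to record three structural facts that make the statement essentially formal, and then dispatch the three parts in turn.

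First I would observe that both $D_1$ and $D_2$ are constant along the orbits of the lifted flow: by Definition \ref{def:devmap} the fibers of $D_1$ (resp. $D_2$) have the leaves of $\widetilde{\mathcal{F}^{wu}}$ (resp. $\widetilde{\mathcal{F}^{ws}}$) as connected components, and every orbit, being tangent to $\mathbb R X \subset E^{u}\oplus \mathbb R X$ (resp. $E^{s}\oplus \mathbb R X$), lies in a single weak unstable (resp. weak stable) leaf. Hence $D$ is constant on orbits, and for the same reason $D_1$ is constant on each strong unstable leaf and $D_2$ on each strong stable leaf, since a strong leaf sits inside the corresponding weak leaf. Next, because $\widetilde{\mathcal{F}^{ws}_{\widetilde{x}}}=\bigcup_{t\in \mathbb R}\widetilde{\varphi}^t(\widetilde{\mathcal{F}^{s}_{\widetilde{x}}})$ and the flow carries strong unstable leaves to strong unstable leaves, one gets $U_{\widetilde{x}}=\bigcup_{t\in \mathbb R}\widetilde{\varphi}^t(T_{\widetilde{x}})$, i.e. $U_{\widetilde{x}}$ is exactly the flow-saturation of $T_{\widetilde{x}}$. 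Finally I would note that the strong unstable leaves constituting $T_{\widetilde{x}}$ are pairwise disjoint: if $\widetilde{\mathcal{F}^{u}_{\widetilde{a}}}$ and $\widetilde{\mathcal{F}^{u}_{\widetilde{b}}}$ (with $\widetilde{a},\widetilde{b}\in \widetilde{\mathcal{F}^{s}_{\widetilde{x}}}$) met, they would coincide, forcing $D_1(\widetilde{a})=D_1(\widetilde{b})$ and hence $\widetilde{a}=\widetilde{b}$ by injectivity of $\restr{D_1}{\widetilde{\mathcal{F}^{s}_{\widetilde{x}}}}$ from Lemma \ref{lem:liftFssaff}. Combined with the local product structure of Proposition \ref{prop:locprodstru}, which identifies $\widetilde M$ locally with a product of a strong stable plaque, a strong unstable plaque and the flow direction, this shows $T_{\widetilde{x}}$ is an embedded topological $4$-manifold meeting the flow transversally.

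For part $(i)$: $U_{\widetilde{x}}$ contains $\widetilde{x}$, so it is non-empty; it is flow-invariant because $\widetilde{\varphi}^t$ permutes $\widetilde{\mathcal{F}^{ws}_{\widetilde{x}}}$ and sends each $\widetilde{\mathcal{F}^{u}_{\widetilde{y}'}}$ to $\widetilde{\mathcal{F}^{u}_{\widetilde{\varphi}^t(\widetilde{y}')}}$. Connectedness follows from $U_{\widetilde{x}}$ being the continuous image of $\mathbb R\times T_{\widetilde{x}}$ under $(t,p)\mapsto \widetilde{\varphi}^t(p)$, once I observe that $T_{\widetilde{x}}$ is connected as a union of connected strong unstable leaves each meeting the connected spine $\widetilde{\mathcal{F}^{s}_{\widetilde{x}}}$. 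Openness is a flow-box argument: given $\widetilde{q}=\widetilde{\varphi}^{t_0}(\widetilde{p})\in U_{\widetilde{x}}$ with $\widetilde{p}\in T_{\widetilde{x}}$, the local product structure provides a codimension-one transversal disk $\Sigma\subset T_{\widetilde{x}}$ through $\widetilde{p}$, and $\widetilde{\varphi}^{(t_0-\varepsilon,\,t_0+\varepsilon)}(\Sigma)$ is then an open flow box around $\widetilde{q}$ contained in $U_{\widetilde{x}}$ by flow-invariance.

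For part $(ii)$: $\restr{D}{T_{\widetilde{x}}}$ is injective by the structural facts, since equal $D_1$-values force the two points onto the same strong stable base point (injectivity of $D_1$ on $\widetilde{\mathcal{F}^{s}_{\widetilde{x}}}$) and hence onto the same strong unstable leaf, whereupon equal $D_2$-values force equality (injectivity of $D_2$ on that leaf), both from Lemma \ref{lem:liftFssaff}. Flow-invariance of $D$ gives $D(T_{\widetilde{x}})=D(U_{\widetilde{x}})$, so the map is onto $D(U_{\widetilde{x}})$; being a continuous injection between topological $4$-manifolds, it is open, and therefore a homeomorphism, by invariance of domain. For part $(iii)$, the family $\{U_{\widetilde{x}}\}_{\widetilde{x}\in \widetilde M}$ is an open cover of the second-countable space $\widetilde M$, so the Lindelöf property yields a countable subcover $\{V_i\}_{i\in \mathbb N^*}$. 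Forming the graph on $\mathbb N^*$ with an edge $\{i,j\}$ whenever $V_i\cap V_j\neq \emptyset$, connectedness of $\widetilde M$ forces this graph to be connected (a separation of its vertices would separate $\widetilde M$); a traversal visiting every vertex so that each new one is adjacent to a previously visited one then reorders the $V_i$ so that each partial union $\Omega_k$ adjoins a connected open set meeting the previous connected union, keeping every $\Omega_k$ connected while still exhausting $\widetilde M$.

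The main obstacle I anticipate is making the ``transversal'' nature of $T_{\widetilde{x}}$ rigorous in the merely continuous category: $\widetilde{\mathcal{F}^{u}}$ and $\widetilde{\mathcal{F}^{s}}$ are only continuous foliations with smooth leaves, so $T_{\widetilde{x}}$ need not be smooth, yet both the flow-box argument in $(i)$ and the invariance-of-domain argument in $(ii)$ depend on knowing that $T_{\widetilde{x}}$ is a genuine embedded topological $4$-manifold sitting transversally to the flow. Extracting this cleanly from Proposition \ref{prop:locprodstru} together with the disjointness of its constituent strong unstable leaves is the delicate point; once it is in hand, everything else is formal.
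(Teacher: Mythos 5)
Your structural reductions (constancy of $D$ on orbits, $U_{\widetilde{x}}=\bigcup_{t}\widetilde{\varphi}^t(T_{\widetilde{x}})$, disjointness of the strong unstable leaves in $T_{\widetilde{x}}$ via Lemma \ref{lem:liftFssaff}), your injectivity/surjectivity argument in $(ii)$, and your Lindelöf-plus-graph-traversal argument in $(iii)$ are all sound; the last is in fact more elementary than the paper's, which extracts the countable cover from lifts of base points of a Markov partition. But there is a genuine gap exactly where you flag one, and it is load-bearing rather than a finishing touch: both the openness of $U_{\widetilde{x}}$ in $(i)$ (via your flow box over a ``codimension-one transversal disk $\Sigma\subset T_{\widetilde{x}}$'') and the invariance-of-domain step in $(ii)$ presuppose that $T_{\widetilde{x}}$ is an embedded topological $4$-manifold transverse to the flow. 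This is not something Proposition \ref{prop:locprodstru} hands you: a priori $T_{\widetilde{x}}$ is just a union of strong unstable leaves indexed by $\widetilde{\mathcal{F}^{s}_{\widetilde{x}}}$, and near a point $\widetilde{p}$ lying far along such a leaf from its base point $\widetilde{y}'\in\widetilde{\mathcal{F}^{s}_{\widetilde{x}}}$, you must still show that the set of nearby strong unstable plaques meeting $\widetilde{\mathcal{F}^{s}_{\widetilde{x}}}$ fills out a full product neighborhood. Making this precise requires (a) the fact that a leaf of $\widetilde{\mathcal{F}^{u}}$ meets a leaf of $\widetilde{\mathcal{F}^{ws}}$ in at most one point (so that $T_{\widetilde{x}}\cap\widetilde{\mathcal{F}^{ws}_{\widetilde{x}}}=\widetilde{\mathcal{F}^{s}_{\widetilde{x}}}$ exactly, not something larger), and (b) transporting a local product chart from $\widetilde{y}'$ back to $\widetilde{p}$ by the holonomy of the relevant foliation. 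Since you neither state nor prove these, the proof as written does not close.

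It is worth noting that the paper's own argument for openness sidesteps the manifold structure of $T_{\widetilde{x}}$ entirely: for $\widetilde{y}\in U_{\widetilde{x}}$ it takes the intersection point $\widetilde{y}'=\widetilde{\mathcal{F}^{u}_{\widetilde{y}}}\cap\widetilde{\mathcal{F}^{ws}_{\widetilde{x}}}$ and runs the holonomy of the lifted weak unstable foliation from a strong stable plaque at $\widetilde{y}$ to a strong stable plaque at $\widetilde{y}'$; the target plaque sits inside the genuine submanifold $\widetilde{\mathcal{F}^{ws}_{\widetilde{x}}}$, so every weak unstable plaque of a small foliated chart around $\widetilde{y}$ also meets $\widetilde{\mathcal{F}^{ws}_{\widetilde{x}}}$, and that whole chart lies in $U_{\widetilde{x}}$. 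No new transversal object has to be certified as a manifold. If you want to keep your flow-box and invariance-of-domain route, you should first prove the local product description of $T_{\widetilde{x}}$ sketched above (using the corollary that $\widetilde{\mathcal{F}^{u}}$-leaves meet $\widetilde{\mathcal{F}^{ws}}$-leaves in at most a point); otherwise, replace the openness step by the holonomy argument and replace invariance of domain in $(ii)$ by exhibiting the continuous inverse of $\restr{D}{T_{\widetilde{x}}}$ directly from the diffeomorphisms of Lemma \ref{lem:liftFssaff} and the continuity of the foliations.
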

\begin{proof}
    $(i):$ By definition, we can write 
    \[U_{\widetilde{x}}=\bigcup_{\widetilde{y}'\in \widetilde{\mathcal{F}^{ws}}(\widetilde{x})} \widetilde{\mathcal{F}}^{u}_{\widetilde{y}'}=\bigcup_{\widetilde{y}'\in \widetilde{\mathcal{F}^{s}}(\widetilde{x})} \widetilde{\mathcal{F}}^{wu}_{\widetilde{y}'}\]
    so $U_{\widetilde{x}}$ is connected.
    Moreover, recall that for any $ \widetilde{z} \in \widetilde{M}$ and $t \in \mathbb R$,
    \begin{align*}
        &\widetilde{\varphi_t} (\widetilde{\mathcal{F}^{u}(\widetilde{z})})=\widetilde{\mathcal{F}^{u}}(\widetilde{\varphi_t}(\widetilde{z}));\\
        &\widetilde{\mathcal{F}^{ws}}(\widetilde{z})=\bigcup_{t \in \mathbb R}\widetilde{\varphi_t} (\widetilde{\mathcal{F}^{s}}(\widetilde{z})).
        \end{align*}
    so for $ \widetilde{x} \in \widetilde{M}$ and $t \in \mathbb R$,
    \[\widetilde{\varphi_t}(U_{\widetilde{x}})= U_{\widetilde{x}}.\]
    To see that $U_{\widetilde{x}}$ is open in $\widetilde{M}$, consider $\widetilde{y} \in U_{\widetilde{x}}$ and a small open neighborhood $V$ of $\widetilde{y}$ in $\widetilde{\mathcal{F}^{s}_{\widetilde{y}}}$, seen as a small transversal to the lifted   foliation, on which the (path-independent) $\widetilde{\mathcal{F}^{u}}$-holonomy between $\widetilde{y}$ and $\widetilde{y'}=\widetilde{\mathcal{F}^{u}_{\widetilde{y}}} \cap \widetilde{\mathcal{F}^{ws}_{\widetilde{x}}}$ is well-defined. Now consider a foliated chart, for the lifted   foliation, containing $V$. We restrict this chart so that every leaf of the lifted   unstable foliation intersect $V$. 
    This gives an open neighborhood of $\widetilde{y}$ in $U_{\widetilde{x}}$.\\
    $(ii):$ $\restr{D}{T_{\widetilde{x}}}$ is well-defined. Let $\widetilde{z} \in U_{\widetilde{x}}$. By the second formula of $U_{\widetilde{x}}$, there exist $\widetilde{y}' \in \widetilde{\mathcal{F}^{s}_{\widetilde{x}}}$ and $\widetilde{z}' \in \widetilde{\mathcal{F}^{u}_{\widetilde{y}'}}$ such that $\widetilde{z}$ and $\widetilde{z}'$ belong to the same orbit of the lifted flow. Therefore, since $\widetilde{z}' \in T_{\widetilde{x}}$ and $D(\widetilde{z})=D(\widetilde{z}')$, the map is surjective. Let $\widetilde{z}', \widetilde{Z}'$ in $T_{\widetilde{x}}$ satisfy $D(\widetilde{z}')=D(\widetilde{Z}')$. Note $\widetilde{y}', \widetilde{Y}' \in \widetilde{\mathcal{F}^{s}_{\widetilde{x}}}$ such that $\widetilde{z}' \in \widetilde{\mathcal{F}^{u}_{\widetilde{y}'}}$ and $\widetilde{Z}' \in \widetilde{\mathcal{F}^{u}_{\widetilde{Y}'}}$. By definition of $D_1$ and $D_2$, we have $D_1(\widetilde{z}')=D_1(\widetilde{y}') \neq w_1(\widetilde{x})$ and $D_2(\widetilde{z}')\neq w_2(\widetilde{y}')$. In the same manner, $D_1(\widetilde{Z}')=D_1(\widetilde{Y}') \neq w_1(\widetilde{x})$ and $D_2(\widetilde{Z}')\neq w_2(\widetilde{Y}')$.
    
    By Lemma \ref{lem:liftFssaff}, since $\widetilde{y}' $ and $ \widetilde{Y}'$  both belong to $\widetilde{\mathcal{F}^{s}_{\widetilde{x}}}$ and verify $D_1(\widetilde{Y}')=D_1(\widetilde{y}')$, it comes $\widetilde{y}'= \widetilde{Y}$. Therefore, by the same Lemma, since $\widetilde{z}'$ and $\widetilde{Z}'$ both belong to $\widetilde{\mathcal{F}^{u}_{\widetilde{y}'}}$ and satisfy $D_2(\widetilde{Z}')=D_2(\widetilde{z}')$, it follows $\widetilde{Z}'=\widetilde{z}'$. It is straightforward to see that $\restr{D}{T_{\widetilde{x}}}$ is a homeomorphism. 
    
    $(iii):$ As in the proof of Theorem \ref{thm:FsTrProj}, take a Markov partition  $(V_i)_{1\leq i \leq k}$ (switch the roles of $E^{s}$ and $E^{u}$) and consider for each $i$ a point $y_i$ in the interior of $P_i$. 
    Consider, for $i \in \llbracket 1, k \rrbracket$, the (countable) set of lifts $(\widetilde{y_i^\alpha})_{\alpha \in A_i}$ of $y_i$ on $\widetilde{M}$.
    Then $(U_{\widetilde{y_i^\alpha}})_{i\in \llbracket 1, k \rrbracket, \alpha \in A_i}$ covers $\widetilde{M}$.
    Write the sequence $S:=(\widetilde{y_i^\alpha})_{i\in \llbracket 1, k \rrbracket, \alpha \in A_i}$ as $(\widetilde{z_n})_{n \in \mathbb N^*}$.
    Fix a first element $\widetilde{x_1}$ of this sequence.
    Then $\Omega_1=U_{\widetilde{x_1}}$ is connected.
    Since $\widetilde{M}$ is connected and each $U_{\widetilde{z_n}}$ is open, there exists an element $\widetilde{x_2} \in S$ such that 
    \[\Omega_1 \cap U_{\widetilde{x_2}} \neq \emptyset.\]
    Therefore, $\Omega_2=\Omega_1 \cup U_{\widetilde{x_2}}$ is connected. \\
    By induction, for every $n \geq 2$, there exists an element $\widetilde{x_{n}}$ satisfying
    \[\Omega_{n-1}\cap U_{\widetilde{x_n}} \neq \emptyset,\]
    so $\Omega_n=\Omega_{n-1} \cup U_{\widetilde{x_n}}$ is connected.
    The result is proven.
\end{proof}
From now, we note $U_i:=U_{\widetilde{x_i}}$ for $i \in \mathbb N^*$.
\begin{proof}[Proof of Proposition \ref{prop:Dbundle}]
    $(i):$ Suppose $\widetilde{z}$ is periodic for the lifted flow. By the previous lemma, let $\widetilde{x_i}$ in the above family such that $\widetilde{z} \in U_{\widetilde{x_i}}$. Let also $\widetilde{y}' \in \widetilde{\mathcal{F}^{s}_{\widetilde{x_i}}}$ and $\widetilde{z}' \in \widetilde{\mathcal{F}^{u}_{\widetilde{y}'}}$ such that $\widetilde{z}$ and $\widetilde{z}'$ belong to the same orbit of the lifted flow. As a result, there exists $t_0>0$ such that $\widetilde{\varphi}^{-t_0}(\widetilde{z}')=\widetilde{z}'$. Since
    \[d(\widetilde{\varphi}^{-kt_0}(\widetilde{y}'),\widetilde{\varphi}^{-kt_0}(\widetilde{z}')) \xrightarrow[k\to +\infty]{} 0,\]
    it comes that $D(\widetilde{y}') = D(\widetilde{z}')$ by continuity of $D$ at $\widetilde{z}'$ and the fact that elements of the same orbit for the lifted flow have the same image by $D$,
    and thus $\widetilde{y}'=\widetilde{z}'$ by Lemma \ref{lem:liftFssaff}.
    In the same manner, we prove that $\widetilde{y}'=\widetilde{x_i}$. In particular, $\widetilde{x_i}$ is periodic for the lifted flow which is absurd.\\
    We will prove the following lemma (see \cite{ghys_holomorphic_1995}, \cite{fang_rigidity_2007}):
\begin{lemma}
\label{lem:DbundleRec}
    With the notations of Lemma \ref{lem:liftFssaff}:
    \begin{enumerate}
        \item For every $\widetilde{x} \in \widetilde{M}$, the fibers of $\restr{D}{U_{\widetilde{x}}}:U_{\widetilde{x}} \to D(U_{\widetilde{x}})$ are precisely the orbits in $U_{\widetilde{x}}$ of the lifted flow.
        Also, $\restr{D}{U_{\widetilde{x}}}:U_{\widetilde{x}} \to D(U_{\widetilde{x}})$ is a trivial $\mathbb R$-fiber bundle and $D(U_{\widetilde{x}})$ is the complement in $\mathbf{P}^1(\mathbb C) \times \mathbf{P}^1(\mathbb C)$ of the union of $\{ w_1(\widetilde{x}) \}\times\mathbf{P}^1(\mathbb C)$ and the horizontal graph of a continuous map from $\mathbf{P}^1(\mathbb C)\setminus\{w_1(\widetilde{x})\}$ to $\mathbf{P}^1(\mathbb C)$.
    \item For every $k \in \mathbb N^*$:
    \begin{enumerate}[label=(\roman*)]
        \item The fibers of $\restr{D}{\Omega_k}$ are precisely the orbits in $\Omega_k$ of the lifted flow .
        \item $\Omega_k \xrightarrow{\restr{D}{\Omega_k}} D(\Omega_k)$ is a smooth $\mathbb R$-fiber bundle.
        \item $D(\Omega_k)$ is either of these two forms:
        \begin{enumerate}[label=(\arabic*)]
            \item $\mathbf{P}^1(\mathbb C)\times \mathbf{P}^1(\mathbb C) \setminus \text{gr}^H(u_k)$, where $u_k: \mathbf{P}^1(\mathbb C) \to \mathbf{P}^1(\mathbb C)$ is a continuous map;
           \item $\mathbf{P}^1(\mathbb C)\times \mathbf{P}^1(\mathbb C) \setminus \left((\{ a_k\} \times \mathbf{P}^1(\mathbb C) )\cup  
            \text{gr}^H(u_k) \right)$,\\
            where $a_k \in \mathbf{P}^1(\mathbb C)$ and $u_k: \mathbf{P}^1(\mathbb C)\setminus \{a_k\} \to \mathbf{P}^1(\mathbb C)$ is a continuous map.
        \end{enumerate}
    \end{enumerate}
    \end{enumerate}
\end{lemma}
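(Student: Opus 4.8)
The plan is to prove the two parts in order, obtaining part 2 from part 1 by induction on $k$.

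For part 1, the key observation is that $U_{\widetilde x}$ is precisely the flow-saturation of the transversal $T_{\widetilde x}$: since $\widetilde{\mathcal F}^{wu}_{\widetilde y'}=\bigcup_{t}\widetilde\varphi^t(\widetilde{\mathcal F^u_{\widetilde y'}})$, comparing the two expressions for $U_{\widetilde x}$ in Lemma \ref{lem:U_x} gives $U_{\widetilde x}=\bigcup_{t\in\mathbb R}\widetilde\varphi^t(T_{\widetilde x})$. I would first check that $T_{\widetilde x}$ meets every orbit of $U_{\widetilde x}$ exactly once: at most once because $D$ is constant along orbits while $\restr{D}{T_{\widetilde x}}$ is injective (Lemma \ref{lem:U_x}$(ii)$), and at least once by the saturation above. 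As no orbit is periodic (Proposition \ref{prop:Dbundle}$(i)$) and $T_{\widetilde x}$ is transverse to $X$ of complementary dimension, the map $(\widetilde z,t)\mapsto\widetilde\varphi^t(\widetilde z)$ on $T_{\widetilde x}\times\mathbb R$ is a bijective local diffeomorphism, hence a diffeomorphism; composing with the homeomorphism $\restr{D}{T_{\widetilde x}}$ exhibits $\restr{D}{U_{\widetilde x}}$ as a trivial $\mathbb R$-bundle with the orbits as fibers. For the shape of the image, $D_1$ is constant on $\widetilde{\mathcal F^{wu}}$ (hence on strong unstable leaves) and $D_2$ is constant on $\widetilde{\mathcal F^{ws}}$ (hence along the flow), so for $\widetilde y'\in\widetilde{\mathcal F^s}(\widetilde x)$ with $D_1(\widetilde y')=a$ the slice of $D(U_{\widetilde x})$ over $a$ is $\{a\}\times(\mathbf P^1(\mathbb C)\setminus\{w_2(\widetilde y')\})$ by Lemma \ref{lem:liftFssaff}; setting $u(a):=w_2(\widetilde y')$ and using that $a\mapsto\widetilde y'$ is the continuous inverse of $\restr{D_1}{\widetilde{\mathcal F^s}(\widetilde x)}$ identifies $D(U_{\widetilde x})$ with the complement of $\{w_1(\widetilde x)\}\times\mathbf P^1(\mathbb C)$ and of the graph of $u$. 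Continuity of $u$ comes for free: $D(U_{\widetilde x})$ is open (as $D$ is a submersion), so its complement is closed, the graph part is therefore closed, and a map into the compact space $\mathbf P^1(\mathbb C)$ with closed graph is continuous.

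For part 2 I would induct on $k$, the case $k=1$ being part 1 (form $(2)$, with $a_1=w_1(\widetilde{x_1})$). The engine is that $D$ descends to a map $\overline D$ on the orbit space $Q_{\widetilde\Phi}$ which is a local homeomorphism (a submersion with orbit fibers) into a manifold, and $Q_{\widetilde\Phi}$ is Hausdorff (Corollary \ref{cor:QHausd}). By induction $\overline D$ is injective on the image $\overline{\Omega_{k-1}}$ of $\Omega_{k-1}$, with image $A:=D(\Omega_{k-1})$; by part 1 it is injective on $\overline{U_k}$ with image $B:=D(U_k)$; both restrictions are bijective local homeomorphisms, hence homeomorphisms, so admit continuous inverses $s_1,s_2$ over $A\cap B$. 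These two local sections agree on the non-empty open set $\overline D(\overline{\Omega_{k-1}\cap U_k})$; their agreement set is closed (since $Q_{\widetilde\Phi}$ is Hausdorff) and open (two local inverses of a local homeomorphism agreeing at a point agree nearby); and $A\cap B$ is connected, being the complement in $\mathbf P^1(\mathbb C)\times\mathbf P^1(\mathbb C)$ of finitely many closed real-codimension-two sets (the graphs and vertical $\mathbf P^1(\mathbb C)$'s removed to form $A$ and $B$, of the required shape by induction and part 1). Hence $s_1=s_2$ throughout $A\cap B$, forcing $\overline D$ to be injective on $\overline{\Omega_k}=\overline{\Omega_{k-1}}\cup\overline{U_k}$; this is $(i)$. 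Statement $(ii)$ then follows because for $c\in D(U_i)$ the unique fiber over $c$ lies in $U_i$, so $\restr{D}{\Omega_k}^{-1}(D(U_i))=U_i$, and the trivial bundles of part 1 over the open cover $\{D(U_i)\}_{i\le k}$ patch into a smooth $\mathbb R$-bundle $\Omega_k\to D(\Omega_k)$.

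For $(iii)$ I would analyse $D(\Omega_k)=A\cup B$ column by column under the first projection. Over a value $a$ where both $A$ and $B$ meet the column, injectivity on the orbit space forces the unique weak unstable leaf over $a$ in $\Omega_{k-1}$ and the one in $U_k$ to coincide (they share every orbit over $a$), so their two omitted points agree; thus the graphs coincide on their common domain, glue to a single graph, and exactly one point is missing there. A column is entirely absent from $A\cup B$ only where it is absent from both $A$ and $B$, i.e. at the intersection of the (at most one each) removed vertical lines; hence at most one vertical line survives, and the two subcases — the removed lines coincide, or they are distinct (so the graph fills both gaps) — give precisely forms $(2)$ and $(1)$. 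Continuity of the resulting $u_k$ again follows from the closed-graph/compactness argument. The main obstacle is the gluing step $(i)$: controlling how the two bundle charts overlap on the orbit space. The two ingredients making the clopen argument run — Hausdorffness of $Q_{\widetilde\Phi}$ and connectedness of $A\cap B$ — are exactly the point, and it is here that the precise graph-complement shape of the images, propagated by the induction, is essential.
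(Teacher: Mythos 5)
Your proposal is correct and follows essentially the same route as the paper: part (1) via the section $T_{\widetilde{x}}$ together with Lemma \ref{lem:liftFssaff} and the closed-graph criterion for continuity, and part (2) by induction on $k$, with injectivity obtained from a connectedness-plus-Hausdorffness argument and the image identified by matching the missing points column by column. The only cosmetic difference is in step 2(i): the paper lifts a path in $D(\Omega_k)\cap D(U_{k+1})$ via the homotopy lifting property and runs the open--closed argument along $[0,1]$, whereas you run it directly on the connected open set $D(\Omega_k)\cap D(U_{k+1})$ using the two continuous sections of the induced map on the orbit space — but both arguments rest on exactly the same two ingredients (Corollary \ref{cor:QHausd} and the connectedness of the complement of finitely many closed codimension-two sets), as you yourself note.
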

\begin{proof}[Proof of Lemma \ref{lem:DbundleRec}]
    $(1):$ Let $\widetilde{z} \in U_{\widetilde{x}}$ and call, as before, $\widetilde{y}' \in \widetilde{\mathcal{F}^{s}_{\widetilde{x}}}$ and $\widetilde{z}' \in \widetilde{\mathcal{F}^{u}_{\widetilde{y}'}}$ such that $\widetilde{z}$ and $\widetilde{z}'$ belong to the same orbit of the lifted flow. We write $D(\widetilde{z}')=(z_1,z_2)$ where $z_1 =D_1(\widetilde{z}')=D_1(\widetilde{y}') \neq w_1(\widetilde{x})$ and $z_2 =D_2(\widetilde{z}')\neq w_2(\widetilde{y}')$. Then as we already said, the orbit of $\widetilde{z}$ lies in $(\restr{D}{U_{\widetilde{x}}})^{-1}(z_1,z_2)$. Conversely, if an element $\widetilde{Z}$ in $U_{\widetilde{x}}$ is such that $D(\widetilde{Z})=(z_1,z_2)$, then if we note $\widetilde{Y}' \in \widetilde{\mathcal{F}^{s}_{\widetilde{x}}}$ and $\widetilde{Z}' \in \widetilde{\mathcal{F}^{u}_{\widetilde{Y}'}}$ such that $\widetilde{Z}$ and $\widetilde{Z}'$ belong to the same orbit of the lifted flow, it comes that $\widetilde{Y}'=\widetilde{y}'$ and $\widetilde{Z}'=\widetilde{z}'$
    since $D(\widetilde{Z}')=D(\widetilde{z}')$. Thus $\widetilde{Z}$ belongs to the orbit of $\widetilde{z}'$.\\
    Now, the map
    \[\left \{\begin{array}{ccl}
        D(U_{\widetilde{x}}) \times \mathbb R & \to & U_{\widetilde{x}}   \\
          ((z_1, z_2), t) & \mapsto & \widetilde{\varphi}^t\left ((\restr{D}{T_{\widetilde{x}}})^{-1}(z_1,z_2) \right )
    \end{array}
     \right. \]
    is well defined, surjective and injective by $(i)$ of Proposition $\ref{prop:Dbundle}$.
    It is straightforward to see that it is an homeomorphism.
    We know look at the set $D(U_{\widetilde{x}})$.
    We know by Lemma \ref{lem:liftFssaff} that $\restr{D_1}{\widetilde{\mathcal{F}^{s}}_{\widetilde{x}}}$ is a diffeomorphism from $\widetilde{\mathcal{F}^{s}}_{\widetilde{x}}$ to $\mathbf{P}^1(\mathbb C)\setminus \{w_1(\widetilde{x})\}$. Therefore, in order to enlighten notation, we will identify $y\in \mathbf{P}^1(\mathbb C)\setminus \{w_1(\widetilde{x})\}$ with $\widetilde{y}=(\restr{D_1}{\widetilde{\mathcal{F}^{s}}_{\widetilde{x}}})^{-1}(y) \in \widetilde{\mathcal{F}^{s}}_{\widetilde{x}}$.
    Again, $\restr{D_2}{\widetilde{\mathcal{F}^{u}_{y}}}$ is a diffeomorphism from $\widetilde{\mathcal{F}^{u}_{y}}$ to $\mathbf{P}^1(\mathbb C)\setminus \{w_2(y)\}$.
    Therefore, if we call $a=w_1(\widetilde{x})$, it is straightforward to see that 
     \[D(U_{\widetilde{x}})= \left (\mathbf{P}^1(\mathbb C)\setminus \{a\} \times \mathbf{P}^1(\mathbb C)\right )\setminus \text{gr}^H(w_2).\]
    By the following elementary fact, since $U_{\widetilde{x}}$ is open and $D$ is a submersion thus an open map, $w_2$ is continuous, and the proof of $(1)$ is complete because 
    \[\left (\mathbf{P}^1(\mathbb C)\setminus \{a\} \times \mathbf{P}^1(\mathbb C)\right )\setminus \text{gr}^H(w_2)=\left (\mathbf{P}^1(\mathbb C) \times \mathbf{P}^1(\mathbb C)\right )\setminus \left ( (\{a\} \times \mathbf{P}^1(\mathbb C))\cup \text{gr}^H(w_2) \right ).\]
    \begin{lemma}
        Let $X,Y$ metric spaces such that $Y$ is compact, and a map $f:X \to Y$.
        Then: $f$ is continuous on $X$ if and only if $\text{gr}^H(f)$ is a closed subset of $X \times Y$.
    \end{lemma}
    \noindent$(2):$ We prove the result by induction on $k \in \mathbb N^*$. The case $k=1$ has already been established in $(1)$. 
    Assume $(i), (ii)$ and $(iii)$ true for a fixed $k \in \mathbb N^*$.
    First we prove $(i)$. $(ii)$ will be immediate. And we will eventually prove $(iii)$.\\
    $(i):$ Again, $D$ is constant on each orbit of the lifted flow. Let $\widetilde{x}, \widetilde{y} \in \Omega_{k+1}$ and assume $D(\widetilde{x})=D(\widetilde{y})$. The goal is to prove that $\widetilde{x}$ and $\widetilde{y}$ belong to the same orbit of the lifted flow. By $(1)$ and the inductive hypothesis, we can assume $\widetilde{x} \in \Omega_k$ and $\widetilde{y} \in U_{k+1}$. By construction,
    $\Omega_k \cap U_{k+1}$ is not empty.
    We start by the following fact:
    \begin{lemma}
    Let $M$ a smooth connected manifold of dimension $n \geq 2$.
        \begin{enumerate}[label=(\roman*)]
            \item If $N$ is a smooth submanifold of $M$ of codimension $q \geq 2$, then $M\setminus N$ is connected.
            \item If $N_1, \cdots , N_k$ are smooth submanifolds of $M$, each closed and of codimension greater or equal than $2$, then $M\setminus (\bigcup_{i=1}^k N_i)$ is connected.
        \end{enumerate}
    \end{lemma}
    Here, by the induction hypothesis $(iii)$, $D(\Omega_k)$ is the complement in $\mathbf{P}^1(\mathbb C) \times \mathbf{P}^1(\mathbb C)$ of the graph of a continuous map $u_k: \mathbf{P}^1(\mathbb C) \to \mathbf{P}^1(\mathbb C) $ or the complement in $\mathbf{P}^1(\mathbb C) \times \mathbf{P}^1(\mathbb C)$ of the union of a vertical $\{a_k\} \times \mathbf{P}^1(\mathbb C) $ and the graph of a continuous map $u_k: \mathbf{P}^1(\mathbb C)\setminus\{a_k\} \to \mathbf{P}^1(\mathbb C)$. Also, by $(1)$, $D(U_{k+1})$ is the complement in $\mathbf{P}^1(\mathbb C) \times \mathbf{P}^1(\mathbb C)$ of the union of a vertical $\{a'_{k+1}\} \times \mathbf{P}^1(\mathbb C) $ and the graph of a continuous map $u'_{k+1}: \mathbf{P}^1(\mathbb C)\setminus\{a'_{k+1}\} \to \mathbf{P}^1(\mathbb C)$. 
    Therefore, $D(\Omega_k)\cap D(U_{k+1})$ is the complement in $\mathbf{P}^1(\mathbb C) \times \mathbf{P}^1(\mathbb C)$ of a finite union of smooth closed submanifolds of $\mathbf{P}^1(\mathbb C)\times \mathbf{P}^1(\mathbb C)$ of codimension $2$. 
    By the previous fact, $D(\Omega_k)\cap D(U_{k+1})$ is a (non-empty) connected open subset of $\mathbf{P}^1(\mathbb C) \times \mathbf{P}^1(\mathbb C)$. It is thus path-connected.
    As a result, let $\widetilde{z} \in \Omega_k \cap U_{k+1}$ and $\gamma$ a continuous curve in $D(\Omega_k)\cap D(U_{k+1})$ which starts at $D(\widetilde{z})$ and ends at $D(\widetilde{x})=D(\widetilde{y})$.
    Since $\restr{D}{\Omega_k}: \Omega_k \to D(\Omega_k)$ and $\restr{D}{U_{k+1}}: U_{k+1} \to D(U_{k+1})$ are both fiber bundles by the induction hypothesis $(ii)$ and $(1)$, they satisfy the homotopy lifting property. There exist therefore a continuous curve $\widetilde{\gamma}_1$ in $\Omega_k$ which starts at $\widetilde{z}$ and a continuous curve $\widetilde{\gamma}_2$ in $U_{k+1}$ which starts at $\widetilde{z}$ such that $D \circ \widetilde{\gamma}_1=D \circ \widetilde{\gamma}_2= \gamma.$
    Since $D(\widetilde{\gamma}_1(1))=\gamma(1)=D(\widetilde{x})$ and both $\widetilde{\gamma}_1(1) $ and $\widetilde{x}$ are in $\Omega_k$, then by the induction hypothesis $(ii)$ , $\widetilde{\gamma}_1(1)$ and $\widetilde{x}$ belong to the same orbit of the lifted flow.
    By $(1)$, the same is true for $\widetilde{\gamma}_2(1)$ and $\widetilde{y}$ since they both live in $U_{k+1}$ and have the same image by $D$.
   We therefore want to show that $\widetilde{\gamma}_1(1)$ and $\widetilde{\gamma}_2(1)$ belong to the same orbit of the lifted flow.\\
   Let $\Lambda=\{t\in [0,1], \; \widetilde{\gamma}_1(t) \text{ and } \widetilde{\gamma}_2(t) \text{ belong to the same orbit of the lifted flow}\}.$
   It is not empty because $\widetilde{\gamma}_1(0)=\widetilde{z}=\widetilde{\gamma}_2(0)$.
   Let $t_0 \in \Lambda$, i.e. $\widetilde{\gamma}_1(t_0)$ and $\widetilde{\gamma}_2(t_0)$ belong to the same orbit of the lifted flow. By the invariance of  $U_{k+1}$ by the action of the lifted flow, and because $\widetilde{\gamma}_2(t_0) \in U_{k+1}$, it follows that $\widetilde{\gamma}_1(t_0) \in U_{k+1}$.
   Since $U_{k+1}$ is open and each curve $\widetilde{\gamma}_1$ and $\widetilde{\gamma}_2$ is continuous, there exists a small $\epsilon>0$ such that 
       $\widetilde{\gamma}_1 \left([t_0-\epsilon, t_0 +\epsilon] \cap [0,1] \right)\subset U_{k+1}$ and $\widetilde{\gamma}_2 \left([t_0-\epsilon, t_0 +\epsilon] \cap [0,1] \right) \subset U_{k+1}$
   Therefore, for $t \in [t_0-\epsilon, t_0 +\epsilon] \cap [0,1]$,
       $D(\widetilde{\gamma}_1(t))= \gamma(t) = D(\widetilde{\gamma}_2(t))$ and 
       $\widetilde{\gamma}_1(t), \widetilde{\gamma}_2(t) \in U_{k+1}$,
    thus $\widetilde{\gamma}_1(t)$ and $\widetilde{\gamma}_2(t)$ belong to the same orbit of the lifted flow by $(1)$. This shows that $\Lambda$ is open in $[0,1]$.\\
    Let $(t_n)_{n\in \mathbb N}$ a sequence of $\Lambda$ converging to $t^* \in [0,1]$. Denote by $\widetilde{\Phi_1}$ the lifted flow orbit of $\widetilde{\gamma}_1(t^*)$ and by $\widetilde{\Phi_2}$ that of $\widetilde{\gamma}_2(t^*)$.
    Let $O_1$ and $O_2$ $\widetilde{\Phi}$-saturated open neighborhoods of $\widetilde{\Phi_1}$ and $\widetilde{\Phi_2}$ respectively.
    For $n \in \mathbb N$ large enough,  $\widetilde{\gamma}_1(t_n) \in O_1$ and $\widetilde{\gamma}_2(t_n) \in O_2$. Since $\widetilde{\gamma}_1(t_n)$ and $\widetilde{\gamma}_2(t_n)$ belong to the same orbit of the lifted flow, their orbit belong to $O_1$ and $O_2$.
    Therefore, by corollary \ref{cor:QHausd}, $\widetilde{\Phi_1} =\widetilde{\Phi_2}$, i.e. $\widetilde{\gamma}_1(t^*)$ and $\widetilde{\gamma}_2(t^*)$ belong to the same orbit of the lifted flow.\\
    By connectedness of $[0,1]$, $\Lambda=[0,1]$ so $\widetilde{\gamma}_1(1)$ and $\widetilde{\gamma}_2(1)$ belong to the same orbit of the lifted flow, which ends the proof of $(i)$ for the inductive step $k+1$.\\
    We now prove $(ii)$ for the inductive step $k+1$. We want to prove that any point $z$ of $D(\Omega_{k+1})=D(\Omega_k) \cup D(U_{k+1})$ is included in an open subset $U$ of $D(\Omega_{k+1})$ such that 
    $\Omega_{k+1}\cap D^{-1}(U)\xrightarrow[]{D} U$
    is isomorphic to 
    $ U\times \mathbb R\xrightarrow[]{\text{pr}_1} U.$
    If $z \in D(\Omega_k)$, then by the inductive hypothesis $(ii)$, there exists an open neighborhood $U$ of $z$ in $D(\Omega_{k})$ (and thus in $D(\Omega_{k+1})$ since $D(\Omega_{k})$ is an open subset of $D(\Omega_{k+1})$) such that 
    $\Omega_{k}\cap D^{-1}(U)\xrightarrow[]{D} U$
    is isomorphic to 
    $U\times \mathbb R\xrightarrow[]{\text{pr}_1} U.$
    We prove in fact that $\Omega_{k}\cap D^{-1}(U)=\Omega_{k+1}\cap D^{-1}(U)$. The first inclusion is immediate. If $\widetilde{x}\in U_{k+1}\cap D^{-1}(U)$, then there exists $\widetilde{y}\in \Omega_k$ such that $D(\widetilde{x})=D(\widetilde{y})$. By $(i)$ of the inductive step $k+1$, $\widetilde{x}$ and $\widetilde{y}$ belong to the same orbit of the lifted flow. In particular, since $\Omega_k$ is invariant by the action of the lifted flow, it follows that $\widetilde{x} \in \Omega_k$.\\
    In the same manner, if $z \in D(U_{k+1})$, by $(1)$, there exists an open neighborhood $U$ of $z$ in $D(U_{k+1})$ (and thus in $D(\Omega_{k+1})$) such that 
    $U_{k+1}\cap D^{-1}(U)\xrightarrow[]{D} U$ is isomorphic to 
    $U\times \mathbb R\xrightarrow[]{\text{pr}_1} U.$
    Again it appears that $U_{k+1}\cap D^{-1}(U)=\Omega_{k+1}\cap D^{-1}(U)$ by the same reasoning as the previous one.
    This proves $(ii)$ of the inductive step $k+1$.\\
    We eventually prove $(iii)$ of the inductive step $k+1$. First, by the inductive hypothesis $(iii)$, assume that $D(\Omega_k)$ is the complement in $\mathbf{P}^1(\mathbb C) \times \mathbf{P}^1(\mathbb C)$ of the graph of a continuous map $u_k: \mathbf{P}^1(\mathbb C) \to \mathbf{P}^1(\mathbb C)$.
    \begin{lemma}
        For any $p\neq a'_{k+1}$, $u'_{k+1}(p)=u_k(p)$.
    \end{lemma}
    \begin{proof}
        Suppose there exists $p_0\neq a'_{k+1}$ such that $u'_{k+1}(p_0)\neq u_k(p_0)$. Then 
        \[(p_0, u'_{k+1}(p_0))\in (\mathbf{P}^1(\mathbb C)\times \mathbf{P}^1(\mathbb C)) \setminus \text{gr}^H(u_k)=D(\Omega_k).\]
        Therefore, there exists $i_0\in\llbracket 1, k \rrbracket$ and $\widetilde{y_1} \in \widetilde{\mathcal{F}^{s}_{\widetilde{x_{i_0}}}}$ such that $(p_0,u'_{k+1}(p_0)) \in D(\widetilde{\mathcal{F}^{u}_{\widetilde{y_1}}})$. We also know that the set
        $D(\widetilde{\mathcal{F}^{u}_{\widetilde{y_1}}})$ is of the form $\{b\}\times (\mathbf{P}^1(\mathbb C) \setminus \{c\})$. So since it is included in $D(\Omega_k)=(\mathbf{P}^1(\mathbb C)\times \mathbf{P}^1(\mathbb C)) \setminus \text{gr}^H(u_k)$ and $b=p_0$, then necessarily $c=u_k(p_0)$.
        Also, by recalling the definition of $a'_{k+1} \in \mathbf{P}^1(\mathbb C)$ and $u'_{k+1}: \mathbf{P}^1(\mathbb C)\setminus \{a'_{k+1}\} \to \mathbf{P}^1(\mathbb C)$ in the proof of $(1)$, it comes
        \[\restr{D_1}{\widetilde{\mathcal{F}^{s}_{\widetilde{x_{k+1}}}}}: \widetilde{\mathcal{F}^{s}_{\widetilde{x_{k+1}}}} \to \mathbf{P}^1(\mathbb C) \setminus \{a'_{k+1}\}\]
        is a diffeomorphism so if we note $\widetilde{y}_2 = (\restr{D_1}{\widetilde{\mathcal{F}^{s}_{\widetilde{x_{k+1}}}}})^{-1}(p_0) \in \widetilde{\mathcal{F}^{s}_{\widetilde{x_{k+1}}}}$, then
        \[\restr{D}{\widetilde{\mathcal{F}^{u}_{\widetilde{y_2}}}}: \widetilde{\mathcal{F}^{u}_{\widetilde{y_2}}} \to \{p_0\} \times (\mathbf{P}^1(\mathbb C) \setminus \{u'_{k+1}(p_0)\})\]
        is a diffeomorphism. As a result,
        \[D(\widetilde{\mathcal{F}^{u}_{\widetilde{y_1}}})\cap D(\widetilde{\mathcal{F}^{u}_{\widetilde{y_2}}})=\left (\{p_0\}\times (\mathbf{P}^1(\mathbb C) \setminus \{u_k(p_0)\}) \right) \cap \left (\{p_0\}\times (\mathbf{P}^1(\mathbb C) \setminus \{u'_{k+1}(p_0)\}) \right )\neq \emptyset.\]
        Let therefore $\widetilde{x}_1 \in \widetilde{\mathcal{F}^{u}_{\widetilde{y_1}}} \subset \Omega_k$ and $\widetilde{x}_2 \in \widetilde{\mathcal{F}^{u}_{\widetilde{y_2}}} \subset U_{k+1}$ such that
        $D(\widetilde{x_1})=D(\widetilde{x_2}).$
        By $(i)$ of the inductive step $k+1$, $\widetilde{x_1}$ and $\widetilde{x_2}$ belong to the same orbit of the lifted flow, that is there exists $t_0 \in \mathbb R$ such that $\widetilde{\varphi}^{t_0}(\widetilde{x_2})=\widetilde{x_1}$.
        Eventually,
        \begin{align*}
            \{p_0\}\times (\mathbf{P}^1(\mathbb C) \setminus \{u'_{k+1}(p_0)\})&=D(\widetilde{\mathcal{F}^{u}_{\widetilde{y_2}}})=D(\widetilde{\mathcal{F}^{u}_{\widetilde{x_2}}})=D(\widetilde{\varphi}^{t_0}(\widetilde{\mathcal{F}^{u}_{\widetilde{x_2}}}))=D(\widetilde{\mathcal{F}^{u}_{\widetilde{\varphi}^{t_0}(\widetilde{x_2})}})\\
            &=D(\widetilde{\mathcal{F}^{u}_{\widetilde{x_1}}})=D(\widetilde{\mathcal{F}^{u}_{\widetilde{y_1}}})
            =\{p_0\}\times (\mathbf{P}^1(\mathbb C) \setminus \{u_k(p_0)\})
        \end{align*}
        which is absurd since $u'_{k+1}(p_0)\neq u_k(p_0)$.
    \end{proof}
    \begin{proof}[End of proof of Lemma \ref{lem:DbundleRec}]
    To conclude the first case of $(iii)$ of the inductive step $k+1$, we thus just write
    \begin{align*}
       (\mathbf{P}^1(\mathbb C)\times \mathbf{P}^1(\mathbb C)) \setminus \text{gr}^H(u_k)\supset (\mathbf{P}^1(\mathbb C)\times \mathbf{P}^1(\mathbb C)) \setminus \left((\{ a'_{k+1}\} \times \mathbf{P}^1(\mathbb C) )\cup  
            \text{gr}^H(u'_{k+1}) \right)
    \end{align*}
    which is immediate by the previous claim and by considering the complements. Therefore
    \[D(\Omega_{k+1})=D(\Omega_{k})\cup D(U_{k+1})=D(\Omega_{k})=(\mathbf{P}^1(\mathbb C)\times \mathbf{P}^1(\mathbb C)) \setminus \text{gr}^H(u_k).\]
    Eventually, by the inductive hypothesis $(iii)$, assume that $D(\Omega_k)$ is the complement in $\mathbf{P}^1(\mathbb C) \times \mathbf{P}^1(\mathbb C)$ of the union of a vertical $\{a_k\} \times \mathbf{P}^1(\mathbb C) $ and the graph of a continuous map $u_k: \mathbf{P}^1(\mathbb C)\setminus\{a_k\} \to \mathbf{P}^1(\mathbb C)$. 
    By the exact same reasoning as the one in the proof of the previous claim, it comes that for any $p \in \mathbf{P}^1(\mathbb C)$ different from $a_k$ and $a'_{k+1}$, $u'_{k+1}(p)=u_k(p)$.\\
    If $a_k \neq a'_{k+1}$, then, by what we have just said, we can define the map $u: \mathbf{P}^1(\mathbb C) \to \mathbf{P}^1(\mathbb C)$ by $u(p)=u_k(p)$ if $p \neq a_k$ and $u(p)=u'_{k+1}(p)$ if $p \neq a'_{k+1}$. $u$ is continuous since $u_k$ and $u'_{k+1}$ are.
    It is straightforward to verify, by considering the complements, that
    $D(\Omega_{k+1})= (\mathbf{P}^1(\mathbb C)\times \mathbf{P}^1(\mathbb C)) \setminus \text{gr}^H(u).$\\
    If $a_k = a'_{k+1}$, then $u_k=u'_{k+1}$ so 
    $D(\Omega_{k+1})=D(\Omega_{k})=(\mathbf{P}^1(\mathbb C)\times \mathbf{P}^1(\mathbb C)) \setminus \left((\{ a_{k}\} \times \mathbf{P}^1(\mathbb C) )\cup  
            \text{gr}^H(u_{k}) \right),$
    which concludes the proof of Lemma \ref{lem:DbundleRec}
\end{proof} 
    We are now ready to prove $(ii), (iii), (iv)$ of Proposition \ref{prop:Dbundle}.\\
    $(ii):$ Let $\widetilde{x}, \widetilde{y} \in \widetilde{M}$. Then there exists $k\in \mathbb N^*$ large enough so that $\widetilde{x}, \widetilde{y} \in \Omega_k$. By $(2,i)$ of the previous lemma, $\widetilde{x}$ and $\widetilde{y}$ belong to the same orbit of the lifted flow. Again, the converse is straightforward.\\
    $(iii):$ As in the proof of Lemma \ref{lem:DbundleRec} $(2,ii)$, we want to prove that any point $z$ of $D(M)=\bigcup_{k \in \mathbb N^*} D(\Omega_k)$ is included in an open subset $U$ of $D(M)$ such that $D^{-1}(U)\xrightarrow[]{D} U $ is isomorphic to 
    $U\times \mathbb R\xrightarrow[]{\text{pr}_1} U.$
    If $z \in D(\Omega_{k_0})$ for some $k_0 \in \mathbb N^*$, then by $(2,ii)$ of that lemma, there exists an open neighborhood $U$ of $z$ in $D(\Omega_{k_0})$ (and thus in $D(M)$ since $D(\Omega_{k_0})$ is an open subset of $D(M)$) such that 
    $\Omega_{k_0}\cap D^{-1}(U)\xrightarrow[]{D} U$ is isomorphic to 
    $U\times \mathbb R\xrightarrow[]{\text{pr}_1} U.$
    In fact $\Omega_{k_0}\cap D^{-1}(U)= D^{-1}(U)$ by the same argument as before, which concludes.\\
    $(iv):$ The proof in $(iii)$ of the inductive step in fact shows the following.
    For $k \in \mathbb N^*$, if $D(\Omega_k)$ is the complement in $\mathbf{P}^1(\mathbb C) \times \mathbf{P}^1(\mathbb C)$ of the graph of a continuous map $u_k: \mathbf{P}^1(\mathbb C) \to \mathbf{P}^1(\mathbb C)$, then $D(\Omega_{k+1})=D(\Omega_k)$. If $D(\Omega_k)$ is the complement in $\mathbf{P}^1(\mathbb C) \times \mathbf{P}^1(\mathbb C)$ of the union of a vertical $\{a_k\} \times \mathbf{P}^1(\mathbb C) $ and the graph of a continuous map $u_k: \mathbf{P}^1(\mathbb C)\setminus\{a_k\} \to \mathbf{P}^1(\mathbb C)$, then knowing that $D(U_{k+1})$ is the complement in $\mathbf{P}^1(\mathbb C) \times \mathbf{P}^1(\mathbb C)$ of the union of a vertical $\{a'_{k+1}\} \times \mathbf{P}^1(\mathbb C) $ and the graph of a continuous map $u'_{k+1}: \mathbf{P}^1(\mathbb C)\setminus\{a'_{k+1}\} \to \mathbf{P}^1(\mathbb C)$, it comes:
    \begin{enumerate}
        \item either $u_k = u'_{k+1}$, and therefore $D(\Omega_{k+1})=D(\Omega_k)$ ;
        \item either $u_k \neq u'_{k+1}$, and there exists a continuous map $u:\mathbf{P}^1(\mathbb C)\to \mathbf{P}^1(\mathbb C)$ such that 
        \[D(\Omega_{k+1})= (\mathbf{P}^1(\mathbb C)\times \mathbf{P}^1(\mathbb C)) \setminus \text{gr}^H(u).\]
    \end{enumerate}
    As a result, if we note for $i \in \mathbb N^*$, $D(U_i)$ as the complement in $\mathbf{P}^1(\mathbb C) \times \mathbf{P}^1(\mathbb C)$ of the union of a vertical $\{a'_{i}\} \times \mathbf{P}^1(\mathbb C) $ and the graph of a continuous map $u'_{i}: \mathbf{P}^1(\mathbb C)\setminus\{a'_{i}\} \to \mathbf{P}^1(\mathbb C)$, then as the induction starts by $D(U_1)$, it follows:
    \begin{enumerate}
        \item either for all $i \in \mathbb N^*$, $u'_i=u'_1$, thus $D(\widetilde{M})=D(U_1)$ is the complement in $\mathbf{P}^1(\mathbb C) \times \mathbf{P}^1(\mathbb C)$ of the union of $\{a\} \times \mathbf{P}^1(\mathbb C) $ and the graph of the continuous map $u': \mathbf{P}^1(\mathbb C)\setminus\{a\} \to \mathbf{P}^1(\mathbb C)$, where $a:=a'_1$ and $u':=u'_1$ ;
        \item or there exists $k_0\geq 2$ (taken minimal) such that $u'_{k_0}\neq u'_1$, thus if we note $u:\mathbf{P}^1(\mathbb C) \to \mathbf{P}^1(\mathbb C) $ the continuous map extending $u'_{k_0}$ and $u'_{1}$ (see the proof of $(iii)$ of the inductive step), then $D(\widetilde{M})$ is the complement in $\mathbf{P}^1(\mathbb C) \times \mathbf{P}^1(\mathbb C)$ of the horizontal graph of $u$.
    \end{enumerate}
    Moreover, we can go over Lemma \ref{lem:U_x} and exchange the role of the strong unstable leaves by the strong stable to come to the analog conclusion we've just given:
     \begin{enumerate}[label=(\arabic*)']
        \item either $D(\widetilde{M})$ is the complement in $\mathbf{P}^1(\mathbb C) \times \mathbf{P}^1(\mathbb C)$ of the union of $\mathbf{P}^1(\mathbb C) \times \{b\}$ and the graph of a continuous map $w': \mathbf{P}^1(\mathbb C)\setminus\{b\} \to \mathbf{P}^1(\mathbb C)$ ;
        \item or $D(\widetilde{M})$ is the complement in $\mathbf{P}^1(\mathbb C) \times \mathbf{P}^1(\mathbb C)$ of the vertical graph of a continuous map $w:\mathbf{P}^1(\mathbb C) \to \mathbf{P}^1(\mathbb C) $.
    \end{enumerate}
    By combining these possibilities, the only options left are the one given by $(iii)$ of Proposition \ref{prop:Dbundle}.
    This concludes the proof of Proposition \ref{prop:Dbundle}.
\end{proof}
\end{proof}
We are now able to start the proof of Theorem \ref{thm:classif}.
\begin{proof}[Proof of Theorem \ref{thm:Fstrholo}]
    First assume that there exists $a,b \in \mathbf{P}^1(\mathbb C)$ such that
    $D(\widetilde{M})=(\mathbf{P}^1(\mathbb C) \setminus \{a\}) \times (\mathbf{P}^1(\mathbb C)  \setminus \{b\}).$
    By post-composing $D$ by a certain couple of Möbius transformations, we can assume
    $D(\widetilde{M})=\mathbb C \times \mathbb C.$
    Recall also that for $\gamma \in \pi_1(M)$ and $\widetilde{x} \in \widetilde{M}$, 
    $D(\gamma \cdot \widetilde{x})= H(\gamma)D(\widetilde{x}).$
    As a result, for $\gamma \in \pi_1(M)$, the Möbius transformations $H_1(\gamma)$ and $H_2(\gamma)$ both sends $\mathbb C$ to $\mathbb C$. Thus they must be affine maps of $\mathbb C$. 
    Therefore, by the following lemma, the weak unstable foliation $\mathcal{F}^u$ of our transversely holomorphic Anosov flow $(\varphi^t)_t$ admits a transverse $(\text{Aff}(\mathbb R^4), \mathbb R^4)$-structure. 
    \begin{lemma}
    \label{lem:trG'X'stru}
       Let $X$ a smooth manifold on which acts analytically a group $G$. Let also $X'$ an open subset of $X$ and $G'$ a subgroup of $G$.
       Suppose $\mathcal{F}$ is a foliation, on a smooth manifold $M$, with a transverse $(G,X)$-structure. Suppose also that there exists a developing map $D: \widetilde{M} \to X$ and a corresponding holonomy representation $H: \pi_1(M) \to G$ for this transverse $(G,X)$-structure which satisfy
       \[D(\widetilde{M}) \subset X' \quad \text{ and } \quad H(\pi_1(M)) \subset G'.\]
       Then $\mathcal{F}$ admits a transverse $(G', X')$-structure, compatible with the initial transverse $(G,X)$-structure. Moreover, $D: \widetilde{M} \to X'$ is a developing map and $H$ is the corresponding holonomy representation $H: \pi_1(M) \to G'$ for this transverse $(G', X')$-structure .
    \end{lemma}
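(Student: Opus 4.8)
The plan is to reconstruct the transverse structure directly from the developing map $D$ and to observe that the hypotheses $D(\widetilde M)\subset X'$ and $H(\pi_1(M))\subset G'$ force every chart and every transition map produced this way to already be adapted to the pair $(G',X')$. Throughout I write $p\colon\widetilde M\to M$ for the universal covering, and I will lean on the correspondence of Definition \ref{def:devmap} between a transverse $(G,X)$-structure and a pair $(D,H)$.

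First I would build a foliated atlas of submersions out of $D$. Cover $M$ by connected open sets $(U_\alpha)_\alpha$ that are evenly covered by $p$ and small enough to be foliated charts for $\mathcal F$; for each $\alpha$ choose a single sheet $\widetilde U_\alpha\subset p^{-1}(U_\alpha)$ and set $s_\alpha:=D\circ(p|_{\widetilde U_\alpha})^{-1}\colon U_\alpha\to X$. Because $D$ is a submersion whose connected fibres are the leaves of the lifted foliation $\widetilde{\mathcal F}$ (property (1) of Definition \ref{def:devmap}) and $p$ carries $\widetilde{\mathcal F}$ to $\mathcal F$, each $s_\alpha$ is a submersion whose connected fibres are exactly the plaques of $\mathcal F$ in $U_\alpha$; thus $(U_\alpha,s_\alpha)_\alpha$ is a foliated atlas of submersions compatible with $\mathcal F$. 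Moreover $s_\alpha(U_\alpha)\subset D(\widetilde M)\subset X'$ by hypothesis, so every chart already takes values in $X'$.

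Next I would identify the transition maps with holonomy elements. On a connected component of a nonempty overlap $U_\alpha\cap U_\beta$, the two local sheets $(p|_{\widetilde U_\alpha})^{-1}$ and $(p|_{\widetilde U_\beta})^{-1}$ are two sheets of $p^{-1}(U_\alpha\cap U_\beta)$, hence differ there by a unique deck transformation $\gamma_{\alpha\beta}\in\pi_1(M)$, i.e. $(p|_{\widetilde U_\alpha})^{-1}=\gamma_{\alpha\beta}\cdot(p|_{\widetilde U_\beta})^{-1}$. Applying $D$ and the equivariance relation $D(\gamma\cdot\widetilde x)=H(\gamma)D(\widetilde x)$ (property (2) of Definition \ref{def:devmap}) gives
\[
s_\alpha=D\circ\bigl(\gamma_{\alpha\beta}\cdot(p|_{\widetilde U_\beta})^{-1}\bigr)=H(\gamma_{\alpha\beta})\cdot s_\beta
\]
on $U_\alpha\cap U_\beta$, so the transition map is $H(\gamma_{\alpha\beta})\in H(\pi_1(M))\subset G'$. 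Since both $s_\alpha$ and $s_\beta$ take values in $X'$, the cocycle $(H(\gamma_{\alpha\beta}))$ maps $X'$-values to $X'$-values, and $(U_\alpha,s_\alpha)_\alpha$ is therefore a transverse $(G',X')$-structure; as its transition maps lie in $G'\subset G$ and its submersions are the local factorizations of the same developing map, it is compatible with the original $(G,X)$-structure. Finally, the "moreover" part is immediate: the developing map and holonomy representation attached to the atlas just constructed are, by construction, $D\colon\widetilde M\to X'$ and $H\colon\pi_1(M)\to G'$ themselves, and properties (1)–(2) of Definition \ref{def:devmap} are exactly the relations already in hand.

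The only genuinely delicate point is the bookkeeping in the second step: checking that the sheet-change over an overlap is a single deck transformation on each connected component, so that $\gamma_{\alpha\beta}$ is well defined, and that the resulting cocycle is consistent on triple overlaps. The latter follows from $\gamma_{\alpha\gamma}=\gamma_{\alpha\beta}\gamma_{\beta\gamma}$ (obtained by composing the two sheet-change relations) together with the fact that $H$ is a homomorphism, giving $H(\gamma_{\alpha\gamma})=H(\gamma_{\alpha\beta})H(\gamma_{\beta\gamma})$. Everything else is formal once the correspondence of Definition \ref{def:devmap} is taken for granted.
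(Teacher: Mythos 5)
Your proof is correct, but it inverts the paper's construction. The paper starts from the atlas of submersions $(U_i,s_i)$ that was used to define $D$ in the first place, and argues that this very atlas is already a transverse $(G',X')$-atlas: the transition maps are values of $H$, hence in $G'$, and $s_i(x)=(g_0\cdots g_{i-1})^{-1}\cdot D(\tilde\alpha)$ lies in $X'$. Compatibility is then automatic since no new atlas is introduced. You instead rebuild an atlas from scratch out of $D$ and local sections of the universal covering, identify the transition cocycle with $H$ applied to sheet-changing deck transformations, and only then check compatibility with the original structure. The two routes use the same two facts (transition maps are holonomy values; chart images are controlled by $D(\widetilde M)$), but yours has one small advantage: you get $s_\alpha(U_\alpha)\subset D(\widetilde M)\subset X'$ directly, whereas the paper's step deducing $s_i(x)\in X'$ from $g_0\cdots g_{i-1}\cdot s_i(x)\in X'$ tacitly uses that $G'$ preserves $X'$ (an assumption that is anyway implicit for the conclusion to make sense, and which holds in the application to affine maps of $\mathbb C\subset\mathbf P^1(\mathbb C)$). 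The price you pay is the extra bookkeeping you correctly flag: the deck transformation $\gamma_{\alpha\beta}$ is only well defined per connected component of $U_\alpha\cap U_\beta$, so to match the paper's Definition \ref{def:trGXstru} literally (one group element per overlap) you should refine the cover so that overlaps are connected; with that refinement your argument is complete.
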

    \begin{proof}
        Consider a family of submersions $(U_i,s_i)$ defining the transverse $(G,X)$-structure of $\mathcal{F}$ and which allows to compute the given developing map $D: \widetilde{M} \to X'$ (see Definition \ref{def:devmap}). 
        Let $x_0 \in U_0$ the corresponding base point.
        Let $i,j$ such that $U_i\cap U_j \neq \emptyset$ and consider a loop $\beta$ that starts in $U_i$ and ends in $U_j$. 
        The transition map between $U_i$ and $U_j$ is the restriction to $s_j(U_i\cap U_j)$ of an element $g$ of $G$.
        However we know by definition that $H(\beta)=g$  belongs to $G'$.
        Now let $x \in U_0$ and $\alpha$ a path in $U_0$ from $x_0$ to $x$.
        Then $D(\tilde{\alpha})=s_0(x)$ by definition, which belongs to $X'$.
        Eventually, for $i\neq 0$ fixed and $x \in U_i$, consider a path $\alpha$ between $x_0$ and $x$, going through $U_0, \cdots, U_{i-1}, U_i$.
        Then with the notations of Definition \ref{def:devmap}, $D(\tilde{\alpha})=g_0 \cdots g_{i-1} \cdot s_i(x)$ belongs to $X'$, thus $s_i(x) \in X'$ since $g_0 \cdots g_{i-1} \in G'$ by the above.
        This proves that the initial chosen family of submersion atlas defines in fact a transverse $(G',X')$-structure for $\mathcal{F}$ obviously compatible with the transverse $(G,X)$-structure, and $D$ and $H$ are obviously respectively developing map and corresponding holonomy map for this transverse $(G',X')$-structure.
    \end{proof}
    \noindent By \cite{plante_anosov_1981}, $(\varphi^t)_{t\in \mathbb R}$ is $C^\infty$-orbit equivalent to the suspension of an Anosov diffeomorphism $f$. We note $\theta: M \to F_f$ the orbit conjugacy between $M$ and the suspension manifold of $f$. Still denote by $F$ the embedded global transverse section of the suspension flow and $f:F \to F$ the corresponding Anosov diffeomorphism. $F$ is connected and without boundary. It is also compact since $M$ is and thus $F_f$ also.
    Call $\Sigma:=\theta^{-1}(F)$
    and $\phi: \Sigma \to \Sigma$
    the Poincaré map of $\Sigma$ for the flow $(\varphi^t)_{t \in \mathbb R}$. It is an Anosov diffeomorphism of $\Sigma$.
    The map $\restr{\theta}{\Sigma}:\Sigma \to F$ smoothly conjugates $\phi: \Sigma \to \Sigma$ and $f: F \to F$.
    Since $(\varphi^t)_{t \in \mathbb R}$ is transversely holomorphic, by Proposition \ref{prop:caractrholo}, $\Sigma$ is a complex surface and $\phi: \Sigma \to \Sigma$ is holomorphic with respect to this complex structure. 
    Therefore, by transporting the complex structure of $\Sigma$ to $F$ via $\restr{\theta}{\Sigma}$, $F$ is a compact connected complex surface and $f:F\to F$ is holomorphic with respect to this complex structure. 
    By Theorem A of \cite{ghys_holomorphic_1995}, $F$ is biholomorphic to a complex torus $\mathbb C^2\setminus\Lambda$ and, under this correspondence, $f$ is holomorphically conjugate to a hyperbolic toral automorphism $\widehat{A}$ of $\mathbb C^2\setminus\Lambda$.
    As a result, the suspension flow of $f$ on $F_f$ and the suspension flow of $\widehat{A}$ on $\mathbb C^2\setminus\Lambda$ are $C^\infty$-flow equivalent. 
    Thus $(\varphi^t)_{t \in \mathbb R}$ is $C^\infty$-orbit equivalent to the suspension of a hyperbolic toral automorphism.\\

    Eventually, assume that there exists a homeomorphism $u: \mathbf P^1(\mathbb C) \to \mathbf P^1(\mathbb C)$ such that
    \[D(\widetilde{M})=(\mathbf P^1(\mathbb C)\times \mathbf P^1(\mathbb C)) \setminus \text{gr}^H(u).\]
    Let $\Gamma:=\pi_1(M)$ denote the fundamental group of $M$. 
    \begin{lemma}
        $H:\Gamma \to \mathrm{PSL}(2,\mathbb C)\times\mathrm{PSL}(2,\mathbb C)$ is injective.
    \end{lemma}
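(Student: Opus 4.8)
The plan is to show that the kernel of $H$ is trivial by exploiting the fact, established in Proposition \ref{prop:Dbundle}, that the fibers of the developing map $D$ are exactly the orbits of the lifted flow, together with the freeness of the deck action of $\Gamma=\pi_1(M)$ on the universal cover $\widetilde{M}$ and the topological transitivity of $(\varphi^t)_{t\in\mathbb R}$. Notably, the argument will not use the specific description of $D(\widetilde{M})$ as a graph complement.

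First I would take $\gamma\in\Gamma$ with $H(\gamma)=\mathrm{id}$ and unwind the equivariance relation $D(\gamma\cdot\widetilde{x})=H(\gamma)D(\widetilde{x})=D(\widetilde{x})$, valid for every $\widetilde{x}\in\widetilde{M}$. By point $(ii)$ of Proposition \ref{prop:Dbundle} this forces $\gamma\cdot\widetilde{x}$ and $\widetilde{x}$ to lie on a common orbit of the lifted flow, so that for every $\widetilde{x}$ there is a real number $t(\widetilde{x})$ with $\gamma\cdot\widetilde{x}=\widetilde{\varphi}^{t(\widetilde{x})}(\widetilde{x})$; here $t(\widetilde{x})$ is uniquely determined since each orbit is diffeomorphic to $\mathbb R$ by point $(i)$ of Proposition \ref{prop:Dbundle}, so the flow acts freely on it.

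The key step is then to locate a point where the time vanishes. Projecting $\gamma\cdot\widetilde{x}=\widetilde{\varphi}^{t(\widetilde{x})}(\widetilde{x})$ down to $M$ via the covering map $p$, and using that $p\circ\gamma=p$ for deck transformations while $p\circ\widetilde{\varphi}^{t}=\varphi^{t}\circ p$, I would obtain $\varphi^{t(\widetilde{x})}(x)=x$ with $x=p(\widetilde{x})$; thus $x$ is periodic for the flow whenever $t(\widetilde{x})\neq 0$. Now transitivity produces an orbit dense in $M$, and since $\dim M=5$ a one-dimensional periodic orbit cannot be dense, so a point $x_0$ on this dense orbit is non-periodic. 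Choosing any lift $\widetilde{x_0}$, we are forced to have $t(\widetilde{x_0})=0$, i.e. $\gamma\cdot\widetilde{x_0}=\widetilde{x_0}$.

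Finally, since $\widetilde{M}$ is the universal cover, the deck group acts freely, so a nontrivial $\gamma$ can have no fixed point; the equality $\gamma\cdot\widetilde{x_0}=\widetilde{x_0}$ therefore yields $\gamma=1$, proving injectivity. I expect no serious obstacle here: the only point requiring care is the passage from the a priori possibility that every projected point be periodic to the extraction of a single non-periodic point, which is precisely where transitivity (and the dimension hypothesis, ruling out a dense periodic orbit) enters. Everything else is formal once Proposition \ref{prop:Dbundle} is available.
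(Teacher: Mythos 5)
Your proof is correct and follows essentially the same route as the paper's: both use the equivariance of $D$, point $(ii)$ of Proposition \ref{prop:Dbundle} to place $\widetilde{x}$ and $\gamma\cdot\widetilde{x}$ on the same lifted orbit, freeness of the deck action, and topological transitivity to produce a non-periodic point. The only difference is organizational (you extract a non-periodic point and deduce $t=0$ there, while the paper first rules out $t_0=0$ via freeness and then derives the contradiction that every point is periodic), which is immaterial.
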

    
    \begin{proof}
        Assume on the contrary that there exists a non-trivial element $\gamma \in \Gamma$ such that $H(\gamma)=I_{\mathrm{PSL(2,\mathbb C)} \times \mathrm{PSL(2,\mathbb C)}}$. Then for every $\widetilde{x} \in \widetilde{M}$, since
        $D(\gamma \cdot \widetilde{x})=H(\gamma)\cdot D(\widetilde{x})=D(\widetilde{x}),$
        then by $(ii)$ of \ref{prop:Dbundle}, $\widetilde{x}$ and $\gamma \cdot \widetilde{x}$ belong to the same orbit of the lifted flow, that is there exists $t_0 \in \mathbb R$ such that 
        $\gamma \cdot \widetilde{x}=\widetilde{\varphi}^{t_0}(\widetilde{x}).$
        Recall that an automorphism of $\widetilde{M} \xrightarrow[]{p}M$ is trivial if and only if it has a fixed point. 
        Therefore, $t_0\neq 0$. By applying $p$ to the previous equality, it follows that
        $x=\varphi^{t_0}(x)$,
        where $x=p(\widetilde{x})$. Thus, every point of $M$ is periodic, which is absurd if we consider for example a point whose orbit is dense in $M$ (which exists by topological transitivity).
    \end{proof}

    \begin{lemma}
    For $\gamma \in \Gamma$,
    $u \circ H_1(\gamma)\circ u^{-1}=H_2(\gamma).$\\
    In particular, $H_1: \Gamma \to \mathrm{PSL(2, \mathbb C)}$ and $H_2: \Gamma \to \mathrm{PSL(2, \mathbb C)}$ are injective.
    \end{lemma}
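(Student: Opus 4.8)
The plan is to exploit the fact that the image $D(\widetilde{M})$ is invariant under the holonomy action and then read off the conjugation relation from the geometry of the excluded graph. First I would record the basic invariance: since $\gamma \cdot \widetilde{M} = \widetilde{M}$ for every $\gamma \in \Gamma$, the equivariance relation $D(\gamma \cdot \widetilde{x}) = H(\gamma) D(\widetilde{x})$ yields
\[
H(\gamma) \cdot D(\widetilde{M}) = D(\gamma \cdot \widetilde{M}) = D(\widetilde{M}).
\]
Because $H(\gamma) = (H_1(\gamma), H_2(\gamma))$ acts as a homeomorphism of $\mathbf P^1(\mathbb C) \times \mathbf P^1(\mathbb C)$ (each factor lying in $\mathrm{PSL}(2,\mathbb C)$), it maps $D(\widetilde{M})$ bijectively onto itself, hence maps the complement $\mathrm{gr}^H(u)$ bijectively onto itself.

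Next I would translate the invariance of the graph into the desired identity. A point of $\mathrm{gr}^H(u)$ has the form $(x, u(x))$, and its image under $H(\gamma)$ is $(H_1(\gamma)(x), H_2(\gamma)(u(x)))$. Since this image again lies on the horizontal graph of $u$, its second coordinate must equal $u$ of its first coordinate, that is
\[
H_2(\gamma)(u(x)) = u(H_1(\gamma)(x)) \qquad \text{for all } x \in \mathbf P^1(\mathbb C).
\]
This is precisely $H_2(\gamma) \circ u = u \circ H_1(\gamma)$, which rearranges to $u \circ H_1(\gamma) \circ u^{-1} = H_2(\gamma)$, as claimed.

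Finally, the injectivity of $H_1$ and $H_2$ would follow from this relation combined with the injectivity of $H$ proved in the previous lemma. Indeed, if $H_1(\gamma) = I$, then the conjugation identity forces $H_2(\gamma) = u \circ I \circ u^{-1} = I$, so $H(\gamma) = I$ and thus $\gamma$ is trivial; symmetrically, $H_2(\gamma) = I$ gives $H_1(\gamma) = u^{-1} \circ I \circ u = I$ and again $\gamma$ trivial. I do not anticipate a genuine obstacle here: the only point requiring a little care is to verify that $H(\gamma)$ preserves the graph as a set (not merely maps it into itself), which is guaranteed by the equality $H(\gamma) \cdot D(\widetilde{M}) = D(\widetilde{M})$ and the bijectivity of the product Möbius action; everything else is a direct reading-off of the component-wise action on $\mathrm{gr}^H(u)$.
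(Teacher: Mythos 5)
Your proof is correct and rests on the same key fact as the paper's: the equivariance $D(\gamma\cdot\widetilde{x})=H(\gamma)D(\widetilde{x})$ forces $H(\gamma)$ to preserve $D(\widetilde{M})$, hence the complementary graph $\mathrm{gr}^H(u)$, from which the conjugation relation is read off coordinatewise. The paper phrases this as a proof by contradiction (first checking that $D_1$ is surjective so that a hypothetical point where $u\circ H_1(\gamma)$ and $H_2(\gamma)\circ u$ disagree lies in the image of $D_1$, then pulling back by $\gamma^{-1}$ to land a point of $D(\widetilde{M})$ on the graph), whereas your direct set-invariance formulation is slightly cleaner and dispenses with that preliminary step; the injectivity deduction is identical in both.
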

    \begin{proof}
        First, $D_1: \widetilde{M} \to \mathbf{P}^1(\mathbb C)$ is surjective. Indeed, let $z \in \mathbf{P}^1(\mathbb C)$ and $y\neq u(z)$. Then 
        \[(z,y) \in (\mathbf P^1(\mathbb C)\times \mathbf P^1(\mathbb C)) \setminus \text{gr}^H(u)=D(\widetilde{M}).\]
        Suppose on the contrary that there exists $\gamma_0 \in \Gamma$ such that 
        $u \circ H_1(\gamma_0) \neq H_2(\gamma_0)\circ u.$
       This means, by the above, that there exists $\widetilde{x_0} \in \widetilde{M}$ such that 
       $u(H_1(\gamma_0) \cdot D_1(\widetilde{x_0}))\neq H_2(\gamma_0)\cdot u(D_1(\widetilde{x_0})).$
       Therefore, 
       \[\left (H_1(\gamma_0) \cdot D_1(\widetilde{x_0}),H_2(\gamma_0)\cdot u(D_1(\widetilde{x_0}))\right ) \in (\mathbf P^1(\mathbb C)\times \mathbf P^1(\mathbb C)) \setminus \text{gr}^H(u)=D(\widetilde{M}),\]
       i.e. there exists $\widetilde{y} \in \widetilde{M}$ such that
          $ H_1(\gamma)\cdot D_1(\widetilde{x_0})=D_1(\widetilde{y})$ and 
          $ H_2(\gamma)\cdot u(D_1(\widetilde{x_0}))=D_2(\widetilde{y}).$
       As a result, since $H_1: \Gamma \to \mathrm{PSL}(2,\mathbb C)$ and $H_2: \Gamma \to \mathrm{PSL}(2,\mathbb C)$ are homomorphisms, and by invariance of $D$ with respect to the natural actions of $\Gamma$ on $\widetilde{M}$ and $H(\Gamma)$ on $\mathbf{P}^1(\mathbb C) \times \mathbf{P}^1(\mathbb C)$, it comes
           $D_1(\widetilde{x_0})=D_1(\gamma^{-1}\cdot \widetilde{y})$ and 
           $u(D_1(\widetilde{x_0}))=D_2(\gamma^{-1}\cdot \widetilde{y})$
       which is a contradiction since $\text{gr}^H(u)\cap D(\widetilde{M})=\emptyset$.\\
       Now if $\gamma \in \Gamma$ is such that $H_1(\gamma)=I_{\mathrm{PSL}(2,\mathbb C)}$, then by the above $H_2(\gamma)=I_{\mathrm{PSL}(2,\mathbb C)}$, thus $H(\gamma)=I_{\mathrm{PSL}(2,\mathbb C)\times \mathrm{PSL}(2,\mathbb C)}$ so $\gamma=0$ since $H$ is injective.
    The same can be said for $H_2$.
    \end{proof}
    In order to show the orbit equivalence with a geodesic flow on a hyperbolic manifold, we must exhibit first a discrete torsion-free cocompact subgroup of $\mathrm{PSL}(2,\mathbb C)$.
    
    \begin{lemma}
        $H_1(\Gamma)$ is a discrete subgroup of $\mathrm{PSL}(2,\mathbb C)$.
    \end{lemma}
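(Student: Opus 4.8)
The plan is to argue by contradiction and to reduce the statement to the non-existence of a sequence of distinct elements of $H(\Gamma)$ converging to the identity. Suppose $H_1(\Gamma)$ is not discrete. Since $\mathrm{PSL}(2,\mathbb C)$ is a Lie group, there is a sequence $(\gamma_n)$ in $\Gamma$ with the $H_1(\gamma_n)$ pairwise distinct and $H_1(\gamma_n)\to \mathrm{Id}$ uniformly on the compact space $\mathbf P^1(\mathbb C)$. Conjugation by the fixed homeomorphism $u$ is continuous for uniform convergence on $\mathbf P^1(\mathbb C)$, so the relation $u\circ H_1(\gamma)\circ u^{-1}=H_2(\gamma)$ established above forces $H_2(\gamma_n)\to \mathrm{Id}$ as well. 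Hence $H(\gamma_n)\to \mathrm{Id}$ in $\mathrm{PSL}(2,\mathbb C)\times\mathrm{PSL}(2,\mathbb C)$, uniformly on $\mathbf P^1(\mathbb C)\times\mathbf P^1(\mathbb C)$, and in particular on $D(\widetilde M)$. As $H$ is injective the $\gamma_n$ are themselves distinct. (This reduction also shows that discreteness of $H_1(\Gamma)$ is equivalent to discreteness of $H(\Gamma)$.) It therefore suffices to derive a contradiction from the existence of distinct $\gamma_n$ with $H(\gamma_n)\to\mathrm{Id}$.

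To exploit this I would use that, by Proposition \ref{prop:Dbundle}, $D\colon\widetilde M\to D(\widetilde M)$ is a fibre bundle with fibre $\mathbb R$ whose fibres are exactly the orbits of the lifted flow. Fix $\widetilde x_0\in\widetilde M$, set $x_0=p(\widetilde x_0)$ and $q_0=D(\widetilde x_0)$, and pick a continuous local section of $D$ through $\widetilde x_0$, that is a compact transversal to the lifted flow on which $D$ is a homeomorphism onto a neighbourhood of $q_0$. Since $D(\gamma_n\widetilde x_0)=H(\gamma_n)q_0\to q_0$, for $n$ large the point $\gamma_n\widetilde x_0$ lies in the corresponding flow box and can be written $\gamma_n\widetilde x_0=\widetilde\varphi^{\,s_n}(\widetilde y_n)$ with $\widetilde y_n\to \widetilde x_0$ and $s_n\in\mathbb R$. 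Projecting to $M$ and using $p\circ\gamma_n=p$ gives $\varphi^{-s_n}(x_0)=y_n\to x_0$, where $y_n=p(\widetilde y_n)$.

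If the drift $(s_n)$ stays bounded, I would pass to a subsequence $s_n\to s^{*}$; then $\varphi^{s^{*}}(x_0)=x_0$, and choosing $x_0$ on a dense (hence non-periodic) orbit, which exists by topological transitivity, forces $s^{*}=0$. Thus $\gamma_n\widetilde x_0=\widetilde\varphi^{\,s_n}(\widetilde y_n)\to\widetilde x_0$. But the $\gamma_n\widetilde x_0$ are distinct points of the fibre $p^{-1}(x_0)$ converging to $\widetilde x_0\in p^{-1}(x_0)$, contradicting the proper discontinuity (discreteness of fibres) of the free $\Gamma$-action on $\widetilde M$.

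The hard part will be the case of unbounded drift $|s_n|\to\infty$, which is genuinely possible when $x_0$ is recurrent and cannot be excluded by proper discontinuity alone: a point may escape to infinity inside the non-compact fibre $\mathbb R$ while its $D$-image converges. Here I would bring in the hyperbolic structure. Being a deck transformation, $\gamma_n$ is an isometry preserving the strong stable and unstable foliations, and it carries the orbit of $\widetilde x_0$ to the orbit of $\widetilde y_n$, which is transversally $\varepsilon_n$-close with $\varepsilon_n\to 0$, while shifting the flow parameter by $s_n$. Using the complete affine structures on the strong leaves (Theorem \ref{thm:compaffstru}), the flow-invariance of $D$, and the uniform rates $\lambda<1<\mu$ of Definition \ref{def:2.6}, forward flow expands the strong-unstable transverse separation by $\mu^{s_n}$ and contracts the strong-stable one by $\lambda^{s_n}$; the isometry constraint relating the configurations at $\widetilde x_0$ and at $\widetilde\varphi^{\,s_n}(\widetilde y_n)$ should then force, as $|s_n|\to\infty$, the transverse separation $\varepsilon_n$ to vanish, i.e. the orbit of $\widetilde y_n$ to coincide with that of $\widetilde x_0$. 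Once the orbits coincide one has $\widetilde y_n=\widetilde\varphi^{\,\tau_n}(\widetilde x_0)$, so $\gamma_n\widetilde x_0=\widetilde\varphi^{\,s_n+\tau_n}(\widetilde x_0)$ and $\varphi^{s_n+\tau_n}(x_0)=x_0$; non-periodicity of $x_0$ gives $s_n+\tau_n=0$, hence $\gamma_n\widetilde x_0=\widetilde x_0$ and $\gamma_n=\mathrm{Id}$ by freeness, contradicting distinctness. Controlling this flow-direction drift through the exponential contraction and expansion is the crux of the argument; once it is settled, $H(\gamma_n)\to\mathrm{Id}$ is impossible, so $H(\Gamma)$, and therefore $H_1(\Gamma)$, is discrete.
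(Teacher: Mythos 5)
Your reduction from discreteness of $H_1(\Gamma)$ to discreteness of $H(\Gamma)$ (via $u\circ H_1(\gamma)\circ u^{-1}=H_2(\gamma)$ and uniform continuity of the homeomorphism $u$ on the compact $\mathbf P^1(\mathbb C)$) is correct and is essentially the last step of the paper's proof, run in reverse. The bounded-drift half of your main argument is also sound. But the unbounded-drift case, which you yourself flag as ``the crux,'' is a genuine gap, not a detail: nothing in your sketch explains why the transverse separation $\varepsilon_n$ between the orbit of $\widetilde x_0$ and the orbit of $\widetilde y_n$ must vanish. The deck transformation $\gamma_n$ carries the strong stable/unstable configuration at $\widetilde x_0$ to the one at $\widetilde\varphi^{s_n}(\widetilde y_n)$, but it is an isometry between \emph{different} base points, so the exponential expansion/contraction rates give you a relation between separations measured at $\widetilde x_0$ and at $\gamma_n\widetilde x_0$, not a self-improving inequality at a single point; I do not see how to close this without further input. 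Indeed, in the model cases (geodesic flows), deck transformations routinely move an orbit to a nearby distinct orbit with arbitrarily large flow-direction drift, so large $|s_n|$ with small $\varepsilon_n>0$ is not by itself contradictory --- the contradiction has to come from $H(\gamma_n)\to\mathrm{Id}$, and your argument never actually uses that hypothesis in the unbounded case beyond $\widetilde y_n\to\widetilde x_0$.

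The paper takes a different route precisely to avoid this: it invokes a general lemma of Fang (Lemma \ref{lem:Hdiscret}) asserting that the holonomy group of a transverse $(G,X)$-structure is discrete whenever the union of the \emph{embedded} leaves is dense and the fibers of $D$ are connected, and verifies the hypotheses using the density of periodic orbits (guaranteed by transitivity of an Anosov flow) and Proposition \ref{prop:Dbundle}. The density of compact (periodic) leaves is the ingredient that controls the flow-direction drift, and it is entirely absent from your argument --- you only use a dense \emph{non-periodic} orbit, which handles the bounded case but not the unbounded one. To repair your proof you would either need to reprove Fang's lemma (e.g.\ by working at a periodic point, where the fiber stabilizer is the cyclic group generated by a hyperbolic holonomy element fixing $D(\widetilde x)$), or simply cite it as the paper does.
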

    
    \begin{proof}
        We use the following general lemma, which has been proved in \cite{fang_rigidity_2007} p.1786.
        \begin{lemma}
        \label{lem:Hdiscret}
            Let $\mathcal{F}$ a foliation, on a smooth connected manifold $M$, with a transverse $(G,X)$-structure. 
            Denote by $D:\widetilde{M} \to X$ a developing map, and $H: \pi_1(M) \to G$ the corresponding holonomy representation, for this transverse $(G,X)$-structure. 
            Suppose that the union of the embedded leaves of $\mathcal{F}$ is dense in $M$ and that the fibers of $D$ are connected.\\
            Then $H(\pi_1(M))$ is a discrete subgroup of $G$.
        \end{lemma}
        In our case, as proved in Corollary \ref{cor:flotrproj}, the flow $(\varphi^t)_{t\in \mathbb R}$ admits a  transverse $$(\mathrm{PSL}(2, \mathbb C) \times \mathrm{PSL}(2, \mathbb C), (\mathbf{P}^1(\mathbb C) \times \mathbf{P}^1(\mathbb C))\setminus \text{gr}^H(u))$$structure such that the map $D$ is a developing map, and $H$ is the corresponding holonomy representation, for this structure.
        Since $(\varphi^t)_{t\in \mathbb R}$ is topologically transitive, the union of periodic orbits of $(\varphi^t)_{t\in \mathbb R}$ is dense in $M$. Since a periodic orbit is closed in $M$ thus embedded (see \cite{camacho_geometric_1985}), and by the fact that the fibers of $D$ are exactly the orbits of the lifted flow, then by the previous lemma, $H(\Gamma)$ is a discrete subgroup of $\mathrm{PSL}(2, \mathbb C) \times \mathrm{PSL}(2, \mathbb C)$.
        Therefore, $H_1(\Gamma)$ is discrete since, by the previous claim, it is homeomorphic to $H(\Gamma)$ via the homeomorphism 
        $\left \{ \begin{array}{ccl}
             H_1(\Gamma)& \to & H(\Gamma) \\
             h& \mapsto & (h,u \circ h \circ u^{-1})
        \end{array} \right..$
    \end{proof}
    \begin{lemma}
        $H_1(\Gamma)$ is a cocompact subgroup of $\mathrm{PSL}(2,\mathbb C)$.
    \end{lemma}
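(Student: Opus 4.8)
The plan is to reduce the claim to the compactness of the quotient, by $\Lambda:=H_1(\Gamma)$, of the space $\overrightarrow{\mathbb{G}}$ of oriented geodesics of $\mathbb{H}^3$, and then to recognize this as the cocompactness criterion for a discrete subgroup of $\mathrm{PSL}(2,\mathbb{C})=\mathrm{Isom}_{+}(\mathbb{H}^3)$. The first step is to straighten the image $D(\widetilde{M})$ using the conjugacy $u\circ H_1(\gamma)\circ u^{-1}=H_2(\gamma)$ from the previous lemma. Define $\Phi(\xi,\eta)=(\xi,u^{-1}(\eta))$. Since $(\xi,\eta)\notin\text{gr}^H(u)$ is equivalent to $u^{-1}(\eta)\neq\xi$, the map $\Phi$ is a homeomorphism from $D(\widetilde{M})=(\mathbf{P}^1(\mathbb{C})\times\mathbf{P}^1(\mathbb{C}))\setminus\text{gr}^H(u)$ onto $\overrightarrow{\mathbb{G}}=(\mathbf{P}^1(\mathbb{C})\times\mathbf{P}^1(\mathbb{C}))\setminus\Delta$. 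A one-line computation using the conjugacy relation, namely $\Phi(H(\gamma)(\xi,\eta))=(H_1(\gamma)\xi,\,H_1(\gamma)u^{-1}(\eta))$, shows that $\Phi$ intertwines the $H(\Gamma)$-action on $D(\widetilde{M})$ with the \emph{diagonal} $\Lambda$-action on $\overrightarrow{\mathbb{G}}$. In particular it identifies $D(\widetilde{M})/H(\Gamma)$ with $\overrightarrow{\mathbb{G}}/\Lambda$.

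Next I would transfer compactness from $M$. The composite $\Phi\circ D\colon\widetilde{M}\to\overrightarrow{\mathbb{G}}$ is a continuous surjection (as $\Phi$ is onto and $D$ surjects onto its image) which is $\Gamma$-equivariant for the diagonal $\Lambda$-action; composing with the projection $\overrightarrow{\mathbb{G}}\to\overrightarrow{\mathbb{G}}/\Lambda$ gives a $\Gamma$-invariant continuous surjection, which descends to a continuous surjection $M=\widetilde{M}/\Gamma\to\overrightarrow{\mathbb{G}}/\Lambda$. Since $M$ is compact, $\overrightarrow{\mathbb{G}}/\Lambda$ is compact. Thus $\Lambda$ is a discrete (by the previous lemma) and finitely generated (since $\pi_1(M)$ is, $M$ being a compact manifold, and $H_1$ is injective) subgroup of $\mathrm{PSL}(2,\mathbb{C})$ acting cocompactly on the space of oriented geodesics of $\mathbb{H}^3$.

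It remains to promote cocompactness on $\overrightarrow{\mathbb{G}}$ to cocompactness on $\mathbb{H}^3$, and this is the crux. Fixing a basepoint $o$ and the associated visual metric on $\mathbf{P}^1(\mathbb{C})$, compactness of $\overrightarrow{\mathbb{G}}/\Lambda$ yields a uniform $\varepsilon>0$ such that every pair of distinct points can be moved by some element of $\Lambda$ to an $\varepsilon$-separated pair; equivalently, $\Lambda$ expands a neighbourhood of every point of $\mathbf{P}^1(\mathbb{C})$ to definite size, i.e. every point is a uniformly conical limit point and $\Lambda$ acts as a uniform convergence group on $\mathbf{P}^1(\mathbb{C})=S^2$. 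Concretely, this uniform separation excludes the two mechanisms of noncompactness: a parabolic fixed point would force pairs near it to remain inseparable (in horospherical coordinates a parabolic acts by a non-expanding Euclidean motion), and a point of the domain of discontinuity would likewise produce inseparable pairs near the diagonal above it, each contradicting the uniform $\varepsilon$. Since $\mathrm{PSL}(2,\mathbb{C})$ acts simply transitively on the space of distinct triples of $\mathbf{P}^1(\mathbb{C})$, the uniform convergence property upgrades to cocompactness of $\Lambda$ on distinct triples, which is exactly the statement that $\Lambda\backslash\mathrm{PSL}(2,\mathbb{C})$ is compact; as the stabilizer of a point of $\mathbb{H}^3$ is compact, $\mathbb{H}^3/\Lambda$ is then compact.

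The main obstacle is precisely this last passage from the space of geodesics to $\mathbb{H}^3$: formally, cocompactness of the double coset $\Lambda\backslash\mathrm{PSL}(2,\mathbb{C})/A$ with $A\cong\mathbb{C}^*$ noncompact (the stabilizer of an oriented geodesic) does \emph{not} by itself imply cocompactness of $\Lambda\backslash\mathrm{PSL}(2,\mathbb{C})$, so one must genuinely use the uniform-expansion dynamics to rule out geometrically infinite behaviour (for instance degenerate ends), where the limit set is already the whole sphere yet the group is not a lattice. I expect to settle this either by invoking the dynamical characterization of cocompact lattices of $\mathrm{PSL}(2,\mathbb{C})$ as uniform convergence groups on $S^2$, or, staying closer to the present setting, by adapting Fang's fundamental-domain argument (as was done for discreteness) to build an explicit compact fundamental domain in $\mathbb{H}^3$ out of a compact fundamental domain in $\overrightarrow{\mathbb{G}}$ and the uniform separation constant $\varepsilon$.
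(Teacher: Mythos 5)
Your first two steps are sound: the map $\Phi=\mathrm{id}\times u^{-1}$ does conjugate the $H(\Gamma)$-action on $D(\widetilde M)=(\mathbf P^1(\mathbb C)\times\mathbf P^1(\mathbb C))\setminus\mathrm{gr}^H(u)$ to the diagonal $\Lambda$-action on $\overrightarrow{\mathbb G}$, and composing with $D$ and descending does give a continuous surjection from the compact manifold $M$ onto $\Lambda\backslash\overrightarrow{\mathbb G}$, so the action of $\Lambda$ on the space of oriented geodesics is cocompact. The gap is exactly where you place it, and it is not filled: cocompactness of $\Lambda$ on ordered distinct \emph{pairs} of $\partial\mathbb H^3$ does not formally yield cocompactness on $\mathbb H^3$, and the tool you propose to invoke — the characterization of uniform lattices as uniform convergence groups — is a statement about cocompactness on distinct \emph{triples} (Gehring--Martin, Tukia, Bowditch). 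Passing from pairs to triples is precisely the missing content. Your sketch (uniform $\varepsilon$-separation, hence every point of $S^2$ is a uniformly conical limit point, hence no parabolics and empty domain of discontinuity) would, even if the conicality claim were carefully justified, still require Bishop's theorem or a Beardon--Maskit-type geometric finiteness criterion together with the identification of the limit set with all of $S^2$; none of this is carried out, and the heuristic about "a geodesic $[\xi,\eta_n]$ fellow-travelling the ray to $\xi$" needs care, since the element $\gamma_n$ returning the pair to the compact set may track the $\eta_n$-end rather than the $\xi$-end. As it stands the proof is incomplete at its decisive step.

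For comparison, the argument the paper appeals to (Lemma 2.8 of Fang) bypasses the dynamics on the sphere entirely and is cohomological. By Proposition \ref{prop:Dbundle}, $\widetilde M\xrightarrow{D}D(\widetilde M)$ is an $\mathbb R$-fiber bundle, and $D(\widetilde M)$ fibers over $\mathbf P^1(\mathbb C)$ with contractible fibers, so $\widetilde M$ is homotopy equivalent to $S^2$. The Cartan--Leray spectral sequence of the covering $\widetilde M\to M$, $E_2^{p,q}=H^p(\Gamma;H^q(\widetilde M))\Rightarrow H^{p+q}(M)$, has $E_2^{p,q}=0$ unless $q\in\{0,2\}$; since $M$ is a closed five-manifold, $H^5(M;\mathbb Z/2)\neq 0$ forces $H^p(\Gamma;\cdot)\neq 0$ for some $p\geq 3$. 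On the other hand a torsion-free finite-index subgroup of $\Gamma\cong H_1(\Gamma)$ acts freely and properly discontinuously on $\mathbb H^3$, so its cohomological dimension is at most $3$. Hence $\mathrm{vcd}(\Gamma)=3$, which by the criterion recalled in Section \ref{sec:3} is equivalent to cocompactness for a discrete subgroup of $\mathrm{PSL}(2,\mathbb C)$. If you want to keep your geometric route, you must either prove cocompactness on triples directly or supply the convergence-group machinery in full; otherwise the cohomological argument is both shorter and self-contained given what the paper has already established.
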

    \begin{proof}
        The proof is identical to that of Lemma 2.8 of \cite{fang_rigidity_2007}. See \cite{hatcher_algebraic_2002}, \cite{paulin_introduction_nodate} for references of algebraic topology.
    \end{proof}
    \begin{lemma}
    We can choose $D$ so that
    $D(\widetilde{M})=(\mathbf P^1(\mathbb C)\times \mathbf P^1(\mathbb C)) \setminus \Delta,$
    where $\Delta:=\{(x,x), \; x \in \mathbf{P}^1(\mathbb C)\}$ is the diagonal of $\mathbf{P}^1(\mathbb C)$, and that $H_1=H_2$.
    \end{lemma}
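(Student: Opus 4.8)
The plan is to reduce the whole statement to the single assertion that the homeomorphism $u$ is actually a Möbius transformation, $u\in\mathrm{PSL}(2,\mathbb C)$, and then to exploit the freedom of postcomposing the developing map by an element of $G=\mathrm{PSL}(2,\mathbb C)\times\mathrm{PSL}(2,\mathbb C)$. Suppose first that we knew $u\in\mathrm{PSL}(2,\mathbb C)$. By Definition \ref{def:devmap}, $D':=(u,\mathrm{Id})\circ D=(u\circ D_1,D_2)$ is again a developing map, with holonomy $H'=(u,\mathrm{Id})\circ H\circ(u,\mathrm{Id})^{-1}$, so that $H_1'(\gamma)=u\,H_1(\gamma)\,u^{-1}$ and $H_2'(\gamma)=H_2(\gamma)$. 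By the relation $u\circ H_1(\gamma)\circ u^{-1}=H_2(\gamma)$ already established, this gives $H_1'=H_2'=H_2$. Moreover the bijection $(x,y)\mapsto(u(x),y)$ of $\mathbf{P}^1(\mathbb C)\times\mathbf{P}^1(\mathbb C)$ carries $\text{gr}^H(u)=\{(x,u(x))\}$ onto $\Delta=\{(u(x),u(x))\}$, hence carries $D(\widetilde M)=(\mathbf{P}^1(\mathbb C)\times\mathbf{P}^1(\mathbb C))\setminus\text{gr}^H(u)$ onto $D'(\widetilde M)=(\mathbf{P}^1(\mathbb C)\times\mathbf{P}^1(\mathbb C))\setminus\Delta$. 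Replacing $D$ by $D'$ then yields exactly the desired conclusion, so it suffices to prove that $u$ is a Möbius transformation.

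To this end the main input is rigidity. We have already shown that $H_1(\Gamma)$ is a discrete cocompact subgroup of $\mathrm{PSL}(2,\mathbb C)$; the same argument applied after exchanging the roles of $\mathcal{F}^{ws}$ and $\mathcal{F}^{wu}$ (equivalently, applied to $H_2$) shows $H_2(\Gamma)$ is discrete and cocompact as well. Thus $H_1(\Gamma)$ and $H_2(\Gamma)$ are cocompact lattices in $\mathrm{PSL}(2,\mathbb C)=\mathrm{Isom}_+(\mathbb H^3)$. The relation $u\circ H_1(\gamma)\circ u^{-1}=H_2(\gamma)$ says precisely that $u$ is a homeomorphism of the boundary sphere $\partial\mathbb H^3=\mathbf{P}^1(\mathbb C)$ intertwining the two boundary actions along the abstract isomorphism $\phi:=H_2\circ H_1^{-1}:H_1(\Gamma)\to H_2(\Gamma)$ (this $\phi$ is well defined and an isomorphism since $H_1,H_2$ are injective). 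By Mostow rigidity for cocompact lattices of $\mathrm{PSL}(2,\mathbb C)$, the isomorphism $\phi$ is induced by conjugation by an isometry of $\mathbb H^3$, whose boundary extension is a conformal (and, since $u$ is built from the orientation-preserving holomorphic identifications of the strong leaves given by $D_1,D_2$ in Lemma \ref{lem:liftFssaff}, orientation-preserving) map $m\in\mathrm{PSL}(2,\mathbb C)$ with $m\,\gamma\,m^{-1}=\phi(\gamma)$ for all $\gamma\in H_1(\Gamma)$.

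It then remains to identify $u$ with $m$. For every $\gamma\in H_1(\Gamma)$ one has $(m^{-1}u)\,\gamma\,(m^{-1}u)^{-1}=m^{-1}\,\phi(\gamma)\,m=\gamma$, so $m^{-1}\circ u$ is a homeomorphism of $\mathbf{P}^1(\mathbb C)$ commuting with the entire action of the cocompact lattice $H_1(\Gamma)$. Since $H_1(\Gamma)$ is cocompact, its limit set is all of $\mathbf{P}^1(\mathbb C)$ and the attracting fixed points of its loxodromic elements are dense. A homeomorphism commuting with such a loxodromic $\gamma$ must preserve its pair of fixed points and, respecting the north–south dynamics, fix each of them; as these points are dense, $m^{-1}\circ u$ is the identity on a dense set and hence everywhere. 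Therefore $u=m\in\mathrm{PSL}(2,\mathbb C)$, and by the first paragraph the proof is complete.

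The delicate point is the rigidity step: one must verify the hypotheses of Mostow's theorem (discreteness and cocompactness of both lattices, already in hand) and pass from the purely algebraic conjugacy $\phi$ to the geometric Möbius transformation $m$, together with the orientation bookkeeping ensuring $m\in\mathrm{PSL}(2,\mathbb C)$ rather than an anti-holomorphic map; only after this does the elementary centralizer argument close the proof. This is exactly where the three-dimensional hyperbolic geometry of $\mathbb H^3$, and not merely the transverse holomorphicity of the flow, is essential.
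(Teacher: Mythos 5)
Your proof is correct and follows essentially the same route as the paper: Mostow rigidity upgrades $u$ to an element of $\mathrm{PSL}(2,\mathbb C)$, after which one postcomposes $D$ by an element of $\mathrm{PSL}(2,\mathbb C)\times\mathrm{PSL}(2,\mathbb C)$ carrying $\text{gr}^H(u)$ to $\Delta$ and equalizing the two holonomies (the paper twists the second factor by $u^{-1}$ where you twist the first by $u$, which is immaterial). Your fuller justification of the rigidity step --- deriving the Möbius map from the abstract conjugacy of the two cocompact lattices and then identifying it with $u$ via the density of fixed points of loxodromic elements --- merely fills in details behind the paper's bare citation of Mostow.
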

    \begin{proof}
        Since $u$ conjugates $H_1(\Gamma)$ and $H_2(\Gamma)$, by Mostow's rigidity theorem (see \cite{mostow_quasi-conformal_1968}), $u: \mathbf{P}^1(\mathbb C) \to \mathbf{P}^1(\mathbb C)$ is an element of $\mathrm{PSL}(2, \mathbb C)$.
        Consider now 
        \[D^*:=D_1^*\otimes D_2^*:=(I_d \otimes u^{-1})\circ D=D_1 \otimes (u^{-1}\circ D_2).\]
        Then $D_1^*$, $D_2^*$ and $D^*$ are respectively developing maps for the transverse $(\mathrm{PSL}(2, \mathbb C), \mathbf{P}^1(\mathbb C))$ structure of $\mathcal{F}^u$, the transverse $(\mathrm{PSL}(2, \mathbb C), \mathbf{P}^1(\mathbb C))$ structure of $\mathcal{F}^s$, and the transverse \[(\mathrm{PSL}(2, \mathbb C) \times \mathrm{PSL}(2, \mathbb C), \mathbf{P}^1(\mathbb C) \times \mathbf{P}^1(\mathbb C))\]
        structure for the flow $(\varphi^t)_{t \in \mathbb R}$.
        The holonomy representation $H_2^*$ corresponding to $D_2^*$ is 
        $u^{-1} \circ H_2 \circ u=H_1.$
        Moreover, 
            $D^*(\widetilde{M})=(I_d \otimes u^{-1})(D(\widetilde{M}))=(\mathbf{P}^1(\mathbb C)\times \mathbf{P}^1(\mathbb C)) \setminus\Delta.$
    \end{proof}
    \begin{lemma}
        There exists a finite covering $\overline{M} \xrightarrow[]{\overline{p}} M$ over $M$ whose fundamental group is discrete, torsion-free and cocompact.
    \end{lemma}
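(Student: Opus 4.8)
The plan is to produce $\overline M$ as the covering space of $M$ associated to a suitable finite-index subgroup of $\Gamma = \pi_1(M)$, obtained by pulling back a torsion-free finite-index subgroup of the holonomy image $H_1(\Gamma)$ through the injective homomorphism $H_1$. First I would record what is already available: by the preceding lemmas, $H_1 : \Gamma \to \mathrm{PSL}(2,\mathbb C)$ is injective and its image $H_1(\Gamma)$ is a discrete cocompact subgroup of $\mathrm{PSL}(2,\mathbb C)$. Since $M$ is a smooth compact manifold, $\Gamma$ is finitely generated, hence so is $H_1(\Gamma)$.

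The key tool is Selberg's lemma: every finitely generated subgroup of $\mathrm{GL}_n(\mathbb C)$ admits a torsion-free subgroup of finite index. To apply it I would first realise $\mathrm{PSL}(2,\mathbb C)$ as a linear group, for instance through the faithful adjoint representation on $\mathfrak{sl}(2,\mathbb C) \cong \mathbb C^3$, which identifies $\mathrm{PSL}(2,\mathbb C)$ with a subgroup of $\mathrm{GL}_3(\mathbb C)$ (equivalently, one may use the isomorphism $\mathrm{PSL}(2,\mathbb C) \cong \mathrm{SO}^+(3,1)$). Selberg's lemma then yields a torsion-free subgroup $\Gamma_0 \le H_1(\Gamma)$ of finite index. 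Because $H_1$ is injective, the preimage $\overline\Gamma := H_1^{-1}(\Gamma_0)$ is a subgroup of $\Gamma$ with $[\Gamma : \overline\Gamma] = [H_1(\Gamma) : \Gamma_0] < \infty$, and $H_1$ restricts to an isomorphism $\overline\Gamma \xrightarrow{\sim} \Gamma_0$.

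Finally, letting $\overline M \xrightarrow{\overline p} M$ be the covering associated to $\overline\Gamma \le \Gamma$, the covering is finite (its degree equals the finite index $[\Gamma:\overline\Gamma]$) and $\pi_1(\overline M) \cong \overline\Gamma$. Under the injective homomorphism $H_1$, this fundamental group is identified with $\Gamma_0$, which is discrete and torsion-free by construction; it is cocompact because it has finite index in the cocompact group $H_1(\Gamma)$ (a finite-index subgroup of a cocompact lattice is again a cocompact lattice). This produces the desired finite cover.

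I expect the only genuinely delicate point to be the invocation of Selberg's lemma, specifically the verification that $H_1(\Gamma)$ is a finitely generated \emph{linear} group so that the lemma applies; the passage from $\Gamma_0$ back to $\Gamma$ and the transfer of discreteness, torsion-freeness and cocompactness are then formal.
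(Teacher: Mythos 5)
Your proposal is correct and follows essentially the same route as the paper: Selberg's lemma applied to the finitely generated linear group $H_1(\Gamma)\cong\Gamma\le\mathrm{PSL}(2,\mathbb C)$ to extract a torsion-free finite-index subgroup, followed by the Galois correspondence to realise the associated finite cover. The only cosmetic difference is that you justify cocompactness directly (a finite-index subgroup of a cocompact lattice is cocompact) whereas the paper routes this through the virtual cohomological dimension criterion stated in Section 3; both are valid, and your explicit remark about realising $\mathrm{PSL}(2,\mathbb C)$ as a linear group before invoking Selberg is a welcome extra detail.
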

    \begin{proof}
        Selberg's lemma (see \cite{nica_three_nodate}) implies that if $\Gamma$ is a finitely generated subgroup of a linear group of characteristic zero, then $\Gamma$ admits a torsion-free subgroup of finite index.
        Also, by the Galois correspondence (see \cite{puttick_galois_2012}), if $M$ is a topological manifold and $H$ is a subgroup of $\pi_1(M)$ of finite index, then there exists a finite covering over $M$ whose fundamental group is $H$ and whose covering degree is the index $[\pi_1(M): H]$.
        In our case, since $M$ is compact, $\pi_1(M)=\Gamma$ is finitely generated, and because $\Gamma \cong H_1(\Gamma)$ is a subgroup of $\mathrm{PSL}(2,\mathbb C)$, then by the above, there exists a finite covering $\overline{M} \xrightarrow[]{p} M$ over $M$ whose fundamental group $\Gamma'$ is a finite index torsion-free subgroup of $\Gamma$.
        Therefore, $\Gamma'$ is also discrete. Since $\Gamma$ is cocompact and $\Gamma'$ is a discrete finite-index torsion-free subgroup of $\Gamma$, then the cohomological dimension of $\Gamma'$ is $3$ and $\Gamma'$ is thus cocompact.
    \end{proof}
     We conclude the proof of Theorem \ref{thm:classif}]
     The lift of the flow on $\overline{M}$ has also a transverse $(\mathrm{PSL}(2, \mathbb C) \times \mathrm{PSL}(2, \mathbb C), (\mathbf{P}^1(\mathbb C) \times \mathbf{P}^1(\mathbb C))\setminus \Delta)$-structure, whose developing map is $D$ and whose corresponding holonomy representation is $\restr{H}{\Gamma'}$. It is also Anosov and its strong/weak stable/unstable foliations are the lifts of the corresponding ones on $M$.
    Therefore, by lifting to a finite cover, we can assume $\Gamma$ is torsion free.
    We define 
    $V=\mathbb H^3 \diagup H_1(\Gamma).$
    Then $V$ is a compact hyperbolic smooth manifold. Consider the geodesic flow $(\psi_t)_{t\in \mathbb R}$ on $S^1V$. 
    In order to prove that $(\varphi^t)_{t\in \mathbb R}$ is $C^\infty$-orbit equivalent to $(\psi_t)_{t\in \mathbb R}$, we will need the following Proposition established in \cite{haefliger_groupoides_1984}.
    \begin{proposition}
    \label{lem:barbo}
        Let $(M_1, \mathcal{F}_1)$ and $(M_2, \mathcal{F}_2)$ two $C^\infty$-foliated connected manifolds each admitting a complete transverse $(G,X)$-structure. Assume:
        \begin{enumerate}[label=(\alph*)]
            \item The developing maps of these structures have connected fibers ;
            \item The $\pi_1(M_1)$-action on the leaf space $Q_{\widetilde{\mathcal{F_1}}}$ of the lifted foliation $\widetilde{\mathcal{F}_1}$ on the universal cover of $M_1$ is $C^\infty$-conjugate to the $\pi_1(M_2)$-action on the leaf space $Q_{\widetilde{\mathcal{F_2}}}$ of the lifted foliation $\widetilde{\mathcal{F}_2}$ on the universal cover of $M_2$ ;
            \item The holonomy covering of each leaf of $\mathcal{F}_1$ and $\mathcal{F}_2$ is contractible.
        \end{enumerate}
        Then there exists a smooth map $h: M_1 \to M_2$ satisfying:
        \begin{enumerate}[label=(\roman*)] 
            \item $h$ is a homotopy equivalence ;
            \item $h$ sends each leaf of $\mathcal{F}_1$ onto a leaf of $\mathcal{F}_2$ ;
            \item $h$ sends two different leaves of $\mathcal{F}_1$ to two different leaves of $\mathcal{F}_2$ ;
            \item $h$ is transversally a $C^\infty$-diffeomorphism conjugating the transverse $(G,X)$-structure of $\mathcal{F_1}$ and that of $\mathcal{F}_2$.
        \end{enumerate}
    \end{proposition}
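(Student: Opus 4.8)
The plan is to exploit completeness to turn both transverse structures into genuine fibre bundles and then import the homotopy theory of bundles with contractible fibre. By Definition \ref{def:devmap}, completeness of the transverse $(G,X)$-structures means that the developing maps $D_i:\widetilde{M_i}\to X$ ($i=1,2$) are smooth fibre bundles onto their images $Q_i:=D_i(\widetilde{M_i})$; since by (a) the fibres are connected, each fibre is exactly a leaf of the lifted foliation $\widetilde{\mathcal{F}_i}$, which is the holonomy covering of the corresponding leaf of $\mathcal{F}_i$. Hence assumption (c) says precisely that the fibres of $D_1$ and $D_2$ are contractible. The leaf space $Q_{\widetilde{\mathcal{F}_i}}$ is canonically identified with $Q_i$, and the deck action of $\pi_1(M_i)$ on $\widetilde{M_i}$ descends through $D_i$ to an action on $Q_i$ by elements of $G$ (via the holonomy representation). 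Assumption (b) therefore furnishes a group isomorphism $\rho:\pi_1(M_1)\to\pi_1(M_2)$ and a $C^\infty$-diffeomorphism $\Theta:Q_1\to Q_2$ with $\Theta(\gamma\cdot q)=\rho(\gamma)\cdot\Theta(q)$, intertwining the two transverse $(G,X)$-models.

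Next I would build a $\rho$-equivariant bundle map covering $\Theta$. Form the fibre product
\[
E:=\{(\widetilde x,\widetilde y)\in\widetilde{M_1}\times\widetilde{M_2}:\Theta(D_1(\widetilde x))=D_2(\widetilde y)\},
\]
and let $\pi:E\to\widetilde{M_1}$ be the first projection. Since $\Theta$ is a diffeomorphism, $\pi$ is the pullback by $\Theta\circ D_1$ of the bundle $D_2$, so it is a smooth fibre bundle whose fibre is that of $D_2$, hence contractible by (c). The group $\pi_1(M_1)$ acts on $E$ by $\gamma\cdot(\widetilde x,\widetilde y)=(\gamma\widetilde x,\rho(\gamma)\widetilde y)$ --- well defined by the equivariance of $\Theta$ --- and commutes with $\pi$, so $\pi$ descends to a fibre bundle $\overline{E}\to M_1$ with the same contractible fibre. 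By the section theorem for bundles with contractible fibre over a paracompact base (\cite{james_general_1984}, already used above), $\overline{E}\to M_1$ admits a global smooth section; its pullback along $\widetilde{M_1}\to M_1$ is an equivariant section of $\pi$, that is, the graph of a smooth $\rho$-equivariant map $\widetilde h:\widetilde{M_1}\to\widetilde{M_2}$ satisfying $D_2\circ\widetilde h=\Theta\circ D_1$. Passing to the quotient gives $h:M_1\to M_2$. By construction $h$ is transversally $\Theta$, so it is a $C^\infty$-diffeomorphism conjugating the two transverse $(G,X)$-structures, proving (iv); because $\Theta$ is a bijection between leaf spaces, $\widetilde h$ carries fibres of $D_1$ into fibres of $D_2$ and is injective on leaf spaces, which yields (ii) and (iii) after projecting.

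For (i) I would run the identical construction with $\Theta^{-1}$ and $\rho^{-1}$ to obtain a smooth equivariant $\widetilde h':\widetilde{M_2}\to\widetilde{M_1}$ with $D_1\circ\widetilde h'=\Theta^{-1}\circ D_2$. Then $\widetilde h'\circ\widetilde h$ covers the identity of $Q_1$, i.e. it is a fibre-preserving self-map of the bundle $D_1$ over the identity of the base; since the fibres are contractible, any two such lifts are fibrewise homotopic, so $\widetilde h'\circ\widetilde h$ is equivariantly homotopic to $\mathrm{id}_{\widetilde{M_1}}$, and symmetrically for $\widetilde h\circ\widetilde h'$. Descending these homotopies shows $h$ and $h'$ are mutually inverse homotopy equivalences. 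The main obstacle is to carry the three requirements --- smoothness, $\pi_1$-equivariance, and the fibre-homotopy uniqueness underlying (i) --- through the same construction simultaneously; the cleanest way is to phrase every lifting and every homotopy as a section (respectively, a section over $M_1\times[0,1]$) of a bundle with contractible fibre over a paracompact base, so that both existence and essential uniqueness come uniformly from the contractibility of the holonomy coverings supplied by (c).
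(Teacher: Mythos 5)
The paper does not actually prove this proposition: it is quoted as established in \cite{haefliger_groupoides_1984}, so there is no in-house argument to compare yours against. Your sketch is essentially the standard Haefliger-type proof (pull back the second developing map along the conjugacy of leaf spaces, observe that the resulting bundle over $\widetilde{M_1}$ has contractible fibre, take an equivariant section by descending to $M_1$, and obtain the homotopy inverse and the homotopies from the same section-extension principle). One step deserves a line more than you give it: the connected fibre of $D_i$ through $\widetilde{x}$ is the covering of the leaf $L$ associated to $\ker(\pi_1(L)\to\pi_1(M_i))$, which a priori only \emph{covers} the holonomy covering rather than equalling it; but contractibility of the holonomy covering forces its fundamental group, hence the holonomy kernel, hence the smaller kernel, to be trivial, so the two coincide and the fibres of $D_i$ are indeed contractible. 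The remaining soft points (smoothing the continuous section; phrasing the fibrewise homotopies as relative sections over $M_1\times[0,1]$ so that they are equivariant and descend) are fine and you flag them yourself.

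The one genuine gap is conclusion (ii): your construction produces a map sending each leaf \emph{into} the corresponding leaf, and nothing in the section argument makes the fibrewise maps surjective. In the stated generality "onto" can in fact fail: take $M_1=X$ foliated by points and $M_2=X\times\mathbb R$ foliated by the lines $\{x\}\times\mathbb R$; all hypotheses hold and $x\mapsto(x,0)$ is exactly what your construction yields. To recover "onto" you need extra input, for instance that $M_1$ and $M_2$ are closed of the same dimension, so that the homotopy equivalence $h$ is surjective (a degree argument), after which (iii) together with the bijectivity of $\Theta$ on leaf spaces upgrades "into" to "onto". This extra hypothesis is satisfied in the paper's application, where $M$ and $S^1V$ are closed five-manifolds, but as written you should either add it or weaken (ii) to "into".
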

    \noindent The $H_1(\Gamma)$-action on $(\mathbf{P}^1(\mathbb C) \times \mathbf{P}^1(\mathbb C))\setminus \Delta$ induced by the isomorphism $H_1: \Gamma \to H_1(\Gamma)$ and coming from the identification of the leaf space of the lifted flow of $(\varphi^t)_t$ with $(\mathbf{P}^1(\mathbb C) \times \mathbf{P}^1(\mathbb C))\setminus \Delta$ is given, for $\gamma \in \Gamma$ and $z \in (\mathbf{P}^1(\mathbb C) \times \mathbf{P}^1(\mathbb C))\setminus \Delta$, by:
    \[H_1(\gamma) \cdot z:=\overline{D_M}(\gamma\cdot {D_M}^{-1}(z))=\overline{D_M}({D_M}^{-1}(H(\gamma) z))=H(\gamma)z\]
   so it coincides with the $H_1(M)$-action on $(\mathbf{P}^1(\mathbb C) \times \mathbf{P}^1(\mathbb C))\setminus \Delta$ coming from the identification of the leaf space of the lifted geodesic flow on $S^1V$ with $(\mathbf{P}^1(\mathbb C) \times \mathbf{P}^1(\mathbb C))\setminus \Delta$.
    Now we prove that the holonomy covering of each orbit of $(\varphi^t)_{t \in \mathbb R}$ is homeomorphic to $\mathbb R$ and is thus contractible. This will prove the corresponding statement for the geodesic flow on $S^1V$. First, a non-periodic orbit of $(\varphi^t)_{t \in \mathbb R}$ is diffeomorphic to $\mathbb R$. Therefore, its fundamental group is $0$ so its holonomy covering is $\mathbb R$. If we consider now a periodic orbit $L$, and a loop $\gamma$ in $L$ non-homotopic to a constant path, then its holonomy is non-trivial by Corollary \ref{cor:holofloFsu} since the flow $(\varphi^t)_{t \in \mathbb R}$ is Anosov. Therefore, its holonomy covering is homeomorphic to the universal covering space of $L$, that is $\mathbb R$. \\
    Let $h: M \to S^1V$ given by Proposition \ref{lem:barbo}.
    The problem if we want to conclude is that the restriction of $h$ to an orbit of $(\phi^t)_{t \in \mathbb R}$ is not necessarily injective, so $h$ does not give a priori a $C^\infty$-orbit equivalence between the flow $(\varphi^t)_{t\in \mathbb R}$ and the geodesic flow $(\psi_t)_{t \in \mathbb R}$ on $S^1V$.
    As mentioned in \cite{barbot_caracterisation_1995} and \cite{ghys_deformations_1992}, let a continuous function $u:\mathbb R \times M \to \mathbb R$ defined by the functional equation:
    \[\forall t \in \mathbb R, \; \forall x \in M, \quad h(\varphi^t(x))=\psi^{u(t,x)}(h(x)).\]
    Then by a technical lemma (see \cite{barbot_caracterisation_1995}) there exists $T \in \mathbb R$ such that for every $x \in M$, $u(T,x) \neq 0$.
    Therefore, if we let, for $x \in M$, 
    \[u_T(x):=\frac{1}{T}\int_0^T u(s,x)ds,\]
    then the map $h_T: M\to S^1V$ given, for $x \in M$, by:
    \[h_T(x)=\psi^{u_T(x)}(h(x)),\]
    is a $C^\infty$-diffeomorphism satisfying the same properties as $h$ and is thus a $C^\infty$-orbit equivalence between $(\varphi_t)_{t \in \mathbb R}$ and the geodesic flow on $S^1V$.
    We have thus proved that $(\varphi_t)_{t \in \mathbb R}$ is, after lifting to a certain finite cover over $M$, $C^\infty$-orbit equivalent to the geodesic flow on the unit tangent bundle of a smooth compact hyperbolic $3$-manifold.
\end{proof}
\begin{corollary}
    Let $(\varphi^t)_{t \in \mathbb R}$ a transversely holomorphic Anosov flow on a smooth compact manifold $M$ of dimension five. Suppose $(\varphi^t)_{t \in \mathbb R}$ topologically transitive.\\
    Then the strong stable and strong unstable distribution are $C^\infty$.
\end{corollary}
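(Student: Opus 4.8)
The plan is to deduce the statement from the classification obtained in Theorem \ref{thm:classif}, reducing to the two algebraic models, and then to transfer smoothness of the strong distributions across the $C^\infty$-orbit equivalence. The delicate point is that, unlike the weak distributions $E^{ws}=T\mathcal F^{ws}$ and $E^{wu}=T\mathcal F^{wu}$ (which are $C^\infty$ by Corollary \ref{cor:4.3.1} and are invariant under orbit equivalence), the strong distributions $E^{s},E^{u}$ depend on the parametrization of the flow and are \emph{not} preserved by an orbit equivalence. Indeed, from the smooth submersion $D=(D_1,D_2)$ of Proposition \ref{prop:Dbundle} one recovers only $\mathbb R X=\ker dD$, $E^{wu}=\ker dD_1$ and $E^{ws}=\ker dD_2$, so that $E^{wu}\cap E^{ws}=\mathbb R X$; the weak data alone cannot single out the complements $E^{s},E^{u}$ of $\mathbb R X$.

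First I would reduce to the models. By Theorem \ref{thm:classif}, after passing to a finite cover (harmless for a local statement about distributions, since the covering is a local diffeomorphism along which $E^{s},E^{u}$ lift) and composing with a $C^\infty$-orbit equivalence $h$, the flow is orbit equivalent either to the suspension of a hyperbolic automorphism of a complex torus or to the geodesic flow of a compact hyperbolic three-dimensional manifold. In both models the strong distributions are $C^\infty$: for the suspension of the linear automorphism $\widehat A$ they are the constant eigendistributions of $\widehat A$ carried along the fibres, hence smooth; for the geodesic flow of a constant-curvature manifold they are the horospherical distributions, which are smooth (indeed real-analytic) in constant curvature.

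The core step is the transfer. I would pull back, via the diffeomorphism $h$, the canonical parametrization of the model flow to a flow $(\varphi'^{t})_{t\in\mathbb R}$ on $M$ generated by $X'=fX$ with $f>0$ of class $C^\infty$; since $h$ is then a genuine time-preserving smooth conjugacy onto the model, $E^{s}_{\varphi'}$ and $E^{u}_{\varphi'}$ are $C^\infty$. As $\varphi'$ and $\varphi$ share their orbits they share their weak foliations, so $E^{ws},E^{wu}$ are common and smooth, and $E^{s}_\varphi,E^{s}_{\varphi'}$ are both continuous sections of the smooth affine bundle of complements of $\mathbb R X$ inside the smooth bundle $E^{ws}$. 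It then remains to show that these two sections differ by a \emph{smooth} shear along $\mathbb R X$. Writing $\varphi'^{t}(x)=\varphi^{T(t,x)}(x)$ with $T(t,x)=\int_0^t f(\varphi'^{s}x)\,ds$, differentiating in the base point gives
\[
d\varphi'^{t}_x(w)=d\varphi^{T(t,x)}_x(w)+dT_x(w)\,X(\varphi^{T(t,x)}(x)),
\]
so that a vector $w\in E^{ws}$ lies in $E^{s}_{\varphi'}$ precisely when $w-c(w)X\in E^{s}_\varphi$, where
\[
c(w)=\int_0^{+\infty} df_{\varphi'^{s}x}\big(d\varphi'^{s}_x w\big)\,ds.
\]
The integrand decays exponentially and uniformly because $w\in E^{s}_{\varphi'}$, and it depends smoothly on $x$ since $E^{s}_{\varphi'}$, $f$ and $df$ do; hence $c$ is $C^\infty$ in $x$ and linear in $w$, the map $w\mapsto w-c(w)X$ is a smooth shear, and $E^{s}_\varphi$ is $C^\infty$. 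Applying the same argument to $(\varphi^{-t})_{t\in\mathbb R}$ yields smoothness of $E^{u}_\varphi$.

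The main obstacle is exactly this last lemma: smoothness of the strong distributions is not an orbit-equivalence invariant, so one must quantify the effect of a smooth time change. The heart of the matter is proving that the shear $c$ is $C^\infty$, which relies on the uniform exponential convergence of the defining integral (and of its derivatives in $x$, where the contraction along $E^{s}_{\varphi'}$ must dominate the derivative growth) together with the already-established smoothness of the weak data in Corollary \ref{cor:4.3.1} and of the reference strong distribution in the model. Everything else—the reduction to the two models and the verification of smoothness there—is routine once Theorem \ref{thm:classif} is available.
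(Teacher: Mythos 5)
Your reduction to the two algebraic models is fine, and your observation that a $C^\infty$-orbit equivalence does \emph{not} transport strong distributions is correct and important --- indeed the paper's own proof simply asserts that the strong foliations of $(\varphi^t)$ ``correspond'' to those of the model under the orbit equivalence of Theorem \ref{thm:classif}, which is exactly the point you isolate. However, your proposed repair has a genuine gap at its crucial step. The shear
\[
c(w)=\int_0^{+\infty} df_{\varphi'^{s}x}\bigl(d\varphi'^{s}_x w\bigr)\,ds, \qquad w\in E^{s}_{\varphi'},
\]
converges uniformly, but uniform convergence of the integrand only yields continuity of $c$ in the base point. To differentiate $c$ in a direction $v$ transverse to the stable data (say $v\in E^{u}$) you must differentiate the integrand, which produces terms of the form $\nabla(df)\bigl(d\varphi'^{s}v,\,d\varphi'^{s}w\bigr)$ and $df\bigl(\nabla_v(d\varphi'^{s}_x w)\bigr)$; these are of size comparable to $\mu^{s}\lambda^{s}$ (and worse once second derivatives of the flow enter), which is not integrable in $s$ unless a bunching condition holds. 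Your parenthetical remark that ``the contraction along $E^{s}_{\varphi'}$ must dominate the derivative growth'' is precisely the unproved assertion, and it fails for general smooth time changes: this is the standard reason why a smooth reparametrization of an Anosov flow with smooth strong distributions generically has only H\"older strong distributions, while the weak ones survive. Even in the hyperbolic geodesic flow model one has $\lambda\mu=1$, so the estimate is at best borderline and the first transverse derivative of $c$ is not controlled by your argument. (There is also a harmless sign slip: with your definition of $c$ the condition should read $w+c(w)X\in E^{s}_{\varphi}$.)

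So ``the shear is smooth'' is not a routine technicality; it carries the entire content of the statement, and as written the step would fail. Closing it would require something special to this situation --- for instance exploiting the conformality of the flow on the complex line bundles $E^{u}$ and $E^{s}$, or characterizing $E^{u}$ directly from the transverse holomorphic and affine structures (Theorem \ref{thm:compaffstru}) rather than transporting it through the orbit equivalence. By comparison, the paper's proof takes the blunter route of asserting the correspondence of strong foliations outright; you have correctly diagnosed why that assertion needs justification, but your substitute does not yet supply one.
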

\begin{proof}
    By Theorem \ref{thm:classif}, we can lift the flow to a finite covering space of $M$ such that its orbit, strong stable and strong unstable foliations correspond to that of a suspension of a holomorphic automorphism of a complex torus, or that of a geodesic flow on a compact hyperbolic $3$-manifold. 
    Since their strong distributions are $C^\infty$, the same can be said for $(\varphi^t)_{t \in \mathbb R}$
\end{proof}

\appendix
\section{ }
We give the proofs of Lemma \ref{lem:fact1} and Lemma \ref{lem:fact2} mentioned in subsection \ref{subsec:6.2} during the proof of Theorem \ref{thm:FsTrProj}. They are just topological facts concerning $\mathbb R^p$.
\begin{proof}[Proof of Lemma \ref{lem:fact1}]
        Let $|\cdot|$ be the euclidean norm on $\mathbb R^p$.
        Denote, for $i \in \llbracket1,q \rrbracket$, $d_i=\text{diam}(K_i)$. Since $K$ is compact, by continuity there exist $x_1, x_q \in K$ such that $\text{diam}(K)=|x_1-x_q|$. Let 
        \[r=\min\{i \in \llbracket1,q \rrbracket, \; x_1 \in K_i\}, \quad s=\max\{i \in \llbracket1,q \rrbracket, \; x_q \in K_i\}.\]
        We can assume $r\leq s$ (otherwise, $r$ is defined by the maximum, $s$ by the minimum and necessarily $r \geq s$).
        Then 
        \[ \sum_{i=1}^q d_i =|x_1-x_q|\leq \text{diam}(\bigcup_{i=r}^s K_i)\leq \sum_{i=r}^sd_i.\]
        Since each $K_i$ is not reduced to a point, it comes $r=1$ and $s=q$.
        Now, denote, for $i \in \llbracket1,q-1 \rrbracket$, by $y_{i,i+1}$ an arbitrary element of $K_i \cap K_{i+1}$.
        It follows 
        \[\sum_{i=1}^q d_i =|x_1-x_q|\leq |x_1 - y_{1,2}|+ \sum_{i=1}^{q-2} |y_{i,i+1}-y_{i+1,i+2}| +|y_{q-1,q} - x_{q}|,\]
        so
        \begin{align*}
            \forall y_{1,2} \in K_1 \cap K_2,& \quad |x_1-y_{1,2}|=d_1,\\
            \forall i\in \llbracket 1,q-2 \rrbracket, \; \forall x \in K_i \cap K_{i+1}, \;\forall y \in K_{i+1} \cap K_{i+2},& \quad |x-y|=d_{i+1},\\
            \forall y_{q-1,q} \in K_{q-1} \cap K_q, &\quad |y_{q-1,q}-x_q|=d_q.
        \end{align*}
        Therefore, the case of equality in the triangular inequality gives that for any such points, $x_1, y_{1,2}, y_{2,3}, \cdots, y_{q-1,q}, x_q$ are aligned in that order. This imply that for every $i \in \llbracket1, q-1 \rrbracket$, $K_i\cap K_{i+1}$ is a point denoted by $x_{i, i+1}$. We also note $x_{0,1}:=x_1$ and $x_{q,q+1}:=x_q$. Since each $K_j$ is not a reduced to point, $(x_{i,i+1})_{0\leq i \leq q}$ is a sequence of distinct points.\\
        We now prove that for every $i\in \llbracket 1,q-2 \rrbracket$ and $l\in \llbracket 2,q-i \rrbracket$, $K_i \cap K_{i+l}=\emptyset$, which concludes thanks to the previous claim. 
        Otherwise, let $i\in \llbracket 1,q-2 \rrbracket, \;l\in \llbracket 2,q-i \rrbracket$ and $z \in K_i \cap K_{i+l}$.
        Then 
        \begin{align*}
            \sum_{j=i}^{i+l} d_{j} = \sum_{j=i}^{i+l} |x_{j-1,j}-x_{j,j+1}|&=\left |\sum_{j=i}^{i+l} x_{j-1,j}-x_{j,j+1}\right|=|x_{i-1,i}-x_{i+l,i+l+1}|\\
            &\leq |x_{i-1,i}-z| + |z-x_{i+l,i+l+1}|\leq d_i + d_{i+l}.
        \end{align*}
        Therefore $d_{i+1}=\cdots=d_{i+l-1}=0$ which is absurd.
    \end{proof}
    \begin{proof}[Proof of Lemma \ref{lem:fact2}]
        For $\epsilon>0$, and $A \subset \mathbb R^p$, let $A(\epsilon)=\{x \in \mathbb R^p, d(x,A)<\epsilon\}$. By assumption, for every $\epsilon >0$ and $\alpha >0$, 
        $\mathcal{C}^\alpha(\epsilon)=(C_i^\alpha(\epsilon))_{1\leq i\leq N_\alpha}$ is an open covering of $C(\epsilon)$. Since $C(\epsilon)$ is an open subset of $\mathbb R^p$, its topological dimension is $p$. 
        Therefore, there exists an open covering $\mathcal{U_\epsilon}=(U_\epsilon^{(i)})_{1\leq i\leq M_\epsilon}$ of $C(\epsilon)$ such that for every finite refinement of $\mathcal{U}_\epsilon$, there exists a point of $\mathbb R^p$ belonging to $p+1$ distinct open subsets of the refinement.
        Fix $\epsilon >0$ and take $\alpha >0$ small enough so that $\mathcal{C}^\alpha_\epsilon$ is a (finite) refinement of $\mathcal{U}_\epsilon$. As a result, there exists a point $x_\epsilon$ belonging to $p+1$ distinct elements of $\mathcal{C}^\alpha(\epsilon)$, which we note $C^\alpha_{i^{(\epsilon)}_1}(\epsilon), \cdots , C^\alpha_{i^{(\epsilon)}_{p+1}}(\epsilon)$ where each $i^{(\epsilon)}_j$ for $j\in \llbracket1,p+1 \rrbracket$ is in $\llbracket1, N_\alpha\rrbracket$.
        If $\epsilon=\frac{1}{n}>0$, then by compactness of $\llbracket1, N_\alpha\rrbracket$, after extraction, for $n$ large enough, we can assume that each sequence $(i^{(\frac{1}{n})}_j)_n$ for $j\in \llbracket1,p+1 \rrbracket$ is constant and equals $i_j \in \llbracket1, N_\alpha\rrbracket$. After extraction, since $C$ is compact, we can also assume that $(x_{\frac{1}{n}})_n$ converges to some $x \in C(1)$. By definition, for $n$ large enough and $j \in \llbracket1, p+1 \rrbracket$, 
        \[ d(x_{\frac{1}{n}}, C^\alpha_j)\leq \frac{1}{n}.\]
        As a matter of fact, since each $ C^\alpha_j$ is closed and the map $d(\cdot,  C^\alpha_j)$ is continuous, $x$ belongs to $C^\alpha_1, \cdots ,  C^\alpha_{p+1}$ which is the result.
    \end{proof}

\bibliographystyle{abbrv}
\bibliography{article_1}
\end{document}